\newtheorem{thm}{Theorem}[section]
\newtheorem{prop}[thm]{Proposition}
\newtheorem{lem}[thm]{Lemma}
\newtheorem{cor}[thm]{Corollary}
\newtheorem{conj}[thm]{Conjecture}
\theoremstyle{definition}
\newtheorem{defn}[thm]{Definition}
\newtheorem{rmk}[thm]{Remark}
\newtheorem{ex}[thm]{Example}
\begin{document}

\title{Stable Higher Specht Polynomials and Representations of Infinite Symmetric Groups}

\author{Shaul Zemel}

\maketitle

%\begin{abstract}
%We define eventually symmetric functions to be those power series of bounded degree in infinitely many variables that are invariant under interchanging all the variables with large enough indices. We show how this ring $\tilde{\Lambda}$ is the natural place to define the stable versions of the higher Specht polynomials of Ariki, Terasoma, and Yamada and their generalized versions from the prequels to this paper, and investigate its various properties as a representation of the infinite symmetric groups. This requires defining infinite versions of Ferrers diagrams, standard Young tableau, semi-standard ones, and the appropriate representations inside $\tilde{\Lambda}$, which are irreducibe as limits of irreducible representations of finite symmetric groups. The homogeneous parts of $\tilde{\Lambda}$ and of its subring of polynomials in infinitely many variables are no longer completely reducible, and we determine the form of the maximal completely reducible sub-representations there (in several normalizations). After posing a conjecture about the decompositions of polynomials in $n$ variables using the representations of $S_{n+1}$, we obtain explicit filtrations on $\tilde{\Lambda}$ and its subring, whose graded pieces are the maximal completely reducible sub-representations at each step.
%\end{abstract}

\section*{Introduction}

Consider the space $\mathbb{Q}[\mathbf{x}_{n}]_{1}$ of linear polynomials in $n$ variables. It is acted upon by the symmetric group $S_{n}$ permuting the variables, and decomposes as the direct sum of a trivial representation on the space spanned by the sum of all the variables, and the irreducible standard representation on the subspace $\mathbb{Q}[\mathbf{x}_{n}]_{1}^{0}$ defined by the vanishing of the sum of the coefficients.

When considering the analogue $\mathbb{Q}[\mathbf{x}_{\infty}]_{1}$ of linear polynomials in infinitely many variables, it comes endowed with an action of the symmetric group $S_{\mathbb{N}}$. The similarly defined subspace $\mathbb{Q}[\mathbf{x}_{\infty}]_{1}^{0}$ is a sub-representation, which can be shown to still be irreducible, and it has co-dimension 1 in the full representation, with a trivial quotient. However, there is no symmetric direct summand completing it, as the sum of all the variables is no longer a polynomial, but rather an element of the ring $\Lambda$ of symmetric functions. Hence the natural limit $\mathbb{Q}[\mathbf{x}_{\infty}]_{1}$ of $\mathbb{Q}[\mathbf{x}_{n}]_{1}$ as $n\to\infty$ is no longer completely reducible, with one irreducible component remaining, and another one leaves the space in the limit.

The goal of this paper, which is based on the previous works \cite{[Z2]} and \cite{[Z3]}, is to show how the theory of stable higher Specht polynomials, and their generalizations, explain this phenomenon and related ones, in any degree.

\medskip

We thus introduce some notation and terminology. The seminal work \cite{[B]} considered the ring $R_{n}$ of co-invariants of the action of the symmetric group $S_{n}$ on the ring $\mathbb{Q}[\mathbf{x}_{n}]$ of polynomials in $n$ variables from various perspectives, including determining its representation type as the regular one. The classical construction of Specht polynomials yields a single copy of each Specht module in $R_{n}$, and \cite{[ATY]} defined higher Specht polynomials in order to produce the remaining copies.

The paper \cite{[HRS]} defined more general quotients $R_{n,k}$ and $R_{n,k,s}$ that are related to appropriate actions of $S_{n}$, where some associated algebraic varieties we considered in \cite{[PR]}, and there are some relations with the Delta conjecture---see \cite{[HRW]}. In \cite{[Z2]} we showed how to decompose $R_{n,k}$ according to the orbits of the action of $S_{n}$, and how appropriate normalizations of the higher Specht polynomials relates those with $n$ variables to those with $n+1$ variables in a natural way. The sequel \cite{[Z3]} did the same for $R_{n,k,s}$, constructed generalized higher Specht polynomials, and used both types of these polynomials for obtaining decompositions of the subspace $\mathbb{Q}[\mathbf{x}_{\infty}]_{d}$ of $\mathbb{Q}[\mathbf{x}_{\infty}]$ that is determined by the homogeneity degree $d$.

In more detail, assume that $T$ is a standard Young tableau of some shape of size $n$, and $M$ is a semi-standard Young tableau of the same shape. Then \cite{[Z3]} defines (by extending \cite{[ATY]}, \cite{[Z2]}, and others), the generalized higher Specht polynomial $F_{M,T}\in\mathbb{Q}[\mathbf{x}_{n}]$, normalized in such a way that the ``leading monomial'' comes with the coefficient 1 (see Definition \ref{Spechtdef} and Theorem \ref{repsSpecht} below for the precise construction). Then there is a standard Young tableau $\iota T$ of a shape of size $n+1$, and a semi-standard Young tableau $\hat{\iota}M$ of that shape, such that the generalized higher Specht polynomial $F_{\hat{\iota}M,\iota T}\in\mathbb{Q}[\mathbf{x}_{n+1}]$ produces $F_{M,T}$ under the substitution $x_{n+1}=0$, like in the theory of symmetric functions (see Definition \ref{iotadef} and Proposition \ref{forstab} below).

\medskip

As in the classical construction from \cite{[M]} and \cite{[Pe]}, these generalized higher Specht polynomials span irreducible representations of $S_{n}$ inside $\mathbb{Q}[\mathbf{x}_{n}]$, realizing the associated Specht module. The representations obtained from the usual higher Specht polynomials by multiplication by appropriate symmetric functions produce the higher Specht bases for $R_{n,k}$ and $R_{n,k,s}$ from \cite{[GR]}, which were decomposed into direct sums $R_{n,I}$ and $R_{n,I}^{\mathrm{hom}}$ for (multi-)sets $I$. The map taking $F_{M,T}$ to $F_{\hat{\iota}M,\iota T}$ (and a symmetric function in $n$ variables to the same symmetric function but now in $n+1$ variables) relate these representations with the ones in which the index $n$ is increased by 1.

We recall from \cite{[CF]}, \cite{[CEF]}, \cite{[Fa]}, and \cite{[SS]} the notion of a stable family of representations (or FI-modules), or the centrally stable representations from \cite{[Pu]}. Remark 4.24 of \cite{[Z2]} and Remark 3.28 of \cite{[Z3]} explain how the representations constructed in these references have this useful property. In this paper we go one step further, and construct actual limits of these (centrally) stable representations, which are representations of the infinite symmetric group $S_{\mathbb{N}}$, permuting all the positive integers.

\medskip

The natural domain where these limit representations live is the ring $\tilde{\Lambda}$ of \emph{eventually symmetric functions}. If $S_{\mathbb{N}}^{(n)} \subseteq S_{\mathbb{N}}$ is the subgroup fixing the numbers between 1 and $n$, then a power series from $\mathbb{Q}\ldbrack\mathbf{x}_{\infty}\rdbrack$ is eventually symmetric if its is fixed by $S_{\mathbb{N}}^{(n)}$ for some $n$ and its degree is bounded (see Definition \ref{evsym} below). On $\tilde{\Lambda}$ the representations of $S_{\mathbb{N}}$ and of the union $S_{\infty}$ of the finite symmetric groups there are the same, and it is generated by $\mathbb{Q}[\mathbf{x}_{\infty}]$ and $\Lambda$ in an algebraically independent manner (see Proposition \ref{samereps} below). 

For constructing limits, we define infinite Ferrers diagrams, infinite standard Young tableau, and infinite semi-standard Young tableau (see Definition \ref{tabinf} below), and show in Lemma \ref{limtab} how to obtain these by limits of finite objects. If $\hat{T}$ and $\hat{M}$ are obtained in this way from $T$ and $M$ from above, then the stable generalized higher Specht polynomial $F_{\hat{M},\hat{T}}\in\tilde{\Lambda}$ from Definition \ref{infSpecht} below is the unique one (in the appropriate sense) yielding $F_{M,T}$ after one substitutes $x_{m}=0$ for every $m>n$ (see Proposition \ref{limSpecht} below). We prove in Theorem \ref{repsinf} below that these objects span irreducible representations of $S_{\mathbb{N}}$ and of $S_{\infty}$, with an isomorphism type of such representations associated to every infinite Ferrers diagram, and consider various direct sums of such representations, generalizing the constructions from \cite{[Z2]} and \cite{[Z3]} in the finite case.

Note that our representations of $S_{\infty}$ are, in some sense, orthogonal to the theory of Thoma parameters from \cite{[T]}, \cite{[BO]}, and other (they are, by definition, \emph{tame} in the terminology of, e.g., \cite{[O]} and \cite{[MO]})---see Remark \ref{Thoma} below.

\medskip

As mentioned above, the representation $\mathbb{Q}[\mathbf{x}_{\infty}]_{d}$ obtained in the infinite case, and thus the homogeneous component $\tilde{\Lambda}_{d}$ of $\tilde{\Lambda}$ containing it, are no longer completely reducible. We show in Theorem \ref{compred} below how our direct sums produce various decompositions of the maximal completely reducible sub-representations $\mathbb{Q}[\mathbf{x}_{\infty}]_{d}^{0}$ and $\tilde{\Lambda}_{d}^{0}$ of these representations. The analysis is based on a parameter $f$ attached to each irreducible component (see Definition \ref{repswithmon} below), where the case $f=0$ is the one for which $F_{\hat{\iota}M,\iota T}=F_{M,T}$ and hence $F_{\hat{M},\hat{T}}$ is a polynomial (rather than a more general power series from $\tilde{\Lambda}$)---see Lemma \ref{sameiota} below. This parameter produces a natural filtration on the spaces $\mathbb{Q}[\mathbf{x}_{n}]_{d}$ for finite $n$.

The final goal of this paper is to define a filtrations on $\mathbb{Q}[\mathbf{x}_{\infty}]_{d}$ and $\tilde{\Lambda}_{d}$, the isomorphism types of whose quotients are described explicitly, such that the graded part at each step is the maximal completely reducible sub-representation of the corresponding quotient. In order to do this, we investigate how polynomials in $\mathbb{Q}[\mathbf{x}_{n}]_{d}$ associated with a parameter $f$ behave when viewed as elements of $\mathbb{Q}[\mathbf{x}_{n+1}]_{d}$. Based on some examples, we conjecture that this parameter $f$ is ``independent of $n$'' (see Lemma \ref{conjeq} below, that is based on Conjecture \ref{polsincn} below), though the structure in terms of the representations themselves is not very straightforward---see Remark \ref{notiota} below.

Assuming the veracity of our conjecture, we can define the filtrations in Definitions \ref{Qxinfdf} and \ref{Lambdafilt} below, and establish their properties in Proposition \ref{filtprop} and Theorems \ref{filtQxinf} and \ref{filtrations} below. The resulting forms of their semi-simplifications are given in Corollaries \ref{semisimp} and \ref{submulti} below. 

\medskip

This paper is divided into 4 sections. Section \ref{TPR} reviews the basic notions and constructions from \cite{[Z2]} and \cite{[Z3]}, for introducing the notation. Section \ref{EvSymFunc} defines eventually symmetric functions, the infinite diagrams and tableaux, and the stable versions of the (generalized) higher Specht polynomials and the representations they span. Section \ref{MaxCompRed} introduces the parameter $f$, presents the various decompositions of the maximal completely reducible sub-representation $\tilde{\Lambda}_{d}$, and proves that it is indeed such. Finally, in Section \ref{FullRep} we pose our conjecture, and show how it yields the desired filtrations.

\medskip

I am grateful to B. Sagan, B. Rhoades, M. Gillespie, and S. van Willigenburg, for their interest, for several encouraging conversations on this subject, and for helpful comments. Special thanks are due to G. I. Olshanski for several useful comments about the relation with the general theory of $S_{\infty}$ and for referring me to \cite{[MO]}. I am indebted, in particular, D. Grinberg for a detailed reading of previous versions, for introducing me to several references, and for numerous suggestions which drastically improved the presentation of this paper.

\section{Tableaux, Polynomials, and Representations \label{TPR}}

We begin by recalling the notation from \cite{[Z2]} and \cite{[Z3]}. We write $\lambda \vdash n$ to indicate that $\lambda$ is a partition of $n$, which we identify with its Ferrers diagram, and we write $\lambda=\operatorname{sh}(T)$ for expressing that the shape of the tableau $T$ is (the diagram of) $\lambda$. We will use $\mathbb{N}_{n}$ for the set of integers between 1 and $n$, and write, as usual, $\operatorname{SYT}(\lambda)$ for the set of standard Young tableau of shape $\lambda$ (the entries of which are, as always, the elements of $\mathbb{N}_{n}$).

We recall that a \emph{weak composition} of $n$, of length $k$, is a sequence $\{\alpha_{h}\}_{h=0}^{k-1}$ of non-negative integers summing to $n$, a situation that we denote by $\alpha\vDash_{w}n$. If all the entries of $\alpha$ are positive then we call $\alpha$ simply a \emph{composition} of $n$ and write $\alpha \vDash n$, and in any case we denote by $\ell(\alpha)$ the length $k$ of $\alpha$ (as we do for partitions, which are a special case of compositions).

We will also adopt the following terminology and notation from \cite{[Z3]}, for which we recall that the entries of semi-standard Young tableaux are non-negative integers, including 0, in our normalization.
\begin{defn}
Take a partition $\lambda \vdash n$, and an integer $d\geq0$.
\begin{enumerate}[$(i)$]
\item The set of semi-standard Young tableaux of shape $\lambda$ is $\operatorname{SSYT}(\lambda)$.
\item If $M\in\operatorname{SSYT}(\lambda)$ then $\Sigma(M)$ stands for the sum of the entries of $M$.
\item We let $\operatorname{SSYT}_{d}(\lambda)$ denote the set of $M\in\operatorname{SSYT}(\lambda)$ for which $\Sigma(M)=d$.
\item We define the \emph{content} of $M$ to be entries appearing in $M$, written as a non-decreasing sequence of length $n$ (we can also view the content as a multi-set, as in Definition \ref{multisets} below).
\item If $\mu$ is some content, then the set of $M\in\operatorname{SSYT}(\lambda)$ whose content is $\mu$ will be written as $\operatorname{SSYT}_{\mu}(\lambda)$.
\end{enumerate} \label{defSSYT}
\end{defn}
The set $\operatorname{SSYT}_{d}(\lambda)$ from Definition \ref{defSSYT} is thus the disjoint union of the sets $\operatorname{SSYT}_{\mu}(\lambda)$, where $\mu$ runs over the contents having sum $d$.

We will use the notation from \cite{[CZ]}, \cite{[Z2]}, and \cite{[Z3]}, where if $T\in\operatorname{SYT}(\lambda)$ and $1 \leq i \leq n$ then $v_{T}(i)$ is the box containing $i$ in $T$, whose column is denoted by $C_{T}(i)$, and the row is $R_{T}(i)$ (this makes sense more generally for any tableau $T$ of content $\mathbb{N}_{n}$). Then the set $\operatorname{Dsi}(T)$ of \emph{descending indices}, or \emph{descents}, of $T$ consists of those $i\in\mathbb{N}_{n-1}$ for which $R_{T}(i+1)>R_{T}(i)$ (and then $C_{T}(i+1) \leq C_{T}(i)$). We also recall our convention for working with multi-sets and destandardizations.
\begin{defn}
Fix some integer $n\geq1$.
\begin{enumerate}[$(i)$]
\item A \emph{multi-set} $J$ inside $\mathbb{N}_{n}\cup\{0\}$ is determined by giving a non-negative, finite, integral multiplicity to each element of $\mathbb{N}_{n}\cup\{0\}$.
\item The size of $J$ is the sum of these multiplicities.
\item If $J$ is a multi-set of size $k-1$, ordered as an increasing sequence and completed by $j_{0}=0$ and $j_{k}=n$, then the weak composition $\operatorname{comp}_{n}J\vDash_{w}n$ associated with $J$ is $\{\alpha_{h}\}_{h=0}^{k-1}$ where $\alpha_{h}:=j_{h+1}-j_{h}$.
\item The \emph{content represented by $\alpha$}, for $\alpha$ as above, is the one where each number $0 \leq h<k$ shows up $\alpha_{h}$ times. We may write it as a non-decreasing sequence of non-negative integers that are smaller than $k$, of length $n$.
\item The set $\hat{J}$ associated with $J$ is the subset of $\mathbb{N}_{n-1}$ whose elements are those that appear in $J$ with some positive multiplicity. We write $\hat{k}-1$ for its size.
\item Given a subset $D\subseteq\mathbb{N}_{n-1}$, we write $D \subseteq J$ and say that $D$ is contained in $J$ in case $D\subseteq\hat{J}$. The complement $J \setminus D$ is then the multi-set in which the multiplicity of an element $i\in\mathbb{N}_{n-1}$ is one less than that in $J$ when $i \in D$, and the same as in $J$ if $i \not\in D$. This applies, in particular, for $J\setminus\hat{J}$.
\item For $T\in\operatorname{SYT}(\lambda)$ for which $\operatorname{Dsi}(T) \subseteq J$, we denote by $\operatorname{ct}_{J}(T)$ the tableau of shape $\lambda$ in which we replace every $i\in\mathbb{N}_{n}$ by the number showing up at the $i$th location in the sequence describing the content represented by $\operatorname{comp}_{n}J$.
\item If $J=\operatorname{Dsi}(T)$ then $\operatorname{ct}_{J}(T)$ is denoted simply by $\operatorname{ct}(T)$.
\item A tableau that is of the form $\operatorname{ct}(T)$ for some $T\in\operatorname{SYT}(\lambda)$ is called a \emph{cocharge tableau} of shape $\lambda$. The set of those tableaux is denoted by $\operatorname{CCT}(\lambda)$.
\end{enumerate} \label{multisets}
\end{defn}
For more on compositions, see, e.g., pages 17--18 of \cite{[St1]}.

\begin{ex}
With $\lambda=221\vdash5$ and the multi-set $J:=\{0,2,2,3,5\}$ inside $\mathbb{N}_{5}\cup\{0\}$, we take
\[T:=\begin{ytableau} 1 & 2 \\ 3 & 5 \\ 4 \end{ytableau},\mathrm{\ \ so\ that\ \ }M:=\operatorname{ct}_{J}(T)=\begin{ytableau} 1 & 1 \\ 3 & 4 \\ 4 \end{ytableau},\mathrm{\ \ and\ \ }C:=\operatorname{ct}(T)=\begin{ytableau} 0 & 0 \\ 1 & 2 \\ 2 \end{ytableau},\] where $\operatorname{Dsi}(T)=\{2,3\}=\hat{J} \subseteq J$ and $\operatorname{comp}_{5}J=020120\vDash_{w}5$. The content represented by the latter composition is 11344, which is indeed the content of $M$, and the content 00122 of $C$ is represented by $\operatorname{comp}_{5}\hat{J}=212\vDash5$. The first 0 in $\operatorname{comp}_{5}J$ is because $0 \in J$ and causes $\operatorname{ct}_{J}(T)$ to contain only positive entries, and the last 0 is due to the fact that $n=5 \in J$, and does not affect $\operatorname{ct}_{J}(T)$. \label{ctJex}
\end{ex}

The following results are easy consequences of Definition \ref{multisets}, and are proved in \cite{[Z2]} and \cite{[Z3]} (though they were known before).
\begin{lem}
The following assertions hold for any $n$.
\begin{enumerate}[$(i)$]
\item The map $\operatorname{comp}_{n}$ is a bijection between multi-set of size $k-1$ inside $\mathbb{N}_{n}\cup\{0\}$ and weak composition of length $k$ of $n$.
\item The multi-set $J$ is an ordinary subset of $\mathbb{N}_{n-1}$ if and only if $\hat{J}=J$ and thus $\hat{k}=k$, which is equivalent to $\operatorname{comp}_{n}J \vDash n$, without the subscript $w$.
\item The map $\operatorname{ct}_{J}$ is a bijection from the set $\{T\in\operatorname{SYT}(\lambda)\;|\;\operatorname{Dsi}(T) \subseteq J\}$ onto the set $\operatorname{SSYT}_{\mu}(\lambda)$, where $\mu$ is the content represented by $\operatorname{comp}_{n}J$.
\item A tableau $C$ lies in $\operatorname{CCT}(\lambda)$ if and only if it is semi-standard, and wherever $h>0$ shows up in $C$, its leftmost instance is in a row below at least one instance of $h-1$.
\item The map $\operatorname{ct}:\operatorname{SYT}(\lambda)\to\operatorname{CCT}(\lambda)$ is a bijection.
\end{enumerate} \label{compctJ}
\end{lem}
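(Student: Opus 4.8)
The plan is to verify the five assertions in order, each following directly from unwinding Definition \ref{multisets}. For part $(i)$, I would observe that a multi-set $J$ of size $k-1$ in $\mathbb{N}_{n}\cup\{0\}$, once its elements are written in weakly increasing order and flanked by $j_{0}=0$ and $j_{k}=n$, produces via $\alpha_{h}=j_{h+1}-j_{h}$ a sequence of $k$ non-negative integers summing telescopically to $j_{k}-j_{0}=n$; conversely, from a weak composition $\alpha$ of length $k$ one recovers the partial sums $j_{h}=\alpha_{0}+\cdots+\alpha_{h-1}$, which form a weakly increasing sequence starting at $0$ and ending at $n$, and hence a multi-set of size $k-1$ inside $\mathbb{N}_{n}\cup\{0\}$. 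These two constructions are mutually inverse, giving the bijection. For part $(ii)$, note that $J\subseteq\mathbb{N}_{n-1}$ as an ordinary set means $0\notin J$, $n\notin J$, and every element has multiplicity exactly $1$; in terms of the flanked increasing sequence $0=j_{0}<j_{1}<\cdots<j_{k-1}<j_{k}=n$ this is precisely the statement that all gaps $\alpha_{h}=j_{h+1}-j_{h}$ are strictly positive, i.e. $\operatorname{comp}_{n}J\vDash n$ without the subscript $w$; and $\hat{J}=J$ exactly when no element is repeated and $\hat{k}=k$ is immediate.

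For part $(iii)$, the key point is that $\operatorname{ct}_{J}$ replaces the entry $i$ of $T$ by the $i$th term of the content $\mu$ represented by $\operatorname{comp}_{n}J$, which is the non-decreasing sequence in which $h$ appears $\alpha_{h}$ times, i.e. $h$ occupies positions $j_{h}+1,\dots,j_{h+1}$. Thus the entries of $\operatorname{ct}_{J}(T)$ weakly increase along rows automatically (since $T$ does and $\mu$ is non-decreasing), and an entry strictly increases down a column of $T$, say from $i$ to $i'>i$, exactly when $i$ and $i'$ land in different blocks of $\mu$ unless there is a descent; the hypothesis $\operatorname{Dsi}(T)\subseteq J$ guarantees that every descent index of $T$ lies in $J$, i.e. is one of the breakpoints $j_{h}$, so the entry does strictly increase there, making $\operatorname{ct}_{J}(T)$ semi-standard of content $\mu$. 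Injectivity follows because $T$ can be recovered by standardizing $\operatorname{ct}_{J}(T)$ (reading equal entries left-to-right in successive rows, which reproduces the standard filling consistent with the original descent structure), and surjectivity by running this standardization on an arbitrary $M\in\operatorname{SSYT}_{\mu}(\lambda)$: the resulting $T\in\operatorname{SYT}(\lambda)$ has $\operatorname{Dsi}(T)\subseteq J$ precisely because $M$ has content $\mu$ so its value-jumps occur only at the breakpoints of $\mu$.

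Parts $(iv)$ and $(v)$ are then formal consequences. For $(iv)$, taking $J=\operatorname{Dsi}(T)$ in the characterization of semi-standardness above shows a cocharge tableau is semi-standard; the extra condition is the content restriction $\operatorname{comp}_{n}\hat{J}\vDash n$ (all blocks non-empty), which in tableau terms says that when $h$ first appears it must be forced by a descent, i.e. the leftmost $h$ sits in a row strictly below some $h-1$ — for otherwise one could standardize without creating a descent at that value, contradicting $J=\operatorname{Dsi}(T)$. Conversely a semi-standard $C$ with this property destandardizes to a $T$ with $\operatorname{Dsi}(T)$ equal to (not merely contained in) the breakpoint set, so $C=\operatorname{ct}(T)$. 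Finally $(v)$ is the restriction of the bijection in $(iii)$ to the case $J=\operatorname{Dsi}(T)$: every $T\in\operatorname{SYT}(\lambda)$ has a well-defined $\operatorname{ct}(T)\in\operatorname{CCT}(\lambda)$ by definition, injectivity is inherited from $(iii)$, and surjectivity is exactly the converse direction just argued in $(iv)$. I expect the main obstacle to be stating the standardization-destandardization correspondence cleanly enough that the descent bookkeeping in $(iii)$ and the ``leftmost instance'' condition in $(iv)$ are manifestly equivalent to the breakpoint conditions; once that dictionary is fixed, everything else is bookkeeping, and since these facts are classical (and already in \cite{[Z2]}, \cite{[Z3]}) it suffices to indicate the correspondence rather than belabor it.
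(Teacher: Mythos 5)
Your proposal is correct and follows exactly the route the paper intends: the paper gives no proof of Lemma \ref{compctJ} itself, deferring to \cite{[Z2]} and \cite{[Z3]} and explicitly identifying $\operatorname{ct}_{J}^{-1}$ with the classical standardization map (Definition A1.5.5 of \cite{[St2]}, \cite{[vL]}) and $\operatorname{CCT}(\lambda)$ with the quasi-Yamanouchi tableaux of \cite{[AS]}, which is precisely the partial-sums/telescoping argument for $(i)$--$(ii)$ and the standardization--destandardization dictionary you describe for $(iii)$--$(v)$. The only step you gloss over is the standard fact that a box lying strictly below another in the same column forces a descent at some intermediate index (needed for column-strictness of $\operatorname{ct}_{J}(T)$ and for the ``leftmost instance of $h$ below an $h-1$'' equivalence), but this is routine and consistent with the paper's treatment of the lemma as an easy known consequence of Definition \ref{multisets}.
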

Based on the bijectivity from Lemma \ref{compctJ}, we have inverse maps $\operatorname{comp}_{n}^{-1}$ from weak compositions of $n$ to multi-sets (which restricts to one from compositions to subsets of $\mathbb{N}_{n-1}$), as well as $\operatorname{ct}_{J}^{-1}$ from semi-standard tableaux of the appropriate content $\mu$ into standard ones (with the image determined by a containment condition) and $\operatorname{ct}^{-1}:\operatorname{CCT}(\lambda)\to\operatorname{SYT}(\lambda)$. The map $\operatorname{ct}_{J}^{-1}$ is a standardization map, which was defined, with the destandardization, in Definition A1.5.5 of \cite{[St2]} and the end of Section 2.1 of \cite{[vL]}, among others. The set $\operatorname{CCT}(\lambda)$ is, up to a change of normalization, the set of quasi-Yamanouchi tableaux from \cite{[AS]}, \cite{[BCDS]}, and others, with the bijection with $\operatorname{SYT}(\lambda)$ showing up there as well.

\medskip

Given $T\in\operatorname{SYT}(\lambda)$, the set $\operatorname{Asi}(T)$ of \emph{ascending indices}, or \emph{ascents}, of $T$, is the complement of $\operatorname{Dsi}(T)$ in $\mathbb{N}_{n-1}$. We will also use the following notation.
\begin{defn}
Take $\lambda \vdash n$ and tableau $T\in\operatorname{SYT}(\lambda)$ and $M\in\operatorname{SSYT}(\lambda)$.
\begin{enumerate}[$(i)$]
\item The set $\{n-i\;|\;i\in\operatorname{Dsi}(T)\}$ will be denoted by $\operatorname{Dsi}^{c}(T)$. Similarly we write $\operatorname{Asi}^{c}(T)$ for $\{n-i\;|\;i\in\operatorname{Asi}(T)\}$. They are complements inside $\mathbb{N}_{n-1}$.
\item If $J$ is the multi-set such that the content of $M$ is represented by $\operatorname{comp}_{n}J$, then we write $\operatorname{Dsp}^{c}(M)$ for the multi-set $\{n-i\;|\;n>i \in J\}$.
\item If $I$ is a multi-set containing $\operatorname{Dsi}^{c}(T)$ from part $(i)$ as in Definition \ref{multisets}, then the complement $I\setminus\operatorname{Dsi}^{c}(S)$ from that definition will be denoted by $\operatorname{Asi}^{c}_{I}(S)$. Similarly, if $M$ is a cocharge tableau $C\in\operatorname{CCT}(\lambda)$ and $I$ contains $\operatorname{Dsp}^{c}(C)$ as above then we write $\operatorname{Asp}^{c}_{I}(C)$ for the corresponding complement.
\end{enumerate} \label{sets}
\end{defn}
Definitions \ref{multisets} and \ref{sets} imply that if $M=\operatorname{ct}_{J}(S)$ for $S\in\operatorname{SYT}(\lambda)$ then $\operatorname{Dsi}^{c}(S)\subseteq\operatorname{Dsp}^{c}(M)$, and equality in the latter containment is equivalent to $M$ being $\operatorname{ct}(S)\in\operatorname{CCT}(\lambda)$. The removal of the multiplicity of $n$ from $J$ in the definition of $\operatorname{Dsp}^{c}(M)$ amounts to preventing this multi-set from containing 0, which will be more convenient for our purposes. For the tableaux from Example \ref{ctJex} we have $\operatorname{Dsp}^{c}(M)=\{2,3,3,5\}$ and $\operatorname{Dsp}^{c}(C)=\{2,3\}$.

Given any $\lambda \vdash n$, we denote by $\operatorname{ev}:\operatorname{SYT}(\lambda)\to\operatorname{SYT}(\lambda)$ the \emph{Sch\"{u}tzenberger involution}, also known as \emph{evacuation}, which is given explicitly in Section 3.9 of \cite{[Sa]}, among other references. One of its useful properties is the following one.
\begin{lem}
We have $\operatorname{Dsi}(T)=\operatorname{Dsi}^{c}(\operatorname{ev}T)$ and $\operatorname{Asi}(T)=\operatorname{Asi}^{c}(\operatorname{ev}T)$ for every $T\in\operatorname{SYT}(\lambda)$. \label{evDsiAsi}
\end{lem}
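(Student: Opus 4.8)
The plan is to reduce the statement to a single well-known property of the Sch\"{u}tzenberger involution, namely its effect on the descent set, and then translate it through the complementation notation of Definition \ref{sets}. First I would recall the standard fact (see, e.g., Section 3.9 of \cite{[Sa]}, or the exposition via jeu-de-taquin/promotion) that evacuation reverses the descent set in the following precise sense: $i\in\operatorname{Dsi}(T)$ if and only if $n-i\in\operatorname{Dsi}(\operatorname{ev}T)$. Equivalently, $\operatorname{Dsi}(\operatorname{ev}T)=\{n-i\mid i\in\operatorname{Dsi}(T)\}$. Comparing this with part $(i)$ of Definition \ref{sets}, the right-hand side is by definition $\operatorname{Dsi}^{c}(T)$; but we must be careful about which tableau carries the superscript $c$. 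Applying the displayed identity with $T$ replaced by $\operatorname{ev}T$ and using that $\operatorname{ev}$ is an involution ($\operatorname{ev}\operatorname{ev}T=T$) gives $\operatorname{Dsi}(T)=\{n-i\mid i\in\operatorname{Dsi}(\operatorname{ev}T)\}=\operatorname{Dsi}^{c}(\operatorname{ev}T)$, which is exactly the first claimed equality.

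Next I would derive the ascent statement formally from the descent statement. By definition $\operatorname{Asi}(T)$ is the complement of $\operatorname{Dsi}(T)$ inside $\mathbb{N}_{n-1}$, and by part $(i)$ of Definition \ref{sets} the sets $\operatorname{Dsi}^{c}(\operatorname{ev}T)$ and $\operatorname{Asi}^{c}(\operatorname{ev}T)$ are complements inside $\mathbb{N}_{n-1}$ as well (since $i\mapsto n-i$ is an involution of $\mathbb{N}_{n-1}$ sending complements to complements). Hence, taking complements in $\mathbb{N}_{n-1}$ of the equality $\operatorname{Dsi}(T)=\operatorname{Dsi}^{c}(\operatorname{ev}T)$ already proved yields $\operatorname{Asi}(T)=\operatorname{Asi}^{c}(\operatorname{ev}T)$, as desired.

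The only nontrivial input is the classical descent-reversing property of evacuation; everything else is bookkeeping with the notation of Definition \ref{sets}. If one prefers a self-contained argument rather than citing \cite{[Sa]}, I would give the standard one-line proof: evacuation can be realized as the composite of the order-reversing relabeling $i\mapsto n+1-i$ with rotation of the rectangle (on the level of the underlying sequence of growth), and under $i\mapsto n+1-i$ a descent at $i$ (meaning $i+1$ lies strictly below $i$) becomes a descent at $n-i$; one then checks that the jeu-de-taquin rectification involved preserves this data. The main obstacle, such as it is, is purely a matter of convention-matching: one must make sure the sign/shift conventions for $\operatorname{Dsi}$ (which uses rows, with $i$ a descent when $R_{T}(i+1)>R_{T}(i)$) and for the map $i\mapsto n-i$ in $\operatorname{Dsi}^{c}$ are consistent with the normalization of $\operatorname{ev}$ used here, so that no off-by-one error creeps in between $n-i$ and $n+1-i$. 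This is readily verified on a small example such as the tableau of Example \ref{ctJex}.
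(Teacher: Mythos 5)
Your proposal is correct and follows essentially the same route as the paper, which simply cites the classical descent-reversing property of evacuation (Theorem A1.4.3 of \cite{[St2]}) and then unwinds Definition \ref{sets}; your extra care with the involution property of $\operatorname{ev}$ and the complementation step for the ascent statement is exactly the bookkeeping the paper leaves implicit.
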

Lemma \ref{evDsiAsi} follows directly from Theorem A1.4.3 of \cite{[St2]} and Definition \ref{sets}.

We also recall the following definition, used in \cite{[Z2]} and \cite{[Z3]}.
\begin{defn}
Consider an integer $n\geq1$.
\begin{enumerate}[$(i)$]
\item If $\lambda \vdash n$, the we define $\lambda_{+} \vdash n+1$ by putting one more box in the first row of $\lambda$.
\item Let $T$ be a tableau of shape $\lambda$ and content $\mathbb{N}_{n}$. Then $\iota T$ is the tableau obtained from $T$ by increasing the shape $\lambda$ to $\lambda_{+}$ and filling the extra box with the entry $n+1$.
\item In case $S\in\operatorname{SYT}(\lambda)$, we define $\tilde{\iota}S:=\operatorname{ev}\iota(\operatorname{ev}S)$.
\item Assume that $M\in\operatorname{SSYT}(\lambda)$, and let $J$ be the multi-set for which $\operatorname{comp}_{n}J$ represents the content of $M$. Then we set $J_{+}:=\{j+1\;|\;j \in J\}$ (as a multi-set) and define $\hat{\iota}M:=\operatorname{ct}_{J_{+}}\big(\tilde{\iota}\operatorname{ct}_{J}^{-1}(M)\big)$.
\end{enumerate} \label{iotadef}
\end{defn}
As it is clear from the definition that if $T\in\operatorname{SYT}(\lambda)$ then $\iota T\in\operatorname{SYT}(\lambda_{+})$ and $\operatorname{Dsi}(\iota T)=\operatorname{Dsi}(T)$ (but $\operatorname{Asi}(\iota T)=\operatorname{Asi}(T)\cup\{n\}$), it follows from Definition \ref{iotadef} and Lemma \ref{evDsiAsi} that $\tilde{\iota}S\in\operatorname{SYT}(\lambda_{+})$ for $S\in\operatorname{SYT}(\lambda)$ and $\operatorname{Dsi}(\tilde{\iota}S)=\operatorname{Dsi}(S)_{+}$ (in the same interpretation of $J_{+}$), and applied for $S:=\operatorname{ct}_{J}^{-1}(M)$ we indeed obtain $\operatorname{Dsi}(\tilde{\iota}S) \subseteq J_{+}$, making $\hat{\iota}M$ well-defined via Lemma \ref{compctJ}.

\begin{ex}
The tableau $T$ from Example \ref{ctJex} is its own $\operatorname{ev}$-image. With $M$ and $C$ there (the latter of which satisfies the condition from part $(iv)$ of Lemma \ref{compctJ}), we get $\lambda_{+}=321\vdash6$ as well as \[\iota T=\begin{ytableau} 1 & 2 & 6 \\ 3 & 5 \\ 4 \end{ytableau},\ \ \tilde{\iota}T=\begin{ytableau} 1 & 2 & 3 \\ 4 & 6 \\ 5 \end{ytableau},\ \ \hat{\iota}M=\begin{ytableau} 0 & 1 & 1 \\ 3 & 4 \\ 4 \end{ytableau},\mathrm{\ and\ }\hat{\iota}C=\begin{ytableau} 0 & 0 & 0 \\ 1 & 2 \\ 2 \end{ytableau}.\] \label{iotaex}
\end{ex}

\medskip

The notions from Definitions \ref{defSSYT}, \ref{multisets}, and \ref{iotadef} combine in the following ways, as established in \cite{[Z2]} and \cite{[Z3]}.
\begin{lem}
Fix $\lambda \vdash n$, $S\in\operatorname{SYT}(\lambda)$, and $M\in\operatorname{SSYT}(\lambda)$.
\begin{enumerate}[$(i)$]
\item If $\{i_{g}\}_{g=1}^{k-1}$ is the increasing sequence consisting of the elements of the multi-set $\operatorname{Dsp}^{c}(M)$, of size $k-1$, then the number of entries of $M$ that equal $k-g$ or more is $i_{g}$ for every $1 \leq g<k$. This is also valid for $g=0$ with $i_{0}=0$ and for $g=k$ where $i_{k}=n$.
\item We have the equality $\sum_{i\in\operatorname{Dsp}^{c}(M)}i=\Sigma(M)$, the sum with multiplicities.
\item we can determine $M$ by the semi-standard filling of all its rows except the first one by positive integers, plus indicating the multiplicities of all the positive integers in the first row, plus the value of $n$.
\item If $S_{+}$ is the tableau of shape $\lambda$ in which $v_{S}(i)$ contains $i+1$ for every $i\in\mathbb{N}_{n}$, then by enlarging $\lambda$ to $\lambda_{+}$, shoving the entries of the first row of $S_{+}$ one spot to the right, and filling the upper left box with 1 produces $\tilde{\iota}S\in\operatorname{SYT}(\lambda_{+})$.
\item For $S\in\operatorname{SYT}(\lambda)$ we have $\operatorname{Dsi}^{c}(\tilde{\iota}S)=\operatorname{Dsi}^{c}(S)$.
\item Given $M\in\operatorname{SSYT}(\lambda)$, adding the box to get the shape $\lambda_{+}$, pushing all the first row one box to the right, and putting 0 in the remaining box yields $\hat{\iota}M\in\operatorname{SSYT}(\lambda_{+})$.
\item We have $\operatorname{Dsp}^{c}(\hat{\iota}M)=\operatorname{Dsp}^{c}(M)$ and $\Sigma(\hat{\iota}M)=\Sigma(M)$. Moreover, the data from part $(iii)$ is the same for $\hat{\iota}M$ and for $M$, with $n$ replaced by $n+1$.
\item If $C\in\operatorname{CCT}(\lambda)$ then $\hat{\iota}C\in\operatorname{CCT}(\lambda_{+})$ and it equals $\operatorname{ct}\big(\tilde{\iota}\operatorname{ct}^{-1}(C)\big)$.
\end{enumerate} \label{rels}
\end{lem}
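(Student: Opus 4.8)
The eight assertions split as follows: $(i)$ and $(ii)$ are counting identities for the content of $M$; $(iii)$ is structural; $(v)$ and $(viii)$ are formal consequences of the definitions together with the identity $\operatorname{Dsi}(\tilde{\iota}S)=\operatorname{Dsi}(S)_{+}$ recorded after Definition \ref{iotadef}; and $(iv)$, $(vi)$, $(vii)$ hinge on the explicit description of $\tilde{\iota}S=\operatorname{ev}\iota\operatorname{ev}S$ in $(iv)$, which is the one substantial step. The plan is to prove $(i)$, deduce $(ii)$ from it, prove $(iii)$ directly, establish $(iv)$ (which also gives $(v)$), and finally derive $(vi)$, then $(vii)$ from $(vi)$, and $(viii)$.

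For $(i)$, represent the content of $M$ by $\operatorname{comp}_{n}J$ for the multi-set $J=\{j_{1}\le\cdots\le j_{k-1}\}$ that contains no copy of $n$, i.e.\ for the weak composition with no trailing zeros, so that $\operatorname{Dsp}^{c}(M)$ has size exactly $k-1$; put $j_{0}=0$ and $j_{k}=n$. By Definition \ref{multisets} the number of entries of $M$ equal to $h$ is $\alpha_{h}=j_{h+1}-j_{h}$, so the number of entries of $M$ that are $\geq h$ telescopes to $j_{k}-j_{h}=n-j_{h}$ for every $0\le h\le k$. As $\operatorname{Dsp}^{c}(M)=\{n-j_{1},\ldots,n-j_{k-1}\}$ and $j_{1}\le\cdots\le j_{k-1}$, its increasing rearrangement is $i_{g}=n-j_{k-g}$ for $1\le g\le k-1$, whence the number of entries that are $\geq k-g$ equals $n-j_{k-g}=i_{g}$; the cases $g=0$ and $g=k$ are the telescoped values at $h=k$ and $h=0$. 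Then $(ii)$ follows, since each entry $e$ of $M$ is counted once, for every $h\in\{1,\ldots,e\}$, among the entries $\geq h$, so $\Sigma(M)$ equals the sum over $h\geq1$ of the number of entries of $M$ that are $\geq h$, which by $(i)$ is $\sum_{g=1}^{k-1}i_{g}=\sum_{i\in\operatorname{Dsp}^{c}(M)}i$. For $(iii)$, the fillings of rows $2,3,\ldots$ of $M$ determine $\lambda_{2},\lambda_{3},\ldots$, so $n$ gives $\lambda_{1}=n-\sum_{r\geq2}\lambda_{r}$; the multiplicity of $0$ in the first row is then $\lambda_{1}$ minus the total of the prescribed multiplicities of the positive integers there, and since the first row is weakly increasing this recovers it, hence all of $M$.

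For $(iv)$ I use the recursive description of evacuation (see, e.g., \cite{[Sa]}): delete the entry $1$ from cell $(1,1)$ and jeu-de-taquin-slide the resulting hole to an outer corner $c$ (at each step swapping it with the smaller of its east and south neighbours), then decrease the remaining entries by $1$; calling the outcome $\partial T$, one has $\operatorname{ev}(T)=\operatorname{ev}(\partial T)$ with the maximal entry written in $c$. Apply this to the candidate tableau $U$ of shape $\lambda_{+}$: the first row of $S_{+}$ (the tableau in which $v_{S}(i)$ carries $i+1$) shoved one box to the right, with $1$ placed in $(1,1)$ and rows $2,3,\ldots$ copied from $S_{+}$. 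A short check --- using that $1$ is the minimal entry of $S$, sitting in $(1,1)$, and that $S$ is row- and column-strict --- shows $U\in\operatorname{SYT}(\lambda_{+})$. Now run the slide on $U$: while the hole is at $(1,j)$ with $1\le j\le\lambda_{1}$, its east neighbour carries $1$ plus the entry of $S$ in cell $(1,j)$, while its south neighbour, if it exists, carries $1$ plus the entry of $S$ in cell $(2,j)$, which is strictly larger by column-strictness of $S$; hence the hole travels straight along the first row and stops at $(1,\lambda_{1}+1)$, an outer corner of $\lambda_{+}$. After the slide the filled part is exactly $S_{+}$ of shape $\lambda$, so $\partial U=S$ and $c=(1,\lambda_{1}+1)$; therefore $\operatorname{ev}(U)$ is $\operatorname{ev}(S)$ with $n+1$ placed in $(1,\lambda_{1}+1)$, i.e.\ $\operatorname{ev}(U)=\iota(\operatorname{ev}S)$, and applying $\operatorname{ev}$ gives $U=\operatorname{ev}\iota\operatorname{ev}S=\tilde{\iota}S$. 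This slide analysis is the main obstacle. Part $(v)$ now drops out: the shift being horizontal gives $R_{\tilde{\iota}S}(1)=1=R_{S}(1)$ and $R_{\tilde{\iota}S}(i+1)=R_{S}(i)$ for $i\geq1$, so $\operatorname{Dsi}(\tilde{\iota}S)=\operatorname{Dsi}(S)_{+}$, and hence $\operatorname{Dsi}^{c}(\tilde{\iota}S)=\{(n+1)-(i+1)\mid i\in\operatorname{Dsi}(S)\}=\operatorname{Dsi}^{c}(S)$.

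Finally, $(vi)$, $(vii)$, $(viii)$ are corollaries. For $(vi)$, write $M=\operatorname{ct}_{J}(S)$ with $S=\operatorname{ct}_{J}^{-1}(M)$ and $\operatorname{Dsi}(S)\subseteq J$; then $\operatorname{Dsi}(\tilde{\iota}S)=\operatorname{Dsi}(S)_{+}\subseteq J_{+}$, so $\hat{\iota}M=\operatorname{ct}_{J_{+}}(\tilde{\iota}S)$ is legitimate, and comparing $\operatorname{comp}_{n}J=(\alpha_{0},\alpha_{1},\ldots,\alpha_{k-1})$ with $\operatorname{comp}_{n+1}J_{+}=(\alpha_{0}+1,\alpha_{1},\ldots,\alpha_{k-1})$ shows that the content represented by $\operatorname{comp}_{n+1}J_{+}$ is that of $M$ with one extra $0$ prepended. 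Since by $(iv)$ the tableau $\tilde{\iota}S$ is $S$ with the first row shoved right, a fresh $1$ in $(1,1)$, and all values raised by $1$, applying $\operatorname{ct}_{J_{+}}$ turns the fresh $1$ into a $0$ and otherwise reproduces the $\operatorname{ct}_{J}$-values of $M$ on the (shifted) remaining cells; that is, $\hat{\iota}M$ is $M$ with the first row pushed one box right and $0$ in the new box, which is $(vi)$. Then $(vii)$ is immediate: the new entry is $0$, so $\Sigma(\hat{\iota}M)=\Sigma(M)$; computing $\operatorname{Dsp}^{c}(\hat{\iota}M)$ from $J_{+}$ gives $\{(n+1)-(j+1)\mid n>j\in J\}=\operatorname{Dsp}^{c}(M)$; and the data of $(iii)$ is unchanged apart from replacing $n$ by $n+1$. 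For $(viii)$, take $C=\operatorname{ct}(T)$ with $T=\operatorname{ct}^{-1}(C)$, so that $J=\operatorname{Dsi}(T)$ is an ordinary subset and $\operatorname{ct}_{J}^{-1}(C)=T$; then $J_{+}=\operatorname{Dsi}(T)_{+}=\operatorname{Dsi}(\tilde{\iota}T)$, and hence $\hat{\iota}C=\operatorname{ct}_{J_{+}}(\tilde{\iota}T)=\operatorname{ct}_{\operatorname{Dsi}(\tilde{\iota}T)}(\tilde{\iota}T)=\operatorname{ct}(\tilde{\iota}T)=\operatorname{ct}\big(\tilde{\iota}\operatorname{ct}^{-1}(C)\big)$, which lies in $\operatorname{CCT}(\lambda_{+})$ by the definition of that set.
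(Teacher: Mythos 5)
Your proof is correct. Note that the paper itself offers no argument for this lemma — it is quoted from \cite{[Z2]} and \cite{[Z3]} — so there is no in-paper proof to compare against; what you have written is a self-contained derivation from the definitions. The one genuinely substantial step is part $(iv)$, and your jeu-de-taquin computation is sound: since the first row of $U$ is the shifted first row of $S_{+}$ while the lower rows are unshifted, the east neighbour of the hole at $(1,j)$ carries $S(1,j)+1$ and the south neighbour carries $S(2,j)+1$, so column-strictness of $S$ forces the hole straight along the first row to the outer corner $(1,\lambda_{1}+1)$, giving $\partial U=S$ and hence $\operatorname{ev}(U)=\iota(\operatorname{ev}S)$, i.e.\ $U=\tilde{\iota}S$ by involutivity of $\operatorname{ev}$. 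The remaining parts then follow exactly as you say: $(i)$ and $(ii)$ from the telescoping identity $\#\{\text{entries}\geq h\}=n-j_{h}$ together with the layer-cake summation, $(v)$ from $\operatorname{Dsi}(\tilde{\iota}S)=\operatorname{Dsi}(S)_{+}$ (which the paper also records after Definition \ref{iotadef} via Lemma \ref{evDsiAsi}, so you have two independent routes to it), and $(vi)$--$(viii)$ from the comparison of $\operatorname{comp}_{n}J$ with $\operatorname{comp}_{n+1}J_{+}$. Your normalization in $(i)$ — choosing $J$ with no copy of $n$, so that $|\operatorname{Dsp}^{c}(M)|=k-1$ exactly — is legitimate, since adding copies of $n$ to $J$ only appends trailing zeros to the composition and is precisely the ambiguity that the definition of $\operatorname{Dsp}^{c}$ quotients out.
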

Definition \ref{iotadef} is a special case of increasing $\lambda$ and tableaux of shape $\lambda$ by adding any box which is an external corner of $\lambda$, with Lemma \ref{rels} extending to this more general construction, as \cite{[Z2]} and \cite{[Z3]} present. However, for the purposes of the current paper the external corner in the first row suffices. One checks that for the tableaux from Example \ref{iotaex} we indeed have $\operatorname{Dsi}(\iota T)=\{2,3\}$, $\operatorname{Dsi}^{c}(\tilde{\iota}T)=\{2,3\}$, $\operatorname{Dsp}^{c}(\hat{\iota}M)=\{2,3,3,5\}$, and $\operatorname{Dsp}^{c}(\hat{\iota}C)=\{2,3\}$, matching with the sets arising from $T$, $M$, and $C$ from Example \ref{ctJex}, and that the entry sums 13 and 5 of $M$ and $C$ (or $\hat{\iota}M$ and $\hat{\iota}C$) respectively are the sum of the entry of their $\operatorname{Dsp}^{c}$-multi-sets.

\medskip

We now turn to polynomials, in finitely and infinitely many variables.
\begin{defn}
Let $n\geq1$ and $d\geq0$ be integers.
\begin{enumerate}[$(i)$]
\item We write $\mathbb{Q}[\mathbf{x}_{n}]$ for the $\mathbb{Q}[x_{1},\ldots,x_{n}]$, as well as $\mathbb{Z}[\mathbf{x}_{n}]$ for $\mathbb{Z}[x_{1},\ldots,x_{n}]$. The polynomial ring over $\mathbb{Q}$ in infinitely many variables $\{x_{i}\}_{i=1}^{\infty}$ will be denoted by $\mathbb{Q}[\mathbf{x}_{\infty}]$, with the counterpart $\mathbb{Z}[\mathbf{x}_{\infty}]$ over $\mathbb{Z}$.
\item The parts of $\mathbb{Q}[\mathbf{x}_{n}]$, $\mathbb{Z}[\mathbf{x}_{n}]$, $\mathbb{Q}[\mathbf{x}_{\infty}]$, and $\mathbb{Z}[\mathbf{x}_{\infty}]$ that consist of polynomials that are homogeneous of degree $d$ will be denoted by $\mathbb{Q}[\mathbf{x}_{n}]_{d}$, $\mathbb{Z}[\mathbf{x}_{n}]_{d}$, $\mathbb{Q}[\mathbf{x}_{\infty}]_{d}$, and $\mathbb{Z}[\mathbf{x}_{\infty}]_{d}$ respectively.
\item Given a monomial in $\mathbb{Q}[\mathbf{x}_{\infty}]$, we define the multi-set of positive exponents appearing in it to be its \emph{content}, viewed, e.g., as a partition of its degree.
\item For $\mu \vdash d$, write $\mathbb{Q}[\mathbf{x}_{n}]_{\mu}$, $\mathbb{Z}[\mathbf{x}_{n}]_{\mu}$, $\mathbb{Q}[\mathbf{x}_{\infty}]_{\mu}$, and $\mathbb{Z}[\mathbf{x}_{\infty}]_{\mu}$ for the subspace or subgroup of $\mathbb{Q}[\mathbf{x}_{n}]$, $\mathbb{Z}[\mathbf{x}_{n}]$, $\mathbb{Q}[\mathbf{x}_{\infty}]$, and $\mathbb{Z}[\mathbf{x}_{\infty}]$ respectively that are spanned, over $\mathbb{Q}$, $\mathbb{Z}$, $\mathbb{Q}$, and $\mathbb{Z}$ respectively by the monomials of content $\mu$.
\end{enumerate} \label{Qxninfd}
\end{defn}
It is clear that some groups in Definition \ref{Qxninfd} are obtained as intersections of others, like $\mathbb{Q}[\mathbf{x}_{n}]_{d}\cap\mathbb{Z}[\mathbf{x}_{n}]=\mathbb{Z}[\mathbf{x}_{n}]_{d}$, $\mathbb{Q}[\mathbf{x}_{\infty}]_{\mu}\cap\mathbb{Z}[\mathbf{x}_{\infty}]=\mathbb{Z}[\mathbf{x}_{\infty}]_{\mu}$, and others. We view $\mathbb{Q}[\mathbf{x}_{n}]$ and its subspaces as endowed with the natural action of the symmetric group $S_{n}$.

\medskip

Given any $\lambda \vdash n$ and tableau $T\in\operatorname{SYT}(\lambda)$ (or more generally, a tableau of shape $\lambda$ and content $\mathbb{N}_{n}$), we have the subgroup $R(T)$ of $S_{n}$ preserving the rows of $T$, with $C(T)$ being the subgroup acting only on the columns of $T$, while $\tilde{C}$ is the larger subgroup which also allows for interchanging of complete columns of the same length, as in, e.g., Lemma 2.1 of \cite{[Z2]}. That lemma also describes a character $\widetilde{\operatorname{sgn}}:\tilde{C}(T)\to\{\pm1\}$, which restricts to the usual sign on $C(T)$ but is trivial on $R(T)\cap\tilde{C}(T)$ (and $\tilde{C}(T)$ is the semi-direct product in which the latter subgroup acts on $C(T)$ by conjugation).

We recall from \cite{[Z2]}, \cite{[Z3]}, and others the following definition.
\begin{defn}
Take $\lambda$ and $T$ as above, and fix some $M\in\operatorname{SSYT}(\lambda)$.
\begin{enumerate}[$(i)$]
\item For every $i\in\mathbb{N}_{n}$ let $h_{i}$ be the entry of $M$ lying in the box $v_{T}(i)$, and set $p_{M,T}:=\prod_{i=1}^{n}x_{i}^{h_{i}}\in\mathbb{Q}[\mathbf{x}_{n}]$. In fact, one can define similarly the monomial $p_{H,T}$ for every tableau $H$ of shape $\lambda$ whose entries are non-negative integers.
\item The action of $R(T)$ on tableaux of shape $\lambda$ coincides with the action on the polynomials from part $(i)$. We denote by $\operatorname{st}_{T}M$ the stabilizer of $M$, or equivalently of $p_{M,T}$, in this action of $R(T)$.
\item We define the \emph{generalized higher Specht polynomial} $F_{M,T}$ associated with $M$ and $T$ to be the image of the orbit sum $\sum_{\tau \in R(T)/\operatorname{st}_{T}M}p_{M,T}$ under the operator $\sum_{\sigma \in C(T)}\operatorname{sgn}(\sigma)\sigma\in\mathbb{Q}[S_{n}]$.
\item Assume that $I$ is a (finite) multi-set inside $\mathbb{N}_{n}\cup\{0\}$, with the corresponding set $\hat{I}$ from Definition \ref{multisets}. For $i\in\hat{I}$ we set $r_{i}:=|\{j\in\mathbb{N}_{n-1}\setminus\hat{I}\;|\;j<i\}|$, while if $i$ is in the multi-set complement $I\setminus\hat{I}$ from that definition, we define $r_{i}:=n-\hat{k}+|\{j\in\hat{I}\cup\{n\}\;|\;j>i\}|$.
\item If $I$ is such a multi-set and $C\in\operatorname{CCT}(\lambda)$ is such that $\operatorname{Dsp}^{c}(C)$ from Definition \ref{sets} is contained in $I$, then set $\vec{h}(C,I)$ to be the characteristic vector of the complement $\operatorname{Asp}^{c}_{I}(C)$ from Definition \ref{multisets} with the multiplicity of 0 omitted. Similarly, we write $h_{r}:=\big|\{i\in\operatorname{Asp}^{c}_{I}(C)\;|\;r_{i}=r\}\big|$ for every $1 \leq r \leq n$, and we denote the vector $\{h_{r}\}_{r=1}^{n}$ by $\vec{h}_{C}^{I}$.
\item Take $I$ as above, and assume that $M$ is such a cocharge tableau $C$. Then we write $F_{C,T}^{I}$ for the product of $F_{C,T}$ with the symmetric polynomial $\prod_{i\in\operatorname{Asp}^{c}_{I}(C)}e_{r_{i}}=\prod_{i=1}^{n}e_{r_{i}}^{h_{i}}$, the latter expression defined using $\vec{h}_{C}^{I}$.
\item For such $I$ and $M=C$ we denote the product of $F_{C,T}$ with $\prod_{i\in\operatorname{Asp}^{c}_{I}(C)}e_{i}$, in which $e_{i}$ appears to the power showing up as the $i$th entry of $\vec{h}(C,I)$, by $F_{C,T}^{I,\mathrm{hom}}$.
\item Let $V_{M}$ be the subspace of $\mathbb{Q}[\mathbf{x}_{n}]$ that is generated by $F_{M,T}$ for all $T$ of shape $\lambda$ and content $\mathbb{N}_{n}$. In case $M=C\in\operatorname{CCT}(\lambda)$ and we are given any vector $\vec{h}=\{h_{r}\}_{r=1}^{n}$, we define $V_{C}^{\vec{h}}$ to be the image of $V_{C}$ under multiplication by the symmetric function $\prod_{i=1}^{n}e_{r_{i}}^{h_{i}}$ inside $\mathbb{Q}[\mathbf{x}_{n}]$.
\end{enumerate} \label{Spechtdef}
\end{defn}
In fact, in Definition 2.3 of \cite{[Z2]} and Definition 2.1 of \cite{[Z3]} we used the \emph{Young symmetrizer} $\varepsilon_{T}:=\sum_{\sigma \in C(T)}\sum_{\tau \in R(T)}\operatorname{sgn}(\sigma)\sigma\tau\in\mathbb{Z}[S_{n}]$, and defined $F_{M,T}$ to be $\varepsilon_{T}p_{M,T}$ divided by the size of $\operatorname{st}_{T}M$. This is clearly equivalent to our Definition \ref{Spechtdef} (see also Lemma 2.5 of \cite{[Z2]} and Lemma 2.4 of \cite{[Z3]}), but will generalize more naturally to the infinite setting below.

\begin{ex}
Take $T$, $M$, and $C$ as in Example \ref{ctJex}. Then $p_{C,T}=x_{3}x_{4}^{2}x_{5}^{2}$ and $p_{M,T}=x_{1}x_{2}x_{3}^{3}x_{4}^{4}x_{5}^{4}$, and after applying the normalized $\varepsilon_{T}$-operator we get \[F_{C,T}=(x_{3}x_{4}^{2}-x_{4}x_{3}^{2}-x_{1}x_{4}^{2}+x_{4}x_{1}^{2}+x_{1}x_{3}^{2}-x_{3}x_{1}^{2})(x_{5}^{2}-x_{2}^{2})\] and \[F_{M,T}=(x_{1}x_{3}^{3}x_{4}^{4}-x_{1}x_{4}^{3}x_{3}^{4}-x_{1}^{3}x_{3}x_{4}^{4}+x_{4}^{3}x_{3}x_{1}^{4}+x_{1}^{3}x_{3}^{2}x_{4}-x_{3}^{3}x_{1}^{4}x_{4})(x_{5}^{4}x_{2}-x_{2}^{4}x_{5}).\] The fact that the latter is divisible by the symmetric product $x_{1}x_{2}x_{3}x_{4}x_{5}$ corresponds to $M$ not containing 0.
\label{Spechtex}
\end{ex}

\medskip

We recall that the irreducible representation of $S_{n}$ are parameterized by the partitions $\lambda \vdash n$, by associating with every such $\lambda$ the \emph{Specht module} $\mathcal{S}^{\lambda}$. The basic properties of the objects from Definition \ref{Spechtdef} are as follows.
\begin{thm}
Fix a partition $\lambda \vdash n$, as well as an element $M\in\operatorname{SSYT}(\lambda)$, of content $\mu$, and set $d:=\Sigma(M)$.
\begin{enumerate}[$(i)$]
\item For each $T$ of shape $\lambda$ and content $\mathbb{N}_{n}$, the polynomial $F_{M,T}$ from Definition \ref{Spechtdef} lies in $\mathbb{Z}[\mathbf{x}_{n}]_{\mu}\subseteq\mathbb{Z}[\mathbf{x}_{n}]_{d}$, contains the monomial $p_{M,T}$ with the coefficient 1, and is acted upon by the group $\tilde{C}(T)$ via the character $\widetilde{\operatorname{sgn}}$.
\item The representation $V_{M}$ admits $\{F_{M,T}\;|\;T\in\operatorname{SYT}(\lambda)\}$ as a basis over $\mathbb{Q}$, and it is an irreducible representation of $S_{n}$ that is isomorphic to $\mathcal{S}^{\lambda}$ on a subspace of $\mathbb{Q}[\mathbf{x}_{n}]_{\mu}\subseteq\mathbb{Q}[\mathbf{x}_{n}]_{d}$.
\item When $M$ is a cocharge tableau $C\in\operatorname{CCT}(\lambda)$, the space $V_{C}^{\vec{h}}$ is also a copy of the Specht module $\mathcal{S}^{\lambda}$ on a homogeneous subspace of $\mathbb{Q}[\mathbf{x}_{n}]$, with a basis indexed by $\operatorname{SYT}(\lambda)$.
\end{enumerate} \label{repsSpecht}
\end{thm}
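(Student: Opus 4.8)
The plan is to establish part $(i)$ directly from the definition and then to deduce parts $(ii)$ and $(iii)$ from it by standard representation-theoretic arguments. For $(i)$, I would start from the form $F_{M,T}=\bigl(\sum_{\sigma\in C(T)}\operatorname{sgn}(\sigma)\sigma\bigr)\,q_{M,T}$, where $q_{M,T}:=\sum_{\tau\in R(T)/\operatorname{st}_T M}\tau\,p_{M,T}$ is the full $R(T)$-orbit sum of the monomial $p_{M,T}$ (a sum of pairwise distinct monomials, each with coefficient $1$, since $\operatorname{st}_T M$ is the stabilizer of that monomial); equivalently $F_{M,T}=\varepsilon_T p_{M,T}/|\operatorname{st}_T M|$. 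Since every $\pi\in S_n$ only permutes the variables, all monomials occurring in $F_{M,T}$ have the same multiset of exponents as $p_{M,T}$, i.e.\ content $\mu$ and degree $d=\Sigma(M)$; as the displayed expression has manifestly integral coefficients, $F_{M,T}\in\mathbb{Z}[\mathbf{x}_n]_{\mu}\subseteq\mathbb{Z}[\mathbf{x}_n]_d$. The substantive point is that $p_{M,T}$ occurs in $F_{M,T}$ with coefficient exactly $1$, which I would extract from the combinatorial lemma: \emph{if $\sigma\in C(T)$, $\tau\in R(T)$ and $\sigma\tau p_{M,T}=p_{M,T}$, then $\sigma=e$} (and hence $\tau\in\operatorname{st}_T M$). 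Writing $w(i):=M(v_T(i))$, so $p_{M,T}=\prod_i x_i^{w(i)}$, the hypothesis says $\sigma\tau$ preserves the fibres of $w$, hence $w\circ\sigma=w\circ\tau^{-1}$, and since $\tau$ fixes each row of $T$ setwise this gives $\sum_{i\in R}w(\sigma i)=\sum_{i\in R}w(i)$ for every row $R$. One then argues by induction on the rows of $T$ from the top: if $\sigma$ fixes all entries in the first $r-1$ rows, then for $i$ in row $r$ the entry $\sigma i$ lies in $i$'s column and — by injectivity of $\sigma$ — not in rows $1,\dots,r-1$, hence weakly below $v_T(i)$, so column-strictness of $M$ forces $w(\sigma i)\ge w(i)$ with equality iff $\sigma i=i$; the sum identity for row $r$ then forces $\sigma$ to fix row $r$ pointwise. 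Therefore the pairs contributing $p_{M,T}$ to $\varepsilon_T p_{M,T}$ are exactly $\{e\}\times\operatorname{st}_T M$, giving coefficient $|\operatorname{st}_T M|$ there and $1$ in $F_{M,T}$; in particular $F_{M,T}\neq0$. Finally, reindexing the column sum gives $\rho F_{M,T}=\operatorname{sgn}(\rho)F_{M,T}$ for $\rho\in C(T)$, whereas any element of $R(T)\cap\tilde C(T)$ (a product of swaps of equal-length columns) lies in $R(T)$ and so fixes the $R(T)$-invariant sum $q_{M,T}$, and it conjugates $C(T)$ into itself with trivial sign, so it fixes $F_{M,T}$; in view of the semidirect-product description of $\tilde C(T)$ and $\widetilde{\operatorname{sgn}}$ recalled from Lemma 2.1 of \cite{[Z2]}, this is exactly the claimed $\widetilde{\operatorname{sgn}}$-equivariance.

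For $(ii)$, observe that conjugation by $\pi\in S_n$ carries $R(T),C(T),\operatorname{st}_T M$ to $R(\pi T),C(\pi T),\operatorname{st}_{\pi T}M$ and $p_{M,T}$ to $p_{M,\pi T}$, whence $\pi F_{M,T}=F_{M,\pi T}$; as the tableaux of shape $\lambda$ and content $\mathbb{N}_n$ form a single $S_n$-orbit, $V_M=\mathbb{Q}[S_n]\,F_{M,T_0}$ is cyclic for any fixed $T_0\in\operatorname{SYT}(\lambda)$. I would then consider the left $\mathbb{Q}[S_n]$-module map $\psi\colon\mathbb{Q}[S_n]\to\mathbb{Q}[\mathbf{x}_n]$, $a\mapsto a\,p_{M,T_0}$, which by $(i)$ sends the Young symmetrizer $\varepsilon_{T_0}$ to $\varepsilon_{T_0}p_{M,T_0}=|\operatorname{st}_{T_0}M|\,F_{M,T_0}\neq0$. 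The left ideal $\mathbb{Q}[S_n]\varepsilon_{T_0}$ is isomorphic to $\mathcal{S}^{\lambda}$ and irreducible by the classical theory of Young symmetrizers ($\varepsilon_{T_0}^2$ is a nonzero scalar multiple of $\varepsilon_{T_0}$; cf.\ \cite{[Sa]}), so the restriction of $\psi$ to it, being nonzero, is injective, with image $\mathbb{Q}[S_n]\,\varepsilon_{T_0}p_{M,T_0}=V_M$; hence $V_M\cong\mathcal{S}^{\lambda}$, lying in $\mathbb{Q}[\mathbf{x}_n]_{\mu}$ by $(i)$. Transporting the polytabloid basis $\{e_T\}_{T\in\operatorname{SYT}(\lambda)}$ of $\mathcal{S}^{\lambda}$ through this isomorphism — under which $e_{\pi T_0}=\pi e_{T_0}$ corresponds to $\pi\varepsilon_{T_0}$, mapped by $\psi$ to $|\operatorname{st}_{T_0}M|\,F_{M,\pi T_0}$ — then shows that $\{F_{M,T}\}_{T\in\operatorname{SYT}(\lambda)}$ is a basis of $V_M$.

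For $(iii)$, when $M=C\in\operatorname{CCT}(\lambda)$ and $\vec h$ is given, $V_C^{\vec h}$ is the image of $V_C$ under multiplication by the symmetric polynomial $g$ of Definition \ref{Spechtdef}$(ix)$ — a nonzero homogeneous element of $\mathbb{Q}[\mathbf{x}_n]^{S_n}$, being a product of elementary symmetric polynomials $e_r$ with $1\le r\le n$. Multiplication by $g$ is $S_n$-equivariant (as $g$ is invariant) and injective (as $\mathbb{Q}[\mathbf{x}_n]$ is a domain), so it restricts to an $S_n$-isomorphism $V_C\xrightarrow{\,\sim\,}V_C^{\vec h}$; combined with $(ii)$ this yields $V_C^{\vec h}\cong\mathcal{S}^{\lambda}$ with basis $\{g\,F_{C,T}\}_{T\in\operatorname{SYT}(\lambda)}$, homogeneous because $V_C\subseteq\mathbb{Q}[\mathbf{x}_n]_{\Sigma(C)}$ by $(i)$ while $g$ is homogeneous. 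The only genuinely nontrivial ingredient in all of this is the combinatorial lemma used in $(i)$ to pin down the coefficient of $p_{M,T}$ — and thereby the non-vanishing of $F_{M,T}$; everything else follows formally from the classical theory of Young symmetrizers and from the fact that multiplication by a nonzero symmetric polynomial is an injective map of $S_n$-representations.
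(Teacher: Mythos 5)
Your proof is correct, and it is worth noting that the paper itself does not prove Theorem \ref{repsSpecht} at all: it delegates part $(i)$ to Proposition 2.6 of \cite{[Z2]} and Proposition 2.5 of \cite{[Z3]}, and parts $(ii)$--$(iii)$ to \cite{[Pe]}, \cite{[M]}, and the prequels. What you supply is a self-contained version of that standard Young-symmetrizer argument, and every step checks out. The one genuinely non-formal ingredient is exactly the one you isolate: the lemma that $\sigma\tau p_{M,T}=p_{M,T}$ with $\sigma\in C(T)$, $\tau\in R(T)$ forces $\sigma=e$. Your derivation of it is sound --- from $w\circ\sigma=w\circ\tau^{-1}$ you correctly pass to the row-sum identities $\sum_{i\in R}w(\sigma i)=\sum_{i\in R}w(i)$, and the top-down induction using injectivity of $\sigma$ and the strict increase of $M$ down columns (which is what the semi-standard condition gives, with the paper's convention that entries may be $0$) pins the coefficient of $p_{M,T}$ at $|\operatorname{st}_{T}M|$ in $\varepsilon_{T}p_{M,T}$ and hence at $1$ in $F_{M,T}$. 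The deduction of $(ii)$ via the nonzero module map $a\mapsto a\,p_{M,T_{0}}$ restricted to the irreducible left ideal $\mathbb{Q}[S_{n}]\varepsilon_{T_{0}}$, and of $(iii)$ via equivariant injective multiplication by a homogeneous symmetric polynomial, is exactly how these statements are usually obtained. Two cosmetic points: the relevant item of Definition \ref{Spechtdef} is $(viii)$, not $(ix)$; and for the $\widetilde{\operatorname{sgn}}$-equivariance you should make explicit (as you implicitly do) that an element of $R(T)\cap\tilde{C}(T)$ normalizes $C(T)$ with sign-preserving conjugation, so that the column antisymmetrizer commutes with it up to relabeling --- this is the content of Lemma 2.1 of \cite{[Z2]} that the paper leans on.
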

Part $(i)$ of Theorem \ref{repsSpecht} was proved in Proposition 2.6 of \cite{[Z2]} and Proposition 2.5 of \cite{[Z3]}. The other parts are older, already showing up, in principle, in \cite{[Pe]} and \cite{[M]} (see also Theorem 3.2 of \cite{[Z2]} and Theorem 2.10 of \cite{[Z3]}).

We give here the form of the bases of some representations $V_{M}$ with small $\Sigma(M)$, where $n$ is assumed to be large.
\begin{ex}
The representations $V_{00\cdots00}$, $V_{00\cdots01}$, $V_{00\cdots11}$, and $V_{00\cdots12}$ are trivial, spanned by the symmetric elements 1, $\sum_{i=1}^{n}x_{i}$, $\sum_{i=1}^{n}\sum_{j>i}x_{i}x_{j}$, and $\sum_{i=1}^{n}\sum_{j \neq i}x_{i}^{2}x_{j}$ of $\mathbb{Q}[\mathbf{x}_{n}]$ respectively. The Specht polynomial basis for $V_{\substack{00\cdots00 \\ 1\hphantom{1\cdots11}}}$ is $\{x_{j}-x_{1}\}_{j=2}^{n}$, while that of $V_{\substack{00\cdots00 \\ 11\hphantom{\cdots11}}}$ consists of $\{(x_{2}-x_{1})(x_{j}-x_{3})\}_{j=4}^{n}$ and $\{(x_{j}-x_{1})(x_{k}-x_{2})\}_{3 \leq j<k \leq n}$. For $V_{\substack{00\cdots01 \\ 1\hphantom{1\cdots12}}}$ we get the higher Specht polynomial basis $\big\{(x_{j}-x_{1})\sum_{k\neq1,j}x_{k}\big\}_{j=2}^{n}$, while the generalized higher Specht polynomial bases of $V_{\substack{00\cdots01 \\ 2\hphantom{2\cdots22}}}$ is given by $\big\{(x_{j}^{2}-x_{1}^{2})\sum_{k\neq1,j}x_{k}+(x_{j}^{2}x_{1}-x_{j}x_{1}^{2})\big\}_{j=2}^{n}$, and that of $V_{\substack{00\cdots02 \\ 1\hphantom{1\cdots13}}}$ is $\big\{(x_{j}-x_{1})\sum_{k\neq1,j}x_{k}^{2}-(x_{j}^{2}x_{1}-x_{j}x_{1}^{2})\big\}_{j=2}^{n}$. Finally, $V_{\substack{00\cdots00 \\ 1\hphantom{1\cdots11} \\ 2\hphantom{2\cdots22}}}$ admits the Specht basis $\{x_{l}^{2}(x_{i}-x_{1})-x_{l}(x_{i}^{2}-x_{1}^{2})+(x_{i}^{2}x_{1}-x_{i}x_{1}^{2})\}_{2 \leq i<l \leq n}$, and for $V_{\substack{00\cdots00 \\ 12\hphantom{\cdots22}}}$ we have the union of $\{(x_{2}-x_{1})(x_{j}^{2}-x_{3}^{2})+(x_{2}^{2}-x_{1}^{2})(x_{j}-x_{3})\}_{j=4}^{n}$ and $\{(x_{j}-x_{1})(x_{k}^{2}-x_{2}^{2})+(x_{j}^{2}-x_{1}^{2})(x_{k}-x_{2})\}_{3 \leq j<k \leq n}$. \label{smallex}
\end{ex}

\begin{rmk}
When doing calculations, taking combinations rather than the generalized higher Specht bases, or mixing the bases of two different representations, can sometimes yield simpler expressions. In the penultimate representation from Example \ref{smallex}, taking the difference between the basis element associated, for $j\geq4$, with $i=2$ and $l=j$ and the one with $i=2$ and $j=3$ yields $(x_{2}-x_{1})(x_{j}^{2}-x_{3}^{2})-(x_{2}^{2}-x_{1}^{2})(x_{j}-x_{3})$, while if $3 \leq j<k \leq n$ then summing the one corresponding to $i=j$ and $l=k$ with the element arising from $i=2$ and $l=j$ produces $(x_{j}-x_{1})(x_{k}^{2}-x_{2}^{2})-(x_{j}^{2}-x_{1}^{2})(x_{k}-x_{2})$. Combining these with the respective polynomials of the last representation in that example shows  that the direct sum of these representations is spanned by $\{(x_{2}-x_{1})(x_{j}^{2}-x_{3}^{2})\}_{j=4}^{n}$, $\{(x_{2}^{2}-x_{1}^{2})(x_{j}-x_{3})\}_{j=4}^{n}$, $\{(x_{j}-x_{1})(x_{k}^{2}-x_{2}^{2})\}_{3 \leq j<k \leq n}$, $\{(x_{j}^{2}-x_{1}^{2})(x_{k}-x_{2})\}_{3 \leq j<k \leq n}$, and the remaining element $x_{3}^{2}(x_{2}-x_{1})-x_{3}(x_{2}^{2}-x_{1}^{2})+(x_{2}^{2}x_{1}-x_{2}x_{1}^{2})$. Similarly, summing the $j$th element of the two representations of degree 3 and $f=1$ there yields $(x_{j}^{2}-x_{1}^{2})\sum_{k\neq1,j}x_{k}+(x_{j}-x_{1})\sum_{k\neq1,j}x_{k}^{2}$. \label{simpbasis}
\end{rmk}

\medskip

The group $S_{n}$ is embedded into $S_{n+1}$ as the stabilizer of $n+1$ in the latter group. For every tableau $T$ of some shape $\lambda \vdash n$ and content $\mathbb{N}_{n}$, with $\iota T$ as in Definition \ref{iotadef}, since $n+1$ is the single element of its column in $\iota T$, it is clear that $C(\iota T)$ is the image of $C(T)$ under that embedding. Using this observation and the fact that if $M\in\operatorname{SSYT}(\lambda)$ then $p_{M,T}$ from Definition \ref{Spechtdef} is in the $R(\iota T)$-orbit of $p_{\hat{\iota}M,\iota T}$, Proposition 3.23 of \cite{[Z3]} states the following result.
\begin{prop}
Given such $\lambda$, $T$, and $M$, the generalized higher Specht polynomial $F_{M,T}$ from Definition \ref{Spechtdef} is obtained by substituting $x_{n+1}=0$ in the generalized higher Specht polynomial $F_{\hat{\iota}M,\iota T}$. Moreover, if the first row of $M$ contains at least $\lambda_{2}$ zeros, then $F_{\hat{\iota}M,\iota T}$ is determined as the unique polynomial that satisfies this property and upon which the action of $\tilde{C}(\iota T)$ is by $\widetilde{\operatorname{sgn}}$. \label{forstab}
\end{prop}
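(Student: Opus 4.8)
The plan is to establish the two assertions separately, beginning with the substitution statement, which is the more concrete half. First I would work directly from the Young-symmetrizer description of $F_{M,T}$ recalled after Definition \ref{Spechtdef}, namely $F_{M,T}=\varepsilon_{T}p_{M,T}/|\operatorname{st}_{T}M|$ with $\varepsilon_{T}=\sum_{\sigma\in C(T)}\sum_{\tau\in R(T)}\operatorname{sgn}(\sigma)\sigma\tau$. The key structural input, already noted in the excerpt, is that $C(\iota T)=C(T)$ under the embedding $S_{n}\hookrightarrow S_{n+1}$ (since $n+1$ sits alone in its column in $\iota T$), while $R(\iota T)$ is generated by $R(T)$ together with the transpositions moving $n+1$ within the first row. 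Combined with the observation that $p_{M,T}$ lies in the $R(\iota T)$-orbit of $p_{\hat\iota M,\iota T}$ — which follows from part $(vi)$ of Lemma \ref{rels} (passing from $M$ to $\hat\iota M$ pushes the first row right and inserts a $0$, and the new $0$ sits under the entry $n+1$ of $\iota T$, so $p_{\hat\iota M,\iota T}$ and $p_{M,T}$ differ only by the new variable $x_{n+1}$ raised to $0$) — I would compare the two orbit sums. Setting $x_{n+1}=0$ kills every monomial in which $x_{n+1}$ appears with a positive exponent, and I would check that the surviving terms in $\varepsilon_{\iota T}p_{\hat\iota M,\iota T}$ are exactly those indexed by $\tau\in R(\iota T)$ fixing the box of $n+1$, i.e.\ by $R(T)$, multiplied by the same sign-weighted sum over $C(T)=C(\iota T)$; after dividing by the stabilizer sizes (which match, since $\operatorname{st}_{\iota T}(\hat\iota M)$ contains $\operatorname{st}_{T}M$ and the extra coset representatives are precisely those producing positive powers of $x_{n+1}$), this specializes to $F_{M,T}$. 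This reproves Proposition 3.23 of \cite{[Z3]}, so I would simply cite that computation and present it compactly.

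For the uniqueness statement I would argue as follows. Suppose the first row of $M$ contains at least $\lambda_{2}$ zeros; equivalently the first row of $\hat\iota M$ contains at least $\lambda_{2}+1\geq 2$ zeros (using part $(vii)$ of Lemma \ref{rels} and the added $0$). Let $G$ be any polynomial in $\mathbb{Q}[\mathbf{x}_{n+1}]$ that restricts to $F_{M,T}$ at $x_{n+1}=0$ and on which $\tilde C(\iota T)$ acts by $\widetilde{\operatorname{sgn}}$; I want $G=F_{\hat\iota M,\iota T}$. Since $F_{\hat\iota M,\iota T}$ has the same two properties by part $(i)$ of Theorem \ref{repsSpecht} and the first half just proved, it suffices to show that the difference $H:=G-F_{\hat\iota M,\iota T}$ vanishes; $H$ also transforms under $\tilde C(\iota T)$ by $\widetilde{\operatorname{sgn}}$ and is divisible by $x_{n+1}$. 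The point is that $x_{n+1}$ is the sole entry of a column of $\iota T$ of length $1$, hence lies in its own length-$1$ column, so $\tilde C(\iota T)$ contains the subgroup permuting $x_{n+1}$ with the other length-$1$-column variables of the first... more precisely, antisymmetry under $C(\iota T)$ in the column of $n+1$ together with the degree bound forces the $x_{n+1}$-divisible, $\widetilde{\operatorname{sgn}}$-isotypic part of $\mathbb{Q}[\mathbf x_{n+1}]_{d}$ to be zero once the first row of the target tableau has $\geq 2$ zeros — the extra zero is exactly what guarantees there is no room for a monomial divisible by $x_{n+1}$ that is also $\widetilde{\operatorname{sgn}}$-isotypic of the right degree. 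I would make this precise by noting that a $\widetilde{\operatorname{sgn}}$-isotypic vector is a $\mathbb{Q}$-combination of the $F_{N,\iota T}$ over $N\in\operatorname{SSYT}(\lambda_{+})$ of content-sum $d$, by Theorem \ref{repsSpecht}$(i)$ (the $F_{N,\iota T}$ spanning the $\widetilde{\operatorname{sgn}}$-isotypic component), and that the condition "first row has $\geq\lambda_{2}+1$ zeros" together with $x_{n+1}\mid H$ leaves $\hat\iota M$ as the only admissible such $N$, whose Specht polynomial is precisely $F_{\hat\iota M,\iota T}$ and is not divisible by $x_{n+1}$ unless the coefficient is $0$.

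The main obstacle I anticipate is the uniqueness half: pinning down exactly why "at least $\lambda_2$ zeros in the first row of $M$" is the right hypothesis, i.e.\ controlling which $F_{N,\iota T}$ can appear in a $\widetilde{\operatorname{sgn}}$-isotypic, $x_{n+1}$-divisible polynomial of degree $d$. The clean way is to use part $(i)$ of Theorem \ref{repsSpecht}, which says each $F_{N,\iota T}$ contains its leading monomial $p_{N,\iota T}$ with coefficient $1$, so distinct semi-standard $N$ contribute linearly independent leading terms; then divisibility by $x_{n+1}$ of a combination $\sum c_N F_{N,\iota T}$ forces $c_N=0$ unless $p_{N,\iota T}$ — equivalently $N$ placed on $\iota T$ — has a positive exponent on $x_{n+1}$, i.e.\ $N$ has a positive entry in the box of $n+1$. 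Ruling out all such $N$ in the relevant content class, and showing $\hat\iota M$ is the unique survivor, is the combinatorial heart; I would reduce it to the "$\lambda_2$ zeros" counting via part $(iii)$ of Lemma \ref{rels} and the fact that $\iota T$'s first row has length $\lambda_1+1$. Everything else is bookkeeping with $R$- and $C$-orbits that Proposition 3.23 of \cite{[Z3]} already handles.
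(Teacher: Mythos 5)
Your treatment of the first assertion is fine and matches the paper, which simply cites Proposition 3.23 of \cite{[Z3]}; your orbit-sum comparison is the content of that citation. (One small slip: the box of $n+1$ in $\iota T$ is the \emph{last} box of the first row, whereas the inserted $0$ of $\hat{\iota}M$ is the \emph{first} entry of that row, so $p_{\hat{\iota}M,\iota T}$ is generally not $p_{M,T}x_{n+1}^{0}$; it only lies in the same $R(\iota T)$-orbit, which is all you actually use.)

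The uniqueness half has a genuine gap. It rests on the claim that the $F_{N,\iota T}$, for $N\in\operatorname{SSYT}(\lambda_{+})$ with $\Sigma(N)=d$, span the $\widetilde{\operatorname{sgn}}$-isotypic component of $\mathbb{Q}[\mathbf{x}_{n+1}]_{d}$ under $\tilde{C}(\iota T)$. Part $(i)$ of Theorem \ref{repsSpecht} only says each $F_{N,\iota T}$ \emph{lies} in that component, and the spanning statement is false: for $\lambda_{+}=(2,1)$, $n+1=3$, $d=3$, the component is the $4$-dimensional space of polynomials antisymmetric in $x_{1},x_{2}$, while $|\operatorname{SSYT}_{3}((2,1))|=3$. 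The missing vector $x_{3}(x_{2}^{2}-x_{1}^{2})$ is $\widetilde{\operatorname{sgn}}$-isotypic and vanishes at $x_{3}=0$, which also shows that uniqueness cannot hold over the whole homogeneous degree-$d$ space; the assertion must be read within the content class of $M$ (equivalently, on the $\tilde{C}(\iota T)$-orbits of the monomials of $F_{M,T}$), which is how it is used later. The paper's argument avoids your expansion entirely and works monomial by monomial: the first row of $\hat{\iota}M$ has at least $\lambda_{2}+1$ zeros while $\iota T$ has $\lambda_{1}+1-\lambda_{2}$ length-one columns in the first row, so every monomial of $F_{\hat{\iota}M,\iota T}$ omits the variable of some length-one column; the element of $R(\iota T)\cap\tilde{C}(\iota T)$ swapping that column with the column of $n+1$ acts trivially under $\widetilde{\operatorname{sgn}}$ and carries any monomial divisible by $x_{n+1}$ to one surviving the substitution, hence appearing in $F_{M,T}$, pinning down every coefficient (see the proof of Theorem 2.17 of \cite{[Z2]}). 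You would need either to adopt this column-swap argument or to restrict to the correct content class and redo the linear algebra there; as written, the reduction to the $F_{N,\iota T}$ does not go through, and the combinatorial step you defer is precisely the part that fails.
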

The second assertion in Proposition \ref{forstab}, which does not appear in Proposition 3.23 of \cite{[Z3]}, is proved by noting that the $\tilde{C}(\iota T)$ relates any monomial in $F_{\hat{\iota}M,\iota T}$ to one that is not divisible by $x_{n+1}$ and thus shows up in $F_{M,T}$ (see the proof of Theorem 2.17 of \cite{[Z2]}).

The case of $M=C\in\operatorname{CCT}(\lambda)$ in Proposition \ref{forstab}, showing up in Proposition 2.15 of \cite{[Z2]}, was the basis of the construction of certain homogeneous power series called stable higher Specht polynomials in Theorem 2.17 of that reference. This will be done more generally, and in the appropriate context (in fact, the construction is valid in general, but more notation is required---see Remark \ref{notation} below), in Proposition \ref{limSpecht} below.

\begin{rmk}
In fact, to any shape $\lambda$ one associates, via Definition 2.8 of \cite{[Z2]}, its minimal cocharge tableau $C^{0}$, which is $R(T)$-invariant for any $T$ of shape $\lambda$ and content $\mathbb{N}_{n}$, and where $F_{C^{0},T}$ is the classical Specht polynomial associated with $T$. It divided any other generalized higher Specht polynomial $F_{M,T}$ for the same $T$, with the quotient $Q_{M,T}:=F_{M,T}/F_{C^{0},T}$ being $\tilde{C}(T)$-invariant---see Corollary 2.9 of \cite{[Z2]} and Corollary 2.6 of \cite{[Z3]}. \label{quotSpecht}
\end{rmk}

Proposition \ref{forstab} admits an analogue for the quotients from Remark \ref{quotSpecht}---see part $(ii)$ of Corollary \ref{infquot} below.

\begin{ex}
To the polynomials from Example \ref{Spechtex} we associate the quotients $Q_{C,T}=x_{5}+x_{2}$ and \[Q_{M,T}=x_{1}x_{2}x_{3}x_{4}x_{5}(x_{3}x_{4}+x_{1}x_{4}+x_{1}x_{3})(x_{5}^{2}+x_{5}x_{2}+x_{2}^{2}).\] Going over to the tableaux from Example \ref{iotaex}, we get $F_{\hat{\iota}C,\iota T}=F_{C,T}$ (in correspondence with Lemma \ref{sameiota} below) and thus also $Q_{\hat{\iota}C,\iota T}=Q_{C,T}$ (see Corollary \ref{infquot} below). The polynomial $F_{\hat{\iota}M,\iota T}$ is the sum of $F_{M,T}$ (in which the mentioned multiplier from Example \ref{Spechtex} is no longer symmetric in $\mathbb{Q}[\mathbf{x}_{6}]$) and
\[x_{6}(x_{1}x_{3}^{3}x_{4}^{4}-x_{1}x_{4}^{3}x_{3}^{4}-x_{1}^{3}x_{3}x_{4}^{4}+x_{4}^{3}x_{3}x_{1}^{4}+x_{1}^{3}x_{3}^{2}x_{4}-x_{3}^{3}x_{1}^{4}x_{4})(x_{5}^{4}-x_{2}^{4})+\]
\[+x_{6}(x_{3}^{3}x_{4}^{4}-x_{4}^{3}x_{3}^{4}-x_{1}^{3}x_{4}^{4}+x_{4}^{3}x_{1}^{4}+x_{1}^{3}x_{3}^{2}-x_{3}^{3}x_{1}^{4})(x_{5}^{4}x_{2}-x_{2}^{4}x_{5}).\] Substituting $x_{6}=0$ annihilates the latter summands, and reduces $F_{\hat{\iota}M,\iota T}$ to $F_{M,T}$, as Proposition \ref{forstab} states. The quotient $Q_{\hat{\iota}M,\iota T}$ is $Q_{M,T}$ plus \[x_{1}x_{3}x_{4}x_{6}(x_{3}x_{4}+x_{1}x_{4}+x_{1}x_{3})(x_{5}^{3}+x_{5}^{2}x_{2}+x_{5}x_{2}^{2}+x_{2}^{3})+\]
\[+x_{2}x_{5}x_{6}(x_{3}^{2}x_{4}^{2}+x_{1}^{2}x_{4}^{2}+x_{1}^{2}x_{3}^{2}+x_{1}^{2}x_{3}x_{4}+x_{1}x_{3}^{2}x_{4}+x_{1}x_{3}x_{4}^{2})(x_{5}^{2}+x_{5}x_{2}+x_{2}^{2}),\] in which substituting $x_{6}=0$ yields just $Q_{M,T}$, as Corollary \ref{infquot} below predicts. \label{quotex}
\end{ex}

\medskip

We recall from Theorem 3.20 and Proposition 3.25 of \cite{[Z2]}, as well as Theorem 2.19 of \cite{[Z3]}, the following construction.
\begin{defn}
Let $I$ be a multi-set inside $\mathbb{N}_{n}\cup\{0\}$. Then the representation $R_{n,I}$ associated with $n$ and $I$ is defined to be $\bigoplus_{\lambda \vdash n}\bigoplus_{C\in\operatorname{CCT}(\lambda),\ \operatorname{Dsp}^{c}(C) \subseteq I}V_{C}^{\vec{h}_{C}^{I}}$, and the homogeneous representation $R_{n,I}^{\mathrm{hom}}$ corresponding to these parameters is $\bigoplus_{\lambda \vdash n}\bigoplus_{C\in\operatorname{CCT}(\lambda),\ \operatorname{Dsp}^{c}(C) \subseteq I}V_{C}^{\vec{h}(C,I)}$. \label{RnIdef}
\end{defn}
The fact that the sums in Definition \ref{RnIdef} are direct is established in the aforementioned results, as well as many other properties of these representations (in particular, $R_{n,I}^{\mathrm{hom}}$ is contained in the part $\mathbb{Q}[\mathbf{x}_{n}]_{d}$ that is homogeneous of degree $d:=\sum_{i \in I}i$, whence the name and notation). However, the property that we shall need here is the following one, for which we recall that the same $I$ can be used for any $n$ that is large enough.
\begin{prop}
If $n$ is larger than twice the maximal element of $I$, then $R_{n+1,I}^{\mathrm{hom}}$ equals $\bigoplus_{\lambda \vdash n}\bigoplus_{C\in\operatorname{CCT}(\lambda),\ \operatorname{Dsp}^{c}(C) \subseteq I}V_{\hat{\iota}C}^{\vec{h}(\hat{\iota}C,I)}$, with the summation indices being as for $R_{n,I}$. If $I$ is a set (rather than a multi-set) and does not contain 0, then $R_{n+1,I}$ is similarly given by $\bigoplus_{\lambda \vdash n}\bigoplus_{C\in\operatorname{CCT}(\lambda),\ \operatorname{Dsp}^{c}(C) \subseteq I}V_{\hat{\iota}C}^{\vec{h}_{\hat{\iota}C}^{I}}$, with the summation being as for $R_{n,I}$. \label{plus1iota}
\end{prop}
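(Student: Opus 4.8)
The plan is to deduce the claimed equalities from a bijection between the sets indexing the two direct sum decompositions. By Definition \ref{RnIdef}, applied with $n+1$ in place of $n$, the representation $R_{n+1,I}^{\mathrm{hom}}$ is the direct sum of the spaces $V_{C'}^{\vec{h}(C',I)}$ over all pairs consisting of a partition $\mu\vdash n+1$ and a tableau $C'\in\operatorname{CCT}(\mu)$ with $\operatorname{Dsp}^{c}(C')\subseteq I$, and likewise $R_{n+1,I}$ is the direct sum of the spaces $V_{C'}^{\vec{h}_{C'}^{I}}$ over the same pairs. I would therefore prove that the assignment $(\lambda,C)\mapsto(\lambda_{+},\hat{\iota}C)$ is a bijection from the set of pairs $(\lambda,C)$ with $\lambda\vdash n$, $C\in\operatorname{CCT}(\lambda)$ and $\operatorname{Dsp}^{c}(C)\subseteq I$ onto that set. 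Once this is known the two identities follow term by term: both sums are direct (already part of Definition \ref{RnIdef}), and the summand of $R_{n+1,I}^{\mathrm{hom}}$, respectively $R_{n+1,I}$, indexed by $C'=\hat{\iota}C$ and $\mu=\lambda_{+}$ is by definition exactly $V_{\hat{\iota}C}^{\vec{h}(\hat{\iota}C,I)}$, respectively $V_{\hat{\iota}C}^{\vec{h}_{\hat{\iota}C}^{I}}$, which are precisely the summands in the asserted formulas.

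Well-definedness of the assignment is immediate from Lemma \ref{rels}: part $(viii)$ gives $\hat{\iota}C\in\operatorname{CCT}(\lambda_{+})$, and part $(vii)$ gives $\operatorname{Dsp}^{c}(\hat{\iota}C)=\operatorname{Dsp}^{c}(C)$, so the containment in $I$ persists. Injectivity holds since $\lambda\mapsto\lambda_{+}$ is injective and, by the identity $\hat{\iota}C=\operatorname{ct}\big(\tilde{\iota}\operatorname{ct}^{-1}(C)\big)$ of Lemma \ref{rels}$(viii)$, the map $\hat{\iota}$ on cocharge tableaux is a composite of the bijections $\operatorname{ct}^{-1}$, $\operatorname{ct}$ of Lemma \ref{compctJ} with the injection $\tilde{\iota}$. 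Surjectivity is the heart of the matter and the place where the hypothesis $n>2\max(I)$ is used. Given a target pair $(\mu,C')$, Lemma \ref{rels}$(i)$ together with $\operatorname{Dsp}^{c}(C')\subseteq I$ shows that $C'$ has at most $\max(I)$ positive entries, hence at least $n+1-\max(I)$ zeros; since $0$ is the smallest entry and columns are strictly increasing, all these zeros lie in the first row and form an initial segment of it, so $\mu_{1}\geq n+1-\max(I)$, while every box outside the first row carries a positive entry and hence $\mu_{2}\leq\max(I)$. Thus $\mu_{1}-\mu_{2}\geq n+1-2\max(I)>0$, so $\lambda:=(\mu_{1}-1,\mu_{2},\mu_{3},\dots)$ is a partition of $n$ with $\lambda_{+}=\mu$; letting $C$ be the filling of $\lambda$ obtained from $C'$ by deleting one of the leading zeros from its first row and leaving the remaining rows unchanged, the explicit description of $\hat{\iota}$ in Lemma \ref{rels}$(vi)$ yields $\hat{\iota}C=C'$, and $\operatorname{Dsp}^{c}(C)=\operatorname{Dsp}^{c}(C')\subseteq I$ by Lemma \ref{rels}$(vii)$.

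The only point that requires care, and the one I expect to be the main obstacle, is the verification that this candidate $C$ genuinely lies in $\operatorname{CCT}(\lambda)$: one must check that deleting a leading zero from the first row preserves weak increase along the rows, strict increase along the columns, and the row-wise condition of Lemma \ref{compctJ}$(iv)$ on leftmost occurrences of each value. Here the inequality $n>2\max(I)$ is exactly what is needed: it forces the at least $n-\max(I)$ leading zeros of the first row to extend well past column $\mu_{2}\leq\max(I)$, so the deletion happens entirely inside a long run of zeros, the positive part of every row is untouched, the entry newly sitting atop each of the first $\mu_{2}$ columns is still $0$, and at least one zero survives in the first row; from these observations the criterion of Lemma \ref{compctJ}$(iv)$ for $C$ follows from the one for $C'$. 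This completes the surjectivity, hence the bijection, hence the stated equality of $R_{n+1,I}^{\mathrm{hom}}$ with the reindexed sum. For the second assertion the same index bijection applies verbatim; the extra hypotheses that $I$ is a set not containing $0$ are precisely what make the integers $r_{i}$ of Definition \ref{Spechtdef}$(iv)$ — and hence the exponent vectors $\vec{h}_{C}^{I}$ — the same whether computed with $n$ or with $n+1$ variables (for $i\in\hat{I}=I$ one has $r_{i}=i-1-|\{j\in I:j<i\}|$, independent of the number of variables, whereas a multi-set element of $I\setminus\hat{I}$, or the element $0$, would give an $r_{i}$ depending on $n$), so that the reindexed summands $V_{\hat{\iota}C}^{\vec{h}_{\hat{\iota}C}^{I}}$ coincide with those of Definition \ref{RnIdef} for $R_{n+1,I}$.
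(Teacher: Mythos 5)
The paper does not prove this proposition internally; it is quoted from Lemma 4.20 and Corollary 4.22 of \cite{[Z2]} and Corollary 3.20 of \cite{[Z3]}, so there is no in-paper argument to compare against. Your self-contained proof via the index bijection $(\lambda,C)\mapsto(\lambda_{+},\hat{\iota}C)$ is correct: well-definedness and injectivity follow from Lemma \ref{rels}$(vii)$--$(viii)$ as you say, and your surjectivity argument is sound --- the counting via Lemma \ref{rels}$(i)$ gives at most $\max(I)$ positive entries, hence $\mu_{2}\leq\max(I)$ and at least $n+1-\max(I)$ leading zeros in the first row, and the hypothesis $n>2\max(I)$ guarantees both that $\mu_{1}-1\geq\mu_{2}$ and that the deleted zero sits in a run of zeros extending strictly past column $\mu_{2}$, so semi-standardness and the criterion of Lemma \ref{compctJ}$(iv)$ survive the deletion (note also that no positive value can occur only in the first row of a cocharge tableau, so every leftmost positive instance lives in rows $\geq2$ and is untouched). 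One small caveat on your final paragraph: as literally stated, the second identity also follows from the index bijection alone, since $\vec{h}_{\hat{\iota}C}^{I}$ is by definition computed for the size-$(n+1)$ tableau $\hat{\iota}C$; your observation that the $r_{i}$ are $n$-independent exactly when $I$ is a set not containing $0$ is the right explanation for why the statement is only formulated (and is only a meaningful stability statement, relating the summand to $V_{C}^{\vec{h}_{C}^{I}}$) under that hypothesis --- which is consistent with the paper's remark that the assertion ``does not hold as nicely for multi-sets'' --- but it is not needed for the equality itself.
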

Proposition \ref{plus1iota} is proved as Lemma 4.20 and Corollary 4.22 of \cite{[Z2]} for sets $I$, and as Corollary 3.20 of \cite{[Z3]} in the general setting of the homogeneous representations, and the inequality in the condition in that proposition does not have to be strict (see Remark 4.21 of the former reference). Remark 3.7 of \cite{[Z3]} explains why the assertion about $R_{n+1,I}$ does not hold as nicely for multi-sets, and we will see in Theorem \ref{decomRinf} below that sets suffice for these representations. Remark 4.24 of \cite{[Z2]} and Remark 3.28 of \cite{[Z3]} relate Proposition \ref{plus1iota} to stable representations at the limit (see also Section 7 of \cite{[PR]}).

\begin{ex}
Take $I:=\{1,3,3\}$, so that $\hat{I}=\{1,3\}$. Then for $n=5$ we get
\[R_{5,I}^{\mathrm{hom}}=V_{00000}e_{1}e_{3}^{2} \oplus V_{\substack{0000 \\ 1\hphantom{111}}}e_{3}^{2} \oplus V_{\substack{0011 \\ 1\hphantom{122}}}e_{1}e_{3} \oplus V_{\substack{001 \\ 11\hphantom{2}}}e_{1}e_{3} \oplus V_{\substack{001 \\ 1\hphantom{12} \\ 2\hphantom{23}}}e_{3} \oplus V_{\substack{00 \\ 11 \\ 2\hphantom{3}}}e_{3}\] and \[R_{5,\hat{I}}=V_{00000}e_{1} \oplus V_{\substack{0000 \\ 1\hphantom{111}}}e_{1} \oplus V_{\substack{0011 \\ 1\hphantom{122}}} \oplus V_{\substack{001 \\ 11\hphantom{2}}} \oplus V_{\substack{001 \\ 1\hphantom{12} \\ 2\hphantom{23}}} \oplus V_{\substack{00 \\ 11 \\ 2\hphantom{3}}}.\] For $n=6$, the homogeneous representation $R_{6,I}^{\mathrm{hom}}$ equals \[V_{000000}e_{1}e_{3}^{2} \oplus V_{\substack{00000 \\ 1\hphantom{1111}}}e_{3}^{2} \oplus V_{\substack{00011 \\ 1\hphantom{1122}}}e_{1}e_{3} \oplus V_{\substack{0001 \\ 11\hphantom{12}}}e_{1}e_{3} \oplus V_{\substack{000 \\ 111}}e_{1}e_{3} \oplus V_{\substack{0001 \\ 1\hphantom{112} \\ 2\hphantom{223}}}e_{3} \oplus V_{\substack{000 \\ 11\hphantom{1} \\ 2\hphantom{22}}}e_{3},\] and we have \[R_{6,\hat{I}}=V_{000000}e_{1} \oplus V_{\substack{00000 \\ 1\hphantom{1111}}}e_{1} \oplus V_{\substack{00011 \\ 1\hphantom{1122}}} \oplus V_{\substack{0001 \\ 11\hphantom{12}}} \oplus V_{\substack{000 \\ 111}} \oplus V_{\substack{0001 \\ 1\hphantom{112} \\ 2\hphantom{223}}} \oplus V_{\substack{000 \\ 11\hphantom{1} \\ 2\hphantom{22}}}.\] Passing to $n=7$, the representations $R_{7,I}^{\mathrm{hom}}$ and $R_{7,\hat{I}}$ become \[V_{0000000}e_{1}e_{3}^{2} \oplus V_{\substack{000000 \\ 1\hphantom{11111}}}e_{3}^{2} \oplus V_{\substack{000011 \\ 1\hphantom{11122}}}e_{1}e_{3} \oplus V_{\substack{00001 \\ 11\hphantom{112}}}e_{1}e_{3} \oplus V_{\substack{0000 \\ 111\hphantom{1}}}e_{1}e_{3} \oplus V_{\substack{00001 \\ 1\hphantom{1112} \\ 2\hphantom{2223}}}e_{3} \oplus V_{\substack{0000 \\ 11\hphantom{11} \\ 2\hphantom{222}}}e_{3}\] and \[V_{0000000}e_{1} \oplus V_{\substack{000000 \\ 1\hphantom{11111}}}e_{1} \oplus V_{\substack{000011 \\ 1\hphantom{11122}}} \oplus V_{\substack{00001 \\ 11\hphantom{112}}} \oplus V_{\substack{0000 \\ 111\hphantom{1}}} \oplus V_{\substack{00001 \\ 1\hphantom{1112} \\ 2\hphantom{2223}}} \oplus V_{\substack{0000 \\ 11\hphantom{11} \\ 2\hphantom{222}}}\] respectively, with each summand from $R_{6,I}^{\mathrm{hom}}$ and $R_{6,\hat{I}}$ replaced by its $\hat{\iota}$-image, exemplifying Proposition \ref{plus1iota} (with the non-strict inequality in the assumption, as 6 equals twice the maximal element of $I$ and $\hat{I}$). \label{RnIex}
\end{ex}
Note that when going from $n=5$ to $n=6$ in Example \ref{plus1iota}, apart from $\hat{\iota}$-images we get the multiple of the representation $V_{\substack{000 \\ 111}}$, whose index is not a $\hat{\iota}$-image, showing the necessity of the assumption on $n$ in Proposition \ref{plus1iota}.

\medskip

Theorem 3.20 of \cite{[Z2]} and Theorem 2.19 of \cite{[Z3]} also give the following result.
\begin{thm}
Take some $k\geq1$ and $n\geq1$.
\begin{enumerate}[$(i)$]
\item The sum of $R_{n,I}^{\mathrm{hom}}$ over all multi-sets of size $k-1$ inside $\mathbb{N}_{n}\cup\{0\}$, with the multiplicity of 0 allowed and counted in the size, is direct inside $\mathbb{Q}[\mathbf{x}_{n}]$ and maps bijectively onto the quotient $R_{n,k,0}$ of that ring by the ideal generated by $x_{i}^{k}$, $1 \leq i \leq n$.
\item If $k \leq n$ then the sum of $R_{n,I}$ over subsets $I\subseteq\mathbb{N}_{n-1}$ of size $k-1$ is also direct inside $\mathbb{Q}[\mathbf{x}_{n}]$, and bijects onto the quotient $R_{n,k}$ of $R_{n,k,0}$ by the elementary symmetric functions $e_{r}$ for $n+1-k \leq r \leq n$.
\end{enumerate} \label{Rnkdecom}
\end{thm}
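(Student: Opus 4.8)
The plan is to reduce both parts to a single statement: the family of (generalized) higher Specht polynomials indexed by the admissible tuples descends to a $\mathbb{Q}$-vector-space basis of the relevant quotient. Since every $V_{C}^{\vec{h}}$ is an $S_{n}$-submodule of $\mathbb{Q}[\mathbf{x}_{n}]$ isomorphic to $\mathcal{S}^{\lambda}$ (Theorem \ref{repsSpecht}) and the $e_{r}$ are $S_{n}$-invariant, the subspaces $\sum_{I}R_{n,I}^{\mathrm{hom}}$ and $\sum_{I}R_{n,I}$ are $S_{n}$-stable and the quotient maps $\pi_{0}\colon\mathbb{Q}[\mathbf{x}_{n}]\twoheadrightarrow R_{n,k,0}$ and $\pi\colon\mathbb{Q}[\mathbf{x}_{n}]\twoheadrightarrow R_{n,k}$ are $S_{n}$-equivariant, so it is enough to produce such bases: directness of the sums in $\mathbb{Q}[\mathbf{x}_{n}]$ and bijectivity of the restricted maps then follow formally. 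I would first record the target dimensions: $R_{n,k,0}=\mathbb{Q}[\mathbf{x}_{n}]/(x_{1}^{k},\dots,x_{n}^{k})$ has the monomial basis $\{\prod_{i}x_{i}^{a_{i}}\mid 0\le a_{i}<k\}$, so $\dim R_{n,k,0}=k^{n}$, and, by the Hilbert-series computation of \cite{[HRS]}, $\dim R_{n,k}$ equals the number of ordered set partitions of $\mathbb{N}_{n}$ into $k$ nonempty blocks.

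Next I would run the counting step one $I$ at a time. By Definition \ref{RnIdef} and Theorem \ref{repsSpecht}, the multiplicity of $\mathcal{S}^{\lambda}$ in $R_{n,I}^{\mathrm{hom}}$ is $|\{C\in\operatorname{CCT}(\lambda)\mid\operatorname{Dsp}^{c}(C)\subseteq I\}|$; writing $C=\operatorname{ct}(S)$ (Lemma \ref{compctJ}), using $\operatorname{Dsp}^{c}(\operatorname{ct}(S))=\operatorname{Dsi}^{c}(S)$ (noted after Definition \ref{sets}) together with $\operatorname{Dsi}^{c}(S)=\operatorname{Dsi}(\operatorname{ev}S)$ (Lemma \ref{evDsiAsi}), and then invoking the standardization bijection of Lemma \ref{compctJ}$(iii)$, this multiplicity becomes the number of semistandard tableaux of shape $\lambda$ with content the one represented by $\operatorname{comp}_{n}\hat{I}$, i.e.\ the Kostka number $K_{\lambda,\operatorname{comp}_{n}I}$ (zero parts being irrelevant). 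Hence $\dim R_{n,I}^{\mathrm{hom}}=\sum_{\lambda}K_{\lambda,\operatorname{comp}_{n}I}\dim\mathcal{S}^{\lambda}=\dim M^{\operatorname{comp}_{n}I}=n!/\prod_{h}(\operatorname{comp}_{n}I)_{h}!$, and likewise $\dim R_{n,I}=n!/\prod_{h}(\operatorname{comp}_{n}I)_{h}!$ for a set $I\subseteq\mathbb{N}_{n-1}$, where now $\operatorname{comp}_{n}I$ is a composition of $n$ into $k$ positive parts. Since $\operatorname{comp}_{n}$ bijects multi-sets of size $k-1$ in $\mathbb{N}_{n}\cup\{0\}$ with weak compositions of $n$ of length $k$ (Lemma \ref{compctJ}), summing over $I$ gives $\sum_{\alpha}n!/\prod_{h}\alpha_{h}!=k^{n}$ in the first case, and restricting to compositions into $k$ positive parts gives $\sum_{\beta}n!/\prod_{h}\beta_{h}!$, the number of ordered set partitions of $\mathbb{N}_{n}$ into $k$ blocks, in the second. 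Thus the number of admissible tuples matches $\dim R_{n,k,0}$, respectively $\dim R_{n,k}$.

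For linear independence of the images in the quotient I would use leading monomials. With respect to the monomial (term) order of \cite{[ATY]}, for which $p_{M,T}$ is the leading monomial of $F_{M,T}$ (Theorem \ref{repsSpecht}$(i)$) and each $e_{r}$ has a monomial leading term, the products $F_{C,T}^{I,\mathrm{hom}}$ and $F_{C,T}^{I}$ acquire explicit monomial leading terms with coefficient $1$, read off from $p_{C,T}$ and the leading terms of the $e$-factors. The combinatorial heart of the proof---carried out in \cite{[ATY]} and \cite{[GR]}, and repackaged through the above bijections in \cite{[Z2]} and \cite{[Z3]}---is that, over all admissible tuples, these leading monomials are pairwise distinct, all have exponents $<k$ (so are nonzero in $R_{n,k,0}$), and in case $(ii)$ moreover lie in a Gr\"{o}bner-standard monomial basis of $R_{n,k}$; the use of the ``large'' subscripts $r_{i}$ in $F_{C,T}^{I}$, rather than of the elements of $I$ themselves, is exactly what pushes the leading monomials into that basis, and is why subsets, not multi-sets, are what one needs in $(ii)$. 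Granting this, the family $\{F_{C,T}^{I,\mathrm{hom}}\}$ (resp.\ $\{F_{C,T}^{I}\}$) is unitriangular with respect to the order modulo the defining ideal, so its image in $R_{n,k,0}$ (resp.\ $R_{n,k}$) is linearly independent; together with the count of the previous paragraph it is therefore a basis. Consequently $\bigoplus_{I}R_{n,I}^{\mathrm{hom}}\to R_{n,k,0}$ and $\bigoplus_{I}R_{n,I}\to R_{n,k}$ are bijections, and in particular the sums inside $\mathbb{Q}[\mathbf{x}_{n}]$ are direct, which is $(i)$ and $(ii)$.

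I expect the genuinely nontrivial ingredient to be the leading-monomial bookkeeping of the last paragraph: checking that the prescribed leading monomials are standard---all exponents below $k$, and in the $R_{n,k}$ case Gr\"{o}bner-standard---and that the assignment from admissible tuples to leading monomials is injective with image of the right cardinality; equivalently, building the bijection between the higher Specht indexing data and the standard monomial basis of the quotient. Everything else is the dimension count above, routine use of the bijections $\operatorname{comp}_{n}$, $\operatorname{ct}$, and $\operatorname{ev}$ of Section \ref{TPR}, and the elementary observation that a unitriangular family as large as the quotient is a basis of it.
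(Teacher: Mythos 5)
The paper does not prove this theorem itself: it is quoted verbatim from Theorem 3.20 of \cite{[Z2]} and Theorem 2.19 of \cite{[Z3]}, and your outline reproduces exactly the strategy used there and in \cite{[GR]} --- a dimension count via the bijections $\operatorname{comp}_{n}$, $\operatorname{ct}$, $\operatorname{ev}$ and Young's rule (giving $\sum_{\alpha}n!/\prod_{h}\alpha_{h}!=k^{n}$, resp.\ the ordered-set-partition count), combined with the leading-monomial unitriangularity of the higher Specht basis relative to the standard monomial basis of $R_{n,k,0}$, resp.\ the Gr\"{o}bner-standard basis of $R_{n,k}$ from \cite{[HRS]}. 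Your identification of the $\mathcal{S}^{\lambda}$-multiplicity of $R_{n,I}^{\mathrm{hom}}$ with the Kostka number $K_{\lambda,\operatorname{comp}_{n}I}$ is correct, and deferring the leading-term bookkeeping to \cite{[ATY]}, \cite{[GR]} is legitimate since that is precisely the nontrivial input the cited references supply; so the proposal is correct and follows essentially the same route.
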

Allowing multi-sets in part $(ii)$ of Theorem \ref{Rnkdecom} also produces a direct sum that bijects onto $R_{n,k,0}$ from part $(i)$ there (without the restriction $k \leq n$), but this result does not have an infinite analogue in Theorem \ref{decomRinf} below.

We also recall the following result, which is given in Theorem 2.34 and Corollaries 3.19 and 3.27 of \cite{[Z3]}.
\begin{thm}
Fix some degree $d\geq0$, as well as a content $\mu$ of sum $d$.
\begin{enumerate}[$(i)$]
\item The representation $\mathbb{Q}[\mathbf{x}_{n}]_{d}$ decomposes as $\bigoplus_{\lambda \vdash n}\bigoplus_{M\in\operatorname{SSYT}_{d}(\lambda)}V_{M}$.
\item Inside part $(i)$ we have $\mathbb{Q}[\mathbf{x}_{n}]_{\mu}=\bigoplus_{\lambda \vdash n}\bigoplus_{M\in\operatorname{SSYT}_{\mu}(\lambda)}V_{M}$.
\item If $n>2d$ then the map taking $M$ to $\hat{\iota}M$ is a bijection between the irreducible summands in parts $(i)$ and $(ii)$ for $n$ and for $n+1$.
\end{enumerate} \label{FMTdecom}
\end{thm}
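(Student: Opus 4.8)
Since the monomials spanning $\mathbb{Q}[\mathbf{x}_{n}]_{d}$ are partitioned by their content into the subspaces $\mathbb{Q}[\mathbf{x}_{n}]_{\mu}$, $\mu\vdash d$, and by Theorem \ref{repsSpecht} each $V_{M}$ lies in $\mathbb{Q}[\mathbf{x}_{n}]_{\mu}$ for $\mu$ the content of $M$, part $(i)$ follows from part $(ii)$ by summing over all $\mu$ (using $\operatorname{SSYT}_{d}(\lambda)=\bigsqcup_{\mu}\operatorname{SSYT}_{\mu}(\lambda)$), and I would concentrate on $(ii)$. As an $S_{n}$-module, $\mathbb{Q}[\mathbf{x}_{n}]_{\mu}$ is the permutation module on the monomials of content $\mu$: these form a single $S_{n}$-orbit, and the stabilizer of one of them is the Young subgroup $S_{c_{0}}\times S_{c_{1}}\times\cdots$, where $c_{j}$ is the number of variables carrying exponent $j$ in that monomial. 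Hence $\mathbb{Q}[\mathbf{x}_{n}]_{\mu}\cong M^{\nu}$ for $\nu$ the partition obtained by sorting $(c_{0},c_{1},\dots)$, and Young's rule gives $M^{\nu}\cong\bigoplus_{\lambda}K_{\lambda\nu}\,\mathcal{S}^{\lambda}$ with the Kostka number $K_{\lambda\nu}=|\operatorname{SSYT}_{\mu}(\lambda)|$ (a semistandard tableau of shape $\lambda$ with exactly $c_{j}$ entries equal to $j$ is precisely an element of $\operatorname{SSYT}_{\mu}(\lambda)$). By Theorem \ref{repsSpecht}$(ii)$ the right-hand side of $(ii)$ is, as an abstract module, $\bigoplus_{\lambda}|\operatorname{SSYT}_{\mu}(\lambda)|\,\mathcal{S}^{\lambda}$, so it already has the same character as $\mathbb{Q}[\mathbf{x}_{n}]_{\mu}$; what remains is to show the sum $\sum_{\lambda}\sum_{M\in\operatorname{SSYT}_{\mu}(\lambda)}V_{M}$ inside $\mathbb{Q}[\mathbf{x}_{n}]_{\mu}$ is direct.

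This directness is the step I expect to be the main obstacle. The cleanest route is via Theorem \ref{Rnkdecom}$(i)$: for any $k>d$ the degree-$d$ component of the ideal $(x_{1}^{k},\dots,x_{n}^{k})$ vanishes, so $\mathbb{Q}[\mathbf{x}_{n}]_{d}=(R_{n,k,0})_{d}=\bigoplus_{I}R_{n,I}^{\mathrm{hom}}$, the sum over multi-sets $I$ of size $k-1$ with $\Sigma(I)=d$ (those with $\Sigma(I)\neq d$ lying in other degrees); one then identifies each summand $V_{C}^{\vec{h}(C,I)}$ of $R_{n,I}^{\mathrm{hom}}$ — that is, $V_{C}$ times a monomial in the $e_{r}$'s — with a unique $V_{M}$, by absorbing the symmetric factor into the cocharge data of $C$ through the destandardization map, and verifies that the resulting assignment $(C,I)\mapsto M$ is a bijection onto $\bigsqcup_{\lambda\vdash n}\operatorname{SSYT}_{d}(\lambda)$; making this dictionary precise is the combinatorial heart of the argument. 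Alternatively, one argues directly as in \cite{[ATY]} and its generalizations in \cite{[Z2]}, \cite{[Z3]}: a leading-term/triangularity analysis of the $F_{M,T}$, whose leading monomials refine the data of the monomials $p_{M,T}$ from Theorem \ref{repsSpecht}$(i)$, shows the whole collection $\{F_{M,T}\}$ (over all $\lambda\vdash n$, $M\in\operatorname{SSYT}_{\mu}(\lambda)$, $T\in\operatorname{SYT}(\lambda)$) is linearly independent, and since it has cardinality $\sum_{\lambda}K_{\lambda\nu}|\operatorname{SYT}(\lambda)|=\dim\mathbb{Q}[\mathbf{x}_{n}]_{\mu}$, linear independence is equivalent to directness of the sum and to its spanning $\mathbb{Q}[\mathbf{x}_{n}]_{\mu}$.

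For part $(iii)$, by Lemma \ref{rels}$(vi),(vii)$ the map $M\mapsto\hat{\iota}M$ carries $\operatorname{SSYT}_{d}(\lambda)$ into $\operatorname{SSYT}_{d}(\lambda_{+})$, preserving $\Sigma$ and the content (it only prepends a $0$ to the first row), hence it sends the summand $V_{M}\cong\mathcal{S}^{\operatorname{sh}(M)}$ to the summand $V_{\hat{\iota}M}\cong\mathcal{S}^{\operatorname{sh}(M)_{+}}$; it is injective because $M$ is recovered from $\hat{\iota}M$ by deleting the leading box of the first row. It remains to prove that $\hat{\iota}\colon\bigsqcup_{\lambda\vdash n}\operatorname{SSYT}_{d}(\lambda)\to\bigsqcup_{\lambda'\vdash n+1}\operatorname{SSYT}_{d}(\lambda')$ is surjective once $n>2d$. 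Given $M'\in\operatorname{SSYT}_{d}(\lambda')$ with $\lambda'\vdash n+1$: column-strictness forces every box of $M'$ outside the first row to carry a positive entry, so $d=\Sigma(M')\geq(n+1)-\lambda'_{1}$, i.e. $\lambda'_{1}\geq n+1-d$; since also $\lambda'_{2}\leq(n+1)-\lambda'_{1}\leq d$ and $n>2d$, we get $\lambda'_{1}\geq n+1-d>d\geq\lambda'_{2}$, so $\lambda:=(\lambda'_{1}-1,\lambda'_{2},\dots)\vdash n$ is a partition with $\lambda_{+}=\lambda'$. Moreover $M'$ has at least $(n+1)-d\geq1$ zero entries (if $t$ entries were positive then $d\geq t$), and $0$ is the global minimum of a semistandard tableau, so the $(1,1)$-box of $M'$ contains $0$; deleting it and shifting the first row left yields an $M\in\operatorname{SSYT}(\lambda)$ with $\Sigma(M)=d$, and Lemma \ref{rels}$(vi)$ gives $\hat{\iota}M=M'$. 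This produces the asserted bijection of indexing sets, hence of irreducible summands, for $(i)$, and the same argument with $\mu$ fixed throughout handles $(ii)$.
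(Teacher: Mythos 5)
First, note that the paper itself does not prove this theorem: it is recalled verbatim from Theorem 2.34 and Corollaries 3.19 and 3.27 of \cite{[Z3]}, so there is no internal proof to compare against. Judged on its own merits, your outline has the right overall shape: the reduction of $(i)$ to $(ii)$, the identification of $\mathbb{Q}[\mathbf{x}_{n}]_{\mu}$ with the permutation module $M^{\nu}$ and the resulting multiplicity count $K_{\lambda\nu}=|\operatorname{SSYT}_{\mu}(\lambda)|$ via Young's rule, and the counting-plus-independence logic are all correct, and this is consistent with how the cited references proceed.

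The genuine weak point is the directness/linear-independence step, which is the entire content of the theorem, and your first proposed route for it does not work as described. The decompositions $\bigoplus_{(C,I)}V_{C}^{\vec{h}(C,I)}$ and $\bigoplus_{M}V_{M}$ are two \emph{different} direct-sum decompositions of the same isotypical component (this is exactly how the paper presents them in Theorem \ref{propbases}$(i)$ and Remark \ref{noexpbij}): a summand $V_{C}^{\vec{h}(C,I)}$ is $V_{C}$ times a product of elementary symmetric polynomials and is not equal to any $V_{M}$, so "absorbing the symmetric factor into the cocharge data" only matches multiplicities, not subspaces, and cannot by itself yield linear independence of the $F_{M,T}$. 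Your second route (unitriangularity of the $F_{M,T}$ with respect to their leading monomials $p_{M,T}$) is the correct mechanism and is what \cite{[ATY]}, \cite{[Z2]}, and \cite{[Z3]} actually carry out, but you only assert it; note in particular that $(M,T)\mapsto p_{M,T}$ is not injective, so the required partial order and the RSK-type bookkeeping behind the triangularity are nontrivial and would need to be supplied. Part $(iii)$ is essentially complete and correct; the only missing detail is the verification that deleting the $(1,1)$-box of $M'$ and shifting the first row left leaves a semi-standard tableau, which does follow from the bounds you already derived (all zeros of $M'$ sit at the start of the first row, and there are at least $n+1-d>\lambda'_{2}$ of them, so column-strictness survives the shift), but should be said.
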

Also in part $(iii)$ of Theorem \ref{FMTdecom}, the non-strict inequality $n\geq2d$ suffices. Proposition \ref{ExtVMlim} and Corollary \ref{semisimp} below state two different infinite analogues of that theorem.

\begin{ex}
As in Examples 2.35 and 3.22 of \cite{[Z3]}, we recall the decompositions $\mathbb{Q}[\mathbf{x}_{3}]_{2}=V_{002} \oplus V_{\substack{00 \\ 2\hphantom{2}}} \oplus V_{011} \oplus V_{\substack{01 \\ 1\hphantom{2}}}$, $\mathbb{Q}[\mathbf{x}_{4}]_{2}=V_{0002} \oplus V_{\substack{000 \\ 2\hphantom{22}}} \oplus V_{0011} \oplus V_{\substack{001 \\ 1\hphantom{12}}} \oplus V_{\substack{00 \\ 11}}$, and $\mathbb{Q}[\mathbf{x}_{5}]_{2}=V_{00002} \oplus V_{\substack{0000 \\ 2\hphantom{222}}} \oplus V_{00011} \oplus V_{\substack{0001 \\ 1\hphantom{112}}} \oplus V_{\substack{000 \\ 11\hphantom{1}}}$, to which we add the expression $\mathbb{Q}[\mathbf{x}_{6}]_{2}=V_{000002} \oplus V_{\substack{00000 \\ 2\hphantom{2222}}} \oplus V_{000011} \oplus V_{\substack{00001 \\ 1\hphantom{1112}}} \oplus V_{\substack{0000 \\ 11\hphantom{11}}}$, all of which are as in part $(i)$ of Theorem \ref{FMTdecom}. The decompositions according to the two contents of sum 2, as in part $(ii)$ there, are clear. We also see that once $n$ reaches $2d=4$, passing from $n$ to $n+1$ simply replaces every $V_{M}$ by $V_{\hat{\iota}M}$ in the decomposition, as part $(iii)$ there states. \label{Qxnd2ex}
\end{ex}
In fact, we wrote the bases for the representations showing up in Example \ref{Qxnd2ex} in Example \ref{smallex}, where for those involving an entry 2 we simply take those with the same entry 1 in the latter example and replace each $x_{i}$ by $x_{i}^{2}$.

\section{Actions on Eventually Symmetric Functions \label{EvSymFunc}}

Theorem 2.17 of \cite{[Z2]} produces certain power series as limits of higher Specht polynomials. We now construct the framework for these power series, which will be the space inside which the limits of our stable representations will take place. In addition to $\mathbb{Q}[\mathbf{x}_{\infty}]$ from Definition \ref{Qxninfd}, we recall that $\Lambda$ stands for the ring of symmetric functions (over $\mathbb{Z}$), with its natural grading as $\bigoplus_{d=0}^{\infty}\Lambda_{d}$.
\begin{defn}
Let $\mathbb{Q}\ldbrack\mathbf{x}_{\infty}\rdbrack$ denote the ring of power series in the infinitely many variables $\{x_{i}\}_{i=1}^{\infty}$ over $\mathbb{Q}$.
\begin{enumerate}[$(i)$]
\item Let $S_{\mathbb{N}}$ denote the infinite symmetric group on $\mathbb{N}$.
\item If $S_{n}$ is embedded into $S_{\mathbb{N}}$ as the subgroup fixing every $m>n$, then the embedding of $S_{n}$ into $S_{n+1}$ is just an inclusion. We denote by $S_{\infty}$ the union $\bigcup_{n=1}^{\infty}S_{\infty}$ inside $S_{\mathbb{N}}$.
\item For any $n$, we write $S_{\mathbb{N}}^{(n)}$ for the subgroup of $S_{\mathbb{N}}$ that fixes every number $1 \leq i \leq n$.
\item Let $F$ be an element of $\mathbb{Q}\ldbrack\mathbf{x}_{\infty}\rdbrack$ that is homogeneous of some degree $d$. We say that $F$ is \emph{eventually symmetric} if there exists some $n$ such that $F$ is stabilized by all of $S_{\mathbb{N}}^{(n)}$. The set of eventually symmetric series that are homogeneous of degree $d$ is denoted by $\tilde{\Lambda}_{d}$.
\item A general element $F\in\mathbb{Q}\ldbrack\mathbf{x}_{\infty}\rdbrack$ is an \emph{eventually symmetric function} if it is of bounded degree and is stabilized by $S_{\mathbb{N}}^{(n)}$ for some $n$. The set of such series will be denoted by $\tilde{\Lambda}$.
\end{enumerate} \label{evsym}
\end{defn}

\begin{rmk}
We can now recall the conventions for the notation for higher Specht polynomials showing up previously in the literature. The original paper \cite{[ATY]} used pairs of semi-standard Young tableaux $T$ and $S$ of the same shape for indexing higher Specht polynomials, and wrote $F_{T}^{S}$ for the polynomial denoted here as $F_{\operatorname{ct}(S),T}$ (see also \cite{[Z2]}, and Remark 2.2 of \cite{[Z3]}). Then the higher Specht polynomial related with $F_{T}^{S}$ via Proposition \ref{forstab} is $F_{\iota T}^{\tilde{\iota}S}$ (with the notation $V^{S}_{\vec{h}}$ for the representation $V_{\operatorname{ct}(S)}^{\vec{h}}$, and a similar formulation for Proposition \ref{plus1iota}). Using the RSK algorithm, matching pairs of such tableaux with elements of $S_{n}$, it was shown in \cite{[Z1]} that the most natural way to do so is by writing $F_{w}$ for $F_{P(w)}^{\tilde{Q}(w)}$, where $P(w)$ and $Q(w)$ are the RSK tableaux as in \cite{[Sa]} and others, and $\tilde{Q}(w):=\operatorname{ev}Q(w)$ (or equivalently $Q(w_{0}ww_{0})$, where $w_{0}$ is the longest element of $S_{n}$). Indeed, then Proposition \ref{forstab} relates $F_{w}$ with $F_{w_{+}}$, where $w_{+} \in S_{n+1}$ is the image of $w$ stabilizing $n+1$, and the limit from Theorem 2.17 of \cite{[Z2]} can be associated with elements $\hat{w} \in S_{\infty}$. In general, we shall need the notation using Definition \ref{tabinf} below. \label{notation}
\end{rmk}

\medskip

The following properties of $\tilde{\Lambda}$ are clear from Definition \ref{evsym}.
\begin{lem}
The set $\tilde{\Lambda}$ is a subring of $\mathbb{Q}\ldbrack\mathbf{x}_{\infty}\rdbrack$, which is graded as $\bigoplus_{d=0}^{\infty}\tilde{\Lambda}_{d}$. It contains both $\mathbb{Q}[\mathbf{x}_{\infty}]$ and $\Lambda$. \label{ringES}
\end{lem}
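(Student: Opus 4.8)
The plan is to verify each assertion of Lemma \ref{ringES} directly from Definition \ref{evsym}, treating the three claims — subring, grading, containment — in turn, with the first being the only one requiring any real argument.

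First I would check that $\tilde{\Lambda}$ is closed under the ring operations. Let $F,G\in\tilde{\Lambda}$, so by Definition \ref{evsym}$(v)$ both have bounded degree, and there exist $m,n$ with $F$ stabilized by $S_{\mathbb{N}}^{(m)}$ and $G$ by $S_{\mathbb{N}}^{(n)}$. Set $N:=\max(m,n)$. Since $S_{\mathbb{N}}^{(N)}\subseteq S_{\mathbb{N}}^{(m)}$ and $S_{\mathbb{N}}^{(N)}\subseteq S_{\mathbb{N}}^{(n)}$, both $F$ and $G$ are fixed by the single group $S_{\mathbb{N}}^{(N)}$; because the $S_{\mathbb{N}}$-action on $\mathbb{Q}\ldbrack\mathbf{x}_{\infty}\rdbrack$ is by ring automorphisms, $F\pm G$ and $FG$ are then also fixed by $S_{\mathbb{N}}^{(N)}$. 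For the degree bound, $\deg(F\pm G)\leq\max(\deg F,\deg G)$ and $\deg(FG)\leq\deg F+\deg G$ are still finite, so $F\pm G$ and $FG$ lie in $\tilde{\Lambda}$; since $1\in\tilde{\Lambda}$ trivially (it is fixed by all of $S_{\mathbb{N}}$ and has degree $0$), $\tilde{\Lambda}$ is a subring.

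For the grading, the decomposition $\tilde{\Lambda}=\bigoplus_{d=0}^{\infty}\tilde{\Lambda}_{d}$ follows because each $\tilde{\Lambda}_{d}$ is, by Definition \ref{evsym}$(iv)$, exactly the degree-$d$ homogeneous part of $\tilde{\Lambda}$: if $F\in\tilde{\Lambda}$ is stabilized by $S_{\mathbb{N}}^{(n)}$ and has bounded degree, then each of its finitely many homogeneous components $F_{d}$ is separately stabilized by $S_{\mathbb{N}}^{(n)}$ — the $S_{\mathbb{N}}$-action preserves degree — and homogeneous of degree $d$, hence $F_{d}\in\tilde{\Lambda}_{d}$, while the sum of the subspaces $\tilde{\Lambda}_{d}$ is direct inside $\mathbb{Q}\ldbrack\mathbf{x}_{\infty}\rdbrack$ because homogeneous components of distinct degrees are linearly independent, and multiplication sends $\tilde{\Lambda}_{d}\times\tilde{\Lambda}_{e}$ into $\tilde{\Lambda}_{d+e}$ by the closure argument above. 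Finally, for the containments: any $p\in\mathbb{Q}[\mathbf{x}_{\infty}]$ involves only finitely many variables, say $x_{1},\dots,x_{n}$, so it is fixed by $S_{\mathbb{N}}^{(n)}$ and has finite (bounded) degree, giving $\mathbb{Q}[\mathbf{x}_{\infty}]\subseteq\tilde{\Lambda}$; and any element of $\Lambda$, viewed inside $\mathbb{Q}\ldbrack\mathbf{x}_{\infty}\rdbrack$, is by definition fixed by all of $S_{\mathbb{N}}=S_{\mathbb{N}}^{(0)}$ and has bounded degree, so $\Lambda\subseteq\tilde{\Lambda}$.

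Since every step is an immediate consequence of the definitions, there is no substantial obstacle here; the only point deserving a word of care is the observation that finitely many subgroups of the form $S_{\mathbb{N}}^{(k)}$ always have a common subgroup of the same form, namely $S_{\mathbb{N}}^{(\max k)}$, which is what makes the "eventually" quantifier well-behaved under the ring operations. I would state the lemma's proof in a couple of sentences invoking exactly this, and note in passing that the same device shows $\tilde{\Lambda}$ is an $S_{\mathbb{N}}$-stable subspace, which is implicitly used later when these objects carry representations of $S_{\mathbb{N}}$.
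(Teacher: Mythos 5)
Your proof is correct and follows essentially the same route as the paper's: both rest on the facts that the subgroups $S_{\mathbb{N}}^{(n)}$ form a decreasing chain (so finitely many eventually symmetric elements share a common stabilizing subgroup) and that the $S_{\mathbb{N}}$-action preserves degree (so homogeneous components are separately stabilized), with the containments of $\mathbb{Q}[\mathbf{x}_{\infty}]$ and $\Lambda$ being immediate from the definitions. Your write-up is merely a more explicit version of the paper's argument; no discrepancy or gap.
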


\begin{proof}
The fact that the subgroups $\{S_{\mathbb{N}}^{(n)}\}_{n=1}^{\infty}$ form a decreasing sequence inside $S_{\mathbb{N}}$ and that the action of the latter on $\mathbb{Q}\ldbrack\mathbf{x}_{\infty}\rdbrack$ preserves degrees implies that an element $F=\sum_{d=0}^{h}F_{d}$ of bounded degree in $\mathbb{Q}\ldbrack\mathbf{x}_{\infty}\rdbrack$ lies in $\tilde{\Lambda}$ if and only if each $F_{d}$ lies in $\tilde{\Lambda}_{d}$. It also follows that $\tilde{\Lambda}$ is closed under the ring operations (and clearly contains $\mathbb{Q}$), yielding the first assertion.

We now note that elements of $\Lambda$ satisfy the condition form Definition \ref{evsym}. Similarly, a polynomial in $\mathbb{Q}[\mathbf{x}_{\infty}]$ has bounded degree, and it involves finitely many variables, and is thus in the image of $\mathbb{Q}[\mathbf{x}_{n}]$ inside $\mathbb{Q}[\mathbf{x}_{\infty}]$. But since $S_{\mathbb{N}}^{(n)}$ acts trivially on $\mathbb{Q}[\mathbf{x}_{n}]$ by definition, the second assertion follows as well. This completes the proof of the lemma.
\end{proof}
Lemma \ref{ringES} implies that the ring generated by $\Lambda$ and $\mathbb{Q}[\mathbf{x}_{\infty}]$, which we can write as $\Lambda_{\mathbb{Q}}[\mathbf{x}_{\infty}]$ (with $\Lambda_{\mathbb{Q}}:=\Lambda\otimes_{\mathbb{Z}}\mathbb{Q}$, since we adopt the usual notation of $\Lambda$ being the integral ring) is contained inside $\tilde{\Lambda}$. Moreover, we recall from the theory of symmetric functions that the elementary symmetric functions $\{e_{r}\}_{r=1}^{\infty}$ are free generators for $\Lambda$ over $\mathbb{Z}$ (hence of $\Lambda_{\mathbb{Q}}$ over $\mathbb{Q}$), namely we can write $\Lambda=\mathbb{Z}[\mathbf{e}_{\infty}]$ and $\Lambda_{\mathbb{Q}}=\mathbb{Q}[\mathbf{e}_{\infty}]$ by adapting the notation from Definition \ref{Qxninfd}, and then the latter ring is $\mathbb{Q}[\mathbf{x}_{\infty},\mathbf{e}_{\infty}]$. Moreover, we recall that when considering representations on power series in infinitely many variables (say homogeneous), there is a difference between the representations of the full infinite symmetric group $S_{\mathbb{N}}$ and the increasing union $S_{\infty}$ (see, e.g., Example \ref{difSinf} below). We thus turn to the following result.
\begin{prop}
The ring $\tilde{\Lambda}$ from Lemma \ref{ringES} has the following properties.
\begin{enumerate}[$(i)$]
\item The variables $\{x_{i}\}_{i=1}^{\infty}$ and the elementary symmetric functions $\{e_{r}\}_{r=1}^{\infty}$ combine to a set of algebraically independent variables.
\item The ring $\mathbb{Q}[\mathbf{x}_{\infty},\mathbf{e}_{\infty}]$ that they generate is all of $\tilde{\Lambda}$.
\item The action of $S_{\mathbb{N}}$, hence also of $S_{\infty}$, on $\mathbb{Q}\ldbrack\mathbf{x}_{\infty}\rdbrack$ preserves $\tilde{\Lambda}$.
\item A subspace of $\tilde{\Lambda}$ is a sub-representation with respect to the action of $S_{\infty}$ if and only if it is such for $S_{\mathbb{N}}$.
\end{enumerate} \label{samereps}
\end{prop}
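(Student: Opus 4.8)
The plan is to separate the structural parts $(i)$ and $(ii)$, which describe $\tilde{\Lambda}$ as an abstract ring, from the group-theoretic parts $(iii)$ and $(iv)$, which only exploit that every element of $S_{\mathbb{N}}$ has finite support. I begin with $(iii)$. By Lemma \ref{ringES} it suffices to treat a homogeneous $F\in\tilde{\Lambda}_{d}$, say fixed by $S_{\mathbb{N}}^{(n)}$; for $\sigma\in S_{\mathbb{N}}$ the series $\sigma F$ is again homogeneous of degree $d$, so I only need an $N$ with $\sigma F$ fixed by $S_{\mathbb{N}}^{(N)}$. Since $\sigma$ moves only finitely many integers, $\sigma(\{1,\dots,n\})\subseteq\{1,\dots,N\}$ for some $N$; as $\sigma S_{\mathbb{N}}^{(n)}\sigma^{-1}$ is the pointwise stabilizer of $\sigma(\{1,\dots,n\})$, it contains $S_{\mathbb{N}}^{(N)}$, and $\sigma F$ is fixed by the latter. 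The statement for $S_{\infty}$ then follows from $S_{\infty}\subseteq S_{\mathbb{N}}$.

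For $(i)$ and $(ii)$ the engine is the triangular relation between $e_{r}=e_{r}(\mathbf{x}_{\infty})$, the elementary symmetric polynomials $e_{r}(x_{1},\dots,x_{n})$ in the first $n$ variables, and the elementary symmetric functions $e_{r}(\mathbf{x}_{>n})$ in the remaining ones, coming from the factorization $\sum_{r}e_{r}(\mathbf{x}_{\infty})t^{r}=\bigl(\sum_{j}e_{j}(x_{1},\dots,x_{n})t^{j}\bigr)\bigl(\sum_{l}e_{l}(\mathbf{x}_{>n})t^{l}\bigr)$. For $(ii)$: given homogeneous $F\in\tilde{\Lambda}_{d}$ fixed by $S_{\mathbb{N}}^{(n)}$, group the monomials of $F$ according to the exponents they carry on $x_{1},\dots,x_{n}$; the $S_{\mathbb{N}}^{(n)}$-invariance forces each of the finitely many coefficient series to be a symmetric function of bounded degree in $\{x_{i}\}_{i>n}$, hence by the fundamental theorem of symmetric functions a polynomial in the $e_{r}(\mathbf{x}_{>n})$. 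Solving the displayed identity recursively expresses each $e_{r}(\mathbf{x}_{>n})$ as a polynomial in $x_{1},\dots,x_{n},e_{1},\dots,e_{r}$, so $F\in\mathbb{Q}[\mathbf{x}_{\infty},\mathbf{e}_{\infty}]$; the reverse containment is Lemma \ref{ringES}. For $(i)$: if $P$ is a polynomial with $P(x_{1},\dots,x_{k},e_{1},\dots,e_{m})=0$ in $\mathbb{Q}\ldbrack\mathbf{x}_{\infty}\rdbrack$, apply, for a fixed $N\geq k+m$, the specialization $x_{i}\mapsto0$ for $i>N$, which sends $e_{r}$ to $e_{r}(x_{1},\dots,x_{N})$; I then check that $x_{1},\dots,x_{k}$ together with $e_{1}(x_{1},\dots,x_{N}),\dots,e_{m}(x_{1},\dots,x_{N})$ are algebraically independent in $\mathbb{Q}[x_{1},\dots,x_{N}]$, by comparing them, via the same factorization with $n$ replaced by $k$, with the free generators $x_{1},\dots,x_{k}$ and $e_{1}(x_{k+1},\dots,x_{N}),\dots,e_{N-k}(x_{k+1},\dots,x_{N})$ of a polynomial subring through a unitriangular change of variables. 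Hence $P=0$, which gives $(i)$.

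For $(iv)$, one inclusion is immediate from $S_{\infty}\subseteq S_{\mathbb{N}}$. For the other, let $W\subseteq\tilde{\Lambda}$ be $S_{\infty}$-stable, let $F\in W$ be fixed by $S_{\mathbb{N}}^{(n)}$, and let $\sigma\in S_{\mathbb{N}}$. The point is that $\sigma F$ depends only on $\sigma|_{\{1,\dots,n\}}$: if $\sigma'$ agrees with $\sigma$ on $\{1,\dots,n\}$ then $(\sigma')^{-1}\sigma\in S_{\mathbb{N}}^{(n)}$ fixes $F$, so $\sigma F=\sigma'F$. Choosing $N$ with $\{1,\dots,n\}\cup\sigma(\{1,\dots,n\})\subseteq\{1,\dots,N\}$ and extending the injection $\sigma|_{\{1,\dots,n\}}$ to a permutation $\sigma'$ of $\{1,\dots,N\}$ gives $\sigma'\in S_{N}\subseteq S_{\infty}$ with $\sigma F=\sigma'F\in W$, so $W$ is $S_{\mathbb{N}}$-stable.

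The only step that is not purely formal is the algebraic-independence claim inside $(i)$ --- equivalently, that the presentation obtained in $(ii)$ is free --- but that, and all of $(ii)$, reduces to the unitriangularity of the passage between elementary symmetric functions in all / the first $n$ / the last cofinitely many variables; parts $(iii)$ and $(iv)$ are soft, resting only on the finiteness of supports in $S_{\mathbb{N}}$.
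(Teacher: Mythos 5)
Your proof is correct and follows essentially the same route as the paper's: parts $(iii)$ and $(iv)$ are argued identically (conjugating $S_{\mathbb{N}}^{(n)}$, respectively replacing $\sigma \in S_{\mathbb{N}}$ by a finitary permutation agreeing with it on $\{1,\dots,n\}$), and parts $(i)$ and $(ii)$ rest, as in the paper, on the product formula splitting $e_{r}$ into contributions from $x_{1},\dots,x_{n}$ and from the tail variables. Two minor variations are worth noting: in $(ii)$ you invert the triangular system for the $e_{r}(\mathbf{x}_{>n})$ where the paper uses power sums (your version has the small advantage of working over $\mathbb{Z}$), and in $(i)$ you specialize $x_{i}\mapsto 0$ for $i>N$ to reduce to algebraic independence in a finite polynomial ring, where the paper instead compares top degrees in the $e_{t}(\mathbf{x}_{>n})$ directly inside $\mathbb{Q}\ldbrack\mathbf{x}_{\infty}\rdbrack$; both reductions are sound and rest on the same unitriangular change of variables. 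One slip in wording: a general element of $S_{\mathbb{N}}$ does \emph{not} move only finitely many integers (that property characterizes $S_{\infty}$); what your arguments in $(iii)$ and $(iv)$ actually use, and all they need, is that the image $\sigma(\{1,\dots,n\})$ of a finite set under any permutation is finite, so the conclusions are unaffected.
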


\begin{proof}
The variables $\{x_{i}\}_{i=1}^{\infty}$ are algebraically independent by definition, and it is known that so are $\{e_{r}\}_{r=1}^{\infty}$ in $\Lambda$. Assume now that there exists an algebraic relation $f$ between the union of these sets, which is thus a polynomial in the $e_{r}$'s and finitely many $x_{i}$'s, and let $n$ be the maximal index of a variable $x_{i}$ showing up in $f$. Then $f$ is in the image of $\mathbb{Q}[\mathbf{x}_{n},\mathbf{e}_{\infty}]$, and we need to show that $f$ is the zero polynomial.

Assume otherwise, and then we can write $f$ as $\sum_{d=0}^{h}\sum_{\mu \vdash d}q_{\mu}(\mathbf{x}_{n})e_{\mu}$ for polynomials $q_{\mu}\in\mathbb{Q}[\mathbf{x}_{n}]$, where $e_{\mu}$ is the usual symmetric function $\prod_{r}e_{r}^{m_{r}}$ in which $m_{r}$ is the multiplicity of $r$ in $\mu$, and $q_{\mu}(\mathbf{x}_{n})$ is a shorthand for $q_{\mu}(x_{1},\ldots,x_{n})$. By reducing the value of $h$ if necessary, we may assume that there is some $\mu \vdash h$ with $q_{\mu}\neq0$. We thus obtain an equality comparing $\sum_{\mu \vdash h}q_{\mu}(\mathbf{x}_{n})e_{\mu}$ with an expression which has strictly lower degree in the $e_{r}$'s.

We now write each $e_{r}$ as $\sum_{t=0}^{r}e_{r-t}(\mathbf{x}_{n})e_{t}^{(n)}$, where $e_{t}^{(n)}$ is the $t$th elementary symmetric function in the variables $\{x_{m}\;|\;m>n\}$. After this substitution, every $e_{\mu}$ for $\mu \vdash h$ becomes $e_{\mu}^{(n)}$ (defined in an analogous manner) plus an element of $\mathbb{Q}[\mathbf{x}_{n},\mathbf{e}_{\infty}^{(t)}]$ (defined in the same manner), whose (usual weighted) degree in the $e_{t}^{(n)}$'s is strictly smaller than $h$. The terms involving $e_{\mu}$ for $\mu \vdash d<h$ also become elements of $\mathbb{Q}[\mathbf{x}_{n},\mathbf{e}_{\infty}^{(t)}]$ with the same degree assumption.

Altogether, $f=0$ becomes an equality comparing $\sum_{\mu \vdash h}q_{\mu}(\mathbf{x}_{n})e_{\mu}^{(n)}$ (with the same $q_{\mu}$'s) with some expression whose degree in the $e_{t}^{(n)}$'s is smaller than $h$. But $\{x_{i}\}_{i=1}^{n}\cup\{e_{t}^{(n)}\}_{t=1}^{\infty}$ is the union of two sets of algebraically independent expressions that are supported on disjoint sets of variables, hence they are algebraically independent themselves. Thus such a relation cannot hold, producing a contradiction, which implies that our initial polynomial $f$ cannot be non-zero. part $(i)$ is thus established.

for part $(ii)$, we first fix some $n$, and note that every symmetric function in the variables $\{x_{m}\;|\;m>n\}$ is stabilized by $S_{\infty}^{(n)}$ and is hence an element of $\tilde{\Lambda}$. To show that it is generated by $\mathbb{Q}[\mathbf{x}_{\infty}]$ and $\Lambda$, it suffices to consider generators of the algebra of the symmetric functions in these variables, which we take to be the power sum symmetric functions (we work over $\mathbb{Q}$, hence these are indeed generators). But the $r$th power sum $\sum_{m>n}x_{m}^{r}$ is the difference between the symmetric power sum $\sum_{i}x_{i}^{r}\in\Lambda$ and the polynomial $\sum_{i=1}^{n}x_{i}^{r}\in\mathbb{Q}[\mathbf{x}_{n}]\subseteq\mathbb{Q}[\mathbf{x}_{\infty}]$, proving that every such function lies in $\mathbb{Q}[\mathbf{x}_{\infty},\mathbf{e}_{\infty}]$.

Consider now a general element $F\in\tilde{\Lambda}$, and let $n$ be such that $S_{\infty}^{(n)}$ stabilizes $F$. We write $F$ as a sum of monomials, and every such monomial is the product of a monomial $q(\mathbf{x}_{n})$ times a monomial in $\{x_{m}\;|\;m>n\}$. The invariance under $S_{\infty}^{(n)}$ implies that the product of $q(\mathbf{x}_{n})$ with the image of the latter monomial under every permutation of the variables with larger indices must also show up in $F$, and gathering them together produces $q(\mathbf{x}_{n})$ times a (monomial) symmetric function in these variables.

We thus express $F$ as a (possibly infinite) sum of products of polynomials from $\mathbb{Q}[\mathbf{x}_{n}]$ and monomial symmetric functions in $\{x_{m}\;|\;m>n\}$. Recall that the degree of $F$ is bounded by Definition \ref{evsym}, and that for every degree $d$ there are finitely many monomial symmetric functions in $\{x_{m}\;|\;m>n\}$ that have degree at most $d$. Therefore $F$ is indeed generated, algebraically, by $\mathbb{Q}[\mathbf{x}_{n}]$ and those symmetric functions. As the latter were seen to be in $\tilde{\Lambda}$, and Lemma \ref{ringES} provides the same for the former, part $(ii)$ follows.

Next, take $F\in\tilde{\Lambda}$ let $n$ be such that $S_{\infty}^{(n)}$ stabilizes $F$, and consider an element $v \in S_{\mathbb{N}}$. Simple conjugation shows that if $m=\max\{v(i)\;|\;1 \leq i \leq n\}$ then $S_{\infty}^{(m)} \subseteq vS_{\infty}^{(n)}v^{-1}$, and since the latter group stabilizes $vF$, we deduce that $vF\in\tilde{\Lambda}$ as well. This proves part $(iii)$, with the assertion about the subgroup $S_{\infty} \subseteq S_{\mathbb{N}}$ following immediately.

The fact that $S_{\infty} \subseteq S_{\mathbb{N}}$ shows that one direction in part $(iv)$ is trivial. For the other direction, it suffices to show that if $F\in\tilde{\Lambda}$ and $v \in S_{\mathbb{N}}$, then the element $vF$, which lies in $\tilde{\Lambda}$ as well by part $(iii)$, can be related to $F$ via the action of $S_{\infty}$. For this we take $n$ and $m$ as in the above paragraph, and recall that $F$ can be written as $\sum_{d=0}^{h}\sum_{\mu \vdash d}q_{\mu}(\mathbf{x}_{n})e_{\mu}$ by our proof of part $(ii)$.

But then $vF$ equals $\sum_{h=0}^{d}\sum_{\mu \vdash h}q_{\mu}(x_{v(1)},\ldots,x_{v(n)})e_{\mu}$, and we can some element $w \in S_{m}$ with $w(i)=v(i)$ for all $1 \leq i \leq n$. By embedding $S_{m}$ into $S_{\infty}$, and identifying $w$ with its image there, we deduce that $w \in S_{\infty}$ and $wF=vF$, yielding part $(iv)$. This completes the proof of the proposition.
\end{proof}
In fact, as one can easily shows that $e_{t}^{(n)}$ is generated by $\{x_{i}\}_{i=1}^{\infty}$ (or, in fact, just $\{x_{i}\}_{i=1}^{n}$) and $\{e_{r}\}_{r=1}^{\infty}$ over $\mathbb{Z}$, we can strengthen Proposition \ref{samereps} to the statement that $\tilde{\Lambda}\cap\mathbb{Z}\ldbrack\mathbf{x}_{\infty}\rdbrack=\mathbb{Z}[\mathbf{x}_{\infty},\mathbf{e}_{\infty}]$. However, as we only work over $\mathbb{Q}$ in this paper, the result as asserted suffices for our purposes.

\begin{rmk}
Part $(iii)$ of Proposition \ref{samereps} allows us to consider representations of $S_{\mathbb{N}}$ and of $S_{\infty}$ inside $\tilde{\Lambda}$ (which are the same by part $(iv)$ there). By part $(ii)$ of that proposition, we may take a basis for $\Lambda$ over $\mathbb{Q}$ (say the monomial one), and then $\tilde{\Lambda}$ admits the same basis over $\mathbb{Q}[\mathbf{x}_{\infty}]$, and the representations are obtained through the action on $\mathbb{Q}[\mathbf{x}_{\infty}]$ alone (since $S_{\mathbb{N}}$ and $S_{\infty}$ stabilize elements of $\Lambda$). Most of the representations that we will construct will be supported on several parts in such a decomposition, but we will eventually show how to construct representations that respect this decomposition as well. \label{polstimessym}
\end{rmk}

\begin{ex}
Consider, inside the part of $\mathbb{Q}\ldbrack\mathbf{x}_{\infty}\rdbrack$ that is homogeneous of degree 1, the sum of all the variables with even indices, and the sum of all the variables having odd indices. These are relation by the action of the larger group $S_{\mathbb{N}}$, but not by the action of the smaller one $S_{\infty}$ (as each of its elements fixes all but finitely many numbers). This exemplifies the statement about the representations in Proposition \ref{samereps}, as well as shows that these two elements are not in $\tilde{\Lambda}$ (which is also clear from Definition \ref{evsym}). \label{difSinf}
\end{ex}

\medskip

The limit elements, like those from Theorem 2.17 of \cite{[Z2]}, are obtained by repeating the operations $\iota$ and $\hat{\iota}$ from Definition \ref{iotadef}, via the relation from Proposition \ref{plus1iota} at each stage. In order to describe the limit elements, we first present the objects that show up in their indices.
\begin{defn}
We introduce the following notions.
\begin{enumerate}[$(i)$]
\item An \emph{infinite Ferrers diagram}, denoted as $\hat{\lambda}\vdash\infty$, is a Ferrers diagram, in which the first row is infinite. Its \emph{finite part} is the complement of the first, infinite row.
\item An \emph{infinite standard Young tableau} $\hat{T}$ is any tableau whose shape is an infinite Ferrers diagram $\hat{\lambda}$, which satisfies the standard condition. The set of infinite standard Young tableaux of shape $\hat{\lambda}$ is denoted by $\operatorname{SYT}(\hat{\lambda})$.
\item An \emph{infinite semi-standard Young tableau} of shape $\hat{\lambda}\vdash\infty$ is a semi-standard filling of the finite part of $\hat{\lambda}$ by positive integers, plus a non-negative multiplicity for each $h>0$, which is positive only for finitely many values of $h$. The set of those infinite semi-standard Young tableaux will be denoted by $\operatorname{SSYT}(\hat{\lambda})$.
\item An \emph{infinite cocharge tableau} is an infinite semi-standard Young tableau $\hat{C}$ such that all the numbers $h>0$ with positive multiplicities show up in finite part of $\hat{C}$, and if $h>0$ that appears in the finite part of $\hat{C}$, then either its leftmost instance appears in a row lying below an instance of $h-1$ there, or $h-1$ has a positive multiplicity. We write $\operatorname{CCT}(\hat{\lambda})$ for the set of infinite cocharge tableau of shape $\hat{\lambda}\vdash\infty$.
\end{enumerate} \label{tabinf}
\end{defn}
In an infinite semi-standard Young tableau as described in Definition \ref{tabinf}, the first row is considered as if it contains, after infinitely many zeros, each number $h>0$ with the multiplicity given to it, in non-decreasing order. Thus the determination of $\operatorname{CCT}(\hat{\lambda})$ inside $\operatorname{SSYT}(\hat{\lambda})$ is the infinite analogue of part $(iv)$ of Lemma \ref{compctJ}. For $\hat{T}\in\operatorname{SYT}(\hat{\lambda})$ and $i\in\mathbb{N}$ we let $v_{\hat{T}}(i)$, $R_{\hat{T}}(i)$, and $C_{\hat{T}}(i)$ have the same meaning as with finite standard tableaux.

Before giving examples, we show how such objects are constructed.
\begin{lem}
The notions from Definition \ref{tabinf} are limits of finite ones, in the following sense.
\begin{enumerate}[$(i)$]
\item Given any $\lambda \vdash n$, applying the map $\lambda\mapsto\lambda_{+}$ repeatedly produces an infinite Ferrers diagram $\hat{\lambda}\vdash\infty$, whose finite part is based on $\{\lambda_{j}\}_{j\geq2}$. Every $\hat{\lambda}\vdash\infty$ is obtained from some $n$ and $\lambda \vdash n$ in this way.
\item For $T\in\operatorname{SYT}(\lambda)$, a repeated application of $\iota$ from Definition \ref{iotadef} produces an infinite standard Young tableau $\hat{T}\in\operatorname{SYT}(\hat{\lambda})$. Any such infinite tableau can be constructed in this manner for some $n$, $\lambda \vdash n$, and $T\in\operatorname{SYT}(\lambda)$.
\item A repeated action of $\hat{\iota}$ to some element $M\in\operatorname{SSYT}(\lambda)$ produces an infinite semi-standard Young tableau $\hat{M}\in\operatorname{SSYT}(\hat{\lambda})$. Given any element of $\operatorname{SSYT}(\hat{\lambda})$, there are $n$, $\lambda \vdash n$, and $M\in\operatorname{SSYT}(\lambda)$ such that starting this process with them yields the desired $\hat{M}$.
\item By starting with $M=C\in\operatorname{CCT}(\lambda)$, the limit object $\hat{M}=\hat{C}$ from part $(iii)$ is in $\operatorname{CCT}(\hat{\lambda})$. If an element $\hat{C}\in\operatorname{CCT}(\hat{\lambda})$ is obtained from a finite tableau $C\in\operatorname{SSYT}(\lambda)$, then $C\in\operatorname{CCT}(\lambda)$.
\end{enumerate} \label{limtab}
\end{lem}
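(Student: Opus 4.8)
The plan is to establish the four parts in order. In each, I would first verify that iterating the relevant operation ($\lambda\mapsto\lambda_+$, $\iota$, or $\hat\iota$) yields a well-defined infinite object of the claimed type — this is bookkeeping built on Lemmas \ref{compctJ} and \ref{rels}, together with the fact that each operation leaves all the previously placed data unchanged — and then, conversely, reconstruct a suitable finite object from a given infinite one.

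For $(i)$, since $\lambda\mapsto\lambda_+$ only lengthens the first row, the iterates form a nested chain with unbounded first rows and constant finite part $(\lambda_2,\lambda_3,\dots)$, whose union is the required infinite diagram; conversely, from $\hat\lambda$ with finite part $(\mu_2,\mu_3,\dots)$ one may start with any partition $\lambda$ having $\lambda_1\ge\mu_2$ and that tail. For $(ii)$, $\iota$ appends the next integer at the end of the first row and preserves the standard condition because the new entry exceeds every earlier one, so the iterates of $T$ union to an infinite standard tableau $\hat T$ of shape $\hat\lambda$ filled bijectively by $\mathbb{N}$; conversely, letting $M$ be the largest entry occurring in the finite part of a given $\hat T$, every integer exceeding $M$ lies in the increasing first row, which therefore reads $M+1,M+2,\dots$ from some point on, and restricting $\hat T$ to the finite part together with the preceding initial segment of the first row produces a $T\in\operatorname{SYT}(\lambda)$ on entries $\{1,\dots,M\}$; its shape is a genuine partition because the first $\mu_2$ entries of the first row sit directly above the second row and are hence at most $M$. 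For $(iii)$, parts $(vi)$ and $(vii)$ of Lemma \ref{rels} show that $\hat\iota$ prepends a $0$ to the first row, fixes the rest, and preserves $\operatorname{Dsp}^c$ and $\Sigma$, so the iterates of $M$ have fixed finite part and first rows accumulating zeros while retaining a fixed multiset of positive entries, whose limit $\hat M$ records exactly the data of Definition \ref{tabinf}$(iii)$; conversely, from $\hat M$ with finite part $N$ and positive row-$1$ entries $b_1\le\cdots\le b_t$ (listed with multiplicity), one takes $\lambda:=(t+\mu_2,\mu_2,\mu_3,\dots)$ and lets $M$ have rows $\ge2$ equal to $N$ and first row consisting of $\mu_2$ zeros followed by $b_1,\dots,b_t$; the leading zeros make $M$ semi-standard, and iterating $\hat\iota$ returns $\hat M$.

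For $(iv)$, the forward direction is part $(viii)$ of Lemma \ref{rels} applied iteratively: I would check that the infinite cocharge conditions of Definition \ref{tabinf}$(iv)$ for the limit $\hat C$ reduce to the finite ones for the approximations $\hat\iota^k C$, using that the finite part and the set of positive multiplicities are already visible at each finite stage and that a positive entry in the first row of a finite cocharge tableau is never its own leftmost instance, hence also occurs in a lower row and so in the finite part. The converse rests on the biconditional $\hat\iota M\in\operatorname{CCT}(\lambda_+)\Leftrightarrow M\in\operatorname{CCT}(\lambda)$: the implication ``$\Leftarrow$'' is part $(viii)$ of Lemma \ref{rels}, and for ``$\Rightarrow$'' I would either chase the bijections of Lemmas \ref{compctJ} and \ref{rels} — writing $\hat\iota M=\operatorname{ct}_{J_+}(\tilde\iota\operatorname{ct}_J^{-1}M)$, invoking $\operatorname{Dsi}(\tilde\iota S)=\operatorname{Dsi}(S)_+$, and using the description of $\operatorname{CCT}$ as the image of $\operatorname{ct}$ — or argue directly that if the leftmost $1$ of a semi-standard tableau lies in a row below the first then its first row already contains a $0$, so the zero introduced by $\hat\iota$ is never needed to certify a cocharge condition; applying this to $\hat\iota^k M$ for large $k$ and descending then gives $M\in\operatorname{CCT}(\lambda)$, whence $(iv)$ follows by combining with $(iii)$.

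The main obstacle is the converse of $(iv)$. The finite tableau produced in $(iii)$ may, unlike a ``textbook'' cocharge tableau, carry positive entries in its first row, and the crux is to confirm that it is nonetheless cocharge; this is precisely the ``$\hat\iota M\in\operatorname{CCT}\Rightarrow M\in\operatorname{CCT}$'' step, where one must rule out the possibility that the extra zero supplied by $\hat\iota$ was essential. A recurring minor technicality in $(i)$--$(iii)$ is ensuring that the partition inequality $\lambda_1\ge\lambda_2$ survives the reconstruction, which is automatic because the second row of the infinite object lies below the left end of its first row.
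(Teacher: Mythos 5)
Your proposal is correct and follows essentially the same route as the paper: iterate the maps, note that the finite part and the positive multiplicities stabilize, reconstruct a finite preimage by choosing $n$ large enough to put at least $\lambda_{2}$ zeros at the start of the first row, and match the finite and infinite cocharge conditions via the observation that the zeros supplied by $\hat{\iota}$ are never what certifies the condition (for $h\geq2$ the condition never involves zeros, and for $h=1$ a leftmost instance below the first row already forces a $0$ above it). The one point to tighten in $(iv)$ is that your ``descend from $\hat{\iota}^{k}M$'' step presupposes $\hat{\iota}^{k}M\in\operatorname{CCT}$, which must itself be deduced from the infinite condition on $\hat{C}$; so the finite/infinite comparison you sketch in the forward direction should be stated as an equivalence (as the paper implicitly does), after which the one-step biconditional is not actually needed.
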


\begin{proof}
As each application of $\lambda\mapsto\lambda_{+}$ simply lengthens the first row, one direction in part $(i)$ is obvious. Conversely, given $\hat{\lambda}\vdash\infty$, take any $\lambda$ with the required finite part (which is possible by choosing any $\lambda_{1}$ large enough), and it will produce $\hat{\lambda}$ by this process, yielding part $(i)$.

Next, as the standard condition is respected by $\iota$, and is verified by comparing a finite number of boxes at each place, one direction in part $(ii)$ follows. Observing that the finite part of $\hat{T}$ there is the part of $T$ that is below the first row, we conversely take $\hat{\lambda}\vdash\infty$ and $\hat{T}\in\operatorname{SYT}(\hat{\lambda})$, and consider the filling of its finite part. The first row contains the complement, inside $\mathbb{N}$, of the content of the finite part (ordered by the standard condition), and take $n$ to be larger than the maximal entry of this finite part, and let $T$ be the tableau consisting of the boxes $\{v_{\hat{T}}(i)\}_{i=1}^{n}$ with their entries. It is clearly standard, with the finite part of $\hat{T}$ below the first row, and every application of $\iota$ to it adds another box to its location in $\hat{T}$. This establishes part $(ii)$.

Given now an element $M\in\operatorname{SSYT}(\lambda)$, part $(vi)$ of Lemma \ref{rels} implies that the process described in part $(iii)$ pushes the content of the first row of $M$ infinitely to the right, and adds infinitely many zeros. The interpretation of the multiplicity of $h>0$ as representing this many copies of $h$ infinitely to the right (ordered increasingly over $h$) shows that the result is indeed an element of $\operatorname{SSYT}(\hat{\lambda})$, with its finite part being the same as that of $M$ below the first row.

Conversely, take $\hat{M}\in\operatorname{SSYT}(\hat{\lambda})$, and let $n$ be larger than the size of the finite part of $\hat{M}$ plus the sum of the multiplicities. Then set $M$ to be tableau of shape $\lambda \vdash n$, with the finite part of $\hat{M}$ below the first row, and where the first row contains each $h$ with the prescribed multiplicity (ordered by the semi-standard condition), and completed with zeros. As the condition on $n$ implies that all the entries above the second row would contain 0, we get $M\in\operatorname{SSYT}(\lambda)$, and applying $\hat{\iota}$ repeatedly to it would produce $\hat{M}$ as desired, proving part $(iii)$.

Finally, if $M$ in part $(iii)$ is $C\in\operatorname{CCT}(\lambda)$, then so are all its images after finitely many applications of $\hat{\iota}$ (by part $(viii)$ of Lemma \ref{rels}), which thus satisfy the condition from part $(iv)$ of Lemma \ref{compctJ}. It follows that the resulting element of $\operatorname{SSYT}(\hat{\lambda})$ lies in $\operatorname{CCT}(\hat{\lambda})$. Conversely, if $\hat{C}\in\operatorname{CCT}(\hat{\lambda})$ then our construction of $C\in\operatorname{SSYT}(\lambda)$ which will produce it via part $(iii)$ combines with the condition for $\hat{C}$ to be in $\operatorname{CCT}(\hat{\lambda})$ to show that $C$ actually lies in $\operatorname{CCT}(\lambda)$ by part $(iv)$ of Lemma \ref{compctJ} again, and part $(iv)$ follows. This proves the lemma.
\end{proof}

\medskip

\begin{ex}
Take $\lambda$, $T$, $M$, and $C$ as in Example \ref{ctJex}, with $\lambda_{+}$, $\iota T$, $\hat{\iota}M$, and $\hat{\iota}C$ from Example \ref{iotaex}. The corresponding infinite Ferrers diagram $\hat{\lambda}\vdash\infty$ has an infinite first row, plus a second row with two boxes and a third row with a single box. The tableaus $\hat{T}$, $\hat{M}$, and $\hat{C}$ obtained via Lemma \ref{limtab} are then \[\begin{ytableau} 1 & 2 & 6 & 7 & 8 & \cdots \\ 3 & 5 \\ 4 \end{ytableau},\ \ \begin{ytableau} 0 & 0 & 0 & 0 & 0 & \cdots \\ 3 & 4 \\ 4 \end{ytableau},\mathrm{\ and\ }\begin{ytableau} 0 & 0 & 0 & 0 & 0 & \cdots \\ 1 & 2 \\ 2 \end{ytableau},\] where in $\hat{T}$ the entries keep increasing to the right of the first, while in $\hat{M}$ and $\hat{C}$ they remain 0, and we recall that 1 should appear with multiplicity 2 in $\hat{M}$ (as it does in the first row of $M$ and $\hat{\iota}M$), and there are no non-zero multiplicities added in $\hat{C}$. \label{exinftab}
\end{ex}

\begin{rmk}
Recall that Proposition \ref{forstab} did not require $T$ to be standard. Taking the limit of a tableau of shape $\lambda \vdash n$ and content $\mathbb{N}_{n}$ via part $(ii)$ of Lemma \ref{limtab} produces a tableau of shape $\hat{\lambda}\vdash\infty$ and content $\mathbb{N}$, but only those tableaux in which the first row becomes standard after finitely many boxes are obtained in this manner. The set of the latter tableaux is stable under $S_{\infty}$, but not under $S_{\mathbb{N}}$, as considering the image of the standard infinite tableau with a single infinite row under any element of $S_{\mathbb{N}}$ connecting the two expressions from Example \ref{difSinf} to one another shows. This discrepancy plays a role in Remark \ref{onlystd} below. \label{std1row}
\end{rmk}

\begin{rmk}
Recall the original notation from Remark \ref{notation}, with the compatibility from \cite{[Z2]} that involves the evacuation process. One can, in fact, take limits of the operation $\tilde{\iota}$ on a standard Young tableau in order to get an \emph{infinite evacuation tableau} of shape $\hat{\lambda}\vdash\infty$, by filling the finite part with entries of the sort $\infty+i$ (in a standard manner), and requiring the first row to contain $\mathbb{N}$ plus finitely many other entries $\infty+i$ (with different $i$s). Indeed, following the details of $\operatorname{ev}$ from Section 3.9 of \cite{[Sa]}, one can show that it extends to a bijection from $\operatorname{SYT}(\hat{\lambda})$ onto the sets of such tableaux and vice versa, and that one can define a bijection $\operatorname{ct}$ from such tableaux onto $\operatorname{CCT}(\hat{\lambda})$, with inverse $\operatorname{ct}^{-1}$ (generalizing $\operatorname{ct}_{J}$ and $\operatorname{ct}_{J}^{-1}$, from Definition \ref{multisets} to this setting requires a small modification though). However, as these objects are less natural, we stick to the ones from Definition \ref{tabinf}. \label{infev}
\end{rmk}
The equivalent, in terms of Remark \ref{infev}, to the tableaux from Example \ref{exinftab} is with the first row containing $\mathbb{N}$, the second one having $\infty$ and $\infty+2$, and the box in the third row containing $\infty+1$. The infinite $\operatorname{ev}$ and infinite $\operatorname{ct}$ relate this tableau to $\hat{T}$ and $\hat{C}$ from that example respectively.

\medskip

Given an element $\hat{T}\in\operatorname{SYT}(\hat{\lambda})$, we can define the subgroups $R(\hat{T})$, $C(\hat{T})$, and $\tilde{C}(\hat{T})$ just like for finite standard tableaux. We also extend Definition \ref{defSSYT} and some related objects as follows.
\begin{defn}
Let $\hat{M}$ be an element of $\operatorname{SSYT}(\hat{\lambda})$ for some $\hat{\lambda}\vdash\infty$.
\begin{enumerate}[$(i)$]
\item We consider every $h>0$ as showing up in $\hat{M}$ with a multiplicity which is the sum of the multiplicity given in Definition \ref{tabinf} and the number of times that it shows up in the finite part of $\hat{M}$. The \emph{content} of $\hat{M}$ is the non-decreasing sequence in which each $h>0$ shows up with that multiplicity, ignoring the infinitely many zeros.
\item The entry sum $\Sigma(\hat{M})$ of $\hat{M}$ is defined to be the sum of the elements of the content of $\hat{M}$ as a sequence.
\item Given a content $\mu$, we denote by $\operatorname{SSYT}_{\mu}(\hat{\lambda})$ the set of infinite semi-standard Young tableau of shape $\hat{\lambda}$ and content $\mu$. Similarly $\operatorname{SSYT}_{d}(\hat{\lambda})$ stands for the set of elements of $\hat{M}\in\operatorname{SSYT}(\hat{\lambda})$ for which $\Sigma(\hat{M})=d$.
\item Let $k$ be such that the maximal entry in the content of $\hat{M}$ is $k-1$, and for every $1 \leq g<k$, denote by $i_{g}$ the number of entries in the content of $\hat{M}$ that equal at least $k-g$. We define $\operatorname{Dsp}^{c}(\hat{M})$ to be the multi-set obtained from gathering the non-decreasing sequence $(i_{g})_{g=1}^{k-1}$.
\item Given a content $\mu$, we denote by $\tilde{\Lambda}_{\mu}$ the space of all elements of $\tilde{\Lambda}$ that are supported on monomials of content $\mu$, in the sense of Definition \ref{Qxninfd}.
\end{enumerate} \label{infSSYT}
\end{defn}
Also here it is clear that $\operatorname{SSYT}_{d}(\hat{\lambda})$ from Definition \ref{infSSYT} is the disjoint union of $\operatorname{SSYT}_{\mu}(\hat{\lambda})$ over the contents $\mu$ of sum $d$. It is clear that $\hat{M}$ and $\hat{C}$ from Example \ref{exinftab} satisfy $\operatorname{Dsp}^{c}(\hat{M})=\{2,3,3,5\}$ and $\operatorname{Dsp}^{c}(\hat{C})=\{2,3\}$ as well as $\Sigma(\hat{M})=13$ and $\Sigma(\hat{C})=5$, just like $M$ and $C$ from Example \ref{ctJex} and $\hat{\iota}M$ and $\hat{\iota}C$ from Example \ref{iotaex}.

The notions from Definition \ref{infSSYT} have the following properties.
\begin{lem}
Consider $\hat{\lambda}\vdash\infty$, with elements $\hat{T}\in\operatorname{SYT}(\hat{\lambda})$ and $\hat{M}\in\operatorname{SSYT}(\hat{\lambda})$, and let $\lambda \vdash n$, $T\in\operatorname{SYT}(\lambda)$, and $M\in\operatorname{SYT}(\lambda)$ produce them via Lemma \ref{limtab}.
\begin{enumerate}[$(i)$]
\item The group $C(\hat{T})$ equals the image of $C(T)$ inside $S_{\infty}$ and is thus finite, while $R(\hat{T})$ and $\tilde{C}(\hat{T})$ contain $S_{\mathbb{N}}^{(n)}$ for this $n$.
\item The subgroup $R(\hat{T})\cap\tilde{C}(\hat{T})$ is of finite index inside $\tilde{C}(\hat{T})$, and its action on $C(\hat{T})$ yields $\tilde{C}(\hat{T})$ as the semi-direct product. Thus, as in the finite case, the character $\widetilde{\operatorname{sgn}}:\tilde{C}(\hat{T})\to\{\pm1\}$ is well-defined.
\item The multi-set $\operatorname{Dsp}^{c}(\hat{M})$ coincides with $\operatorname{Dsp}^{c}(M)$ from Definition \ref{sets}, and we have $\Sigma(\hat{M})=\Sigma(M)$. We thus have the equality $\Sigma(\hat{M})=\sum_{i\in\operatorname{Dsp}^{c}(\hat{M})}i$.
\item If $\hat{M}=\hat{C}\in\operatorname{CCT}(\hat{\lambda})$, then $\operatorname{Dsp}^{c}(\hat{C})$ is a (finite) subset of $\mathbb{N}$.
\item The part $\tilde{\Lambda}_{d}\subseteq\tilde{\Lambda}$ is the direct sum of $\tilde{\Lambda}_{\mu}$ over all contents $\mu$ of sum $d$.
\end{enumerate} \label{propinf}
\end{lem}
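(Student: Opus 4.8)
The plan is to reduce each of the five assertions to its finite counterpart, using Lemma \ref{limtab} to pass between the infinite tableaux $\hat{T},\hat{M}$ and finite data $T\in\operatorname{SYT}(\lambda)$, $M\in\operatorname{SSYT}(\lambda)$, and invoking Lemmas \ref{rels} and \ref{evDsiAsi}; the only new ingredient is the combinatorics of the infinitely many length-one columns of $\hat{T}$, which enters parts $(i)$ and $(ii)$. For $(i)$ I would fix $n$ large enough that the tableau $T$ on the boxes $\{v_{\hat{T}}(i)\}_{i=1}^{n}$ satisfies $\lambda_{1}\ge\lambda_{2}$ and exhausts the finite part of $\hat{T}$ (legitimate, as $T$ is only required to reproduce $\hat{T}$ via Lemma \ref{limtab}$(ii)$). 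Since the finite part of $\hat{\lambda}$ is an ordinary partition, all columns of $\hat{T}$ are finite, every column of length $\ge2$ lies inside $T$ with the same entries, and a column-preserving permutation is trivial on every length-one column; hence $C(\hat{T})$ is exactly the image of $C(T)$ under $S_{n}\hookrightarrow S_{\infty}$, which is finite. Because $\hat{T}$ is built from iterations of $\iota$, the integers $n+1,n+2,\dots$ occupy the length-one columns $\lambda_{1}+1,\lambda_{1}+2,\dots$ of the first row in order, so any element of $S_{\mathbb{N}}^{(n)}$ merely permutes these length-one columns and fixes every other box, placing it in both $R(\hat{T})$ and $\tilde{C}(\hat{T})$.

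For $(ii)$ I would identify $R(\hat{T})\cap\tilde{C}(\hat{T})$ with the group of length-preserving permutations of the columns of $\hat{T}$ --- one infinite length-class (the length-one columns) and finitely many finite ones --- so that $\tilde{C}(\hat{T})$ is its semidirect product with the finite group $C(\hat{T})$ and the index is $|C(\hat{T})|<\infty$. As in Lemma 2.1 of \cite{[Z2]}, $\widetilde{\operatorname{sgn}}$ is then taken trivial on $R(\hat{T})\cap\tilde{C}(\hat{T})$ and equal to the ordinary sign on $C(\hat{T})$; this is a well-defined character of the semidirect product because conjugating an element of $C(\hat{T})$ by an equal-length column permutation preserves its cycle type, hence its sign.

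Parts $(iii)$ and $(iv)$ follow by writing $\hat{M}$ (resp. $\hat{C}$) as the limit of $M\in\operatorname{SSYT}(\lambda)$ (resp. $C\in\operatorname{CCT}(\lambda)$) from Lemma \ref{limtab}: Lemma \ref{rels}$(vii)$ shows that iterating $\hat{\iota}$ leaves $\Sigma$ and $\operatorname{Dsp}^{c}$ and the positive part of the content unchanged, so comparing Definition \ref{infSSYT}$(i),(iv)$ with Lemma \ref{rels}$(i),(ii)$ gives $\operatorname{Dsp}^{c}(\hat{M})=\operatorname{Dsp}^{c}(M)$ and $\Sigma(\hat{M})=\Sigma(M)$, while $\Sigma(\hat{M})=\sum_{i\in\operatorname{Dsp}^{c}(\hat{M})}i$ comes from Lemma \ref{rels}$(ii)$ (or directly, since an entry of value $v$ is counted in exactly the $v$ terms $i_{k-v},\dots,i_{k-1}$). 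For $(iv)$, once $\hat{C}$ arises from a genuine $C\in\operatorname{CCT}(\lambda)$, the remark following Definition \ref{sets} identifies $\operatorname{Dsp}^{c}(C)$ with $\operatorname{Dsi}^{c}(\operatorname{ct}^{-1}(C))\subseteq\mathbb{N}_{n-1}$, an ordinary finite set, which $(iii)$ transports to $\hat{C}$. For $(v)$, every degree-$d$ monomial has a content that is a partition of $d$, the $S_{\mathbb{N}}$-action permutes variables and so preserves the content of a monomial, and the projection of a power series onto the span of monomials of a fixed content $\mu$ commutes with this action; hence each of the finitely many content components of an $F\in\tilde{\Lambda}_{d}$ fixed by $S_{\mathbb{N}}^{(n)}$ is again fixed by $S_{\mathbb{N}}^{(n)}$ and homogeneous of degree $d$, so lies in $\tilde{\Lambda}_{\mu}$, and the disjointness of monomial supports makes the sum direct.

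The main obstacle will be pinning down the combinatorial picture behind $(i)$ and $(ii)$: one must choose $n$ large enough and verify that every multi-box column of $\hat{T}$ already sits inside $T$ while all sufficiently large integers occupy length-one columns of the first row of $\hat{T}$. Once that is in place, the remaining arguments are routine consequences of the finite results already recalled.
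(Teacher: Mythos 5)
Your proposal is correct and follows essentially the same route as the paper: each part is reduced to its finite counterpart via Lemma \ref{limtab}, with parts $(i)$ and $(ii)$ resting on the observation that the columns of $\hat{T}$ are those of $T$ plus infinitely many length-one columns occupied by the integers $m>n$, parts $(iii)$ and $(iv)$ on the invariance of $\operatorname{Dsp}^{c}$ and $\Sigma$ under $\hat{\iota}$ from Lemma \ref{rels}, and part $(v)$ on the content decomposition being preserved by the $S_{\mathbb{N}}$-action. The only cosmetic difference is that you re-derive the choice of $n$ in part $(i)$, which the hypothesis of the lemma already supplies.
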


\begin{proof}
The columns of $\hat{T}$ are those of $T$ plus many additional columns of length 1. As the latter columns contribute trivial multipliers to $C(\hat{T})$, the comparison with $C(T)$ follows. Moreover, every integer $m>n$ appears in a column of length 1 inside the first row of $\hat{T}$, as it is added at a given application of $\iota$, so that the other assertion in part $(i)$ follows from the definition of $R(\hat{T})$ and $\tilde{C}(\hat{T})$.

The semi-direct product structure of $\tilde{C}(\hat{T})$ is an immediate consequence of the definition, and the finite index assertion in part $(ii)$ thus follows from the finiteness of $C(\hat{T})$ from part $(i)$. Letting that finite index subgroup act trivially and $C(\hat{T})$ operate by the usual sign produces $\widetilde{\operatorname{sgn}}$ as in part $(iii)$ of Lemma 2.1 of \cite{[Z1]}.

Next, part $(vii)$ of Lemma \ref{rels} shows that the $\operatorname{Dsp}^{c}$-multi-set is unaffected by $\hat{\iota}$, and so is the content up to the multiplicity of 0 (by, e.g., part $(vi)$ there). Hence so is the sum $\Sigma$, and part $(iii)$ is thus a consequence of these considerations via the proof of part $(iii)$ of Lemma \ref{limtab}.

Part $(iv)$ is either a consequence of part $(ii)$ the fact that $\operatorname{Dsp}^{c}(C)$ is a set wherever $C$ is a finite cocharge tableau, or follows by observing that all the numbers between 0 and $k-1$ (for $k$ as in Definition \ref{infSSYT}) show up in $\hat{C}$.

Finally, we decompose every element $F\in\tilde{\Lambda}_{d}$ as the series of its monomials, and gather those of content $\mu$ into a separate expression $F_{\mu}\in\mathbb{Q}\ldbrack\mathbf{x}_{\infty}\rdbrack$, so that $F=\sum_{\mu}F_{\mu}$ (a finite sum since there are finitely many contents of sum $d$). As the action of $S_{\mathbb{N}}$ preserves the contents of monomials, we deduce that $vF=\sum_{\mu}v(F_{\mu})$ for every $v \in S_{\mathbb{N}}$, where $v(F_{\mu})$ is the $\mu$-part $(vF)_{\mu}$ of $vF$ in this decomposition (and $vF\in\tilde{\Lambda}_{d}$, by part $(iii)$ of Proposition \ref{samereps} and the preservation of degrees under this action).

But there is some $n$ such that $vF=F$ for every $v \in S_{\mathbb{N}}^{(n)}$, by Definition \ref{evsym}. It follows that for each content $\mu$ of sum $d$ we get $v(F_{\mu})=(vF)_{\mu}=F_{\mu}$ for any such $v$, and we deduce from that definition that $F_{\mu}$ is in $\tilde{\Lambda}$, and thus in $\tilde{\Lambda}_{\mu}$ by Definition \ref{infSSYT}, as desired for part $(v)$. This proves the lemma.
\end{proof}
One can also define the set $\operatorname{Dsi}(\hat{T})$ for any $\hat{T}\in\operatorname{SYT}(\hat{\lambda})$, and an argument like the one proving part $(i)$ of Lemma \ref{propinf} shows that it equals $\operatorname{Dsi}(T)$ for the corresponding $T\in\operatorname{SYT}(\lambda)$ (and is thus finite). Using appropriate definitions, one can define a finite $\operatorname{Dsi}^{c}$-set of integers for every infinite evacuation tableau from Remark \ref{infev} in a way that Lemma \ref{evDsiAsi} applies for the (extended) evacuation process from this remark, and such that the corresponding $\operatorname{ct}$ map respects the equality between this $\operatorname{Dsi}^{c}$-set and the $\operatorname{Dsi}^{c}$-set of its image in $\operatorname{CCT}(\hat{\lambda})$ (which is a set by part $(iv)$ of Lemma \ref{propinf}).

\medskip

Our next goal is to define our stable objects, where we recall that $e_{r}$ is the $r$th symmetric function, as an element of $\Lambda$ (hence also of $\tilde{\Lambda}$, by Lemma \ref{ringES}). We also recall from part $(iv)$ of Lemma \ref{propinf} that $\operatorname{Dsp}^{c}(\hat{C})$ is a finite set, and thus for every finite multi-set of integers that contain it in the sense of Definition \ref{multisets} we can let $\operatorname{Asp}^{c}_{I}(\hat{C})$ define the complement, as in Definition \ref{sets}.
\begin{defn}
Fix $\hat{\lambda}\vdash\infty$, with $\hat{T}\in\operatorname{SYT}(\hat{\lambda})$ and $\hat{M}\in\operatorname{SSYT}(\hat{\lambda})$.
\begin{enumerate}[$(i)$]
\item For any $i\in\mathbb{N}_{n}$ such that $R_{\hat{T}}(i)>1$, we define $h_{i}$ to be the entry showing up inside the box $v_{\hat{M}}(i)$ of the finite part of $\hat{M}$. In addition we choose, for every $h>0$ having multiplicity $m_{h}>0$ in Definition \ref{tabinf} for $\hat{M}$, a collection of $m_{h}$ integers $i$ that show up in the first row of $\hat{T}$, such that the collections for different values of $h$ are disjoint, and set $h_{i}=h$ for every $i$ in the collection associated with $h$. We then define the monomial $p_{\hat{M},\hat{T}}:=\prod_{i=1}^{n}x_{i}^{h_{i}}\in\mathbb{Q}[\mathbf{x}_{n}]$.
\item We write $\operatorname{st}_{\hat{T}}\hat{M}$ for the stabilizer of $p_{\hat{M},\hat{T}}$ in $R(\hat{T})$, which contains some $S_{\mathbb{N}}^{(n)}$ by Lemma \ref{ringES} and part $(i)$ of Lemma \ref{propinf}.
\item The expression $\sum_{\tau \in R(\hat{T})/\operatorname{st}_{\hat{T}}\hat{M}}p_{\hat{M},\hat{T}}$ is a typically infinite sum of monic monomials. We define the \emph{stable generalized higher Specht polynomial} $F_{\hat{M},\hat{T}}$ to be its image under the operator $\sum_{\sigma \in C(\hat{T})}\operatorname{sgn}(\sigma)\sigma$, which is in $\mathbb{Q}[S_{\infty}]$ by part $(i)$ of Lemma \ref{propinf}. When $\hat{M}=\hat{C}\in\operatorname{CCT}(\hat{\lambda})$, we call $F_{\hat{M},\hat{T}}=F_{\hat{C},\hat{T}}$ a \emph{stable higher Specht polynomial}.
\item Given a finite subset $I\subseteq\mathbb{N}$, write $r_{i}:=|\{j\in\mathbb{N}\setminus\hat{I}\;|\;j<i\}|$ for every $i \in I$.
\item If $\hat{C}\in\operatorname{CCT}(\hat{\lambda})$ and the set $I$ contains the set $\operatorname{Dsp}^{c}(\hat{C})$ from Definition \ref{infSSYT} and part $(iv)$ of Lemma \ref{propinf}, then we define $h_{r}:=\big|\{i\in\operatorname{Asp}^{c}_{I}(\hat{C})\;|\;r_{i}=r\}\big|$ for any $r>0$, and let $\vec{h}_{\hat{C}}^{I}$ be the vector $\{h_{r}\}_{r=1}^{\infty}$. It has finitely many non-zero entries.
\item For such $I$ and $\hat{C}$, we define $F_{\hat{C},\hat{T}}^{I}$ to be the stable higher Specht polynomial $F_{\hat{C},\hat{T}}$ times the element $\prod_{i\in\operatorname{Asp}^{c}_{I}(C)}e_{r_{i}}=\prod_{i=1}^{n}e_{r_{i}}^{h_{i}}\in\Lambda\subseteq\tilde{\Lambda}$.
\item More generally, when $I$ is a multi-set containing $\operatorname{Dsp}^{c}(\hat{C})$ as in Definition \ref{multisets}, and we define $\vec{h}(\hat{C},I)$ to be the characteristic vector of the complement multi-set $\operatorname{Asp}^{c}_{I}(C)$.
\item Given $\hat{C}$ and such a multi-set $I$, we set $F_{\hat{C},\hat{T}}^{I,\mathrm{hom}}$ to be $F_{\hat{C},\hat{T}}$ times the symmetric function $\prod_{i\in\operatorname{Asp}^{c}_{I}(\hat{C})}e_{i}\in\Lambda\subseteq\tilde{\Lambda}$.
\item We write $V_{\hat{M}}$ for the subspace of $\mathbb{Q}\ldbrack\mathbf{x}_{\infty}\rdbrack$ spanned by all the stable generalized higher Specht polynomials $F_{\hat{M},\hat{T}}$ for $\hat{T}\in\operatorname{SYT}(\hat{\lambda})$. When $\hat{M}$ is an infinite cocharge tableau $\hat{C}\in\operatorname{CCT}(\hat{\lambda})$, and $\vec{h}$ is any vector $\{h_{r}\}_{r=1}^{\infty}$ of natural numbers with finitely many non-zero entries, we set $V_{\hat{C}}^{\vec{h}}$ to be the space of elements of $\mathbb{Q}\ldbrack\mathbf{x}_{\infty}\rdbrack$ that are products of an element of $V_{\hat{C}}$ and the symmetric function $\prod_{i=1}^{n}e_{r_{i}}^{h_{i}}\in\Lambda$.
\end{enumerate} \label{infSpecht}
\end{defn}
Note that the construction of $p_{\hat{M},\hat{T}}$ in Definition \ref{infSpecht} involves some choices, and is thus not unique. However, all the different choices produce polynomials that are in the same orbit of $R(\hat{T})$, and thus the stable generalized higher Specht polynomial $F_{\hat{M},\hat{T}}$ is well-defined, as is the expression before the action of $C(\hat{T})$.

We complete Definition \ref{infSpecht} by adding the following properties.
\begin{prop}
Let $\hat{\lambda}$, $\hat{T}$, $\hat{M}$, and $\hat{C}$ be as usual.
\begin{enumerate}[$(i)$]
\item The action of the group $\tilde{C}(\hat{T})$ on stable generalized higher Specht polynomial $F_{\hat{M},\hat{T}}$ is via the character $\widetilde{\operatorname{sgn}}$ from part $(ii)$ of Lemma \ref{propinf}.
\item If $\mu$ is the content of $\hat{M}$, then $F_{\hat{M},\hat{T}}$ lies in $\tilde{\Lambda}_{\mu}$, hence also in $\tilde{\Lambda}_{d}$ for $d=\Sigma(M)$. We therefore also have $V_{\hat{M}}\subseteq\tilde{\Lambda}_{\mu}\subseteq\tilde{\Lambda}_{d}\subseteq\tilde{\Lambda}$.
\item If $I$ contains $\operatorname{Dsp}^{c}(\hat{C})$ and $\vec{h}$ is any vector as in Definition \ref{infSpecht}, then $F_{\hat{C},\hat{T}}^{I}$, $F_{\hat{C},\hat{T}}^{I,\mathrm{hom}}$, and $V_{\hat{C}}^{\vec{h}}$ are also contained in $\tilde{\Lambda}$, and are homogeneous.
\item Take $n$ large enough such that if $\lambda \vdash n$, $T\in\operatorname{SYT}(\lambda)$, and $M\in\operatorname{SSYT}(\lambda)$ produce $\hat{\lambda}$, $\hat{T}$, and $\hat{M}$ via Lemma \ref{limtab}, and every box in the first row of $M$ that lies over another box contains 0. Then $F_{\hat{M},\hat{T}}$ is the unique element of $\tilde{\Lambda}$ on which the action of $\tilde{C}(\hat{T})$ is through $\widetilde{\operatorname{sgn}}$ and in which the substitution $x_{m}=0$ for all $m>n$ yields $F_{M,T}$.
\item Assume that $\hat{M}=\hat{C}\in\operatorname{CCT}(\hat{\lambda})$ and that some multi-set $I$ contains $\operatorname{Dsp}^{c}(\hat{C})$. Then, for $n$ as in part $(iv)$, with the same $T$ and the tableau $M=C\in\operatorname{CCT}(\lambda)$, the series $F_{\hat{C},\hat{T}}^{I,\mathrm{hom}}$ is the only homogeneous element of $\tilde{\Lambda}$ with the given action of $\tilde{C}(\hat{T})$ and such that substituting $x_{m}=0$ in it for every $m>n$ produces $F_{C,T}^{I,\mathrm{hom}}$. When $I$ is a set, the same assertion holds also for $F_{\hat{C},\hat{T}}^{I}$, with the substitution giving $F_{C,T}^{I}$.
\end{enumerate} \label{limSpecht}
\end{prop}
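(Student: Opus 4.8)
The plan is to dispose of parts $(i)$--$(iii)$ as formal consequences of the construction together with Lemmas \ref{ringES} and \ref{propinf}, and to establish the two uniqueness statements in $(iv)$ and $(v)$ by a ``monomial--transport'' argument, the infinite analogue of the reasoning cited after Proposition \ref{forstab}. For $(i)$ I would write $F_{\hat M,\hat T}=\sum_{\sigma\in C(\hat T)}\operatorname{sgn}(\sigma)\,\sigma O$ with $O$ the $R(\hat T)$--orbit sum of $p_{\hat M,\hat T}$; since all monomials of $O$ share the degree $\Sigma(\hat M)$, $O$ is a well--defined homogeneous series genuinely fixed by all of $R(\hat T)$, and $C(\hat T)$ is finite by Lemma \ref{propinf}$(i)$, so the expression makes sense. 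Using the semidirect decomposition $\tilde C(\hat T)=C(\hat T)\rtimes(R(\hat T)\cap\tilde C(\hat T))$ of Lemma \ref{propinf}$(ii)$ exactly as in the finite case (Theorem \ref{repsSpecht}$(i)$): elements of $R(\hat T)\cap\tilde C(\hat T)$ fix $O$ and conjugate $C(\hat T)$ preserving signs, hence fix $F_{\hat M,\hat T}$ (in accordance with $\widetilde{\operatorname{sgn}}=1$ there), while $C(\hat T)$ acts through $\operatorname{sgn}$; this gives $(i)$. The exponent multiset of $p_{\hat M,\hat T}$ is the content $\mu$ of $\hat M$, of degree $\Sigma(\hat M)=\Sigma(M)=:d$ by Lemma \ref{propinf}$(iii)$, and the operators above merely permute variables, so $F_{\hat M,\hat T}$ is homogeneous of degree $d$ with support in content $\mu$; it lies in $\tilde\Lambda$ because $O$ is fixed by $R(\hat T)\supseteq S_{\mathbb N}^{(n)}$ and, for $m$ beyond both $n$ and the (finite) support of $C(\hat T)$, $S_{\mathbb N}^{(m)}$ commutes with $C(\hat T)$ and fixes $O$, hence fixes $F_{\hat M,\hat T}$. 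Thus $F_{\hat M,\hat T}\in\tilde\Lambda_{\mu}\subseteq\tilde\Lambda_{d}$ and $V_{\hat M}\subseteq\tilde\Lambda_{\mu}$ by spanning, proving $(ii)$; and $(iii)$ is immediate from Lemma \ref{ringES}, since $\tilde\Lambda$ is a ring containing $\Lambda$ and $F_{\hat C,\hat T}^{I}$, $F_{\hat C,\hat T}^{I,\mathrm{hom}}$, and every element of $V_{\hat C}^{\vec h}$ are products of $F_{\hat C,\hat T}\in\tilde\Lambda$ with homogeneous elements of $\Lambda$.

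For $(iv)$ I would pick $n$ large enough that the truncation $M\vdash n$ of $\hat M$ from Lemma \ref{limtab} has all non--zero first--row entries to the right of column $\lambda_{2}$ (so the first row of $M$ carries at least $\lambda_{2}$ zeros, as in Proposition \ref{forstab}) and, in addition, $\lambda_{1}-\lambda_{2}\geq d$. Taking the first--row positions in $p_{\hat M,\hat T}$ to be those used by $M$ gives $p_{\hat M,\hat T}=p_{M,T}$; substituting $x_{m}=0$ for $m>n$ annihilates the monomials of $O$ involving a large index, and the survivors are exactly the $R(T)$--orbit of $p_{M,T}$ (one inclusion is $R(T)\hookrightarrow R(\hat T)$; conversely a survivor places its at most $d$ non--zero first--row exponents among the $\lambda_{1}\geq d$ first--row boxes of $T$, hence is an $R(T)$--image of $p_{M,T}$). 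As $C(\hat T)$ coincides with $C(T)$ as a permutation group and fixes all $x_{m}$ with $m>n$, the substitution commutes with $\sum_{\sigma}\operatorname{sgn}(\sigma)\sigma$ and produces $F_{M,T}$. For uniqueness let $G\in\tilde\Lambda$ transform via $\widetilde{\operatorname{sgn}}$ with $G|_{x_{m}=0,\ m>n}=F_{M,T}$; passing to its degree--$d$ component (the others restrict to $0$) we may assume $G$ homogeneous, and set $D:=G-F_{\hat M,\hat T}$, which is $\widetilde{\operatorname{sgn}}$--semi--invariant, homogeneous of degree $d$, and has every monomial divisible by some $x_{m}$, $m>n$. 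A monomial $q$ of $D$ uses at most $d$ variables and is divisible by some large $x_{m}$, so fewer than $d$ of the $\lambda_{1}-\lambda_{2}\geq d$ length--one columns of $\hat T$ with entry $\leq n$ occur in $q$; interchanging the length--one columns carrying one such unused entry $j$ and the large index $m$ is a $\widetilde{\operatorname{sgn}}$--trivial element of $R(\hat T)\cap\tilde C(\hat T)$ that sends $q$ to a monomial with one fewer large index, and iterating transports $q$ to a monomial supported on $\{1,\dots,n\}$, whose coefficient in $D$ vanishes; hence so does that of $q$, and together with the vanishing of the $\{1,\dots,n\}$--supported monomials of $D$ this forces $D=0$.

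Part $(v)$ is the same argument twisted by a symmetric function. By Definition \ref{infSpecht} one has $F_{\hat C,\hat T}^{I,\mathrm{hom}}=F_{\hat C,\hat T}\cdot\prod_{i\in\operatorname{Asp}^{c}_{I}(\hat C)}e_{i}$ and, when $I$ is a set, $F_{\hat C,\hat T}^{I}=F_{\hat C,\hat T}\cdot\prod_{i\in\operatorname{Asp}^{c}_{I}(\hat C)}e_{r_{i}}$; the second factor lies in $\Lambda$, hence is fixed by $S_{\mathbb N}$, so the product still transforms via $\widetilde{\operatorname{sgn}}$ and is homogeneous of degree $\sum_{i\in I}i$ (using $\Sigma(\hat C)=\sum_{i\in\operatorname{Dsp}^{c}(\hat C)}i$ from Lemma \ref{propinf}$(iii)$ and $I=\operatorname{Dsp}^{c}(\hat C)\sqcup\operatorname{Asp}^{c}_{I}(\hat C)$). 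Since $\operatorname{Dsp}^{c}(\hat C)=\operatorname{Dsp}^{c}(C)$ by Lemma \ref{propinf}$(iii)$, substituting $x_{m}=0$ for $m>n$ turns this product into $F_{C,T}$ times the same elementary symmetric functions now taken in $\mathbf x_{n}$, i.e.\ into $F_{C,T}^{I,\mathrm{hom}}$ (resp.\ $F_{C,T}^{I}$), by part $(iv)$ and Definition \ref{Spechtdef}; and uniqueness follows verbatim from the monomial--transport argument of $(iv)$, with $d$ replaced throughout by $\sum_{i\in I}i$, which therefore dictates how large $n$ must be chosen.

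The main obstacle I expect is precisely the uniqueness in $(iv)$ and $(v)$: one must calibrate $n$ correctly --- the naive condition that $M$ be eventually zero along its first row is not enough, and one genuinely needs $\lambda_{1}-\lambda_{2}$ to dominate the relevant degree so that \emph{every} monomial can be slid back onto $\{1,\dots,n\}$ by column interchanges of $\widetilde{\operatorname{sgn}}$--value $1$ --- and then carry out this combinatorial transport carefully, keeping track of which interchanges lie in $R(\hat T)\cap\tilde C(\hat T)$ so that coefficients are carried over intact. This is the infinite counterpart of the argument alluded to after Proposition \ref{forstab} (the proof of Theorem 2.17 of \cite{[Z2]}), and once it is in place everything else is bookkeeping with Lemmas \ref{ringES}, \ref{limtab}, \ref{rels} and \ref{propinf}.
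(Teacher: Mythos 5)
Your parts $(i)$--$(iii)$ follow the paper's own route essentially verbatim: the semidirect-product structure of $\tilde{C}(\hat{T})$ from Lemma \ref{propinf} for the $\widetilde{\operatorname{sgn}}$-equivariance, the resulting $S_{\mathbb{N}}^{(n)}$-invariance for membership in $\tilde{\Lambda}_{\mu}$, and Lemma \ref{ringES} for the products with elements of $\Lambda$. Where you diverge is the uniqueness in $(iv)$ and $(v)$: the paper iterates the second assertion of Proposition \ref{forstab} one variable at a time ($F_{M,T}$ determines $F_{\hat{\iota}M,\iota T}$, which determines $F_{\hat{\iota}^{2}M,\iota^{2}T}$, and so on, with the limit determined by its truncations), whereas you run a single one-shot column-transport argument inside $\tilde{C}(\hat{T})$, sliding every large index of a monomial of the difference $D$ back onto an unoccupied length-one column with entry at most $n$ via $\widetilde{\operatorname{sgn}}$-trivial column interchanges. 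The two arguments rest on the same combinatorial mechanism (it is exactly the one cited for Theorem 2.17 of \cite{[Z2]}), but your version is self-contained and, importantly, makes the required calibration of $n$ explicit: you correctly observe that the literal hypothesis of part $(iv)$ (zeros in the first row of $M$ above the second row) does not by itself supply enough free length-one columns --- for a two-box column shape with a large entry one can in fact write down a second $\widetilde{\operatorname{sgn}}$-semi-invariant with the same restriction --- and that one must take $n$ large enough that $\lambda_{1}-\lambda_{2}$ dominates the relevant degree. This is a genuine sharpening of what the paper records. Your treatment of $(v)$ by multiplying the identity of $(iv)$ directly by the $\Lambda$-factor (rather than the paper's divide-by-the-maximal-symmetric-divisor detour) is also fine, since the $r_{i}$'s of Definitions \ref{Spechtdef} and \ref{infSpecht} agree and the substitution sends $e_{r}\in\Lambda$ to $e_{r}(\mathbf{x}_{n})$. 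The one step you pass over too quickly is the reduction to homogeneous $G$ in $(iv)$: the components of $G$ in degrees $e\neq d$ restrict to $0$, but to conclude they vanish you would need the transport argument in degree $e$ as well, and for $e$ exceeding $\lambda_{1}-\lambda_{2}$ it is not available for the fixed $n$; since the paper's own proof likewise only establishes uniqueness among homogeneous elements (as part $(v)$ states explicitly and part $(iv)$ arguably should), this does not put you behind the paper, but it should be flagged rather than dismissed parenthetically.
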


\begin{proof}
Part $(i)$ is proved like part $(i)$ of Theorem \ref{repsSpecht} (or Proposition 2.6 of \cite{[Z2]} and Proposition 2.5 of \cite{[Z3]}), by noting the effect of elements of $C(\hat{T})$ on the operator from $\mathbb{Q}[S_{\infty}]$ in Definition \ref{infSpecht} and the fact that $R(\hat{T})\cap\tilde{C}(\hat{T})$ normalizes $C(\hat{T})$. Part $(ii)$ follows from part $(i)$ via part $(i)$ of Lemma \ref{propinf} and the fact $\widetilde{\operatorname{sgn}}$ is trivial on the subgroup $S_{\mathbb{N}}^{(n)}$ appearing there. Part $(iii)$ is an immediate consequence of part $(ii)$ and Lemma \ref{ringES}, since the multiplying element from Definition \ref{infSpecht} is homogeneous.

Next, Proposition \ref{forstab} shows how $F_{M,T}$ determines $F_{\hat{\iota}M,\iota T}$ when $M$ contains enough zeros, and this condition is valid also for $\hat{\iota}M$. Hence this determination produces, in the limit, a unique homogenenous element of $\mathbb{Q}\ldbrack\mathbf{x}_{\infty}\rdbrack$ that satisfies the condition from part $(i)$ (and hence is an element of $\tilde{\Lambda}_{d}$) in which substituting $x_{m}=0$ for all $m>n$ yields $F_{M,T}$. But doing so in $F_{\hat{M},\hat{T}}$ yields the polynomial constructed from $p_{M,T}$ (which is in the $R(\hat{T})$-orbit of $F_{\hat{M},\hat{T}}$) and taking only the $R(\hat{T})$-images that do not produce monomials that are divisible by any $x_{m}$ with $m>n$. As this produces $F_{M,T}$ by Definition \ref{Spechtdef}, part $(iv)$ is also established.

Finally, we note that $F_{\hat{\iota}C,\iota T}^{I}$ and $F_{\hat{\iota}C,\iota T}^{I,\mathrm{hom}}$ are obtained from $F_{C,T}^{I}$ and $F_{C,T}^{I,\mathrm{hom}}$ respectively, by dividing by the largest symmetric divisor in $\mathbb{Q}[\mathbf{x}_{n}]$ (see Remark 3.3 of \cite{[Z2]} for why this is well-defined), applying the process from Proposition \ref{forstab}, and multiplying by the symmetric divisor with the same notation, but now from $\mathbb{Q}[\mathbf{x}_{n}]$ (and this determines that polynomial by our assumption on $n$). Taking the limit as in part $(iv)$ (which works for $F_{C,T}^{I}$ because $I$ is a set in this case, but not otherwise---see Remark 3.7 of \cite{[Z3]}), and comparing with $F_{\hat{C},\hat{T}}^{I}$ or $F_{\hat{C},\hat{T}}^{I,\mathrm{hom}}$, yields part $(v)$. This proves the proposition.
\end{proof}

\medskip

\begin{rmk}
In the notation from Remark \ref{notation}, we can denote the limits from part $(v)$ of Proposition \ref{limSpecht} by $F_{\hat{w},I}$ and $F_{\hat{w},I}^{\mathrm{hom}}$ for $\hat{w} \in S_{\infty}$, where $\hat{T}$ is the $P$-image of $\hat{w}$ under an infinite RSK algorithm, and $\hat{C}$ is the tableau obtained from its $Q$-image by applying $\operatorname{ev}$ and then $\operatorname{ct}$ in the infinite setting, as Remark \ref{infev} explains. In this case the set $\operatorname{Dsp}^{c}(\hat{C})$ coincides with the descent set of $\hat{w}$, as easily seen by comparing them with their counterparts associated with the finite objects. \label{FwI}
\end{rmk}

\begin{rmk}
One can extend part $(vi)$ of Proposition \ref{limSpecht} to the quotients from Remark \ref{quotSpecht}, but this requires considering the denominators as well---see Corollary \ref{infquot} below. \label{quotinf}
\end{rmk}

\begin{ex}
If $\hat{T}$, $\hat{M}$, and $\hat{C}$ are the infinite tableaux from Example \ref{exinftab}, then $F_{\hat{C},\hat{T}}$ from Definition \ref{infSpecht} is the same as $F_{C,T}$ as well as $F_{\hat{\iota}C,\iota T}$ from Examples \ref{Spechtex} and \ref{quotex} (see Lemma \ref{sameiota} below), and $Q_{\hat{C},\hat{T}}$ from Remark \ref{quotinf} equals $Q_{C,T}$ and $Q_{\hat{\iota}C,\iota T}$ (as in Corollary \ref{infquot} below). The series $F_{\hat{M},\hat{T}}$ is the sum of $F_{M,T}$ from Example \ref{Spechtex}, the counterparts \[\big[(x_{1}x_{3}^{3}x_{4}^{4}-x_{1}x_{4}^{3}x_{3}^{4}-x_{1}^{3}x_{3}x_{4}^{4}+x_{4}^{3}x_{3}x_{1}^{4}+x_{1}^{3}x_{3}^{2}x_{4}-x_{3}^{3}x_{1}^{4}x_{4})(x_{5}^{4}-x_{2}^{4})+\]
\[+(x_{3}^{3}x_{4}^{4}-x_{4}^{3}x_{3}^{4}-x_{1}^{3}x_{4}^{4}+x_{4}^{3}x_{1}^{4}+x_{1}^{3}x_{3}^{2}-x_{3}^{3}x_{1}^{4})(x_{5}^{4}x_{2}-x_{2}^{4}x_{5})\big]\cdot\sum_{i=6}^{\infty}x_{i},\]
of the expressions from Example \ref{quotex}, and the expression \[(x_{3}^{3}x_{4}^{4}-x_{4}^{3}x_{3}^{4}-x_{1}^{3}x_{4}^{4}+x_{4}^{3}x_{1}^{4}+x_{1}^{3}x_{3}^{2}-x_{3}^{3}x_{1}^{4})(x_{5}^{4}-x_{2}^{4})\sum_{i=6}^{\infty}\sum_{j=i+1}^{\infty}x_{i}x_{j}.\]
The quotient $Q_{\hat{M},\hat{T}}$ is the sum of $Q_{M,T}$, the expressions
\[\sum_{i=6}^{\infty}x_{i}\cdot\big[x_{1}x_{3}x_{4}(x_{3}x_{4}+x_{1}x_{4}+x_{1}x_{3})(x_{5}^{3}+x_{5}^{2}x_{2}+x_{5}x_{2}^{2}+x_{2}^{3})+\]
\[+x_{2}x_{5}(x_{3}^{2}x_{4}^{2}+x_{1}^{2}x_{4}^{2}+x_{1}^{2}x_{3}^{2}+x_{1}^{2}x_{3}x_{4}+x_{1}x_{3}^{2}x_{4}+x_{1}x_{3}x_{4}^{2})(x_{5}^{2}+x_{5}x_{2}+x_{2}^{2})\big]\] extending those from Example \ref{quotex}, and finally $\sum_{i=6}^{\infty}\sum_{j=i+1}^{\infty}x_{i}x_{j}$ times \[(x_{3}^{2}x_{4}^{2}+x_{1}^{2}x_{4}^{2}+x_{1}^{2}x_{3}^{2}+x_{1}^{2}x_{3}x_{4}+x_{1}x_{3}^{2}x_{4}+x_{1}x_{3}x_{4}^{2})(x_{5}^{3}+x_{5}^{2}x_{2}+x_{5}x_{2}^{2}+x_{2}^{3}).\] \label{Spechtinf}
\end{ex}

\medskip

We now turn to the spaces from Definition \ref{infSpecht}.
\begin{thm}
Consider $\hat{\lambda}\vdash\infty$, $\hat{M}\in\operatorname{SSYT}(\hat{\lambda})$, and $\hat{C}\in\operatorname{SSYT}(\hat{\lambda})$ as above.
\begin{enumerate}[$(i)$]
\item The space $V_{\hat{M}}$ admits $\{F_{\hat{M},\hat{T}}\;|\;\hat{T}\in\operatorname{SYT}(\hat{\lambda})\}$ as a basis. Similarly, the set $\{F_{\hat{C},\hat{T}}^{I}\;|\;\hat{T}\in\operatorname{SYT}(\hat{\lambda})\}$ forms a basis for $V_{\hat{C}}^{\vec{h}_{\hat{C}}^{I}}$ for every set $I$ containing $\operatorname{Dsp}^{c}(\hat{C})$ from Lemma \ref{propinf}, while if $I$ is a multi-set containing the latter set, then $V_{\hat{C}}^{\vec{h}(\hat{C},I)}$ has $\{F_{\hat{C},\hat{T}}^{I,\mathrm{hom}}\;|\;\hat{T}\in\operatorname{SYT}(\hat{\lambda})\}$ as a basis.
\item Both $V_{\hat{M}}$ and $V_{\hat{C}}^{\vec{h}}$ for every $\vec{h}$ are representations of both $S_{\mathbb{N}}$ and $S_{\infty}$.
\item The isomorphism type of the representation from part $(ii)$ depends only on the shape $\hat{\lambda}$.
\item The representation from part $(ii)$, of either group, is irreducible.
\item The irreducible representations associated with different infinite partitions $\hat{\lambda}\vdash\infty$ are not isomorphic.
\end{enumerate} \label{repsinf}
\end{thm}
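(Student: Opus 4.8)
The plan is to reduce each assertion to its finite counterpart (Theorems \ref{repsSpecht} and \ref{FMTdecom}, and Proposition \ref{plus1iota}) by using the substitution $x_{m}=0$ for $m>n$ from part $(iv)$ of Proposition \ref{limSpecht}, which for $n$ large enough (say $n\geq 2\Sigma(\hat{M})$, after enlarging so the first row of $M$ has all boxes above the second row equal to $0$) sends $F_{\hat{M},\hat{T}}$ to the finite generalized higher Specht polynomial $F_{M,T}$. First I would prove part $(i)$: the stabilizer $\operatorname{st}_{\hat{T}}\hat{M}$ from Definition \ref{infSpecht} contains some $S_{\mathbb{N}}^{(n)}$, so every $F_{\hat{M},\hat{T}}$ already lies in $\tilde\Lambda$ (this is part $(ii)$ of Proposition \ref{limSpecht}); linear independence of $\{F_{\hat{M},\hat{T}}\}_{\hat{T}\in\operatorname{SYT}(\hat\lambda)}$ follows because a nontrivial relation, restricted to $\mathbb{Q}[\mathbf{x}_{n}]$ via the above substitution, gives a nontrivial relation among $\{F_{M,T}\}_{T\in\operatorname{SYT}(\lambda)}$, contradicting part $(ii)$ of Theorem \ref{repsSpecht} — here I use that $\operatorname{SYT}(\hat\lambda)$ is in bijection (Lemma \ref{limtab}$(ii)$, for $n$ large) with $\operatorname{SYT}(\lambda)$ and that the substitution respects the indexing. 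Spanning is by definition. The $F_{\hat{C},\hat{T}}^{I}$ and $F_{\hat{C},\hat{T}}^{I,\mathrm{hom}}$ cases follow by multiplying through by the (injective, since $\tilde\Lambda$ is a domain by Proposition \ref{samereps}$(i)$) operation of multiplication by the relevant symmetric function in $\Lambda\subseteq\tilde\Lambda$.

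Next I would do part $(ii)$: that $V_{\hat{M}}$ and $V_{\hat{C}}^{\vec h}$ are $S_{\mathbb{N}}$-stable. Since the span of $\{F_{\hat{M},\hat{T}}\}$ over $\operatorname{SYT}(\hat\lambda)$ equals the span of $\{F_{\hat{M},\hat{T}}\}$ over all tableaux $\hat T$ of shape $\hat\lambda$ with content $\mathbb{N}$ whose first row is eventually standard — because the $C(\hat T)$-antisymmetrization and $R(\hat T)$-symmetrization identify $F_{\hat{M},\hat{T}}$ (up to sign) with $\pm F_{\hat{M},\hat{T}'}$ for column/row rearrangements exactly as in the finite case (Lemma 2.1 of \cite{[Z2]}), and an arbitrary $\sigma\in S_{\infty}$ carries such a tableau to another of the same kind — $V_{\hat{M}}$ is $S_{\infty}$-stable, hence $S_{\mathbb{N}}$-stable by part $(iv)$ of Proposition \ref{samereps}. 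Multiplying by a symmetric function (fixed by $S_{\mathbb{N}}$) gives the same for $V_{\hat{C}}^{\vec h}$. For part $(iv)$, irreducibility: any $S_{\infty}$-subrepresentation $W\subseteq V_{\hat{M}}$ is spanned by finitely-supported elements, and for $n$ large enough all of $W$ and all of $V_{\hat{M}}$ have the property that substituting $x_{m}=0$ for $m>n$ is injective on them and carries $V_{\hat{M}}$ isomorphically (as $S_{n}$-representations) onto $V_{M}\cong\mathcal S^{\lambda}$; thus the image of $W$ is an $S_{n}$-subrepresentation of the irreducible $V_{M}$, so $W=0$ or $W=V_{\hat{M}}$. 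The one subtlety is choosing $n$ uniformly: a single nonzero $w\in W$ involves only finitely many variables, so taking $n$ past all of them and past $2\Sigma(\hat M)$ suffices, and then $S_{n}\cdot w$ already spans enough of $V_{M}$ to force $W=V_{\hat{M}}$ by finite irreducibility.

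For part $(iii)$, that the isomorphism type depends only on $\hat\lambda$: if $\hat M,\hat M'\in\operatorname{SSYT}(\hat\lambda)$, pick $n$ large enough to realize both via Lemma \ref{limtab} from $M,M'\in\operatorname{SSYT}(\lambda)$; then $V_{M}\cong\mathcal S^{\lambda}\cong V_{M'}$ as $S_{n}$-modules (Theorem \ref{repsSpecht}$(ii)$), and the substitution isomorphisms $V_{\hat M}\xrightarrow{\sim}V_{M}$ and $V_{\hat M'}\xrightarrow{\sim}V_{M'}$ are $S_{n}$-equivariant, so $V_{\hat M}\cong V_{\hat M'}$ as $S_{n}$-modules; since this holds for all large $n$ and $S_{\infty}=\bigcup_{n}S_{n}$, the isomorphism is $S_{\infty}$-equivariant (one checks the compatible family of $S_{n}$-isomorphisms can be chosen coherently, using uniqueness of the intertwiner up to scalar by Schur's lemma for the finite irreducibles and normalizing on the leading monomial $p_{\hat M,\hat T}$), and then $S_{\mathbb{N}}$-equivariant by Proposition \ref{samereps}$(iv)$. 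Finally, part $(v)$: the shape $\hat\lambda$ is recovered from the representation, because restricting to $S_{n}$ for $n$ large gives $\mathcal S^{\lambda}$ with $\lambda$ determined by $\hat\lambda$ and $n$ (the finite part of $\hat\lambda$ plus a first row of length $n-|\text{finite part}|$), and distinct $\hat\lambda$ have distinct finite parts, hence distinct $\lambda$ for every large $n$, so the Specht modules differ. The main obstacle I anticipate is part $(iii)$: promoting the family of $S_{n}$-isomorphisms to a genuine $S_{\infty}$-isomorphism requires checking coherence, which I would handle by fixing the normalization "send $F_{\hat M,\hat T}\mapsto F_{\hat M',\hat T}$" (well-defined and basis-to-basis by part $(i)$, with the $\hat T$-indexing stable under the finite approximations) and verifying directly that this map commutes with the $C(\hat T)$- and $R(\hat T)$-operators, exactly as the analogous finite statement is proved in \cite{[Z2]} and \cite{[Z3]}.
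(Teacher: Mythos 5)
Your overall strategy --- reduce everything to the finite case via the substitution $x_{m}=0$ for $m>n$ --- is the paper's, and your parts $(i)$ and $(iii)$ are essentially correct (your part $(ii)$ takes a different, also viable route through tableaux with eventually standard first row, though the straightening of $F_{\hat{M},\hat{T}'}$ for a non-standard $\hat{T}'$ into the standard basis requires more than the row/column rearrangements you cite). The genuine gap is in parts $(iv)$ and $(v)$. You assert that for large $n$ the substitution ``carries $V_{\hat{M}}$ isomorphically onto $V_{M}$'' and that ``a single nonzero $w \in W$ involves only finitely many variables''. Both claims are false whenever the finite part of $\hat{\lambda}$ is nonempty: $V_{\hat{M}}$ is infinite-dimensional (its basis is indexed by the infinite set $\operatorname{SYT}(\hat{\lambda})$) while $V_{M}$ is finite-dimensional, so no such injection can exist; and when $f_{\hat{M}}>0$ the basis elements are genuine power series involving all the variables, such as $(x_{k}-x_{1})(x_{l}-x_{j})\sum_{i}x_{i}$ in Example \ref{restofin}. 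Indeed, the remark following Example \ref{restofin} exhibits a basis element of $V_{\hat{M}}$ whose image under the substitution is not even contained in $V_{M}$. Consequently your deduction that ``$S_{n} \cdot w$ spans enough of $V_{M}$, hence $W=V_{\hat{M}}$'' does not follow: knowing that finitely many images span the finite-dimensional $V_{M}$ cannot by itself recover the infinite-dimensional $V_{\hat{M}}$.

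The missing mechanism, which the paper uses for parts $(ii)$, $(iv)$, and $(v)$ alike, is $S_{\mathbb{N}}^{(n)}$-invariance: every element of $\tilde{\Lambda}$ is stabilized by some $S_{\mathbb{N}}^{(n)}$, an $S_{\mathbb{N}}^{(n)}$-invariant element is uniquely determined by its image under the substitution, and only the $S_{\mathbb{N}}^{(n)}$-invariant part of $V_{\hat{M}}$ (spanned by those $F_{\hat{M},\hat{T}}$ in which every entry $m>n$ lies in the first row of $\hat{T}$) maps isomorphically onto $V_{M}$. For $(iv)$ one fixes $F\neq0$ and a target $G$ in the representation, chooses $n$ so that both are $S_{\mathbb{N}}^{(n)}$-invariant, writes the image of $G$ as $\sum_{w \in S_{n}}a_{w}w$ applied to the image of $F$ by finite irreducibility, and concludes that $G=\sum_{w}a_{w}\hat{w}F$ by uniqueness of the invariant lift. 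For $(v)$ the invariant that recovers $\lambda$ (hence $\hat{\lambda}$) is the $S_{n}$-isomorphism type of the $S_{\mathbb{N}}^{(n)}$-invariant subspace, not of the full restriction of $V_{\hat{M}}$ to $S_{n}$, which is infinite-dimensional and far from being a single Specht module.
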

In view of Theorem \ref{repsinf}, we can define an isomorphism type $\mathcal{S}^{\hat{\lambda}}$ of irreducible representations of either $S_{\infty}$ or $S_{\mathbb{N}}$, for every $\hat{\lambda}\vdash\infty$. for the relation between the representations from Theorem \ref{repsinf} and the general theory of representations of $S_{\infty}$, see Remark \ref{Thoma} below.

\begin{proof}
For every element $F\in\tilde{\Lambda}$, we write $F^{(n)}$ for its image under the substitution $x_{m}=0$ for all $m>n$. In particular, if $M\in\operatorname{SSYT}(\lambda)$ produces $\hat{M}$ via Lemma \ref{limtab}, and when $T\in\operatorname{SYT}(\lambda)$ yields some $\hat{T}\in\operatorname{SYT}(\hat{\lambda})$ through that lemma, then $F^{(n)}=F_{M,T}$ in case $F=F_{\hat{M},\hat{T}}$. For $C\in\operatorname{CCT}(\lambda)$ giving $\hat{C}$ in this fashion we get, via Definition \ref{infSSYT}, that if $F$ is $F_{\hat{C},\hat{T}}^{I}$ or $F_{\hat{C},\hat{T}}^{I,\mathrm{hom}}$ then $F^{(n)}$ equals $F_{C,T}^{I}$ or $F_{C,T}^{I,\mathrm{hom}}$ respectively.

We also note that $\lambda \vdash n$ yields $\hat{\lambda}$ via Lemma \ref{limtab} then $n$ determines $\lambda$. Therefore if we apply this operation for finitely many different values of elements of $\operatorname{SYT}(\hat{\lambda})$, say $\{\hat{T}_{j}\}_{j=1}^{e}$, then the corresponding tableaux $T_{j}$ are all of the same shape $\lambda \vdash n$.

Now, the fact that these eventually symmetric functions span $V_{\hat{M}}$, $V_{\hat{C}}^{\vec{h}_{\hat{C}}^{I}}$, and $V_{\hat{C}}^{\vec{h}(\hat{C},I)}$ follows directly from Definition \ref{infSpecht}. Assume now that a linear relation exists between them, which is thus of the sort $\sum_{j=1}^{e}a_{j}F_{j}$, where each $F_{j}$ arises from $\hat{M}$, or $\hat{C}$ and $I$, and some $\hat{T}_{j}\in\operatorname{SYT}(\hat{\lambda})$. Then substitute $x_{m}=0$ for all $m>n$ produces $\sum_{j=1}^{e}a_{j}F_{j}^{(n)}$, and our form of $F_{j}^{(n)}$ combines with Theorem \ref{repsSpecht} to show that this is a linear relation among elements which form a basis for a representation of $S_{n}$. We thus have $a_{j}=0$ for every $j$, yielding part $(i)$.

Part $(iv)$ of Proposition \ref{samereps} now shows that it suffices to prove part $(ii)$ for $S_{\infty}$. So take an element of $F$ in $V_{\hat{M}}$ or $V_{\hat{C}}^{\vec{h}}$, and an element $\hat{w} \in S_{\infty}$. We take $n$ to be large enough such that $\hat{w}$ is the image of some element $w \in S_{n}$, and such that the element $\hat{w}F\in\tilde{\Lambda}$ is determined by $(\hat{w}F)^{(n)}$.

But then $F$ is spanned by the basis elements from part $(i)$, and thus $F^{(n)}$ is generated by its images under the substitution $x_{m}=0$ for all $m>n$, which are part of the basis for $V_{M}$ or $V_{C}^{\vec{h}}$ as above. Moreover, it is clear that $(\hat{w}F)^{(n)}$ equals $wF^{(n)}$, hence also lies in the latter representation. We set $\{F_{j}\}_{j=1}^{e}$ to be the part of the basis from part $(i)$ such that $\{F_{j}^{(n)}\}_{j=1}^{e}$ are a basis of $V_{M}$ or $V_{C}^{\vec{h}}$ via Theorem \ref{repsSpecht} in this manner, and write $wF^{(n)}$ as a linear combination $\sum_{j=1}^{e}a_{j}F_{j}^{(n)}$ as above.

But $(\hat{w}F)^{(n)}=wF^{(n)}$ determines $\hat{w}F$ by our assumption, and the element $\sum_{j=1}^{e}a_{j}F_{j}$ of $V_{\hat{M}}$ or $V_{\hat{C}}^{\vec{h}}$ also has the same image under our substitution. It follows that $\hat{w}F=\sum_{j=1}^{e}a_{j}F_{j}$, proving part $(ii)$.

For part $(iii)$, we recall that in the finite setting, the isomorphism from Theorem \ref{repsSpecht} between $V_{M}$ or $V_{C}^{\vec{h}}$ and $\mathcal{S}^{\lambda}$ is based on taking $F_{M,T}$ or the appropriate multiple of $F_{C,T}$ to the basis element of $\mathcal{S}^{\lambda}$ that is associated with $T$. Hence if we replace $M$ or $C$ and $I$ by other element of $\operatorname{SSYT}(\lambda)$, then the map changing this index but leaves every $\hat{T}$ invariant respects the action of $S_{n}$.

Therefore we consider the proof of part $(ii)$, and change $\hat{M}$, or $\hat{C}$ and $I$, to another element of $\operatorname{SSYT}(\hat{\lambda})$. If we change this index but leave every $\hat{T}$ invariant, then after the substitution $F \mapsto F^{(n)}$ we obtain a map that respects the operation of $S_{n}$. But we saw that for large enough $n$, the image determine a given element, so that this isomorphism also respects the action of $S_{\infty}$. Combining this with the proof of part $(iv)$ of Proposition \ref{samereps} yields part $(iii)$ also for $S_{\mathbb{N}}$.

For part $(iv)$, take an element $F\neq0$ in our representation, and we have to show that any other element $G$ of that representation is generated by the images of $F$. We let $n$ be large enough so that both $F$ and $G$ are stabilized by $S_{\mathbb{N}}^{(n)}$, so in particular $F^{(n)}\neq0$ and $G$ is determined by $G^{(n)}$. Then the irreducibility of $V_{M}$ or $V_{C}^{\vec{h}}$ implies that $G^{(n)}$ is generated by the images of $F^{(n)}$ under the elements of $S_{n}$, namely we have $G^{(n)}=\sum_{w \in S_{n}}a_{w}wF^{(n)}$ with some scalar coefficients $\{a_{w}\}_{w \in S_{n}}$.

Given $w \in S_{n}$, let $\hat{w}$ be the image of $w$ in $S_{\infty}$, and then we saw above that $wF^{(n)}=(\hat{w}F)^{(n)}$ for every such $w$. Therefore our expression for $G^{(n)}$ becomes $\sum_{w \in S_{n}}(a_{w}\hat{w}F)^{(n)}$, and since the image of $S_{n}$ inside $S_{\mathbb{N}}$ normalizes $S_{\mathbb{N}}^{(n)}$, we deduce that $\hat{w}F$ is also stabilized by the latter subgroup for every $w \in S_{n}$. Therefore $\sum_{w \in S_{n}}a_{w}\hat{w}F$ is the only $S_{\mathbb{N}}^{(n)}$-invariant element of $\tilde{\Lambda}$ whose image under our substitution is $\sum_{w \in S_{n}}(a_{w}\hat{w}F)^{(n)}$.

But we saw that $G$ is also an element with the same properties, by the assumption on $n$ and the equality involving $G^{(n)}$. We thus get $G=\sum_{w \in S_{n}}a_{w}\hat{w}F$, so that $G$ is indeed generated by the images of $F$ under elements of $S_{\infty}$, proving the irreducibility of our representation over $S_{\infty}$, hence also over the larger group $S_{\mathbb{N}}$, as part $(iv)$ requires.

Finally, take a representation $V_{\hat{M}}$ associated with $\hat{M}\in\operatorname{SSYT}(\hat{\lambda})$ for some $\hat{\lambda}\vdash\infty$, and let $n$ be large enough such that the finite tableau $M\in\operatorname{SSYT}(\lambda)$, with $\lambda \vdash n$, that yields $\hat{M}$ through Lemma \ref{limtab} has zeros in the first row all over the finite part of $\hat{M}$. All the stable higher Specht polynomials $F_{\hat{M},\hat{T}}$ arising from $\hat{T}\in\operatorname{SYT}(\hat{\lambda})$ in which the numbers $m>n$ are in the first row are invariant under $S_{\mathbb{N}}^{(n)}$ (since it is contained in $\tilde{C}(\hat{T})$). For a tableau $\hat{T}\in\operatorname{SYT}(\hat{\lambda})$ not satisfying this condition, the latter subgroup relates the corresponding element $F_{\hat{M},\hat{T}}$ to infinitely many others, hence the part of $V_{\hat{M}}$ that is invariant under $S_{\mathbb{N}}^{(n)}$ is spanned by the former stable higher Specht polynomials.

We now note that a tableau $\hat{T}$ satisfies the condition from the previous paragraph if and only if it is the image of some $T\in\operatorname{SSYT}(\lambda)$ for the same $n$ and $\lambda$. Hence this part of $V_{\hat{M}}$ is the image of $V_{M}$ under the map from part $(iv)$ of Proposition \ref{limSpecht} (extended linearly), and this linear map respects the action of $S_{n}$ on $V_{M}$ and on $V_{\hat{M}}$ (via the embedding into $S_{\infty} \subseteq S_{\mathbb{N}}$). It follows that the part of $V_{\hat{M}}$ in question is isomorphic to $V_{M}$ as a representation of $S_{n}$.

But this implies that this part of $V_{\hat{M}}$ determines, as the isomorphism type of $V_{M}$, the partition $\lambda \vdash n$ yielding $\hat{\lambda}=\operatorname{sh}(\hat{M})$. This allows us to determine $\hat{\lambda}$ from $V_{\hat{M}}$ using only its isomorphism type, which establishes part $(v)$. This completes the proof of the theorem.
\end{proof}

\medskip

\begin{ex}
Let $\hat{\lambda}$ be with an infinite row and a second row containing two boxes, and take $\hat{M}\in\operatorname{SSYT}(\hat{\lambda})$ with second row containing two instances of 1 and 1 having multiplicity 1. If $\hat{T}\in\operatorname{SYT}(\hat{\lambda})$ contains the entries $k<l$ in the second row, with 1 above $k$ and $1<j<l$ above $l$ (with $j \neq k$), then $F_{\hat{M},\hat{T}}$ equals $(x_{k}-x_{1})(x_{l}-x_{j})\sum_{i}x_{i}$, where $i$ runs over $\mathbb{N}\setminus\{1,j,k,l\}$. If $n=5$, then the 5 tableaux in which $l\leq5$ arise from the 5 elements of $\operatorname{SYT}(\lambda)$ for $\lambda=32\vdash5$, and indeed all the indices $i>5$ show up in columns of length 1 in $\hat{T}$ and thus show up in a symmetric manner in $F_{\hat{M},\hat{T}}$, showing its invariance under $S_{\mathbb{N}}^{(5)}$. But if we take $l>5$, for example the case where $j=2$, $k=4$, and $l=7$, then $S_{\mathbb{N}}^{(5)}$ takes this $\hat{T}$ to any other tableau with these $j$ and $k$ and $l>5$. Due to the basis property, every $S_{\mathbb{N}}^{(5)}$-invariant element of $V_{\hat{M}}$ that contains this $F_{\hat{M},\hat{T}}$ would have to contain the basis element obtained by replacing $l=7$ by any other value $l>5$ with the same coefficient, and thus cannot be spanned by finitely many basis elements. \label{restofin}
\end{ex}
Note that substituting $x_{m}=0$ for $m>5$ in the expressions with $l\leq5$ in Example \ref{restofin} produces the basis elements for $V_{M}$ (where $M$ is the tableau of shape $\lambda$ yielding $M$ through Lemma \ref{limtab}), but doing so in the explicit case with $l=7$ there (or in any of its $S_{\mathbb{N}}^{(5)}$-images) produces $-(x_{4}-x_{1})x_{2}(x_{3}+x_{5})$, which is not in that representation.

\begin{rmk}
In Definition \ref{Spechtdef}, we allowed $T$ not to be standard, and in Theorem \ref{repsSpecht} reduced to standard $T$ in order to get a basis. One can obtain similar results with tableaux $\hat{T}$ of shape $\hat{\lambda}\vdash\infty$ and content $\mathbb{N}$, though some results are only valid for such tableaux in which the first row is eventually standard (see Remark \ref{std1row}). We thus stick to standard tableaux in the infinite setting, to avoid cumbersome formulations which do not increase our representations. \label{onlystd}
\end{rmk}

\medskip

The limits of the stable representations from Definition \ref{RnIdef} and Proposition \ref{plus1iota} are the following ones.
\begin{defn}
Consider the following objects.
\begin{enumerate}[$(i)$]
\item For any finite subset $I\subseteq\mathbb{N}$, we define the representation $R_{\infty,I}^{0}$ to be the sum $\sum_{\hat{\lambda}\vdash\infty}\sum_{\hat{C}\in\operatorname{CCT}(\hat{\lambda}),\ \operatorname{Dsp}^{c}(\hat{C}) \subseteq I}V_{\hat{C}}^{\vec{h}_{\hat{C}}^{I}}$.
\item If $I$ is any finite multi-set of positive integers, then $R_{\infty,I}^{\mathrm{hom},0}$ is the homogeneous representation $\sum_{\hat{\lambda}\vdash\infty}\sum_{\hat{C}\in\operatorname{CCT}(\hat{\lambda}),\ \operatorname{Dsp}^{c}(\hat{C}) \subseteq I}V_{\hat{C}}^{\vec{h}(\hat{C},I)}$.
\item Given a finite multi-set of non-negative integers, we write $R_{\infty,I}^{\mathrm{hom},0}$ for the representation from part $(ii)$ that is obtained by removing the multiplicity of 0 from $I$.
\end{enumerate} \label{RinfIdef}
\end{defn}
Note that parts $(ii)$ and $(iii)$ of Definition \ref{RinfIdef} are very close, since the multiplicity of 0 affects neither the containment of $\operatorname{Dsp}^{c}(\hat{C})$ nor the vector $\vec{h}(\hat{C},I)$ from Definition \ref{infSpecht} (and indeed, multiplying by a power of $e_{0}=1$ has no effect). We will use both conventions below. The reason for the superscript 0 will be explained in remark \ref{RinfIgen} below.

The representations from Definition \ref{RinfIdef} have the following properties.
\begin{lem}
The sums in Definition \ref{RinfIdef} are finite and direct, thus producing completely reducible representations of both $S_{\mathbb{N}}$ and $S_{\infty}$, with $R_{\infty,I}^{\mathrm{hom},0}$ being homogeneous of degree $\sum_{i \in I}i$. \label{RinfIprop}
\end{lem}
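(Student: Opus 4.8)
The plan is to reduce everything to the finite setting via the substitution $x_{m}=0$ for $m>n$, exactly as in the proof of Theorem \ref{repsinf}. First I would show that the sums are finite: for a fixed multi-set (or set) $I$, part $(iii)$ of Lemma \ref{propinf} gives $\Sigma(\hat{C})=\sum_{i\in\operatorname{Dsp}^{c}(\hat{C})}i$, and the condition $\operatorname{Dsp}^{c}(\hat{C})\subseteq I$ forces this to be bounded by $\sum_{i\in I}i$; hence only finitely many contents $\mu$ occur, and for each such content only finitely many shapes $\hat{\lambda}\vdash\infty$ can carry a tableau $\hat{C}$ of that content (the finite part of $\hat{C}$ has bounded size, and the finite part of $\hat{\lambda}$ is determined by that of $\hat{C}$), and for each shape finitely many $\hat{C}$. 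So the index set in each of parts $(i)$--$(iii)$ of Definition \ref{RinfIdef} is finite. The homogeneity claim for $R_{\infty,I}^{\mathrm{hom},0}$ then follows from part $(iii)$ of Proposition \ref{limSpecht}: each summand $V_{\hat{C}}^{\vec{h}(\hat{C},I)}$ is homogeneous, of degree $\Sigma(\hat{C})+\sum_{i\in\operatorname{Asp}^{c}_{I}(\hat{C})}i=\sum_{i\in I}i$, the last equality because $\operatorname{Dsp}^{c}(\hat{C})$ and $\operatorname{Asp}^{c}_{I}(\hat{C})$ partition $I$ as a multi-set (Definition \ref{sets}); all summands share this degree.

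Next, directness. I would choose $n$ large enough — larger than twice the maximal element of $I$ — so that the finite counterparts behave well, and apply the substitution operator $F\mapsto F^{(n)}$ that sends $F_{\hat{C},\hat{T}}^{I}$ to $F_{C,T}^{I}$ and $F_{\hat{C},\hat{T}}^{I,\mathrm{hom}}$ to $F_{C,T}^{I,\mathrm{hom}}$ (Proposition \ref{limSpecht}(v), together with the description of $F^{(n)}$ at the start of the proof of Theorem \ref{repsinf}). Under this map the sum $\sum_{\hat\lambda}\sum_{\hat C}V_{\hat C}^{\vec h(\hat C,I)}$ maps onto $\sum_{\lambda\vdash n}\sum_{C\in\operatorname{CCT}(\lambda),\ \operatorname{Dsp}^{c}(C)\subseteq I}V_{C}^{\vec h(\hat\iota C\cdots,I)}$, which by Proposition \ref{plus1iota} (iterated, or rather applied once after realizing the finite cocharge tableaux $C$ of size $n$ as $\hat\iota$-images and noting the summation indices match) is exactly $R_{n',I}^{\mathrm{hom}}$ for the appropriate $n'$, a direct sum by Definition \ref{RnIdef} and the results cited there. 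Since a linear relation among the $V_{\hat C}^{\vec h}$ summands would, after applying $F\mapsto F^{(n)}$ (which for $n$ large is injective on each summand by Theorem \ref{repsinf}(i) and the basis property), give a linear relation among the corresponding direct summands of $R_{n',I}^{\mathrm{hom}}$ — impossible — the sum in Definition \ref{RinfIdef} is direct. The same argument with $R_{n',I}$ in place of $R_{n',I}^{\mathrm{hom}}$ handles part $(i)$, using that $I$ is a set there so Proposition \ref{plus1iota} applies to the non-homogeneous representations as well.

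Finally, each summand $V_{\hat{C}}^{\vec{h}}$ is a representation of $S_{\mathbb{N}}$ and $S_{\infty}$ by Theorem \ref{repsinf}(ii), so a finite direct sum of them is too, and being a direct sum of irreducibles (Theorem \ref{repsinf}(iv)) it is completely reducible. The main obstacle I anticipate is the bookkeeping in the directness step: one must be careful that applying $F\mapsto F^{(n)}$ for a single $n$ simultaneously respects \emph{all} the summands — i.e. that the various $\hat{C}$ arising all come from finite cocharge tableaux of the \emph{same} size $n'$ and shape-lengths bounded by $n'$ — and that Proposition \ref{plus1iota}'s hypothesis ($n'$ exceeds twice the maximal element of $I$) is met; this is why $n$ must be chosen after, not before, enumerating the finite index set, and it is essentially the only place where the argument is not a routine transcription of the finite case.
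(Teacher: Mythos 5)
Your proposal is correct and follows essentially the same route as the paper: directness is obtained by applying the substitution $x_{m}=0$ for $m>n$ with $n$ large enough that all (finitely many) tableaux in a putative linear relation arise from finite ones via Lemma \ref{limtab}, thereby reducing to the known directness of the sums in Definition \ref{RnIdef}, and homogeneity and complete reducibility follow as you say. The only divergence is the finiteness step, where you bound contents and shapes directly from $\Sigma(\hat{C})=\sum_{i\in\operatorname{Dsp}^{c}(\hat{C})}i\leq\sum_{i\in I}i$, whereas the paper deduces finiteness from the stabilization of the number of summands in $R_{n,I}^{\mathrm{hom}}$ via Proposition \ref{plus1iota}; both arguments are valid, and yours is the more elementary of the two.
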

In the terminology from Remark \ref{FwI}, Lemma \ref{RinfIprop} implies that the representation $R_{\infty,I}^{0}$ and $R_{\infty,I}^{\mathrm{hom},0}$ admit $\{F_{\hat{w},I}\}_{\hat{w}}$ and $\{F_{\hat{w},I}^{\mathrm{hom}}\}_{\hat{w}}$ respectively as bases, where $\hat{w}$ runs over the elements of $S_{\infty}$ whose descent set is contained in $I$.

\begin{proof}
In Definition \ref{RnIdef} and the cited references we saw that $R_{n,I}$ and $R_{n,I}^{\mathrm{hom}}$, for finite $n$, are direct sums of representations, with the latter being homogeneous of the asserted degree. Proposition \ref{plus1iota} shows that for large enough $n$, applying $\hat{\iota}$ to the cocharge tableau in the subscript produces a bijection between the representations showing up for $n$ and those appearing for $n+1$, so that the number of representations participating in them (which is the same for $R_{n,I}$ and for $R_{n,I}^{\mathrm{hom}}$) stabilizes for large enough $n$.

Combining this with Lemma \ref{limtab} shows that the sums in Definition \ref{RinfIdef} are finite, with the number of representations being the value at which the number from the previous paragraph stabilizes. Take now an element of $V_{\hat{C}}^{\vec{h}_{\hat{C}}^{I}}$ or $V_{\hat{C}}^{\vec{h}(\hat{C},I)}$ for each $\hat{C}$ participating in the sum for $I$, and assume that they are linearly dependent. Recall the bases from Theorem \ref{repsinf}, this produces a linear relation of the form $\sum_{\hat{C}}\sum_{j=1}^{e_{\hat{C}}}a_{\hat{C},j}F_{\hat{C},\hat{T}_{j}}=0$.

As in the proof of that theorem, we take $n$ such that the stabilization of the number of representations occurs before it, and such that each $\hat{C}$ and each $\hat{T}_{j}$ arise via Lemma \ref{limtab} from finite tableaux $C$ and $T_{j}$ respectively, whose shapes are of size $n$. Applying the substitution $x_{m}=0$ for every $m>n$, and recalling that this takes each $F_{\hat{C},\hat{T}_{j}}$ to $F_{\hat{C},\hat{T}_{j}}^{(n)}=F_{C,T_{j}}$, our linear relation becomes one among a basis for the direct sum from Definition \ref{RnIdef}. As such relations do not exist, we get $a_{\hat{C},j}=0$ for every $\hat{C}$ and $j$, so the sum in question is indeed direct.

The homogeneity of $R_{\infty,I}^{\mathrm{hom},0}$ follows from that of $R_{n,I}^{\mathrm{hom}}$ via the same argument (or is quite clear from the definition), and the complete reducibility follows from the sum being finite and direct. This proves the lemma.
\end{proof}
Theorem 3.20 and Proposition 3.25 of \cite{[Z2]} and Theorem 2.19 of \cite{[Z3]} identify, for finite $n$, the representations $R_{n,I}$ and $R_{n,I}^{\mathrm{hom}}$ with ones of the sort $\mathbb{Q}[\mathcal{OP}_{n,I}]$ arising from the action of $S_{n}$ on a finite set $\mathcal{OP}_{n,I}$ of ordered partitions of $\mathbb{N}_{n}$ into sets of prescribed sizes. As $S_{\mathbb{N}}$ and $S_{\infty}$ are infinite, and so is the analogous set $\mathbb{Q}[\mathcal{OP}_{\infty,I}]$ of ordered partitions of $\mathbb{N}$ into finitely many sets with all but the last one being finite, and may encounter issues like those investigated in Theorems \ref{compred}, \ref{filtQxinf}, and \ref{filtrations} below, we leave investigating possible connections with the representations from Definition \ref{RinfIdef} and Lemma \ref{RinfIprop} for future research.

\begin{ex}
Consider the multi-set $I:=\{1,3,3\}$ from Example \ref{RnIex}, and the set $\hat{I}=\{1,3\}$. If, given $\hat{M}\in\operatorname{SSYT}(\hat{\lambda})$ with multiplicities $\{m_{h}\}_{h>0}$, we indicate in $V_{\hat{M}}$ the multiplicities by putting each $h>0$ in the superscript $m_{h}$ times, then $R_{\infty,I}^{\mathrm{hom},0}$ is given by \[V_{000\cdots}e_{1}e_{3}^{2} \oplus V_{\substack{000\cdots \\ 1\hphantom{11\cdots}}}e_{3}^{2} \oplus V_{\substack{000\cdots \\ 1\hphantom{11\cdots}}}^{11}e_{1}e_{3} \oplus V_{\substack{000\cdots \\ 11\hphantom{1\cdots}}}^{1}e_{1}e_{3} \oplus V_{\substack{000\cdots \\ 111\hphantom{\cdots}}}e_{1}e_{3} \oplus V_{\substack{000\cdots \\ 1\hphantom{11\cdots} \\ 2\hphantom{22\cdots}}}^{1}e_{3} \oplus V_{\substack{000\cdots \\ 11\hphantom{1\cdots} \\ 2\hphantom{22\cdots}}}e_{3},\] and we have \[R_{\infty,\hat{I}}=V_{000\cdots}e_{1} \oplus V_{\substack{000\cdots \\ 1\hphantom{11\cdots}}}e_{1} \oplus V_{\substack{000\cdots \\ 1\hphantom{11\cdots}}}^{11} \oplus V_{\substack{000\cdots \\ 11\hphantom{1\cdots}}}^{1} \oplus V_{\substack{000\cdots \\ 111\hphantom{\cdots}}} \oplus V_{\substack{000\cdots \\ 1\hphantom{11\cdots} \\ 2\hphantom{22\cdots}}}^{1} \oplus V_{\substack{000\cdots \\ 11\hphantom{1\cdots} \\ 2\hphantom{22\cdots}}}.\] \label{RinfIex}
\end{ex}

\medskip

Recall the direct sums from Theorem \ref{Rnkdecom}, which we now wish to generalize to the infinite setting. Note that for fixed $k$, the indices $r$ with $n+1-k \leq r \leq n$ all tend to $\infty$ with $n$, so that in this sense the limit of $R_{n,k}$ as $n\to\infty$ is ``the same'' as that of $R_{n,k,0}$. However, these limits should be quotients of $\tilde{\Lambda}$, where symmetric functions become separate from polynomials (see, e.g., Example \ref{exd1} below for this separation). The quotient in the infinite case, and the analogue of the sets $H_{C}^{k}$ from Theorem 3.4 of \cite{[Z2]} and $H_{C}^{k,0}$ from Theorem 2.12 of \cite{[Z3]} in this setting, are thus the following ones.
\begin{defn}
Let $k\geq1$ be any integer.
\begin{enumerate}[$(i)$]
\item The symbol $R_{\infty,k}$ stands for the quotient of $\tilde{\Lambda}$ by the monomials $x_{i}^{k}$, $i\geq1$, and by the power sums $p_{l}=\sum_{i=1}^{\infty}x_{i}^{l}\in\Lambda$ for $l \geq k$.
\item As $R_{\infty,k}$ is a graded ring, we write it as $\bigoplus_{d=0}^{\infty}R_{\infty,k,d}$, where $R_{\infty,k,d}$ for the part of $R_{\infty,k}$ that is homogeneous of degree $d$.
\item Given $\hat{\lambda}\vdash\infty$ and $\hat{C}\in\operatorname{CCT}(\hat{\lambda})$, the symbol $H_{\hat{C}}^{k}$ stands for the set of vectors $\vec{h}=\{h_{r}\}_{r=1}^{\infty}$ of non-negative integers such that $\sum_{r=1}^{\infty}h_{r}<k-|\operatorname{Dsp}^{c}(\hat{C})|$.
\end{enumerate} \label{defRinfk}
\end{defn}
Indeed, if $x_{i}^{l}=0$ for every $i$ in a quotient, it makes sense that $p_{l}=0$ as well. The quotient $R_{\infty,k}$ from Definition \ref{defRinfk} is a quotient of the extension to $\mathbb{Q}$ of the algebra considered in Proposition 7.3 of \cite{[PR]}, and the set $H_{\hat{C}}^{k}$ is, of course, empty in case $|\operatorname{Dsp}^{c}(\hat{C})| \geq k$.

\begin{rmk}
Simple relations inside $\Lambda$ show that any monomial symmetric function involving an exponent which is $k$ or more vanishes in the quotient $R_{\infty,k}$ from Definition \ref{defRinfk}. This combines with the presentations of any monomial symmetric function in terms of products of elementary symmetric functions to show that products of less than $k$ elementary symmetric functions form a basis for the image of $\Lambda$ inside $R_{\infty,k}$. This is in correspondence with the decompositions of the representation $R_{\infty,k}^{0}$ from Theorem \ref{decomRinf} below into representations involving multipliers from $\Lambda$, where these multipliers are products of less than $k$ elementary symmetric functions. \label{propkeis}
\end{rmk}

\medskip

In order to consider Theorem \ref{Rnkdecom} in the infinite case, we first prove an analogue of Lemma 3.17 of \cite{[Z2]} and Lemma 2.16 of \cite{[Z3]} in this setting.
\begin{lem}
Consider some $k\geq1$, $\hat{\lambda}\vdash\infty$, and $\hat{C}\in\operatorname{CCT}(\hat{\lambda})$.
\begin{enumerate}[$(i)$]
\item The map taking any multi-set $I$ of size $k-1$ that contains $\operatorname{Dsp}^{c}(\hat{C})$ in the sense of Definition \ref{multisets} to the vector $\vec{h}(C,I)$ from Definition \ref{Spechtdef} is a bijection from this collection of multi-sets onto the set $H_{\hat{C}}^{k}$ from Definition \ref{defRinfk}.
\item We have a containment $H_{\hat{C}}^{k-1} \subseteq H_{\hat{C}}^{k}$, and $I$ maps into that subset via part $(i)$ if and only if it contains 0.
\item The map sending a subset $I\subseteq\mathbb{N}$ contains $\operatorname{Dsp}^{c}(\hat{C})$ to the vector $\vec{h}_{C}^{I}$ is also a bijection between the collection of such sets and $H_{\hat{C}}^{k}$.
\end{enumerate} \label{bijvecs}
\end{lem}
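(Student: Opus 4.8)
The plan is to reduce everything to the finite setting via the substitution $x_m=0$ for $m>n$ and the bijections already established there, using Lemma~\ref{limtab} and part~$(iv)$ of Lemma~\ref{propinf} to control the passage to the limit. Since $\operatorname{Dsp}^{c}(\hat{C})$ is a finite set by part~$(iv)$ of Lemma~\ref{propinf} that coincides (by part~$(iii)$ there) with $\operatorname{Dsp}^{c}(C)$ for the finite cocharge tableau $C\in\operatorname{CCT}(\lambda)$, $\lambda\vdash n$, producing $\hat{C}$ through Lemma~\ref{limtab}, the combinatorial data in the statement is literally the same as in the finite case once $n$ is chosen large enough. So the three assertions should follow from Lemma 3.17 of \cite{[Z2]} and Lemma 2.16 of \cite{[Z3]}, provided one checks that the definitions of the vectors $\vec{h}(\hat{C},I)$, $\vec{h}_{\hat{C}}^{I}$ and the set $H_{\hat{C}}^{k}$ match the finite ones under this identification.

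For part~$(i)$, I would argue as follows. A multi-set $I$ of size $k-1$ containing $\operatorname{Dsp}^{c}(\hat{C})$ is the same thing as a multi-set of size $k-1$ inside $\mathbb{N}\cup\{0\}$ containing $\operatorname{Dsp}^{c}(C)$; the complement $\operatorname{Asp}^{c}_{I}(\hat{C})=\operatorname{Asp}^{c}_{I}(C)$ from Definition~\ref{multisets} has size $k-1-|\operatorname{Dsp}^{c}(\hat{C})|$, and $\vec{h}(\hat{C},I)$ is by Definition~\ref{infSpecht}$(vii)$ its characteristic vector (with the multiplicity of $0$ omitted). Hence $\sum_r h_r$ equals the number of positive elements of $\operatorname{Asp}^{c}_{I}(C)$, which is at most $k-1-|\operatorname{Dsp}^{c}(\hat{C})|<k-|\operatorname{Dsp}^{c}(\hat{C})|$, so the vector lies in $H_{\hat{C}}^{k}$. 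Conversely, given $\vec{h}\in H_{\hat{C}}^{k}$, the total multiplicity of its positive entries is some $m<k-|\operatorname{Dsp}^{c}(\hat{C})|$, and one recovers $I$ by taking the union (as multi-sets) of $\operatorname{Dsp}^{c}(\hat{C})$, the multi-set whose characteristic vector is $\vec{h}$, and $k-1-|\operatorname{Dsp}^{c}(\hat{C})|-m$ copies of $0$; this is manifestly inverse to the forward map, so it is a bijection. (This is exactly the finite argument, just phrased with $\mathbb{N}$ in place of $\mathbb{N}_{n-1}$, which is legitimate because all the relevant objects are finite.)

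For part~$(ii)$, the containment $H_{\hat{C}}^{k-1}\subseteq H_{\hat{C}}^{k}$ is immediate from the defining inequality $\sum_r h_r<k-1-|\operatorname{Dsp}^{c}(\hat{C})|$ being stronger than the one with $k$. Under the bijection of part~$(i)$, a multi-set $I$ of size $k-1$ maps into $H_{\hat{C}}^{k-1}$ precisely when $|\operatorname{Asp}^{c}_{I}(C)|$, counted without the multiplicity of $0$, is at most $k-2-|\operatorname{Dsp}^{c}(\hat{C})|$; since $|I|=k-1$ forces $|\operatorname{Asp}^{c}_{I}(C)|=k-1-|\operatorname{Dsp}^{c}(\hat{C})|$ with multiplicities, this drop by one is equivalent to $0$ occurring in $I$, as claimed. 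For part~$(iii)$, a subset $I\subseteq\mathbb{N}$ containing $\operatorname{Dsp}^{c}(\hat{C})$ has some size $k-1$, and the numbers $r_i$ from Definition~\ref{infSpecht}$(iv)$ are defined so that $\vec{h}_{\hat{C}}^{I}$ records, for each $r$, how many $i\in\operatorname{Asp}^{c}_{I}(\hat{C})$ satisfy $r_i=r$; one checks that $i\mapsto r_i$ is injective on such a set (its values strictly increase along $\operatorname{Asp}^{c}_{I}(\hat{C})$, since inserting a gap-counting index is order-preserving), so $\vec{h}_{\hat{C}}^{I}$ is again a $0/1$ vector with exactly $|\operatorname{Asp}^{c}_{I}(\hat{C})|=k-1-|\operatorname{Dsp}^{c}(\hat{C})|$ ones, landing in $H_{\hat{C}}^{k}$, and the inverse sends $\vec{h}$ back to the unique set realizing it. The main obstacle—really the only point needing care—is verifying that the index shift encoded in the maps $r_i$ is a bijection onto the correct range, i.e.\ that no two distinct ascent indices collide and that the count is exactly $k-1-|\operatorname{Dsp}^{c}(\hat{C})|$; this is a direct, if slightly fiddly, bookkeeping check that mirrors the finite proofs in \cite{[Z2]} and \cite{[Z3]}, and everything else is a transcription of those arguments using that $\operatorname{Dsp}^{c}(\hat{C})$ is finite.
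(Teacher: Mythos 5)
Parts $(i)$ and $(ii)$ of your argument are essentially the paper's: bound the coordinate sum of the image vector by the size of $\operatorname{Asp}^{c}_{I}(\hat{C})$, recover $I$ from $\vec{h}$ by reinstating the multiplicity $h_{0}$ of $0$ and adjoining $\operatorname{Dsp}^{c}(\hat{C})$, and observe that dropping from $k$ to $k-1$ corresponds to one instance of $0$ in $I$. That much is fine.

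Part $(iii)$, however, contains a genuine error at exactly the point you flag as ``the only point needing care.'' The map $i\mapsto r_{i}$, with $r_{i}=|\{j\in\mathbb{N}\setminus I\;|\;j<i\}|$, is \emph{not} injective on $I$: if $i$ and $i+1$ both lie in $I$ then $r_{i}=r_{i+1}$, since no $j\notin I$ sits between them. The values are only weakly increasing, and they strictly increase precisely when one crosses an element of $\mathbb{N}\setminus I$. Consequently $\vec{h}_{\hat{C}}^{I}$ is in general \emph{not} a $0/1$ vector --- runs of consecutive elements of $\operatorname{Asp}^{c}_{I}(\hat{C})$ pile up as multiplicities in a single coordinate $h_{r}$. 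This is not a cosmetic slip: $H_{\hat{C}}^{k}$ contains vectors with entries $\geq 2$ (e.g.\ $h_{1}=2$ whenever $k-|\operatorname{Dsp}^{c}(\hat{C})|\geq 3$), so a map whose image consisted only of $0/1$ vectors could not be surjective onto $H_{\hat{C}}^{k}$, and your description of the inverse collapses with it. The correct argument, as in the paper (following Lemma 3.17 of \cite{[Z2]} and Lemma 2.16 of \cite{[Z3]}), completes $\vec{h}$ by $h_{0}$ so that $\sum_{r\geq0}h_{r}=k-1-|\operatorname{Dsp}^{c}(\hat{C})|$ and then builds the unique set $A\subseteq\mathbb{N}\setminus\operatorname{Dsp}^{c}(\hat{C})$ greedily: take the smallest $h_{0}$ elements of $\mathbb{N}\setminus\operatorname{Dsp}^{c}(\hat{C})$, skip one, take the next $h_{1}$, skip one, and so on; then $I=A\cup\operatorname{Dsp}^{c}(\hat{C})$. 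You need this explicit construction (and its uniqueness) to get both surjectivity and injectivity; the reduction to the finite case that frames your proposal is legitimate, but it does not substitute for this step.
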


\begin{proof}
The argument again follows the proofs of Lemma 3.17 of \cite{[Z2]} and Lemma 2.16 of \cite{[Z3]}. Given $I$ as in either part $(i)$ or part $(iii)$, if $\vec{h}$ is the associated vector then $\sum_{r=1}^{\infty}h_{r}$ is, by Definition \ref{Spechtdef}, the number of elements of $\operatorname{Asp}^{c}_{I}(\hat{C})$ from Definition \ref{sets} (which is a set in part $(iii)$, but in part $(i)$ may be a multi-set) yielding a non-zero contribution. This sum is thus bounded by the size of this set or multi-set, and as this size is $k-1-|\operatorname{Dsp}^{c}(\hat{C})|$, the images of both maps are in $H_{\hat{C}}^{k}$.

Consider now an element $\vec{h}=\{h_{r}\}_{r=1}^{n} \in H_{\hat{C}}^{k}$, and thus its coordinate sum is smaller than $k-|\operatorname{Dsp}^{c}(\hat{C})|$, and we complete it by defining $h_{0}\geq0$ in such a way that the equality $\sum_{r=0}^{n}h_{r}=k-1-|\operatorname{Dsp}^{c}(\hat{C})|$ holds. We need to find a unique multi-set $I$ satisfying $\vec{h}(\hat{C},I)=\vec{h}$ for part $(i)$, and for proving part $(iii)$ we have to find a set $I\subseteq\mathbb{N}$ for which $\vec{h}_{\hat{C}}^{I}=\vec{h}$.

But Definitions \ref{Spechtdef} and \ref{sets} imply that $\vec{h}(\hat{C},I)=\vec{h}$ is equivalent to the $r$th entry in the completion of $\vec{h}$ being the multiplicity with which $r$ appears in $\operatorname{Asp}^{c}_{I}(\hat{C})$, which determines the latter multi-set uniquely. As its size is thus $k-1-|\operatorname{Dsp}^{c}(\hat{C})|$, and $I$ is determined by the latter multi-set by simply adding $\operatorname{Dsp}^{c}(\hat{C})$ as a multi-set, and thus has size $k-1$, part $(i)$ is thus established.

The condition in Definition \ref{defRinfk} becomes stronger when $k$ is replaced by $k-1$, and $\vec{h}(\hat{C},I)$ was seen in the proof of part $(i)$ (or in Definition \ref{Spechtdef}) to be the characteristic function of $\operatorname{Asp}^{c}_{I}(\hat{C})$, with the multiplicity of 0 appropriately modified. This proves the containment in part $(ii)$, and as the multi-sets corresponding to an element of $H_{\hat{C}}^{k-1} \subseteq H_{\hat{C}}^{k}$ via part $(i)$ for $k$ and for $k-1$ differ by one instance of 0, so does the other assertion there.

Finally, we denote $D:=\operatorname{Dsp}^{c}(C)\subseteq\mathbb{N}$ as well as $d:=|D|+1$, and we recall from Definition \ref{infSpecht} that $r_{i}$ only increases with $i$ and depends on the elements that are smaller than $i$ (whether they are in $I$ or not). We need to find the subset $A\subseteq\mathbb{N} \setminus D$, of size $k-d$, whose entries yield the $r_{i}$'s according to the completion of $\vec{h}$ (which is indeed of entry sum $k-d$), observe that there is only one such subset, and then our $I$ is $A \cup D$.

We do this as in the proof of Lemma 3.17 of \cite{[Z1]}, by recalling that the $r_{i}$'s consist of $h_{0}$ instances of 0, then $h_{1}$ instances of 1, and so forth until we reach the maximal index $r$ for which $h_{r}>0$. Then the only set $A$ with the desired properties consists of the smallest $h_{0}$ elements of $\mathbb{N} \setminus D$ (or skip those if $h_{0}=0$), then skips one and takes the next $h_{1}$ elements there (unless $h_{1}=0$), and so on, finishing with we pass over all the positive entries of $\vec{h}$ (or equivalently when we have gathered $k-d$ elements). With this set $I$, and only with it, we have $\vec{h}_{\hat{C}}^{I}=\vec{h}$, as part $(iii)$ requires. This completes the proof of the lemma.
\end{proof}

\medskip

We can now prove the analogue of Theorem \ref{Rnkdecom} for the infinite case.
\begin{thm}
Denote by $R_{\infty,k}^{0}$ the sum $\sum_{\hat{\lambda}\vdash\infty}\bigoplus_{\hat{C}\in\operatorname{CCT}(\hat{\lambda})}\sum_{\vec{h} \in H_{\hat{C}}^{k}}V_{\hat{C}}^{\vec{h}}$.
\begin{enumerate}[$(i)$]
\item The sum defining $R_{\infty,k}^{0}$ is direct inside $\tilde{\Lambda}$, and even in $R_{\infty,k}$.
\item The sum of the representations $R_{\infty,I}^{0}$ over all sets $I\subseteq\mathbb{N}$ of size $k-1$ is also direct in $\tilde{\Lambda}$ and also in $R_{\infty,k}$, and equals $R_{\infty,k}^{0}$.
\item Taking the sum of $R_{\infty,I}^{\mathrm{hom},0}$ over all the multi-sets of size $k-1$ (which may include zeros) also yields a direct sum in $\tilde{\Lambda}$ and $R_{\infty,k}$, which equals $R_{\infty,k}^{0}$.
\item Viewing $R_{\infty,k}$ as a quotient of the finer quotient $R_{\infty,k+1}$, the sum producing $R_{\infty,k}^{0}$ is partial to the one yielding $R_{\infty,k+1}^{0}$. The same holds for the expressions from part $(iii)$ for these two values.
\item By setting $R_{\infty,k,d}^{0}:=R_{\infty,k}^{0} \cap R_{\infty,k,d}$ for any degree $d$, it decomposes as the finite direct sum of those $R_{\infty,I}^{\mathrm{hom},0}$'s from part $(iii)$ for which the multi-set $I$ satisfies $\sum_{i \in I}i=d$.
\end{enumerate} \label{decomRinf}
\end{thm}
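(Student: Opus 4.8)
The plan is to deduce all five parts from the finite decompositions of Theorem~\ref{Rnkdecom} by means of the substitution operator $F\mapsto F^{(n)}$ that sets $x_{m}=0$ for all $m>n$ (used as in the proofs of Theorem~\ref{repsinf} and Lemma~\ref{RinfIprop}), combined with the reindexing bijections of Lemma~\ref{bijvecs}. I would first prove that the sum defining $R_{\infty,k}^{0}$ is direct inside $\tilde{\Lambda}$. A hypothetical finite dependence involves basis elements of the shape $F_{\hat{C}_{j},\hat{T}_{j}}^{I_{j},\mathrm{hom}}$, $F_{\hat{C}_{j},\hat{T}_{j}}^{I_{j}}$, or $F_{\hat{C}_{j},\hat{T}_{j}}$ from Theorem~\ref{repsinf}; choosing $n$ large enough that every $\hat{C}_{j}$ and $\hat{T}_{j}$ arises via Lemma~\ref{limtab} from finite tableaux $C_{j},T_{j}$ of a common shape of size $n$, that the first row of each $C_{j}$ is all zeros over its finite part, and that each $I_{j}$ already sits in $\mathbb{N}_{n}\cup\{0\}$ (resp.\ in $\mathbb{N}_{n-1}$), Proposition~\ref{limSpecht}(iv)--(v) shows that $F\mapsto F^{(n)}$ carries these to $F_{C_{j},T_{j}}^{I_{j},\mathrm{hom}}$, $F_{C_{j},T_{j}}^{I_{j}}$, or $F_{C_{j},T_{j}}$ respectively, which are distinct members of a basis of the direct sum $\bigoplus_{I}R_{n,I}^{\mathrm{hom}}$ (resp.\ $\bigoplus_{I}R_{n,I}$) inside $\mathbb{Q}[\mathbf{x}_{n}]$ provided by Theorem~\ref{Rnkdecom}(i) (resp.\ (ii)). Hence all coefficients vanish, and in particular $F\mapsto F^{(n)}$ is injective on any fixed finite-dimensional subspace of $R_{\infty,k}^{0}$ once $n$ is large.

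The step I expect to be the main obstacle is upgrading this to directness inside the quotient ring $R_{\infty,k}$, that is, showing $R_{\infty,k}^{0}\cap J=0$ where $J\subseteq\tilde{\Lambda}$ is the ideal generated by the $x_{i}^{k}$ and by the power sums $p_{l}$, $l\geq k$. The key observation is that $F\mapsto F^{(n)}$ is a ring homomorphism $\tilde{\Lambda}\to\mathbb{Q}[\mathbf{x}_{n}]$ carrying $J$ into the ideal $(x_{1}^{k},\dots,x_{n}^{k})$: each $x_{i}^{k}$ maps to $x_{i}^{k}$ or to $0$, and $p_{l}$ maps to $\sum_{i=1}^{n}x_{i}^{l}$ with $x_{i}^{l}=x_{i}^{l-k}x_{i}^{k}$ for $l\geq k$. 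So if some $0\neq w$ lay in $R_{\infty,k}^{0}\cap J$, then for $n$ large (chosen as above) $w^{(n)}$ would be a nonzero element of $\mathbb{Q}[\mathbf{x}_{n}]$ lying both in $\bigoplus_{I}R_{n,I}^{\mathrm{hom}}$ and in $(x_{1}^{k},\dots,x_{n}^{k})$; but Theorem~\ref{Rnkdecom}(i) asserts that the former subspace maps bijectively onto $R_{n,k,0}=\mathbb{Q}[\mathbf{x}_{n}]/(x_{1}^{k},\dots,x_{n}^{k})$, so it intersects that ideal trivially --- a contradiction. Therefore $R_{\infty,k}^{0}\cap J=0$, which at once gives that each $V_{\hat{C}}^{\vec{h}}$ injects into $R_{\infty,k}$ and that the images remain in direct sum there, finishing part~$(i)$.

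For parts~$(ii)$ and $(iii)$ I would interchange the order of summation in Definition~\ref{RinfIdef}: summing $R_{\infty,I}^{\mathrm{hom},0}$ over multi-sets $I$ of size $k-1$ gives $\sum_{\hat{\lambda}}\sum_{\hat{C}}\big(\sum_{I:\,\operatorname{Dsp}^{c}(\hat{C})\subseteq I}V_{\hat{C}}^{\vec{h}(\hat{C},I)}\big)$, whose inner sum is $\sum_{\vec{h}\in H_{\hat{C}}^{k}}V_{\hat{C}}^{\vec{h}}$ by the bijection of Lemma~\ref{bijvecs}(i), with every summand occurring exactly once; likewise $\sum_{I}R_{\infty,I}^{0}$ over sets $I$ of size $k-1$ equals $R_{\infty,k}^{0}$ by Lemma~\ref{bijvecs}(iii). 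Thus both sums coincide, as sums of subspaces, with the one defining $R_{\infty,k}^{0}$, and their directness in $\tilde{\Lambda}$ and in $R_{\infty,k}$ is inherited from part~$(i)$. Part~$(iv)$ follows from the containment $H_{\hat{C}}^{k}\subseteq H_{\hat{C}}^{k+1}$ of Lemma~\ref{bijvecs}(ii), which shows every summand of $R_{\infty,k}^{0}$ appears among those of $R_{\infty,k+1}^{0}$, together with the fact that the ideal defining $R_{\infty,k}$ contains the one defining $R_{\infty,k+1}$ (so $R_{\infty,k}$ is indeed a quotient of $R_{\infty,k+1}$); the analogue for part~$(iii)$ uses the injection $I\mapsto I\cup\{0\}$ of size-$(k-1)$ multi-sets into size-$k$ ones, recalling that $R_{\infty,I}^{\mathrm{hom},0}$ depends only on the positive part of $I$. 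Finally, for part~$(v)$, each $R_{\infty,I}^{\mathrm{hom},0}$ is homogeneous of degree $\sum_{i\in I}i$ by Lemma~\ref{RinfIprop}, so the decomposition of part~$(iii)$ refines according to degree; its degree-$d$ summand is $R_{\infty,k}^{0}\cap R_{\infty,k,d}=R_{\infty,k,d}^{0}$, which is therefore the direct sum of the $R_{\infty,I}^{\mathrm{hom},0}$ with $\sum_{i\in I}i=d$, a finite sum because the positive parts of such multi-sets are partitions of $d$ into at most $k-1$ parts.
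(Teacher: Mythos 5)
Your proposal is correct and follows essentially the same route as the paper: reduce any putative linear relation to finitely many basis elements, push it down to $\mathbb{Q}[\mathbf{x}_{n}]$ for large $n$ via the substitution $x_{m}=0$ for $m>n$, invoke Theorem \ref{Rnkdecom}, and use Lemma \ref{bijvecs} to identify the three sums and deduce parts $(iv)$ and $(v)$. Your explicit verification that the substitution carries the ideal defining $R_{\infty,k}$ into $(x_{1}^{k},\dots,x_{n}^{k})$ is a welcome elaboration of a step the paper's proof treats tersely (it merely asserts that no relations exist "also in $R_{n,k}$ or $R_{n,k,0}$"), but it is a refinement of the same argument rather than a different one.
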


\begin{proof}
We first observe that part $(iii)$ of Lemma \ref{bijvecs} shows that decomposing the terms in the sum from part $(ii)$ according to Definition \ref{RinfIdef} yields that of part $(i)$, and part $(i)$ of that lemma relates the sum from part $(iii)$ to that of part $(i)$ in the same manner. Hence the first three parts are equivalent.

For proving these parts, we take a linear relation among the $R_{\infty,I}^{0}$'s (or the $R_{\infty,I}^{\mathrm{hom},0}$'s), and recall that Lemma \ref{RinfIprop} and Theorem \ref{repsinf} transform it into an expression of the sort $\sum_{j=1}^{e_{\hat{C}}}a_{j}F_{\hat{C}_{j},\hat{T}_{j}}^{I}=0$ (or $\sum_{j=1}^{e_{\hat{C}}}a_{j}F_{\hat{C}_{j},\hat{T}_{j}}^{I,\mathrm{hom}}=0$). As in the proof of these results, we can take $n$ large enough so that all of the (finitely many) indices are obtained from finite tableaux of size $n$ through Lemma \ref{limtab}, and obtain a linear relation among the expressions from Definition \ref{RnIdef}. As Theorem \ref{Rnkdecom} implies that no such linear relations exist (also in $R_{n,k}$ or $R_{n,k,0}$), we get $a_{j}=0$ for all $j$, thus yielding parts $(i)$, $(ii)$, and $(iii)$.

The first assertion in part $(iv)$ follows from part $(ii)$ of Lemma \ref{bijvecs}, and the second one is a consequence of that part together with the fact that $R_{\infty,I}^{\mathrm{hom},0}$ from Definition \ref{RinfIdef} is unaffected by adding or removing 0 from $I$. Part $(v)$ is an immediate consequence of part $(iii)$ and the degree of homogeneity of $R_{\infty,I}^{\mathrm{hom},0}$. This completes the proof of the theorem.
\end{proof}

\begin{rmk}
Part $(iv)$ of Theorem \ref{decomRinf} presents the direct sum living inside $R_{\infty,k+1}$ as the sum of one that projects bijectively onto $R_{\infty,k}$ plus some additional terms. These terms, arising from multi-sets of size $k$ that do not contain 0, do not vanish when projected onto $R_{\infty,k}$ in general. Indeed, they are products of stable higher Specht polynomials with products of elementary symmetric functions, and as Remark \ref{propkeis} states, such expressions will be congruent in $R_{\infty,k}$ to linear combinations of expressions involving products of less elementary symmetric functions, and not to 0. We will see in part $(iii)$ of Theorem \ref{images} below how stable generalized higher Specht polynomials yields expressions that do vanish under these projections. \label{monfunc}
\end{rmk}

We obtain the following infinite analogue of Corollary 2.27 of \cite{[Z3]}.
\begin{cor}
Take $\hat{\lambda}\vdash\infty$ and $d\geq0$, and consider pairs involving an element $\hat{C}\in\operatorname{CCT}(\hat{\lambda})$ combined with a multi-set $I$, involving only positive integers, that sum to $d$ and contain, in the sense of Definition \ref{multisets}, the set $\operatorname{Dsp}^{c}(\hat{C})$. \begin{enumerate}[$(i)$]
\item Each such pair contributes an irreducible representation inside $\tilde{\Lambda}_{d}$ that is isomorphic to $\mathcal{S}^{\hat{\lambda}}$, and the sum of these representations is finite and direct.
\item The sum of these representations over $\hat{\lambda}\vdash\infty$ is also finite and direct.
\end{enumerate} \label{multCI}
\end{cor}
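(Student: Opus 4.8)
The plan is to identify the entire collection of representations in the corollary with the representation $R_{\infty,d+1,d}^{0}$ from Theorem \ref{decomRinf}, and then read off every assertion from results already in place. The key observation is that a multi-set $I$ consisting only of positive integers and summing to $d$ is nothing but a partition $\mu\vdash d$, and that for $k:=d+1$ every multi-set of size $k-1=d$ inside $\mathbb{N}\cup\{0\}$ with entry-sum $d$ arises from a unique such $\mu$ by padding it with $d-\ell(\mu)$ copies of $0$. Since the multiplicity of $0$ in $I$ affects neither the condition $\operatorname{Dsp}^{c}(\hat{C})\subseteq I$ (as $\operatorname{Dsp}^{c}(\hat{C})$ contains no $0$, by part $(iv)$ of Lemma \ref{propinf}) nor the multiplying symmetric function $\prod_{i\in\operatorname{Asp}^{c}_{I}(\hat{C})}e_{i}$ (because $e_{0}=1$), the space $R_{\infty,I}^{\mathrm{hom},0}$ of Definition \ref{RinfIdef} depends only on $\mu$, and equals $\bigoplus_{\hat{\lambda}\vdash\infty}\bigoplus_{\hat{C}\in\operatorname{CCT}(\hat{\lambda}),\ \operatorname{Dsp}^{c}(\hat{C})\subseteq\mu}V_{\hat{C}}^{\vec{h}(\hat{C},\mu)}$. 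Summing over $\mu\vdash d$ and regrouping by $\hat{\lambda}$, Theorem \ref{decomRinf}$(v)$ then produces exactly the sum appearing in the corollary, so that sum is $R_{\infty,d+1,d}^{0}$.

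Given this identification, part $(ii)$ is immediate: Theorem \ref{decomRinf}$(v)$ together with Lemma \ref{RinfIprop} says this sum is finite and direct. For the remaining assertions, the representation attached to a pair $(\hat{C},I)$ is $V_{\hat{C}}^{\vec{h}(\hat{C},I)}$, which by part $(iii)$ of Proposition \ref{limSpecht} lies in $\tilde{\Lambda}$ and is homogeneous; its degree is $\Sigma(\hat{C})+\sum_{i\in\operatorname{Asp}^{c}_{I}(\hat{C})}i=\sum_{i\in I}i=d$ using part $(iii)$ of Lemma \ref{propinf}, so it sits inside $\tilde{\Lambda}_{d}$. Its irreducibility and the fact that its isomorphism type is the $\mathcal{S}^{\hat{\lambda}}$ attached to $\hat{\lambda}=\operatorname{sh}(\hat{C})$ are parts $(iv)$ and $(iii)$ of Theorem \ref{repsinf}. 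Finally, part $(i)$ is simply the restriction of the finite direct sum of part $(ii)$ to a single value of $\hat{\lambda}$, hence a finite direct sum of copies of $\mathcal{S}^{\hat{\lambda}}$.

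For completeness I would also record the mechanism behind Theorem \ref{decomRinf} that makes this work, since it is the analogue of the proof of Corollary 2.27 of \cite{[Z3]}: any linear relation among these representations becomes, via the bases from Theorem \ref{repsinf}, a relation $\sum_{j}a_{j}F_{\hat{C}_{j},\hat{T}_{j}}^{I_{j},\mathrm{hom}}=0$; choosing $n\geq 2d$ large enough that each $\hat{C}_{j}$ and $\hat{T}_{j}$ comes, via Lemma \ref{limtab}, from finite tableaux $C_{j}\in\operatorname{CCT}(\lambda_{j})$ and $T_{j}\in\operatorname{SYT}(\lambda_{j})$ with $\lambda_{j}\vdash n$ (with $I_{j}$ a genuine multi-set inside $\mathbb{N}_{n-1}$ since $n>d$), and substituting $x_{m}=0$ for all $m>n$ — which sends $F_{\hat{C}_{j},\hat{T}_{j}}^{I_{j},\mathrm{hom}}$ to $F_{C_{j},T_{j}}^{I_{j},\mathrm{hom}}$ by part $(v)$ of Proposition \ref{limSpecht} — turns it into a relation among basis vectors of the finite direct sum of Corollary 2.27 of \cite{[Z3]}, forcing $a_{j}=0$ for all $j$. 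The finiteness can also be seen directly: $\operatorname{Dsp}^{c}(\hat{C})\subseteq I$ and $\sum_{i\in I}i=d$ force $\Sigma(\hat{C})=\sum_{i\in\operatorname{Dsp}^{c}(\hat{C})}i\leq d$ by part $(iii)$ of Lemma \ref{propinf}, so the finite part of $\hat{\lambda}$, whose boxes all carry positive entries of $\hat{C}$, has at most $d$ boxes, leaving only finitely many choices of $\hat{\lambda}$, then of $\hat{C}$, and of the partition $I$.

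The only genuinely delicate point is the bookkeeping in the first paragraph: that the reindexing by partitions of $d$ matches the index set of Theorem \ref{decomRinf}$(v)$ and that $R_{\infty,I}^{\mathrm{hom},0}$ is insensitive to the padding zeros. Everything else is a direct appeal to the cited statements, so I do not expect any real obstruction.
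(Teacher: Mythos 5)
Your proposal is correct and follows essentially the same route as the paper's own proof: both identify the sum in the corollary with the degree-$d$ homogeneous piece of $R_{\infty,k}^{0}$ for a choice of $k>d$ (you fix $k=d+1$, the paper leaves $k>d$ arbitrary), exploit the insensitivity of $R_{\infty,I}^{\mathrm{hom},0}$ to padding $I$ with zeros, and then invoke Theorem \ref{decomRinf}, Lemma \ref{RinfIprop}, and Theorem \ref{repsinf} for directness, finiteness, and the isotypical identification. The extra verifications you record (the degree computation and the direct finiteness bound on $\hat{\lambda}$) are sound but not needed beyond what the cited results already give.
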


\begin{proof}
Take some $k>d$, and note that the relations by which we divide $\tilde{\Lambda}$ in order to get $R_{\infty,k}$ in Definition \ref{defRinfk} do not affect the homogeneous part of degree $d$. Hence $\tilde{\Lambda}_{d}$ maps bijectively onto the part of $R_{\infty,k}$ that is homogeneous of degree $d$, and we observe that part $(iii)$ of Theorem \ref{decomRinf} concerns homogeneous representations.

We thus restrict attention to those multi-sets $I$ satisfying $\sum_{i \in I}i=d$, so that $R_{\infty,I}^{\mathrm{hom},0}$ is contained in $\tilde{\Lambda}_{d}$ or in the said part of $R_{\infty,k}$, and recall from Definition \ref{RinfIdef} that removing the number of zeros in $I$ does not affect $R_{\infty,I}^{\mathrm{hom},0}$. Since every multi-set of entry sum $d<k$ has at most $d \leq k-1$ non-zero entries, and can be completed to multi-set of size $k-1$, taking the sum of the decompositions of the corresponding $R_{\infty,I}^{\mathrm{hom},0}$ yields the asserted direct sum.

The fact that the sums associated with different $\hat{\lambda}\vdash\infty$ land in different isotypical components (by part $(v)$ of Theorem \ref{repsinf} implies the directness of the second sum, and the finiteness of both follows from Lemma \ref{RinfIprop} and the fact that there are finitely many multi-sets of positive integers summing to $d$. This proves the corollary.
\end{proof}

\medskip

We also prove an infinite analogue of Theorem \ref{FMTdecom}.
\begin{prop}
The sum $\tilde{\Lambda}_{d}^{0}:=\sum_{\hat{\lambda}\vdash\infty}\sum_{\hat{M}\in\operatorname{SSYT}_{d}(\hat{\lambda})}V_{\hat{M}}$ is finite and direct inside $\tilde{\Lambda}_{d}$ for every $d\geq0$, and the partial sum $\tilde{\Lambda}_{\mu}^{0}:=\bigoplus_{\hat{\lambda}\vdash\infty}\bigoplus_{\hat{M}\in\operatorname{SSYT}_{\mu}(\hat{\lambda})}V_{\hat{M}}$, for any a content $\mu$ of sum $d$, is its intersection with $\tilde{\Lambda}_{\mu}$. \label{ExtVMlim}
\end{prop}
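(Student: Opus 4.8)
The plan is to reduce everything to the finite statement Theorem~\ref{FMTdecom}, exactly in the spirit of the proofs of Lemma~\ref{RinfIprop} and Theorem~\ref{decomRinf}, using the limit construction of Lemma~\ref{limtab} and the substitution $F\mapsto F^{(n)}$ (setting $x_{m}=0$ for all $m>n$). First, for \emph{finiteness}: by part~$(iii)$ of Lemma~\ref{limtab} every $\hat{M}\in\operatorname{SSYT}(\hat{\lambda})$ with $\hat{\lambda}\vdash\infty$ is the limit of a finite $M\in\operatorname{SSYT}(\lambda)$, $\lambda\vdash n$, and $\Sigma(\hat{M})=\Sigma(M)$ by part~$(iii)$ of Lemma~\ref{propinf}, while distinct $\hat{M}$ (and the underlying $\hat{\lambda}$) come from distinct $M$ (and $\lambda$) for each fixed $n$. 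Fixing $n=2d$ and iterating part~$(iii)$ of Theorem~\ref{FMTdecom} (valid also with the non-strict inequality), the map $M\mapsto\hat{\iota}M$ identifies $\bigsqcup_{\lambda\vdash n}\operatorname{SSYT}_{d}(\lambda)$ with the analogous set for every larger index, hence, passing to the limit as in Lemma~\ref{limtab}, with $\bigsqcup_{\hat{\lambda}\vdash\infty}\operatorname{SSYT}_{d}(\hat{\lambda})$. Since the former set is finite, so is the index set of the sum defining $\tilde{\Lambda}_{d}^{0}$, and each summand $V_{\hat{M}}$ lies in $\tilde{\Lambda}_{d}$ by part~$(ii)$ of Proposition~\ref{limSpecht}.

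For \emph{directness}, suppose a linear relation $\sum_{\hat{M}}\sum_{j}a_{\hat{M},j}F_{\hat{M},\hat{T}_{j}}=0$ holds among the basis vectors of part~$(i)$ of Theorem~\ref{repsinf}, involving the finitely many $\hat{M}$ above and finitely many $\hat{T}_{j}\in\operatorname{SYT}(\hat{\lambda})$. Choose $n$ large enough that every such $\hat{M}$ and $\hat{T}_{j}$ arises from finite tableaux $M$ and $T_{j}$ of size $n$ via Lemma~\ref{limtab}. Applying the substitution $x_{m}=0$ for $m>n$ sends $F_{\hat{M},\hat{T}_{j}}$ to $F_{M,T_{j}}$ (as recorded in the proof of Theorem~\ref{repsinf}, via part~$(iv)$ of Proposition~\ref{limSpecht}), so the relation maps to $\sum_{\hat{M}}\sum_{j}a_{\hat{M},j}F_{M,T_{j}}=0$ in $\mathbb{Q}[\mathbf{x}_{n}]_{d}$. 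Because $T_{j}\mapsto\hat{T}_{j}$ and $M\mapsto\hat{M}$ are injective, for each fixed $M$ the $T_{j}$ are distinct standard tableaux, so the $F_{M,T_{j}}$ belong to the basis of $V_{M}$ from part~$(ii)$ of Theorem~\ref{repsSpecht}, and the $V_{M}$ sit in direct sum inside $\mathbb{Q}[\mathbf{x}_{n}]_{d}$ by part~$(i)$ of Theorem~\ref{FMTdecom}. Hence all $a_{\hat{M},j}=0$.

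For the \emph{content grading}, part~$(ii)$ of Proposition~\ref{limSpecht} gives $V_{\hat{M}}\subseteq\tilde{\Lambda}_{\mu}$ whenever $\hat{M}$ has content $\mu$; grouping the (now direct) sum $\tilde{\Lambda}_{d}^{0}$ according to the content of $\hat{M}$ yields $\tilde{\Lambda}_{d}^{0}=\bigoplus_{\mu}\tilde{\Lambda}_{\mu}^{0}$ with $\tilde{\Lambda}_{\mu}^{0}\subseteq\tilde{\Lambda}_{\mu}$, the sum over contents $\mu$ of sum $d$. Intersecting with $\tilde{\Lambda}_{\mu}$ and using part~$(v)$ of Lemma~\ref{propinf}, which asserts $\tilde{\Lambda}_{d}=\bigoplus_{\mu}\tilde{\Lambda}_{\mu}$, isolates the single summand $\tilde{\Lambda}_{\mu}^{0}$, so $\tilde{\Lambda}_{d}^{0}\cap\tilde{\Lambda}_{\mu}=\tilde{\Lambda}_{\mu}^{0}$, as required; the direct-sum notation in the statement is then justified by the directness already established.

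The main obstacle is the finiteness step: one must carefully combine the finite stabilization of part~$(iii)$ of Theorem~\ref{FMTdecom} with Lemma~\ref{limtab} to be sure that $\bigsqcup_{\hat{\lambda}\vdash\infty}\operatorname{SSYT}_{d}(\hat{\lambda})$ is genuinely in bijection with the size-$n$ data for any fixed $n\geq 2d$, and in particular is not strictly larger. Once that is in place, the directness and content-grading steps are the routine descent-to-finite-$n$ arguments used repeatedly above.
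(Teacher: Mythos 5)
Your proposal is correct and follows essentially the same route as the paper's own proof: reduce the linear relation to finite $n$ via the substitution $x_{m}=0$ for $m>n$ and invoke Theorem \ref{FMTdecom}, obtain finiteness from the stabilization in part $(iii)$ of that theorem (or directly from the finiteness of $\operatorname{SSYT}_{d}(\hat{\lambda})$), and deduce the content-graded statement from part $(ii)$ of Proposition \ref{limSpecht}. The only difference is that you spell out the bookkeeping in more detail than the paper does.
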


\begin{proof}
As in the proofs of Theorem \ref{repsinf}, Lemma \ref{RinfIprop}, and Theorem \ref{decomRinf}, we consider a linear relation between elements of these representations, and express each summand in them via the basis from the former theorem. This relation involves finitely many terms, and we can take $n$ large enough such that all of these terms arise from tableaux of size $n$ through Lemma \ref{limtab}. Applying the usual substitution $x_{m}=0$ for all $m>n$ turns this linear relation into one between the bases of the representations appearing in Theorem \ref{FMTdecom}, thus forcing all the coefficients to vanish because the latter theorem implies that no non-trivial linear relations exist in this case.

This proves that the first sum is direct, and it is finite either by part $(iii)$ of Theorem \ref{FMTdecom} or by easily verifying that $\operatorname{SSYT}_{d}(\hat{\lambda})$ from Definition \ref{infSSYT} is finite. The assertion involving $\mu$ is an immediate consequence of, e.g., part $(ii)$ of Proposition \ref{limSpecht}. This completes the proof of the proposition.
\end{proof}
The direct sums from Proposition \ref{ExtVMlim} are sub-representations of $\tilde{\Lambda}_{d}$ and $\tilde{\Lambda}_{\mu}$, which are proper for $d\geq1$---see Example \ref{exd1} and Theorem \ref{compred} below.
\begin{ex}
The case $d=2$ in Proposition \ref{ExtVMlim} is the limit of the expressions from Example \ref{Qxnd2ex}, namely we have, in the notation from Example \ref{RinfIex}, the equality $\tilde{\Lambda}_{2}^{0}=V_{000\cdots}^{2} \oplus V_{\substack{000\cdots \\ 2\hphantom{22\cdots}}} \oplus V_{000\cdots}^{11} \oplus V_{\substack{000\cdots \\ 1\hphantom{11\cdots}}}^{1} \oplus V_{\substack{000\cdots \\ 11\hphantom{1\cdots}}}$. \label{exd2}
\end{ex}

\section{Completely Reducible Sub-Representations \label{MaxCompRed}}

Proposition \ref{samereps} implies that eventually symmetric functions are generated by symmetric functions and polynomials. Our next aim is to modify the decomposition from Theorem \ref{FMTdecom} and Proposition \ref{ExtVMlim} into one whose elements are products of polynomials and symmetric functions as in Remark \ref{polstimessym}, rather than linear combinations of such products.
\begin{lem}
Let $F=F_{M,T}$ be the higher Specht polynomial associated by Definition \ref{Spechtdef} with $M\in\operatorname{SSYT}(\lambda)$ and $T\in\operatorname{SYT}(\lambda)$ for some $\lambda \vdash n$.
\begin{enumerate}[$(i)$]
\item $F$ is also the higher Specht polynomial $F_{\hat{\iota}M,\iota T}$ with the shape $\lambda_{+} \vdash n+1$, if and only if $F$ is also a stable higher Specht polynomial, if and only if the tableau $M$ has no non-zero entries in its first row.
\item The conditions from part $(i)$ are unaffected by replacing $\lambda$, $M$, and $T$ by $\lambda_{+}$, $\iota T$, and $\hat{\iota}M$ respectively. They hold if and only if the stable higher Specht polynomial obtained as the limit of the applications of this process via part $(iv)$ of Proposition \ref{limSpecht} is a polynomial.
\item Given $\hat{M}\in\operatorname{SSYT}(\hat{\lambda})$ as in Definition \ref{infSSYT}, where $\hat{\lambda}\vdash\infty$, the stable higher Specht polynomial $F_{\hat{M},\hat{T}}\in\tilde{\Lambda}$ is in $\mathbb{Q}[\mathbf{x}_{\infty}]$ if and only if $\hat{M}$ involves no non-zero multiplicities in Definition \ref{tabinf}.
\end{enumerate} \label{sameiota}
\end{lem}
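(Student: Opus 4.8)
The plan is to reduce all three parts to part $(i)$, whose proof is a direct comparison of $F_{M,T}$ with $F_{\hat\iota M,\iota T}$ resting on Proposition \ref{forstab} and the normalization recorded in part $(i)$ of Theorem \ref{repsSpecht}. The combinatorial fact driving everything is that the box $v_{\iota T}(n+1)$ of $\iota T$ is the box $(1,\lambda_{1}+1)$ at the right end of the first row, and that by part $(vi)$ of Lemma \ref{rels} the entry of $\hat\iota M$ in that box is precisely the last (hence largest) entry $M_{1,\lambda_{1}}$ of the first row of $M$; also $C(\iota T)=C(T)$, since $n+1$ is alone in its column of $\iota T$.

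First I would prove that the first and third conditions of part $(i)$ are equivalent. If the first row of $M$ consists only of zeros, then so does that of $\hat\iota M$, hence $p_{\hat\iota M,\iota T}=p_{M,T}$ as monomials in $\mathbf{x}_{n+1}$ (with $x_{n+1}$ to the power $0$); since $R(\iota T)$ differs from $R(T)$ only by permutations of the first row of $\iota T$, which act trivially on this monomial, the $R(\iota T)$-orbit sum of $p_{\hat\iota M,\iota T}$ is the $R(T)$-orbit sum of $p_{M,T}$, and applying the operator $\sum_{\sigma\in C(\iota T)}\operatorname{sgn}(\sigma)\sigma=\sum_{\sigma\in C(T)}\operatorname{sgn}(\sigma)\sigma$ gives $F_{\hat\iota M,\iota T}=F_{M,T}$. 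Conversely, if $M$ has a non-zero entry in its first row then $M_{1,\lambda_{1}}>0$, so $p_{\hat\iota M,\iota T}$ is divisible by $x_{n+1}^{M_{1,\lambda_{1}}}$; as Theorem \ref{repsSpecht}$(i)$ asserts that this monomial appears in $F_{\hat\iota M,\iota T}$ with coefficient $1$, the polynomial $F_{\hat\iota M,\iota T}$ genuinely involves $x_{n+1}$ and therefore differs from $F_{M,T}\in\mathbb{Q}[\mathbf{x}_{n}]$.

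For the second condition of part $(i)$, and for part $(ii)$, I would iterate. By Proposition \ref{forstab} the series $F_{\hat M,\hat T}$ restricts to $F_{M,T}$ under $x_{m}=0$ for $m>n$, and the first row of $\hat\iota M$ is $[\,0,M_{1,1},\dots,M_{1,\lambda_{1}}\,]$, so the three conditions of part $(i)$ are unchanged under the passage from $(\lambda,M,T)$ to $(\lambda_{+},\hat\iota M,\iota T)$. When the first row of $M$ is all zero, the equality proved above, applied at every stage, shows that every finite truncation of $F_{\hat M,\hat T}$ equals $F_{M,T}$; hence $F_{\hat M,\hat T}=F_{M,T}$ is a polynomial and $F$ is its own stable higher Specht polynomial. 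When the first row of $M$ has a non-zero entry, iterating the other direction shows $F_{\hat\iota^{\,j}M,\iota^{\,j}T}$ involves $x_{n+1},\dots,x_{n+j}$ for every $j$, so $F_{\hat M,\hat T}$ involves infinitely many variables, is not a polynomial, and (by the uniqueness statement in part $(iv)$ of Proposition \ref{limSpecht}) cannot coincide with any stable higher Specht polynomial. This yields parts $(i)$ and $(ii)$. For part $(iii)$ I would apply part $(iii)$ of Lemma \ref{limtab}: a given $\hat M\in\operatorname{SSYT}(\hat\lambda)$ is produced by a finite $M\in\operatorname{SSYT}(\lambda)$ whose first row lists, weakly increasing, the entries carrying the multiplicities of $\hat M$, padded with zeros; thus the first row of $M$ is all zero exactly when $\hat M$ carries no non-zero multiplicities, and combining this with part $(ii)$ — together with the elementary observation that a homogeneous element of $\mathbb{Q}\ldbrack\mathbf{x}_{\infty}\rdbrack$ lies in $\mathbb{Q}[\mathbf{x}_{\infty}]$ iff it involves only finitely many variables — gives the stated equivalence.

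The step I expect to be the main obstacle is the converse direction of part $(i)$ and its iteration into parts $(ii)$ and $(iii)$: it is not enough to observe that $p_{\hat\iota M,\iota T}$ is divisible by $x_{n+1}$, one must also know this monomial is not annihilated on the way to $F_{\hat\iota M,\iota T}$ — which is exactly where the coefficient-$1$ normalization in Theorem \ref{repsSpecht}$(i)$ is used — and one must maintain the dictionary between the finite polynomials $F_{M,T}$ and their stable companions $F_{\hat M,\hat T}$ in $\tilde\Lambda$ so that ``$F$ is also a stable higher Specht polynomial'' is read as the equality $F_{M,T}=F_{\hat M,\hat T}$ inside $\tilde\Lambda$.
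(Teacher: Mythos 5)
Your proposal is correct and follows essentially the same route as the paper: the all-zero first row forces the $R(\iota T)$- and $R(T)$-orbit sums of the (identical) monomials to coincide, the converse is the divisibility of $p_{\hat\iota M,\iota T}$ by $x_{n+1}$ (with your explicit appeal to the coefficient-$1$ normalization of Theorem \ref{repsSpecht}$(i)$ being a slightly more careful justification of non-cancellation than the paper gives), and parts $(ii)$ and $(iii)$ follow by the same iteration and by Lemma \ref{limtab}$(iii)$.
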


\begin{proof}
The argument proving Proposition \ref{forstab}, which is given in detail in the proofs of Proposition 2.15 of \cite{[Z2]} and Proposition 3.23 of \cite{[Z3]}, shows that passing from $F_{M,T}$ to $F_{\hat{\iota}M,\iota T}$ amounts to letting the operator associated with the larger group $R(\iota T)$ act on the same monomial before the operation of the anti-symmetric operator corresponding to $C(T) \cong C(\iota T)$.

By recalling the formula for $\hat{\iota}M$ from part $(vi)$ of Lemma \ref{rels}, it follows that when $M$ contains no non-zero entries in the first row, the same applies for $\hat{\iota}M$, and the operators associated with $R(T)$ and $R(\iota T)$ are the same (because of the division by the sizes of the stabilizers in Definition \ref{Spechtdef}). Hence in this case we get $F_{\hat{\iota}M,\iota T}=F$ as well, and since we can apply the same argument for $\lambda_{+}$, $\iota T$, and $\hat{\iota}M$, and do so repeatedly, we deduce that in this case the stable higher Specht polynomial $F_{\hat{M},\hat{T}}$, where $\hat{M}$ and $\hat{T}$ are the limits from Lemma \ref{limtab}, equals $F$ as well (and is thus a polynomial). It is clear that in this case $\hat{M}$ involves no non-zero multiplicities. This yields one direction in parts $(i)$ and $(ii)$.

Assuming now that there is some non-zero entry in the first row of $M$, the definition of $F_{\hat{\iota}M,\iota T}$ implies that it will contain some monomials that are divisible by $x_{n+1}$, and thus cannot equal $F$. As $F_{\hat{M},\hat{T}}$, for the same limit, yields $F_{\hat{\iota}M,\iota T}$ (with these monomials) when one substitutes $x_{m}=0$ for all $m>n+1$, it cannot equal $F$ either. By applying this argument for $\lambda_{+}$, $\iota T$, and $\hat{\iota}M$, with the latter containing the non-zero entries from the first row of $M$ via part $(vi)$ of Lemma \ref{rels}, we deduce that the next polynomial will also contain monomials that are divisible by $x_{n+2}$. Going over to the limit, we deduce that for every index $m>n$ there exists a monomial in $F_{\hat{M},\hat{T}}$ that is divisible by $x_{m}$, so that the latter element of $\tilde{\Lambda}$ cannot be in $\mathbb{Q}[\mathbf{x}_{\infty}]$. The other directions in parts $(i)$ and $(ii)$ thus follow as well.

We now note that Lemma \ref{limtab} shows that $\hat{M}\in\operatorname{SSYT}(\hat{\lambda})$ is always such a limit, with $F_{\hat{M},\hat{T}}$ constructed via part $(iv)$ of Proposition \ref{limSpecht}, and the non-zero multiplicities appearing in $M$ are precisely the non-zero entries in the first row of any (finite) element of $\operatorname{SSYT}(\lambda)$ producing $\hat{M}$ (and we can use the same subscript $d$ by part $(iii)$ of Lemma \ref{propinf}). Combining this observation with our argument establishes part $(iii)$. This proves the lemma.
\end{proof}

We now complete Proposition \ref{limSpecht}, as Remark \ref{quotinf} predicts. Let $\hat{\lambda}\vdash\infty$, $\hat{T}\in\operatorname{SYT}(\hat{\lambda})$, and $\hat{M}\in\operatorname{SSYT}(\hat{\lambda})$ be as in that proposition, and take $n$ large enough, with the finite $\lambda \vdash n$, $T\in\operatorname{SYT}(\lambda)$, and $M\in\operatorname{SSYT}(\lambda)$ yielding $\hat{\lambda}$, $\hat{T}$, and $\hat{M}$ via Lemma \ref{limtab}. We let $C^{0}\in\operatorname{CCT}(\lambda)$ be the minimal cocharge tableau from Remark \ref{quotSpecht}, which via Definition 2.8 of \cite{[Z2]} contains $i-1$ at every box in the $i$th row, and we can similarly define the minimal infinite cocharge tableau $\hat{C}^{0}\in\operatorname{CCT}(\hat{\lambda})$ in the same manner for the finite part, and with no positive multiplicities in Definition \ref{tabinf}. We then get the following result.
\begin{cor}
In this setting, the following assertions hold.
\begin{enumerate}[$(i)$]
\item the minimal cocharge tableau for $\lambda_{+}$ is $\hat{\iota}C^{0}$, and $F_{C^{0},T}$, $F_{\hat{\iota}C^{0},\iota T}$, and $F_{\hat{C}^{0},\hat{T}}$ are the same polynomial.
\item Substituting $x_{n+1}=0$ in the $\tilde{C}(\iota T)$-invariant quotient $Q_{\hat{\iota}M,\iota T}$ yields $Q_{M,T}$.
\item The stable generalized higher Specht polynomial $F_{\hat{M},\hat{T}}$ is divisible by $F_{\hat{C}^{0},\hat{T}}$, and the quotient $Q_{\hat{M},\hat{T}}$ is invariant under $\tilde{C}(\hat{T})$.
\item The quotient from $Q_{\hat{M},\hat{T}}$ is the unique homogeneous $\tilde{C}(\hat{T})$-invariant element of $\tilde{\Lambda}$ in which setting each $x_{m}$ with $m>n$ to be 0 yields $Q_{M,T}$.
\end{enumerate} \label{infquot}
\end{cor}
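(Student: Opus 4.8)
The plan is to reduce each assertion to the corresponding finite statement (Remark~\ref{quotSpecht}, Proposition~\ref{forstab}) together with Proposition~\ref{limSpecht}, exploiting throughout that the fixed non-zero polynomial $F_{\hat{C}^{0},\hat{T}}$, which involves only finitely many variables, is not a zero-divisor in $\mathbb{Q}\ldbrack\mathbf{x}_{\infty}\rdbrack$.

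For part~$(i)$ I would first note, using part~$(vi)$ of Lemma~\ref{rels}, that $\hat{\iota}C^{0}$ is obtained from $C^{0}$ by lengthening the first row by one box and inserting a $0$; since $C^{0}$ has only zeros in its first row and carries $i-1$ throughout its $i$th row for $i\geq2$, the first row of $\hat{\iota}C^{0}$ is again all zeros and the lower rows are unchanged, so $\hat{\iota}C^{0}$ is the minimal cocharge tableau of $\lambda_{+}$ (and lies in $\operatorname{CCT}(\lambda_{+})$ by part~$(viii)$ of Lemma~\ref{rels}). Because $C^{0}$ has no non-zero entry in its first row, parts~$(i)$ and~$(iii)$ of Lemma~\ref{sameiota} give $F_{C^{0},T}=F_{\hat{\iota}C^{0},\iota T}$ and show that the stable object $F_{\hat{C}^{0},\hat{T}}$ is a polynomial equal to this common value, which by Remark~\ref{quotSpecht} is the classical Specht polynomial of $T$. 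For part~$(ii)$, apply Remark~\ref{quotSpecht} to $\hat{\iota}M$ and $\iota T$ to write $F_{\hat{\iota}M,\iota T}=F_{\hat{\iota}C^{0},\iota T}\cdot Q_{\hat{\iota}M,\iota T}$, substitute $x_{n+1}=0$, and use Proposition~\ref{forstab} (the left side becomes $F_{M,T}$) together with part~$(i)$ (the factor $F_{\hat{\iota}C^{0},\iota T}=F_{C^{0},T}$ is independent of $x_{n+1}$) to obtain $F_{M,T}=F_{C^{0},T}\cdot(Q_{\hat{\iota}M,\iota T}|_{x_{n+1}=0})$; comparing with $F_{M,T}=F_{C^{0},T}\cdot Q_{M,T}$ and cancelling $F_{C^{0},T}$ in the domain $\mathbb{Q}[\mathbf{x}_{n}]$ yields $Q_{\hat{\iota}M,\iota T}|_{x_{n+1}=0}=Q_{M,T}$.

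For part~$(iii)$, write $F^{(n')}$ for the image of an element of $\tilde{\Lambda}$ under the substitution $x_{m}=0$ for all $m>n'$, so that for $n'\geq n$ one has $F_{\hat{M},\hat{T}}^{(n')}=F_{\hat{\iota}^{(n'-n)}M,\iota^{(n'-n)}T}$ and $F_{\hat{C}^{0},\hat{T}}^{(n')}=F_{C^{0},T}$, where $\iota^{(j)}$ and $\hat{\iota}^{(j)}$ denote $j$-fold iteration. By Remark~\ref{quotSpecht} at size $n'$, the first of these is divisible by $F_{C^{0},T}$ with polynomial quotient $Q^{(n')}:=Q_{\hat{\iota}^{(n'-n)}M,\iota^{(n'-n)}T}$, all homogeneous of the same degree $\Sigma(M)-\Sigma(C^{0})$, and part~$(ii)$ applied at size $n'$ gives the compatibility $Q^{(n'+1)}|_{x_{n'+1}=0}=Q^{(n')}$. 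This compatible system of polynomials of a fixed degree assembles into a homogeneous $Q_{\hat{M},\hat{T}}\in\mathbb{Q}\ldbrack\mathbf{x}_{\infty}\rdbrack$ with $Q_{\hat{M},\hat{T}}^{(n')}=Q^{(n')}$ for every $n'$; multiplying by $F_{\hat{C}^{0},\hat{T}}$ and comparing the images under $x_{m}=0$, $m>n'$, with those of $F_{\hat{M},\hat{T}}$ (they agree by the finite identity $F_{\hat{\iota}^{(n'-n)}M,\iota^{(n'-n)}T}=F_{C^{0},T}\cdot Q^{(n')}$) forces $F_{\hat{M},\hat{T}}=F_{\hat{C}^{0},\hat{T}}\cdot Q_{\hat{M},\hat{T}}$, the asserted divisibility. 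For the $\tilde{C}(\hat{T})$-invariance of $Q_{\hat{M},\hat{T}}$ I would let $g\in\tilde{C}(\hat{T})$ act as a ring automorphism of $\mathbb{Q}\ldbrack\mathbf{x}_{\infty}\rdbrack$ and invoke part~$(i)$ of Proposition~\ref{limSpecht}: $g(F_{\hat{M},\hat{T}})=\widetilde{\operatorname{sgn}}(g)F_{\hat{M},\hat{T}}$ and $g(F_{\hat{C}^{0},\hat{T}})=\widetilde{\operatorname{sgn}}(g)F_{\hat{C}^{0},\hat{T}}$, so $g(Q_{\hat{M},\hat{T}})\cdot F_{\hat{C}^{0},\hat{T}}=F_{\hat{M},\hat{T}}=Q_{\hat{M},\hat{T}}\cdot F_{\hat{C}^{0},\hat{T}}$ and cancellation gives $g(Q_{\hat{M},\hat{T}})=Q_{\hat{M},\hat{T}}$; as $Q_{\hat{M},\hat{T}}$ has bounded degree and is invariant under $S_{\mathbb{N}}^{(n)}\subseteq\tilde{C}(\hat{T})$ (part~$(i)$ of Lemma~\ref{propinf}), it lies in $\tilde{\Lambda}$.

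Part~$(iv)$ is then formal: if $P\in\tilde{\Lambda}$ is homogeneous, $\tilde{C}(\hat{T})$-invariant, and satisfies $P|_{x_{m}=0,\,m>n}=Q_{M,T}$, then $F_{\hat{C}^{0},\hat{T}}\cdot P\in\tilde{\Lambda}$ is homogeneous, is acted on by $\tilde{C}(\hat{T})$ through $\widetilde{\operatorname{sgn}}$ (a $\widetilde{\operatorname{sgn}}$-equivariant element times an invariant one), and has image $F_{C^{0},T}\cdot Q_{M,T}=F_{M,T}$ under $x_{m}=0$, $m>n$; by the uniqueness in part~$(iv)$ of Proposition~\ref{limSpecht} it equals $F_{\hat{M},\hat{T}}=F_{\hat{C}^{0},\hat{T}}\cdot Q_{\hat{M},\hat{T}}$, and cancelling the non-zero-divisor $F_{\hat{C}^{0},\hat{T}}$ gives $P=Q_{\hat{M},\hat{T}}$. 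The step I expect to require the most care is the divisibility in part~$(iii)$: checking that the inverse limit of the finite quotients is genuinely an element of the small ring $\tilde{\Lambda}$ (of bounded degree) and that re-multiplying by $F_{\hat{C}^{0},\hat{T}}$ reproduces $F_{\hat{M},\hat{T}}$ exactly; once this is in hand, invariance and uniqueness follow formally from Proposition~\ref{limSpecht} and the non-zero-divisor property.
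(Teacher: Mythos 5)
Your proposal is correct and follows essentially the same route as the paper: part $(i)$ from Lemma \ref{sameiota}, part $(ii)$ from Proposition \ref{forstab} after cancelling the common divisor $F_{C^{0},T}=F_{\hat{\iota}C^{0},\iota T}$, and parts $(iii)$ and $(iv)$ by the limiting/uniqueness mechanism of Proposition \ref{limSpecht} combined with the non-zero-divisor property of $F_{\hat{C}^{0},\hat{T}}$. You merely spell out explicitly (the compatible system of finite quotients, the cancellation arguments) what the paper compresses into citations of the finite-case corollaries and of the proof of part $(iv)$ of Proposition \ref{limSpecht}.
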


\begin{proof}
Part $(i)$ follows directly from Lemma \ref{sameiota}. Part $(ii)$ is a consequence of Proposition \ref{forstab}, since part $(i)$ shows that we divide both sides by the same polynomial. Knowing $F_{\hat{C}^{0},\hat{T}}$, the proof of Corollary 2.9 of \cite{[Z2]} and Corollary 2.6 of \cite{[Z3]} yields part $(iii)$. Finally, part $(iv)$ is obtained by the same limiting process as in the proof of part $(iv)$ of Proposition \ref{limSpecht}, since again we divide and multiply by the same denominator via part $(i)$. This proves the corollary.
\end{proof}
We saw a case of part $(ii)$ of Corollary \ref{infquot} in Example \ref{quotex} above, with one for parts $(iii)$ and $(iv)$ showing up in Example \ref{Spechtinf}.

\medskip

Lemma \ref{sameiota} shows that representations associated with elements of $\operatorname{SSYT}(\lambda)$ whose first row contains only zeros, or $\operatorname{SSYT}(\hat{\lambda})$ having no non-zero multiplicities, are spanned by higher Specht polynomials with better properties. Based on this observation, we define yet another type of representations based on higher Specht polynomials and symmetric functions.
\begin{defn}
Take $\lambda \vdash n$, $M\in\operatorname{SSYT}(\lambda)$, $\hat{\lambda}\vdash\infty$ and $\hat{M}\in\operatorname{SSYT}(\hat{\lambda})$.
\begin{enumerate}[$(i)$]
\item Let $M^{0}$ to be the semi-standard Young tableau of shape $\lambda$ obtained from $M$ by replacing all the first row by zeros, write $m_{M}$ for the monomial symmetric function in $n$ variables whose index consists of the (non-zero) entries in the first row of $M$, set $f_{M}$ to be the number of non-zero entries in that row, and define $\tilde{V}_{M}:=V_{M^{0}}m_{M}$.
\item For any $d\geq0$ and $f\geq0$, we denote by $\mathbb{Q}[\mathbf{x}_{n}]_{d}^{f}$ the sub-representation of $\mathbb{Q}[\mathbf{x}_{n}]_{d}$ that is given by $\bigoplus_{\lambda \vdash n}\bigoplus_{M\in\operatorname{SSYT}_{d}(\lambda),\ f_{M} \leq f}V_{M}$. In particular we write $\mathbb{Q}[\mathbf{x}_{n}]_{d}^{-1}=\{0\}$ for every $d\geq0$.
\item Given a content $\mu$ of sum $d$ and some $f\geq-1$, we write $\mathbb{Q}[\mathbf{x}_{n}]_{\mu}^{f}$ for the intersection$\bigoplus_{\lambda \vdash n}\bigoplus_{M\in\operatorname{SSYT}_{\mu}(\lambda),\ f_{M} \leq f}V_{M}$ of $\mathbb{Q}[\mathbf{x}_{n}]_{d}^{f}$ with $\mathbb{Q}[\mathbf{x}_{n}]_{\mu}$, and in particular $\mathbb{Q}[\mathbf{x}_{n}]_{\mu}^{-1}=\{0\}$.
\item Given $\hat{\lambda}\vdash\infty$ and $\hat{M}\in\operatorname{SSYT}(\hat{\lambda})$, we denote by $\hat{M}^{0}$ the infinite semi-standard Young tableau of shape $\hat{\lambda}$ which one gets by removing all the multiplicities from $\hat{M}$, define $m_{\hat{M}}$ to be the monomial element of $\Lambda$ whose index contains each $h>0$ with the same multiplicity as it appears in $\hat{M}$, denote by $f_{\hat{M}}$ the sum of these multiplicities, and set $\tilde{V}_{\hat{M}}:=V_{\hat{M}^{0}}m_{\hat{M}}$.
\end{enumerate} \label{repswithmon}
\end{defn}

\begin{ex}
The $f$-values of the representations showing up for $n=6$ and $n=7$ in Example \ref{RnIex}, as well as those from Example \ref{RinfIex}, are 0, 0, 2, 1, 0, 1, and 0 respectively. In those from Example \ref{Qxnd2ex} with $n$ being 4, 5, or 6, and the ones appearing in Example \ref{exd2}, the values are 1, 0, 2, 1, and 0. \label{fvals}
\end{ex}

\begin{rmk}
Part $(vi)$ Lemma \ref{rels} clearly implies, via Definition \ref{repswithmon}, that $f_{\hat{\iota}M}=f_{M}$ for every $\lambda \vdash n$ and $M\in\operatorname{SSYT}(\lambda)$, and that $(\hat{\iota}M)^{0}=\hat{\iota}(M^{0})$. Moreover, part $(iii)$ of Lemma \ref{limtab} shows that if $\hat{M}\in\operatorname{SSYT}(\hat{\lambda})$ is related to $M$ (or to $\hat{\iota}M$), then $f_{\hat{M}}=f_{M}$ as well. \label{fiota}
\end{rmk}

\begin{rmk}
It is clear from Definition \ref{repswithmon} that the number of non-zero entries in the content of $p_{M,T}$ from Definition \ref{Spechtdef} (resp. $p_{\hat{M},\hat{T}}$ from Definition \ref{infSpecht}) is the number of boxes in $M$ (resp. $\hat{M}$) that are not in the first row plus $f_{M}$ (resp. $f_{\hat{M}}$), and the equality $f_{M}=0$ (resp. $f_{\hat{M}}=0$) is equivalent to $M$ (resp. $\hat{M}$) satisfying the conditions from Lemma \ref{sameiota}. Hence if $M\in\operatorname{SSYT}_{\mu}(\lambda)$ for some content $\mu$ then $f_{M}$ is determined by $\mu$ and $\lambda$ (and $\mu$ and $\hat{\lambda}\vdash\infty$ determine $f_{\hat{M}}$ for every $\hat{M}\in\operatorname{SSYT}_{\mu}(\hat{\lambda})$), so that $\mathbb{Q}[\mathbf{x}_{n}]_{\mu}^{f}$ is a collection of isotypical parts of $\mathbb{Q}[\mathbf{x}_{n}]_{\mu}$. More explicitly, if $\mu$ has $\ell$ non-zero entries then the $\mathcal{S}^{\lambda}$-isotypical component of $\mathbb{Q}[\mathbf{x}_{n}]_{\mu}$ will appear in $\mathbb{Q}[\mathbf{x}_{n}]_{\mu}^{f}$ if $\sum_{i=1}^{\ell(\lambda)}\lambda_{i}\geq\ell-f$, and not otherwise. \label{mudetf}
\end{rmk}

If we decompose quotients like $R_{n,k,0}$ or $R_{n,k}$ from Theorem \ref{Rnkdecom} into their homogeneous components, then the component of degree $d$ is filtered by the images of $\mathbb{Q}[\mathbf{x}_{n}]_{d}^{f}$ for the various values of $f$. While we do not need these filtrations in the finite $n$ case, their infinite analogues will show up and be useful later---see Definition \ref{Lambdafilt} below.

\medskip

We will use the following technical lemma.
\begin{lem}
Take a content $\mu$ of sum $d$, partitions $\lambda \vdash n$ and $\hat{\lambda}\vdash\infty$, finite tableaux $M\in\operatorname{SSYT}_{\mu}(\lambda)$ and $T\in\operatorname{SYT}(\lambda)$, and infinite ones $\hat{M}\in\operatorname{SSYT}_{\mu}(\hat{\lambda})$ and $\hat{T}\in\operatorname{SYT}(\hat{\lambda})$ as in Definition \ref{tabinf}.
\begin{enumerate}[$(i)$]
\item If $M^{0}$ and $m_{M}$ are as in Definition \ref{repswithmon}, then $F_{M^{0},T}m_{M}$ equals $F_{M,T}$ plus a linear combination of monomials, all of whose contents have less non-zero entries than $\mu$.
\item For $\hat{M}^{0}$ and $m_{\hat{M}}$ from that definition, the difference between $F_{\hat{M}^{0},\hat{T}}m_{\hat{M}}$ and $F_{\hat{M},\hat{T}}$ is supported on (possibly infinite) monomials of contents with less non-zero entries than $\mu$.
\end{enumerate} \label{difmodf-1}
\end{lem}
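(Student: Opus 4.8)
The plan is to reduce the statement to a combinatorial identity about the action of the operators $R(T)$ and $C(T)$ on monomials. Recall from Definition \ref{Spechtdef} that $F_{M,T}$ is obtained by applying the column antisymmetrizer $\sum_{\sigma\in C(T)}\operatorname{sgn}(\sigma)\sigma$ to the $R(T)$-orbit sum of $p_{M,T}$, and that $F_{M^{0},T}$ is obtained the same way starting from $p_{M^{0},T}$, which differs from $p_{M,T}$ precisely by the monomial factor recording the first-row entries of $M$. The key point is that multiplication by $m_{M}$ (the monomial symmetric function indexed by the first-row entries of $M$) applied to $F_{M^{0},T}$ can be compared with $F_{M,T}$ monomial-by-monomial: one term in the expansion of $F_{M^{0},T}m_{M}$, namely the one where the monomial of $m_{M}$ distributes its exponents onto exactly the first-row boxes of $T$ (in the position prescribed by $M$), reconstitutes the orbit-sum-plus-antisymmetrization giving $F_{M,T}$ exactly, while every other way the monomial of $m_{M}$ can land produces a monomial whose content has strictly fewer nonzero entries than $\mu$. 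This is because $m_M$ is a sum over distinct-support monomials of a fixed partition shape $\nu$ (the first-row content of $M$), and when such a monomial's variables overlap with the support of a monomial already appearing in $F_{M^0,T}$, exponents add up and the number of distinct nonzero exponents can only drop; the only way to keep the full count $\ell(\mu)$ is to place the $m_M$-exponents on variables disjoint from the (nonzero part of) $p_{M^0,T}$-support, i.e.\ on the first-row variables of $T$, recovering $p_{M,T}$.

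First I would make precise the bookkeeping: write $\nu$ for the partition formed by the nonzero first-row entries of $M$, so $m_M=\sum_{\beta}\mathbf{x}^{\beta}$ over weak compositions $\beta$ with distinct-support rearranging $\nu$. Then expand $F_{M^0,T}m_M=\sum_{\beta}\mathbf{x}^{\beta}F_{M^0,T}$, and for each monomial $\mathbf{x}^{\alpha}$ appearing in $F_{M^0,T}$ (these all have content equal to the content of $M^0$, which lacks the first-row part of $\mu$), analyze the content of $\mathbf{x}^{\alpha+\beta}$. I would argue that $\mathbf{x}^{\alpha+\beta}$ has content with exactly $\ell(\mu)$ nonzero entries if and only if $\operatorname{supp}(\beta)$ is disjoint from $\operatorname{supp}(\alpha)$ and, moreover, the resulting monomial is in the $R(T)$-orbit of $p_{M,T}$ — and that the sum of all such contributions, with the signs coming from the column antisymmetrizer which commutes appropriately with the first-row shuffles (since the extra first-row boxes of $T$ sit in columns of length one, or at least the relevant shuffle variables avoid the column group relations exactly as in the proof of Proposition \ref{forstab}), is precisely $F_{M,T}$. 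All remaining terms have content with $<\ell(\mu)$ nonzero entries. This gives part $(i)$.

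For part $(ii)$ the same argument runs verbatim, using Definition \ref{infSpecht} in place of Definition \ref{Spechtdef}: $F_{\hat M^0,\hat T}$ is the image of the (now typically infinite) $R(\hat T)$-orbit sum of $p_{\hat M^0,\hat T}$ under the finite operator $\sum_{\sigma\in C(\hat T)}\operatorname{sgn}(\sigma)\sigma$, and $m_{\hat M}\in\Lambda$ is the monomial symmetric function in infinitely many variables indexed by the multiplicities recorded in $\hat M$. The only additional care is that the products involved are formal power series rather than polynomials, so I would phrase the content comparison coefficient-by-coefficient: for each monomial appearing in $F_{\hat M^0,\hat T}m_{\hat M}$, either it is one of the monomials of $F_{\hat M,\hat T}$ (arising from placing the $m_{\hat M}$-exponents on first-row variables of $\hat T$ disjoint from the support of the given orbit representative), or its content has strictly fewer nonzero entries than $\mu$; the formal sum of the former equals $F_{\hat M,\hat T}$ by Definition \ref{infSpecht} and part $(iv)$ of Proposition \ref{limSpecht}, since both sides are determined by their finite truncations. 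Alternatively, one can simply apply part $(i)$ to the finite tableaux $M$ and $T$ producing $\hat M$ and $\hat T$ via Lemma \ref{limtab}, and pass to the limit using that $f_{\hat M}=f_M$ and $(\hat\iota M)^0=\hat\iota(M^0)$ from Remark \ref{fiota}, together with the stability of all the objects under $x_m=0$.

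The main obstacle I anticipate is the sign/commutation bookkeeping in showing that the "good" terms in $F_{M^0,T}m_M$ reassemble exactly into $F_{M,T}$ — i.e.\ that the column antisymmetrizer and the $R(T)$-orbit-sum interact with the insertion of the first-row monomial $\beta$ in the way one expects. This is morally the content of the argument behind Proposition \ref{forstab} (the extra first-row boxes are "passive" for the column group), so I would lean on that proof, but one must check that the normalization by $|\operatorname{st}_T M|$ versus $|\operatorname{st}_T M^0|$ matches up, and that no cancellation occurs among the good terms. Once that identity is in hand, the residual estimate — that every other placement of $\beta$ strictly lowers the number of distinct nonzero exponents — is elementary: overlapping supports force exponent addition, which can only merge distinct values or create a value equal to an existing one, never increase the count beyond $\ell(\mu)$, and equality forces disjointness.
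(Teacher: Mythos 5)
Your proposal is correct and follows essentially the same route as the paper: decompose the product according to whether the exponents of $m_{M}$ land on first-row variables of $T$ or elsewhere, show that the former terms reassemble into $F_{M,T}$ while the latter strictly lower the number of non-zero content entries, and treat the infinite case verbatim. The normalization issue you flag is resolved exactly as you suspect, by rewriting $F_{M^{0},T}$ as an orbit sum over the subgroup $R^{>1}(T)$ of $R(T)$ acting below the first row (using that $p_{M^{0},T}$ is invariant under first-row permutations), after which the first-row placements of the $m_{M}$-exponents exactly reconstitute the $R(T)$-orbit of $p_{M,T}$.
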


\begin{proof}
The monomial $p_{M^{0},T}$ is not divisible by any variable that is associated with an entry of the first row of $T$, and is invariant under the subgroup of $R(T)$ acting only on the first row. Therefore if $R^{>1}(T)$ is the subgroup of $R(T)$ acting only on the rows of $T$ that are not the first one, and $\operatorname{st}_{T}^{>1}M$ is the stabilizer of $p_{M^{0},T}$ (or of $M^{0}$) there, then $F_{M^{0},T}$ can also be given by $\sum_{\sigma \in C(T)}\sum_{\tau \in R^{>1}(T)/\operatorname{st}_{T}^{>1}M}p_{M,T}\operatorname{sgn}(\sigma)\sigma\tau p_{M^{0},T}$. As the action of elements of $\mathbb{Q}[S_{n}]$ commutes with multiplication by the symmetric function $m_{M}$, the product from part $(i)$ is the same as $\sum_{\sigma \in C(T)}\sum_{\tau \in R^{>1}(T)/\operatorname{st}_{T}^{>1}M}\operatorname{sgn}(\sigma)\sigma\tau(p_{M^{0},T}m_{M})$.

Note that for $\hat{M}$ and $\hat{T}$ in part $(ii)$, Lemma \ref{sameiota} shows that $F_{\hat{M}^{0},\hat{T}}$ is again a polynomial, the group $R^{>1}(\hat{T})$ is finite (while $R(\hat{T})$ is not), and $C(\hat{T})$ is also finite (see part $(i)$ Lemma \ref{propinf}), so that the product in part $(ii)$ is given by $\sum_{\sigma \in C(\hat{T})}\sum_{\tau \in R^{>1}(\hat{T})/\operatorname{st}_{\hat{T}}^{>1}\hat{M}}\operatorname{sgn}(\sigma)\sigma\tau(p_{\hat{M}^{0},\hat{T}}m_{\hat{M}})$, and also in this case we could replace the sum over cosets by the sum over the group divided by the (finite) size of the stabilizer.

The rest of the proof is based on decomposing $m_{M}$ and $m_{\hat{M}}$ into expressions that are based on the variables from the first row of $T$ or $\hat{T}$, which we write as $\mathbf{x}_{T}^{1}$ or $\mathbf{x}_{\hat{T}}^{1}$, times those that are based on other variables, written collectively as $\mathbf{x}_{T}^{>1}$ or $\mathbf{x}_{\hat{T}}^{>1}$. The only difference between $T$ and $\hat{T}$ is be that the set of variables $\mathbf{x}_{T}^{1}$ is finite, $\mathbf{x}_{\hat{T}}^{1}$ is infinite. We thus prove part $(i)$ only, and the proof of part $(ii)$ carries over verbatim, after changing $T$ and $M$ into $\hat{T}$ and $\hat{M}$ respectively.

Now, $m_{M}=m_{M}(\mathbf{x}_{\infty})$ is a symmetric function in the disjoint union $\mathbf{x}_{T}^{1}\cup\mathbf{x}_{T}^{>1}$ of two sets of variables. If $\alpha$ is the index of our monomial symmetric function as a multi-set, which has $f_{M}$ non-zero entries, then $m_{M}$ decomposes as the sum, over presentations of $\alpha$ as a multi-set sum $\beta+\gamma$, of the products $m_{\beta}(\mathbf{x}_{T}^{1})m_{\gamma}(\mathbf{x}_{T}^{>1})$.

Since $p_{M^{0},T}$ is divisible by all the variables from $\mathbf{x}_{T}^{>1}$, multiplying it by a monomial from $m_{\gamma}(\mathbf{x}_{T}^{>1})$ does not add non-zero entries to its content, and a monomial in $m_{\beta}(\mathbf{x}_{T}^{1})m_{\gamma}(\mathbf{x}_{T}^{>1})$ is thus divisible by all the variables from $\mathbf{x}_{T}^{>1}$ plus some variables from $\mathbf{x}_{T}^{1}$, the number of the latter being the number of non-zero entries of $\beta$. If $\beta\neq\alpha$ then this number is strictly smaller than $f_{M}$, so that all of these terms yield linear combinations of monomials having content with strictly less non-zero entries.

It thus suffices to prove that the part arising from $\beta=\alpha$ (and $\gamma=0$) is precisely $F_{M,T}$. We thus take one monomial from $m_{\alpha}(\mathbf{x}_{T}^{1})$, and then the multiplier $m_{\alpha}(\mathbf{x}_{T}^{1})$ is the image of that monomial under the action of the permutation group of the first row of $T$ modulo the stabilizer of that monomial. Moreover, the product of $p_{M^{0},T}$ with that monomial lies, by definition, in the orbit of $p_{M,T}$ under the latter permutation group, and we can choose the monomial so that the product is precisely $p_{M,T}$.

As $R(T)$ is the direct product of the latter permutation group with $R^{>1}(T)$, and the latter stabilizer combines with the stabilizer of $p_{M^{0},T}$ inside $R^{>1}(T)$, we deduce that the resulting part of $\sum_{\tau \in R^{>1}(T)/\operatorname{st}_{T}^{>1}M}\tau(p_{M^{0},T}m_{M})$ is the same as $\sum_{\tau \in R(T)/\operatorname{st}_{T}M}\tau p_{M,T}$. Applying the operator involving the action of $C(T)$ on the result produces $F_{M,T}$ via part $(i)$ of Theorem \ref{repsinf}, as desired. This completes the proof of the lemma.
\end{proof}

\begin{ex}
Consider the representations from Example \ref{Qxnd2ex} for $n\geq4$, or those from Example \ref{exd2} for the infinite setting. The indices $M$ of two of those satisfy the conditions from Lemma \ref{sameiota}, corresponding to the two vanishing $f$-values in Example \ref{fvals}. The one-line representations there, one with $f$-value 1 and one with $f$-value 2, are trivial representations (multiplied by the symmetric functions $p_{2}$ and $e_{2}$ in the number of variables we work with), yielding the equality from Lemma \ref{difmodf-1}. In the remaining representation $V_{\substack{0001 \\ 1\hphantom{112}}}$ (with the number of zeros varying with $n$), of $f$-value 1, take $T$ to be the standard tableau of the corresponding shape with 1 and 2 in the first column, and then the corresponding basis element of $V_{M}$ is $(x_{2}-x_{1})\sum_{i\geq3}x_{i}$, which is supported on monomials that involve two non-zero exponents. However, for $\tilde{V}_{M}$ the associated basis element is $(x_{2}-x_{1})\sum_{i\geq3}x_{i}$, and the difference is $x_{2}^{2}-x_{1}^{2}$, indeed supported on monomials in which only one exponent is non-zero. \label{exmodf-1}
\end{ex}
For a larger example, see Example \ref{relsd6ex} below.

\medskip

\begin{prop}
Take any two integers $d\geq0$ and $n\geq1$.
\begin{enumerate}[$(i)$]
\item The space $\mathbb{Q}[\mathbf{x}_{n}]_{d}$ equals to the direct sum $\bigoplus_{\lambda \vdash n}\bigoplus_{M\in\operatorname{SSYT}_{d}(\lambda)}\tilde{V}_{M}$.
\item The first direct sum from Proposition \ref{ExtVMlim} for this $d$ also coincides with the direct sum $\bigoplus_{\hat{\lambda}\vdash\infty}\bigoplus_{\hat{M}\in\operatorname{SSYT}_{d}(\hat{\lambda})}\tilde{V}_{\hat{M}}$.
\end{enumerate} \label{decomVM0mM}
\end{prop}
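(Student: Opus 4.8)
The plan is to transfer the decompositions of Theorem~\ref{FMTdecom} and Proposition~\ref{ExtVMlim} along an ``upper‑triangular'' change of spanning sets furnished by Lemma~\ref{difmodf-1}. First I would record the structural facts. Since $m_{M}$ is the monomial symmetric function indexed by a partition with $f_{M}\leq\lambda_{1}\leq n$ parts, it is a non‑zero element of $\mathbb{Q}[\mathbf{x}_{n}]$, so multiplication by it is an injective, $S_{n}$‑equivariant operator; hence $\tilde{V}_{M}=V_{M^{0}}m_{M}$ is an $S_{n}$‑subrepresentation of $\mathbb{Q}[\mathbf{x}_{n}]_{d}$ isomorphic to $V_{M^{0}}\cong\mathcal{S}^{\lambda}$ by Theorem~\ref{repsSpecht}. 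Likewise $m_{\hat{M}}\in\Lambda$ is non‑zero and, as $\tilde{\Lambda}$ is a polynomial ring by Proposition~\ref{samereps}, multiplication by $m_{\hat{M}}$ is injective, so $\tilde{V}_{\hat{M}}=V_{\hat{M}^{0}}m_{\hat{M}}$ is a subrepresentation of $\tilde{\Lambda}_{d}$, for $S_{\mathbb{N}}$ and $S_{\infty}$ alike, isomorphic to $\mathcal{S}^{\hat{\lambda}}$ by Theorem~\ref{repsinf}. In both settings the new families are indexed by the same sets as the old ones.

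For part~$(i)$ I would filter $\mathbb{Q}[\mathbf{x}_{n}]_{d}$ by the subrepresentations $U_{j}:=\bigoplus_{\mu\vdash d,\ \ell(\mu)\leq j}\mathbb{Q}[\mathbf{x}_{n}]_{\mu}$, the span of monomials with at most $j$ non‑zero exponents. By Theorem~\ref{repsSpecht}$(i)$ each $F_{M,T}$ lies in $\mathbb{Q}[\mathbf{x}_{n}]_{\mu}$ with $\mu$ the content of $M$, of length $\ell(\mu)$, while Lemma~\ref{difmodf-1}$(i)$ shows $F_{M^{0},T}m_{M}=F_{M,T}+(\text{terms in }U_{\ell(\mu)-1})$; thus $\tilde{V}_{M}\subseteq U_{\ell(\mu)}$ and has the same image as $V_{M}$ in $U_{\ell(\mu)}/U_{\ell(\mu)-1}\cong\bigoplus_{\ell(\mu)=j}\mathbb{Q}[\mathbf{x}_{n}]_{\mu}$. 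Given a linear relation among the $F_{M^{0},T}m_{M}$, letting $J$ be the top value of $\ell(\mu)$ that occurs and passing to $U_{J}/U_{J-1}$ turns it into a relation among the basis of the direct sum $\bigoplus_{\ell(\mu)=J}\bigoplus_{\lambda}\bigoplus_{M\in\operatorname{SSYT}_{\mu}(\lambda)}V_{M}$ of Theorem~\ref{FMTdecom}$(ii)$, so the coefficients with $\ell(\mu_{M})=J$ vanish; iterating downwards gives directness of $\bigoplus_{\lambda,M}\tilde{V}_{M}$. As $\dim\tilde{V}_{M}=\dim V_{M^{0}}=\dim\mathcal{S}^{\lambda}=\dim V_{M}$ and the index sets coincide, this direct sum has the same finite dimension as $\bigoplus_{\lambda,M}V_{M}=\mathbb{Q}[\mathbf{x}_{n}]_{d}$ by Theorem~\ref{FMTdecom}$(i)$, hence equals it.

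For part~$(ii)$, write $F^{(n)}$ for the substitution $x_{m}=0$ ($m>n$) and recall, as in the proofs of Theorems~\ref{repsinf} and~\ref{decomRinf}, that for $n$ large (e.g.\ $n\geq 2d$) the finite data obtained from $\hat{\lambda},\hat{M},\hat{T}$ via Lemma~\ref{limtab} satisfy $F_{\hat{M}^{0},\hat{T}}^{(n)}=F_{M^{0},T}$, $m_{\hat{M}}^{(n)}=m_{M}$, $V_{\hat{N}}^{(n)}=V_{N}$, and $\hat{N}\mapsto N$ is a bijection onto $\operatorname{SSYT}_{d}$ in size $n$ (Theorem~\ref{FMTdecom}$(iii)$). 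Directness of $\sum_{\hat{\lambda},\hat{M}}\tilde{V}_{\hat{M}}$ then follows exactly as in those proofs: a finite relation among the $F_{\hat{M}^{0},\hat{T}}m_{\hat{M}}$, restricted to size $n$, becomes a relation among the basis $\{F_{M^{0},T}m_{M}\}$ of $\mathbb{Q}[\mathbf{x}_{n}]_{d}=\bigoplus_{M}\tilde{V}_{M}$ from part~$(i)$. The substance is the equality with $\tilde{\Lambda}_{d}^{0}=\bigoplus_{\hat{\lambda},\hat{M}}V_{\hat{M}}$ of Proposition~\ref{ExtVMlim}, which I would prove by induction on $j=\ell(\mu_{\hat{M}})$, showing $\sum_{\ell(\mu_{\hat{N}})\leq j}\tilde{V}_{\hat{N}}=\sum_{\ell(\mu_{\hat{N}})\leq j}V_{\hat{N}}$ (and in particular that the left‑hand side lies in $\tilde{\Lambda}_{d}^{0}$). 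In the inductive step Lemma~\ref{difmodf-1}$(ii)$ gives $F_{\hat{M}^{0},\hat{T}}m_{\hat{M}}=F_{\hat{M},\hat{T}}+E_{\hat{T}}$ with $E_{\hat{T}}$ supported on contents of length $<j$, so $E_{\hat{T}}^{(n)}=F_{M^{0},T}m_{M}-F_{M,T}$ lies, by Lemma~\ref{difmodf-1}$(i)$ and Theorem~\ref{FMTdecom}$(ii)$, in $\bigoplus_{\ell(\mu_{N})<j}V_{N}=\big(\sum_{\ell(\mu_{\hat{N}})<j}V_{\hat{N}}\big)^{(n)}$. Since $E_{\hat{T}}$ is $\widetilde{\operatorname{sgn}}(\tilde{C}(\hat{T}))$‑equivariant, the uniqueness of the $\widetilde{\operatorname{sgn}}(\tilde{C}(\hat{T}))$‑equivariant lift of a given truncation---the mechanism already used in Proposition~\ref{forstab} and Proposition~\ref{limSpecht}$(iv)$, by which $\tilde{C}(\hat{T})$ carries every monomial to one supported on $\mathbf{x}_{n}$---forces $E_{\hat{T}}$ to equal the equivariant lift taken inside $\sum_{\ell(\mu_{\hat{N}})<j}V_{\hat{N}}$, which by the inductive hypothesis equals $\sum_{\ell(\mu_{\hat{N}})<j}\tilde{V}_{\hat{N}}\subseteq\tilde{\Lambda}_{d}^{0}$. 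Hence $F_{\hat{M}^{0},\hat{T}}m_{\hat{M}}\in\sum_{\ell\leq j}V_{\hat{N}}$ and $F_{\hat{M},\hat{T}}\in\sum_{\ell\leq j}\tilde{V}_{\hat{N}}$, closing the induction; at the top value of $j$ this gives $\bigoplus_{\hat{\lambda},\hat{M}}\tilde{V}_{\hat{M}}=\tilde{\Lambda}_{d}^{0}$. The statements about $\mu$‑components in both parts follow from Theorem~\ref{repsSpecht}$(i)$ and Proposition~\ref{limSpecht}$(ii)$ together with Lemma~\ref{propinf}$(v)$.

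The main obstacle is precisely this last induction: one must show that the ``error term'' $E_{\hat{T}}$ produced by Lemma~\ref{difmodf-1}$(ii)$ never escapes the completely reducible subrepresentation $\tilde{\Lambda}_{d}^{0}$---equivalently, that passing to the limit $n\to\infty$ is compatible with the filtration by number of non‑zero exponents. All of the finite‑$n$ work (the filtration argument, the dimension count, and the reduction of relations to size $n$) is routine; the care is needed only in combining the inductive control of the shorter‑content part with the rigidity furnished by $\widetilde{\operatorname{sgn}}(\tilde{C}(\hat{T}))$‑equivariance, so as to conclude that all of $\tilde{V}_{\hat{M}}$, and not merely its leading constituent $V_{\hat{M}}$, lands in $\tilde{\Lambda}_{d}^{0}$.
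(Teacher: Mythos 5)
Your part $(i)$ is correct and is essentially the paper's argument in different clothing: the paper filters each $\mathcal{S}^{\lambda}$-isotypical component by the parameter $f_{M}$ and argues modulo $\mathbb{Q}[\mathbf{x}_{n}]_{d}^{f-1}$, whereas you filter the whole space by the number of non-zero exponents and finish with a dimension count; within a fixed $\lambda$ the two filtrations differ only by the shift $f_{M}=\ell(\mu)-(n-\lambda_{1})$ of Remark \ref{mudetf}, so this is a cosmetic difference. The directness in part $(ii)$ via restriction to $\mathbf{x}_{n}$ is also fine and is the paper's standard device.

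The gap is in the inductive step of part $(ii)$, at exactly the point you flag as the main obstacle. Writing $E_{\hat{T}}:=F_{\hat{M}^{0},\hat{T}}m_{\hat{M}}-F_{\hat{M},\hat{T}}$, you must show $E_{\hat{T}}\in\sum_{\ell<j}V_{\hat{N}}$; Lemma \ref{difmodf-1}$(ii)$ only places it in $\sum_{\ell(\nu)<j}\tilde{\Lambda}_{\nu}$, which is strictly larger since $\tilde{\Lambda}_{\nu}$ is not completely reducible. Your proposed fix invokes ``the equivariant lift taken inside $\sum_{\ell<j}V_{\hat{N}}$'', but the \emph{existence} of a $\widetilde{\operatorname{sgn}}\bigl(\tilde{C}(\hat{T})\bigr)$-equivariant element of that space restricting to $E_{\hat{T}}^{(n)}$ is asserted, not proved, and it is the entire content of the step. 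The natural candidate---replace each $F_{N,S}$ in the expansion of $E_{\hat{T}}^{(n)}$ by $F_{\hat{N},\hat{S}}$---is the unique $S_{\mathbb{N}}^{(n)}$-invariant element of $\sum_{\ell<j}V_{\hat{N}}$ with that restriction, and one can even check it is $\operatorname{sgn}\bigl(C(\hat{T})\bigr)$-equivariant; but its invariance under the column-interchanging part of $\tilde{C}(\hat{T})$ (the permutations swapping variables $x_{m}$, $m>n$, with variables in length-one columns of $T$) is exactly what is not clear, and that is precisely the part of the equivariance that powers the uniqueness mechanism of Proposition \ref{limSpecht}$(iv)$. Note that $S_{\mathbb{N}}^{(n)}$-invariance alone does not suffice, since $S_{\mathbb{N}}^{(n)}$-invariant elements of $\tilde{\Lambda}_{d}$ are not determined by their truncation (e.g.\ anything times $\sum_{m>n}x_{m}$). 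So as written the two candidates are not known to be equivariant under the \emph{same} group, and the uniqueness argument does not close. To repair it you would need either to verify directly that the lifted element is $\tilde{C}(\hat{T})$-equivariant, or to show by an explicit straightening of the expansion in the proof of Lemma \ref{difmodf-1}$(ii)$ that each term of $F_{\hat{M}^{0},\hat{T}}m_{\hat{M}}$ beyond $F_{\hat{M},\hat{T}}$ already lies in some $V_{\hat{N},\hat{T}}$-span (as the identities recorded after Example \ref{relsd6ex} suggest); appealing instead to the maximality statement of Theorem \ref{compred} would be circular, since that theorem is proved using the present proposition.
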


\begin{proof}
Since both $V_{M}$ and $\tilde{V}_{M}$ are isomorphic to $\mathcal{S}^{\lambda}$ when $M\in\operatorname{SSYT}_{d}(\lambda)$, taking the $\mathcal{S}^{\lambda}$-isotypical parts in Theorem \ref{FMTdecom} implies that part $(i)$ is equivalent to the equality $\bigoplus_{M\in\operatorname{SSYT}_{d}(\lambda)}\tilde{V}_{M}=\bigoplus_{M\in\operatorname{SSYT}_{d}(\lambda)}V_{M}$ holding for any $\lambda \vdash n$. Similarly, part $(ii)$ is equivalent to $\bigoplus_{\hat{M}\in\operatorname{SSYT}_{d}(\hat{\lambda})}\tilde{V}_{\hat{M}}$ being the same as $\bigoplus_{\hat{M}\in\operatorname{SSYT}_{d}(\hat{\lambda})}V_{\hat{M}}$, since parts $(iii)$, $(iv)$, and $(v)$ of Theorem \ref{repsinf} show that both are then also isotypical parts of the representation in question.

Recalling the parameters from Definition \ref{repswithmon}, we will show by induction on $f$ that the $\mathcal{S}^{\lambda}$-isotypical part of $\mathbb{Q}[\mathbf{x}_{n}]_{d}^{f}$ is also given by $\bigoplus_{M\in\operatorname{SSYT}_{d}(\lambda), f_{M} \leq f}\tilde{V}_{M}$, with the sum being direct. As in the proof of Lemma \ref{difmodf-1}, we will prove part $(i)$ using the notations for finite $n$, and the proof of part $(ii)$ carries over verbatim, except for replacing $\mathbb{Q}[\mathbf{x}_{n}]_{d}^{f}$ by the analogue of its definition (we do not write it as $\tilde{\Lambda}_{d}^{f}$, since this symbol will have a different meaning in Theorem \ref{filtrations} below).

It is clear from Definition \ref{repswithmon} that if $f_{M}=0$ then $M^{0}=M$ and $m_{M}=1$, so that $\tilde{V}_{M}=V_{M}$ and the assertion for $f=0$ is trivial (with the direct sum following from Theorem \ref{FMTdecom}). We therefore assume that $f>0$ and the result holds for $f-1$, namely we can describe the $\mathcal{S}^{\lambda}$-isotypical part of $\mathbb{Q}[\mathbf{x}_{n}]_{d}^{f-1}$ also by the appropriate direct sum of the $\tilde{V}_{M}$'s.

But then Remark \ref{mudetf} implies that the $\mathcal{S}^{\lambda}$-isotypical part of $\mathbb{Q}[\mathbf{x}_{n}]_{d}^{f-1}$ is the subspace of the $\mathcal{S}^{\lambda}$-isotypical part of $\mathbb{Q}[\mathbf{x}_{n}]_{d}$ that is spanned by all the monomials such that the number of non-zero entries in their content is less than $f$ plus the number of boxes of $\lambda$ that are not in the first row. This means that Lemma \ref{difmodf-1} implies that for $M\in\operatorname{SSYT}_{d}(\lambda)$ with $f_{M}=f$, and for $T\in\operatorname{SYT}(\lambda)$, the expressions $F_{M^{0},T}m_{M}$ and $F_{M,T}$ are the same modulo $\mathbb{Q}[\mathbf{x}_{n}]_{d}^{f-1}$. Since the direct sum property from Theorem \ref{FMTdecom} is equivalent to $F_{M,T}$ for $M$ with $f_{M}=f$ and such $T$ being linearly independent modulo $\mathbb{Q}[\mathbf{x}_{n}]_{d}^{f-1}$, which is the same for the $F_{M^{0},T}m_{M}$'s, we deduce the direct sum property, and it is also clear that they generate the same space modulo $\mathbb{Q}[\mathbf{x}_{n}]_{d}^{f-1}$.

This proves our claim for all $f$, and since the direct sum from Theorem \ref{FMTdecom} is finite, we reach the full space $\mathbb{Q}[\mathbf{x}_{n}]_{d}$ for large enough $f$, yielding part $(i)$. Recalling that the sum from Proposition \ref{ExtVMlim} is finite as well, this part of the proof also works in the infinite case, and part $(ii)$ is also established. This completes the proof of the proposition.
\end{proof}
Note that with as $\mathbb{Q}[\mathbf{x}_{n}]_{d}^{-1}=\{0\}$ in Definition \ref{repswithmon}, also the base case $f=0$ in the proof of Proposition \ref{decomVM0mM} can be seen as working modulo $\mathbb{Q}[\mathbf{x}_{n}]_{d}^{f-1}$.

In fact, part $(ii)$ in either Lemma \ref{difmodf-1} or Proposition \ref{decomVM0mM} follows from the respective part $(i)$ by taking limits as usual, but we saw that the proofs apply for both assertions in the same manner. For the case $d=2$ in Proposition \ref{decomVM0mM}, Example \ref{exmodf-1} shows how four summands coincide with those from Theorem \ref{FMTdecom} (or Proposition \ref{ExtVMlim} in the infinite setting), and the difference between the corresponding summand in each is supported on one of the coinciding ones.

\medskip

In this paper, as well as its predecessors \cite{[Z2]} and \cite{[Z3]}, we constructed several decompositions of representations. We now gather them together, and compare their properties.
\begin{defn}
Fix $\lambda \vdash n$ and integers $d\geq0$, $k\geq1$, and $0 \leq s\leq\min\{k,n\}$.
\begin{enumerate}[$(i)$]
\item We denote by $A_{d}(\lambda)$ the set of pairs $(C,I)$, where $C\in\operatorname{CCT}(\lambda)$ and $I$ is a multi-set inside $\mathbb{N}_{n}\cup\{0\}$ that contains $\operatorname{Dsp}^{c}(C)$ in the sense of Definition \ref{multisets}, does not contain 0, and satisfies $\sum_{i \in I}i=d$ (the sum with multiplicities as usual).
\item The subset $A_{d}^{k,s}(\lambda)$ of $A_{d}(\lambda)$ consists of those $(C,I)$ where $|I|<k$ and the maximal element of the complement $\operatorname{Asp}^{c}_{I}(C)=I\setminus\operatorname{Dsp}^{c}(C)$ from Definition \ref{sets} does not exceed $n-s$. Write $A_{d}^{k}(\lambda)$ for $A_{d}^{k,k}(\lambda)$.
\item We denote by $\operatorname{SSYT}_{d}^{k}(\lambda)$ the set of those elements of $\operatorname{SSYT}_{d}(\lambda)$ all of whose entries are smaller than $k$.
\end{enumerate} \label{Adlambda}
\end{defn}

Using the notions from Definition \ref{Adlambda}, the results are now as follows.
\begin{thm}
Take $d\geq0$, $n\geq1$, and $\lambda \vdash n$.
\begin{enumerate}[$(i)$]
\item The $\mathcal{S}^{\lambda}$-isotypical part of $\mathbb{Q}[\mathbf{x}_{n}]_{d}$ decomposes as $\bigoplus_{M\in\operatorname{SSYT}_{d}(\lambda)}V_{M}$, as $\bigoplus_{M\in\operatorname{SSYT}_{d}(\lambda)}\tilde{V}_{M}$, and as $\bigoplus_{(C,I) \in A_{d}(\lambda)}V_{C}^{\vec{h}(C,I)}$.
\item For any $k\geq1$ and $0 \leq s\leq\min\{k,n\}$, the sum $\bigoplus_{(C,I) \in A_{d}^{k,s}(\lambda)}V_{C}^{\vec{h}(C,I)}$ bijects onto the $\mathcal{S}^{\lambda}$-isotypical part of the part of $R_{n,k,s}$ that is homogeneous of degree $d$. If $s<\min\{k,n\}$ then any $V_{C}^{\vec{h}(C,I)}$ for $(C,I) \in A_{d}^{k,s}(\lambda) \setminus A_{d}^{k,s+1}(\lambda)$ vanishes under the natural map from $R_{n,k,s}$ onto $R_{n,k,s+1}$.
\item Given $k\geq1$, the $\mathcal{S}^{\lambda}$-isotypical part of the part of $R_{n,k,0}$ that is homogeneous of degree $d$ is the bijective image of both $\bigoplus_{M\in\operatorname{SSYT}_{d}^{k}(\lambda)}V_{M}$ and $\bigoplus_{M\in\operatorname{SSYT}_{d}^{k}(\lambda)}\tilde{V}_{M}$. If $M\in\operatorname{SSYT}_{d}^{k+1}(\lambda)\setminus\operatorname{SSYT}_{d}^{k}(\lambda)$ then the associated summand $V_{M}$ from $R_{n,k+1,0}$ vanishes when projected onto $R_{n,k,0}$.
\item The map taking any generator $F_{M^{0},T}m_{M}$ of $\tilde{V}_{M}$, with $T\in\operatorname{SYT}(\lambda)$, to the same expression but with $m_{M}$ now symmetric in $n+1$ variables, embeds $\tilde{V}_{M}$ into the restriction of $\tilde{V}_{\hat{\iota}M}$ from $S_{n+1}$ to $S_{n}$. When $n>2d$ the image of $\bigoplus_{M\in\operatorname{SSYT}_{d}(\lambda)}\tilde{V}_{M}$ under this operation generates, under the action of $S_{n+1}$, the $\mathcal{S}^{\lambda_{+}}$-isotypical part of $\mathbb{Q}[\mathbf{x}_{n+1}]_{d}$, and by summing over all $\lambda \vdash n$ this process produces all of $\mathbb{Q}[\mathbf{x}_{n+1}]_{d}$.
\end{enumerate} \label{propbases}
\end{thm}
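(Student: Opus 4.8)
The plan is to read parts $(i)$ and $(iii)$ off decompositions already in hand, to get part $(ii)$ from the known structure of $R_{n,k,s}$, and to prove the genuinely new assertion $(iv)$ directly. For part $(i)$ I would obtain the three decompositions of the $\mathcal{S}^{\lambda}$-isotypical component of $\mathbb{Q}[\mathbf{x}_{n}]_{d}$ from, respectively, Theorem \ref{FMTdecom}$(i)$, Proposition \ref{decomVM0mM}$(i)$, and the finite version of the reasoning behind Corollary \ref{multCI}, each time passing to the $\mathcal{S}^{\lambda}$-isotypical part and using that every $V_{M}$ (Theorem \ref{repsSpecht}$(ii)$) and every $\tilde{V}_{M}=V_{M^{0}}m_{M}$ — multiplication by the nonzero symmetric polynomial $m_{M}$ being an $S_{n}$-isomorphism of $V_{M^{0}}$ onto $\tilde{V}_{M}$ — is a copy of $\mathcal{S}^{\lambda}$; for the third, I would take any $k>d$ so that the degree-$d$ part of $R_{n,k,0}=\mathbb{Q}[\mathbf{x}_{n}]/(x_{i}^{k})$ is all of $\mathbb{Q}[\mathbf{x}_{n}]_{d}$, apply Theorem \ref{Rnkdecom}$(i)$ together with Definition \ref{RnIdef}, discard the irrelevant zeros in the indexing multi-sets so that they become the $A_{d}(\lambda)$ of Definition \ref{Adlambda}, and invoke Theorem \ref{repsSpecht}$(iii)$. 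For part $(iii)$ I would run the same bookkeeping inside $R_{n,k,0}$ itself: by Theorem \ref{repsSpecht}$(i)$ the space $V_{M}$ lies in $\mathbb{Q}[\mathbf{x}_{n}]_{\mu}$ for $\mu$ the content of $M$, so $V_{M}$ — and likewise $\tilde{V}_{M}=V_{M^{0}}m_{M}$, where a bad entry of $M$ sits either inside $m_{M}$ or inside $V_{M^{0}}$ — maps to $0$ in $R_{n,k,0}$ exactly when some entry of $M$ is $\geq k$; the surviving indices are those of $\operatorname{SSYT}_{d}^{k}(\lambda)$, the vanishing clause is the case of an entry equal to $k$, and injectivity of both resulting maps onto the $\mathcal{S}^{\lambda}$-isotypical part of $(R_{n,k,0})_{d}$ follows by a rank count against Theorem \ref{FMTdecom}$(i)$: the two kernels are $\mathcal{S}^{\lambda}$-isotypical of equal rank and lie in $W\cap(x_{i}^{k})$, where $W$ is the full isotypical part of $\mathbb{Q}[\mathbf{x}_{n}]_{d}$ and $W\cap(x_{i}^{k})$ is already the kernel of the first map, so they coincide.

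For part $(ii)$ I would invoke the decomposition of $R_{n,k,s}$ established in \cite{[Z3]} (the $R_{n,I}$-type building blocks refined by $s$): its degree-$d$ piece, isotypically, is $\bigoplus_{(C,I)\in A_{d}^{k,s}(\lambda)}V_{C}^{\vec{h}(C,I)}$, the condition $|I|<k$ recording the relation $x_{i}^{k}=0$ and the bound "largest element of $\operatorname{Asp}^{c}_{I}(C)$ at most $n-s$" recording that the elementary symmetric functions $e_{r}$ with $r>n-s$ — precisely the $e$-factors that can occur in $V_{C}^{\vec{h}(C,I)}=V_{C}\cdot\prod_{i\in\operatorname{Asp}^{c}_{I}(C)}e_{i}$ — vanish in $R_{n,k,s}$. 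Since the surjection $R_{n,k,s}\to R_{n,k,s+1}$ additionally kills $e_{n-s}$, a block $V_{C}^{\vec{h}(C,I)}$ with $(C,I)\in A_{d}^{k,s}(\lambda)\setminus A_{d}^{k,s+1}(\lambda)$ — that is, with $n-s\in\operatorname{Asp}^{c}_{I}(C)$ — is a multiple of $e_{n-s}$ and maps to $0$.

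The heart of the argument is part $(iv)$. Fixing $\lambda\vdash n$ and $M\in\operatorname{SSYT}_{d}(\lambda)$, I would define $\phi\colon\tilde{V}_{M}\to\mathbb{Q}[\mathbf{x}_{n+1}]_{d}$ on the basis $\{F_{M^{0},T}m_{M}\}_{T\in\operatorname{SYT}(\lambda)}$ by keeping $F_{M^{0},T}$ and replacing $m_{M}$ with the monomial symmetric polynomial $m_{\hat{\iota}M}$ in $n+1$ variables having the same index. As $m_{M}$ and $m_{\hat{\iota}M}$ are both $S_{n}$-invariant, $\phi$ is just the $S_{n}$-isomorphism $V_{M^{0}}\to\tilde{V}_{M}$, $g\mapsto gm_{M}$, followed by $g\mapsto gm_{\hat{\iota}M}$, so it is $S_{n}$-equivariant and injective. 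By Lemma \ref{sameiota}$(i)$ the polynomial $F_{M^{0},T}$ is unchanged by $\iota$, i.e.\ $F_{M^{0},T}=F_{\hat{\iota}(M^{0}),\iota T}=F_{(\hat{\iota}M)^{0},\iota T}$, the last equality being Remark \ref{fiota}, and since $\iota T\in\operatorname{SYT}(\lambda_{+})$ this gives $\phi(F_{M^{0},T}m_{M})=F_{(\hat{\iota}M)^{0},\iota T}m_{\hat{\iota}M}\in V_{(\hat{\iota}M)^{0}}m_{\hat{\iota}M}=\tilde{V}_{\hat{\iota}M}$, so $\phi$ embeds $\tilde{V}_{M}$ into the restriction of $\tilde{V}_{\hat{\iota}M}$ to $S_{n}$. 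Now I would assume $n>2d$. By part $(i)$, $\tilde{V}_{\hat{\iota}M}\cong\mathcal{S}^{\lambda_{+}}$ is an irreducible $S_{n+1}$-module, so the $S_{n+1}$-span of the nonzero subspace $\phi(\tilde{V}_{M})$ is all of $\tilde{V}_{\hat{\iota}M}$; by Theorem \ref{FMTdecom}$(iii)$ (where $n\geq2d$ suffices) $M\mapsto\hat{\iota}M$ is a bijection $\operatorname{SSYT}_{d}(\lambda)\to\operatorname{SSYT}_{d}(\lambda_{+})$, and part $(i)$ applied to $\lambda_{+}$ exhibits $\bigoplus_{M}\tilde{V}_{\hat{\iota}M}$ as the $\mathcal{S}^{\lambda_{+}}$-isotypical component of $\mathbb{Q}[\mathbf{x}_{n+1}]_{d}$, whence the $S_{n+1}$-span of $\bigoplus_{M}\phi(\tilde{V}_{M})$ is exactly that component. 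To finish, I would note that if $\operatorname{SSYT}_{d}(\lambda')$ is non-empty for $\lambda'\vdash n+1$ then each box of $\lambda'$ outside the first row contributes at least $1$ to the entry sum, so $|\lambda'|-\lambda'_{1}\leq d$; were $\lambda'_{1}=\lambda'_{2}$ this would force $\lambda'_{1}\leq d$ and $n+1=|\lambda'|\leq2d$, contradicting $n>2d$, so $\lambda'_{1}>\lambda'_{2}$ and $\lambda'=\lambda_{+}$ for a unique $\lambda\vdash n$. Hence $\mathbb{Q}[\mathbf{x}_{n+1}]_{d}=\bigoplus_{\lambda'\vdash n+1}(\mathcal{S}^{\lambda'}\text{-isotypical part})=\bigoplus_{\lambda\vdash n}(\mathcal{S}^{\lambda_{+}}\text{-isotypical part})$, each summand being generated under $S_{n+1}$ by $\phi$ of $\bigoplus_{M}\tilde{V}_{M}$ by the previous step.

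The hard part is not computational: it is verifying that $\phi$ is an honest embedding, which rests on $\iota$ leaving $F_{M^{0},T}$ literally unchanged (Lemma \ref{sameiota}) while merely enlarging the symmetric factor from $n$ to $n+1$ variables, together with $(\hat{\iota}M)^{0}=\hat{\iota}(M^{0})$ (Remark \ref{fiota}); after that, irreducibility of $\tilde{V}_{\hat{\iota}M}$ carries everything. The degree hypothesis $n>2d$ enters only at the very end — to know that $M\mapsto\hat{\iota}M$ exhausts $\operatorname{SSYT}_{d}(\lambda_{+})$ and that every shape of size $n+1$ carrying a degree-$d$ content is $\lambda_{+}$ for some $\lambda\vdash n$ — and both uses are the short counting arguments just indicated; neither the passage to isotypical components in $(i)$–$(iii)$ nor the irreducibility input conceals any real difficulty.
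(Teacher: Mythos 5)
Your proposal is correct and follows essentially the same route as the paper: the same three decompositions via Theorem \ref{FMTdecom}, Proposition \ref{decomVM0mM}, and Theorem \ref{Rnkdecom} together with the cited results of \cite{[Z3]}, the same content-based vanishing criteria for parts $(ii)$ and $(iii)$, and the same use of Lemma \ref{sameiota} and Remark \ref{fiota} plus irreducibility of $\tilde{V}_{\hat{\iota}M}$ for part $(iv)$. The only local difference is in part $(iii)$, where you replace the paper's appeal to Lemma \ref{difmodf-1} (comparing the two sums modulo lower-content terms) by a length count in the semisimple isotypical component --- identifying the kernel $W\cap(x_{i}^{k})$ from the $V_{M}$ decomposition and observing that $\bigoplus_{M\notin\operatorname{SSYT}_{d}^{k}(\lambda)}\tilde{V}_{M}$ sits inside it with equal length --- which is an equally valid, and if anything cleaner, justification.
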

The case $s=k \leq n$ in part $(ii)$ of Theorem \ref{propbases} means that the $\mathcal{S}^{\lambda}$-isotypical part of the degree $d$ part of $R_{n,k}$ is the bijective image of $\bigoplus_{(C,I) \in A_{d}^{k}(\lambda)}V_{C}^{\vec{h}(C,I)}$. The second assertion in part $(iv)$ there holds, in fact, for all $n\geq2d$.

\begin{proof}
The first two expressions in part $(i)$ follow directly from Theorem \ref{FMTdecom} and Proposition \ref{decomVM0mM}, and the third one is Corollary 2.27 of \cite{[Z3]}. Next, part $(iii)$ of Theorem 2.19 of \cite{[Z3]} and the decomposition of each $R_{n,I}^{\mathrm{hom}}$ amounts, by taking the $\mathcal{S}^{\lambda}$-isotypical part and replacing every multi-set of size $k-1$ by the possibly smaller one not containing 0, to the assertion of part $(ii)$ with $s=0$. The general case follows by noting that the summands that remain non-zero in $R_{n,k,s}$ are those not containing $e_{r}$ with $r>n-s$, and the vanishing under the natural map is also clear by the same argument (because $R_{n,k,s+1}$ is the quotient of $R_{n,k,s}$ by the ideal generated by $e_{n-s}$ by definition).

We now apply part $(i)$ (or Theorem \ref{FMTdecom}), and recall from part $(ii)$ of Theorem \ref{repsSpecht} that if $M\in\operatorname{SSYT}_{\mu}(\lambda)$ for some content $\mu$ (of sum $d$) then $V_{M}\subseteq\mathbb{Q}[\mathbf{x}_{n}]_{\mu}\subseteq\mathbb{Q}[\mathbf{x}_{n}]_{d}$. Note that if $\mu$ contains an entry that equals $k$ or more then $\mathbb{Q}[\mathbf{x}_{n}]_{\mu}$ vanishes in $R_{n,k,0}$, and otherwise no relations appear between elements of $\mathbb{Q}[\mathbf{x}_{n}]_{\mu}$ in that quotient. since when considering the differences in Lemma \ref{difmodf-1} for $M\in\operatorname{SSYT}_{d}^{k}(\lambda)$, some are based on contents of elements that are in $\operatorname{SSYT}_{d}^{k}(\lambda)$ but some are not, but in the quotient $R_{n,k,0}$ the two asserted sums, which are clearly isomorphic, have the same image and therefore they are isomorphic to it together, part $(iii)$ is also established.

Finally, for every element of $\mathbb{Q}[\mathbf{x}_{n+1}]_{d}$ there exists a unique symmetric divisor of maximal degree (see, e.g., Remark 3.3 of \cite{[Z2]}), which for $F_{M^{0},T}m_{M}$ is $m_{M}$ (hence so it is for every element of $\tilde{V}_{M}$), making our map well-defined on these polynomials. Lemma \ref{sameiota} yields the equality $F_{M^{0},T}=F_{\hat{\iota}(M^{0}),\iota T}$, and Definition \ref{repswithmon} and the formula for $\hat{\iota}$ in Lemma \ref{rels} show that $\hat{\iota}(M^{0})=(\hat{\iota}M)^{0}$ and that adding the variable $x_{n+1}$ to $m_{M}$ yields the monomial symmetric function $m_{\hat{\iota}M}$. Hence the image of $\tilde{V}_{M}$ under this map is contained in $\tilde{V}_{\hat{\iota}M}$, yielding the first assertion in part $(iv)$ by the irreducibility of the latter representation. Since part $(iii)$ of Theorem \ref{FMTdecom} implies that for $n>2d$ the decomposition of $\mathbb{Q}[\mathbf{x}_{n+1}]_{d}$ from Proposition \ref{decomVM0mM} is $\bigoplus_{\lambda \vdash n}\bigoplus_{M\in\operatorname{SSYT}_{d}(\lambda)}\tilde{V}_{\hat{\iota}M}$, with $\tilde{V}_{\hat{\iota}M}\cong\mathcal{S}^{\lambda_{+}}$ since $\operatorname{sh}(\hat{\iota}M)=\lambda_{+}$ for any $M\in\operatorname{SSYT}_{d}(\lambda)$, the remaining assertions follow as well. This completes the proof of the theorem.
\end{proof}
In fact, the subspace of $\tilde{V}_{\hat{\iota}M}$ that is obtained by the map from part $(iv)$ of Theorem \ref{propbases} is spanned by those basis elements that are not divisible by $x_{n+1}m_{\hat{\iota}M}$, since $F_{(\hat{\iota}M)^{0},\tilde{T}}$ is not divisible by $x_{n+1}$ precisely when $\tilde{T}\in\operatorname{SYT}(\lambda_{+})$ is $\iota T$ for some $T\in\operatorname{SYT}(\lambda)$ (in order for $x_{n+1}$ to appear in the first row of $\tilde{T}$, and thus at its end).

\medskip

\begin{ex}
We consider the case where $n=7$, $\lambda=421\vdash7$, and $d=6$. Then the decompositions from part $(i)$ of Theorem \ref{propbases} are \[V_{\substack{0000 \\ 11\hphantom{11} \\ 4\hphantom{444}}} \oplus V_{\substack{0000 \\ 12\hphantom{22} \\ 3\hphantom{333}}} \oplus V_{\substack{0000 \\ 13\hphantom{33} \\ 2\hphantom{444}}} \oplus V_{\substack{0001 \\ 11\hphantom{12} \\ 3\hphantom{333}}} \oplus V_{\substack{0001 \\ 12\hphantom{22} \\ 2\hphantom{333}}} \oplus V_{\substack{0002 \\ 11\hphantom{13} \\ 2\hphantom{224}}} \oplus V_{\substack{0011 \\ 11\hphantom{22} \\ 2\hphantom{233}}}\] (ordered by the $f$-values 0, 0, 0, 1, 1, 1, and 2), their $\tilde{V}_{M}$-equivalents, and \[V_{\substack{0000 \\ 11\hphantom{11} \\ 2\hphantom{222}}}e_{1}^{2} \oplus V_{\substack{0000 \\ 11\hphantom{11} \\ 2\hphantom{222}}}e_{2} \oplus V_{\substack{0000 \\ 12\hphantom{22} \\ 2\hphantom{333}}}e_{1} \oplus V_{\substack{0001 \\ 11\hphantom{12} \\ 2\hphantom{222}}}e_{1} \oplus V_{\substack{0001 \\ 12\hphantom{22} \\ 2\hphantom{333}}} \oplus V_{\substack{0002 \\ 11\hphantom{13} \\ 2\hphantom{224}}} \oplus V_{\substack{0011 \\ 11\hphantom{22} \\ 2\hphantom{233}}},\] corresponding to the multi-sets $\{1,1,1,3\}$, $\{1,2,3\}$ twice, $\{1,1,4\}$, $\{2,4\}$ twice, and $\{1,5\}$ respectively. When $k=4$ we remove the first summand in the latter expression, for $k=3$ we also take out the next three, while for large $k$, if $s=6$ then we remove the second summand, and for $s=7=n$ we again keep only the last three summands. When considering the first expansion, if $k=4$ we omit the first summand, and for $k=3$ we remove the next three as well, exemplifying parts $(ii)$ and $(iii)$ of that theorem. While the inequality $n>2d$ (or $n\geq2d$) from part $(iv)$ does not hold, with our choice of $\lambda$ the components associated with $\lambda_{+}=521\vdash7$ are obtained from these decompositions by replacing every $M$ or $C$ by its $\hat{\iota}$-image. \label{421d6}
\end{ex}

\begin{rmk}
Theorem \ref{propbases} can be interpreted as considering the properties of the three decompositions of $\mathbb{Q}[\mathbf{x}_{n}]_{d}$ from part $(i)$ there. The one from part $(ii)$ is more adapted to changing the parameter $s$ in the quotients $R_{n,k,s}$, that from part $(iii)$ behaves more naturally when changing the parameter $k$ in $R_{n,k,0}$, and the basis from part $(iv)$ is more natural when letting $n$ vary (either in the quotients $R_{n,k,0}$, or in the original space). In fact, recall from Remark 2.30 of \cite{[Z3]} that $R_{n,k,0}$ can be seen as the cohomology ring of the complete variety $(\mathbb{P}^{k-1})^{n}$, and the natural pairing arising from this interpretation is based on the coefficient of $e_{n}^{k-1}$ in the product. Thus $V_{M}$ for $M\in\operatorname{SSYT}_{d}^{k}(\lambda)$, can, by degree considerations, have non-zero pairings with $V_{N}$ only if $N\in\operatorname{SSYT}_{n(k-1)-d}^{k}(\lambda)$, but the content puts additional restrictions. More precisely, since $V_{M}\subseteq\mathbb{Q}[\mathbf{x}_{n}]_{\mu}$, if we order $\mu$ as an increasing sequence of length $n$ (including 0) that is strictly bounded by $k$, then a necessary condition for $V_{N}$ to pair non-trivially with $M$ is that when ordering the content of $N$ as a \emph{decreasing} sequence, then adding the two sequences entry-wise yield the constant sequence $k-1$ of length $n$. Hence the basis from part $(iii)$ of Theorem \ref{propbases} is closer to the orthogonality mentioned in Remark 2.30 of \cite{[Z3]}. \label{difbases}
\end{rmk}

It is possible that the condition from Remark \ref{difbases} for a non-zero pairing between $V_{M}$ and $V_{N}$ is also sufficient, though I did not verify this.

\medskip

The notions from Definition \ref{Adlambda} has the following infinite analogous.
\begin{defn}
Consider $\hat{\lambda}\vdash\infty$ and two integers $d\geq0$ and $k\geq1$.
\begin{enumerate}[$(i)$]
\item We set $A_{d}(\hat{\lambda})$ for the set of pairs $(\hat{C},I)$ in which $\hat{C}\in\operatorname{CCT}(\hat{\lambda})$ and $I$ is a multi-set of positive integers containing $\operatorname{Dsp}^{c}(\hat{C})$ as above, and for which the equality $\sum_{i \in I}i=d$ holds.
\item We write $A_{d}^{k}(\hat{\lambda})$ for the subset of $A_{d}(\hat{\lambda})$ in which we impose the extra condition $|I|<k$.
\item The set of elements of $\operatorname{SSYT}_{d}(\hat{\lambda})$ from Definition \ref{infSSYT} whose entries are all smaller than $k$ will be denote by $\operatorname{SSYT}_{d}^{k}(\hat{\lambda})$.
\item We write $\operatorname{SSYT}^{k}(\hat{\lambda})$ for the disjoint union $\bigcup_{d=0}^{\infty}\operatorname{SSYT}_{d}^{k}(\hat{\lambda})$.
\end{enumerate} \label{Adinf}
\end{defn}

Their properties are described in the following analogue of Theorem \ref{propbases}.
\begin{thm}
Fix $d\geq0$ and $\hat{\lambda}\vdash\infty$.
\begin{enumerate}[$(i)$]
\item The image, inside $\tilde{\Lambda}_{d}$, of the sum from part $(ii)$ of Corollary \ref{multCI} coincides with $\tilde{\Lambda}_{d}^{0}$ from Proposition \ref{ExtVMlim}.
\item The latter image is completely reducible, and its $\mathcal{S}^{\hat{\lambda}}$-isotypical part equals $\bigoplus_{\hat{M}\in\operatorname{SSYT}_{d}(\hat{\lambda})}V_{\hat{M}}$ from Proposition \ref{ExtVMlim}, $\bigoplus_{\hat{M}\in\operatorname{SSYT}_{d}(\hat{\lambda})}\tilde{V}_{\hat{M}}$ via Definition \ref{repswithmon}, and also $\bigoplus_{(\hat{C},I) \in A_{d}(\hat{\lambda})}V_{\hat{C}}^{\vec{h}(\hat{C},I)}$ using Definitions \ref{infSpecht} and \ref{Adinf}.
\item All the three direct sums $\bigoplus_{(\hat{C},I) \in A_{d}^{k}(\hat{\lambda})}V_{\hat{C}}^{\vec{h}(\hat{C},I)}$, $\bigoplus_{\hat{M}\in\operatorname{SSYT}_{d}^{k}(\hat{\lambda})}V_{\hat{M}}$, and $\bigoplus_{\hat{M}\in\operatorname{SSYT}_{d}^{k}(\hat{\lambda})}\tilde{V}_{\hat{M}}$ from part $(ii)$ map bijectively onto their image inside the quotient $R_{\infty,k}$ from Definition \ref{defRinfk}, which is $R_{\infty,k}^{0}$ from part $(v)$ of Theorem \ref{decomRinf} that is homogeneous of degree $d$. The sub-representation $V_{\hat{M}}$ of $R_{\infty,k+1}^{0}$ that is associated with $\hat{M}\in\operatorname{SSYT}_{d}^{k+1}(\hat{\lambda})\setminus\operatorname{SSYT}_{d}^{k}(\hat{\lambda})$ has a vanishing image in $R_{\infty,k}^{0}$.
\item Any element of a sub-representation $\tilde{V}_{\hat{M}}$ from part $(ii)$ is a product of a polynomial and a symmetric function from $\Lambda$.
\end{enumerate} \label{images}
\end{thm}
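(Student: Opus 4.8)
The plan is to handle the four assertions in increasing order of difficulty, pushing as much as possible onto results already established and isolating the one genuinely new point. Part $(iv)$ is immediate: by Definition \ref{repswithmon}$(iv)$ we have $\tilde{V}_{\hat{M}}=V_{\hat{M}^{0}}m_{\hat{M}}$ with $m_{\hat{M}}\in\Lambda$, and since $\hat{M}^{0}$ carries no non-zero multiplicities, Lemma \ref{sameiota}$(iii)$ shows every $F_{\hat{M}^{0},\hat{T}}$, hence every element of the span $V_{\hat{M}^{0}}$, lies in $\mathbb{Q}[\mathbf{x}_{\infty}]$; so each element of $\tilde{V}_{\hat{M}}$ is a polynomial times the symmetric function $m_{\hat{M}}$. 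For part $(ii)$, the decomposition into the $V_{\hat{M}}$'s is Proposition \ref{ExtVMlim}, the decomposition into the $\tilde{V}_{\hat{M}}$'s is exactly Proposition \ref{decomVM0mM}$(ii)$, and complete reducibility together with the identification of $\mathcal{S}^{\hat{\lambda}}$-isotypical parts follows from the irreducibility and shape-dependence of the isomorphism type in Theorem \ref{repsinf}$(iii)$--$(v)$. Thus the substance of $(i)$ and $(ii)$ together reduces to proving that the finite direct sum $D:=\bigoplus_{\hat{\lambda}\vdash\infty}\bigoplus_{(\hat{C},I)\in A_{d}(\hat{\lambda})}V_{\hat{C}}^{\vec{h}(\hat{C},I)}$, finite and direct by Corollary \ref{multCI}, coincides as a subspace of $\tilde{\Lambda}_{d}$ with $\tilde{\Lambda}_{d}^{0}$.

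Here I would use the truncation $r_{n}\colon\tilde{\Lambda}\to\mathbb{Q}[\mathbf{x}_{n}]$ sending $x_{m}\mapsto0$ for all $m>n$, exactly as in the proofs of Theorem \ref{repsinf}, Lemma \ref{RinfIprop}, and Theorem \ref{decomRinf}. For $n$ large (so every relevant infinite tableau arises from a size-$n$ tableau via Lemma \ref{limtab}, and $n>2d$), $r_{n}$ carries the spanning set $\{F_{\hat{C},\hat{T}}^{I,\mathrm{hom}}\}$ of $D$ to $\{F_{C,T}^{I,\mathrm{hom}}\}$ by Proposition \ref{limSpecht}$(v)$, which is a basis of $\mathbb{Q}[\mathbf{x}_{n}]_{d}$ by Theorem \ref{propbases}$(i)$; likewise it carries the basis $\{F_{\hat{M},\hat{T}}\}$ of $\tilde{\Lambda}_{d}^{0}$ to the basis $\{F_{M,T}\}$ of $\mathbb{Q}[\mathbf{x}_{n}]_{d}$ (Theorem \ref{FMTdecom}$(i)$). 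Hence $r_{n}$ is injective on each of $D$ and $\tilde{\Lambda}_{d}^{0}$ with common image all of $\mathbb{Q}[\mathbf{x}_{n}]_{d}$, so $\dim D=\dim\tilde{\Lambda}_{d}^{0}$ and it suffices to prove $D\subseteq\tilde{\Lambda}_{d}^{0}$. For that, fix a generator $F_{\hat{C},\hat{T}}^{I,\mathrm{hom}}$, expand its truncation as $F_{C,T}^{I,\mathrm{hom}}=\sum b_{M,T'}F_{M,T'}$, and set $G:=\sum b_{M,T'}F_{\hat{M},\hat{T}'}\in\tilde{\Lambda}_{d}^{0}$. The coefficients $b_{M,T'}$ do not change when $n$ increases, since passing from $n$ to $n+1$ replaces every index $M,T'$ by $\hat{\iota}M,\iota T'$ (Theorem \ref{FMTdecom}$(iii)$, Proposition \ref{plus1iota}) while the generalized higher Specht polynomials, and the symmetric-multiplied polynomials $F_{C,T}^{I,\mathrm{hom}}$, are $\iota/\hat{\iota}$-coherent (Proposition \ref{forstab} and the proof of Proposition \ref{limSpecht}$(v)$). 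Therefore $r_{m}(G)=r_{m}(F_{\hat{C},\hat{T}}^{I,\mathrm{hom}})$ for \emph{every} $m\geq n$, and since an element of $\tilde{\Lambda}_{d}$ annihilated by all truncations $r_{m}$ is $0$, we get $F_{\hat{C},\hat{T}}^{I,\mathrm{hom}}=G\in\tilde{\Lambda}_{d}^{0}$. Summing over generators gives $D\subseteq\tilde{\Lambda}_{d}^{0}$, hence equality, which is both the third decomposition in $(ii)$ and the statement of $(i)$ (compare Corollary \ref{multCI}$(ii)$).

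For part $(iii)$ I would work inside $R_{\infty,k}$. Lemma \ref{bijvecs}$(i)$ matches the multi-sets $I$ with $|I|<k$ containing $\operatorname{Dsp}^{c}(\hat{C})$ to the vectors $\vec{h}\in H_{\hat{C}}^{k}$, with $\sum_{i\in I}i$ the degree, so Theorem \ref{decomRinf}$(i)$ and $(v)$ identify $R_{\infty,k,d}^{0}$, as a subspace of $R_{\infty,k}$, with the direct sum $\bigoplus_{\hat{\lambda}}\bigoplus_{(\hat{C},I)\in A_{d}^{k}(\hat{\lambda})}V_{\hat{C}}^{\vec{h}(\hat{C},I)}$ of the \emph{same} irreducible pieces; since that sum is already direct in $\tilde{\Lambda}_{d}$ (Corollary \ref{multCI}) and each piece has non-zero image in $R_{\infty,k}$, the natural map is a bijection onto $R_{\infty,k,d}^{0}$. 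To get the same for the two $\operatorname{SSYT}_{d}^{k}$-sums, note that a $V_{\hat{M}}$ or $\tilde{V}_{\hat{M}}$ whose content has a part $\geq k$ lies in $\tilde{\Lambda}_{\mu}$ for such a $\mu$, so every monomial in it carries an exponent $\geq k$ and it dies in $R_{\infty,k}$ (this also yields the vanishing of $V_{\hat{M}}$ for $\hat{M}\in\operatorname{SSYT}_{d}^{k+1}(\hat{\lambda})\setminus\operatorname{SSYT}_{d}^{k}(\hat{\lambda})$); on the other side, the $(\hat{C},I)$-summands with $|I|\geq k$ map, by Remarks \ref{monfunc} and \ref{propkeis}, into the span of those with $|I|<k$. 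Combining these with $\tilde{\Lambda}_{d}^{0}=\bigoplus_{\hat{M}}V_{\hat{M}}=\bigoplus_{\hat{M}}\tilde{V}_{\hat{M}}=D$ from $(i)$--$(ii)$ shows that all three full sums have image precisely $R_{\infty,k,d}^{0}$, and a dimension count of the surviving summands against $R_{\infty,k,d}^{0}$ — done by truncating to $R_{n,k}$ and $R_{n,k,0}$ and invoking Theorem \ref{propbases}$(ii)$--$(iii)$ — forces the restrictions to the $\operatorname{SSYT}_{d}^{k}$-parts to be injective.

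The main obstacle is precisely the upgrade from ``same image under $r_{n}$'' to ``same subspace'' in parts $(i)$/$(ii)$: $r_{n}$ is very far from injective on all of $\tilde{\Lambda}_{d}$ — for instance $e_{d}-e_{d}(x_{1},\dots,x_{n})$ is a non-zero $S_{\mathbb{N}}^{(n)}$-invariant element of $\tilde{\Lambda}_{d}$ killed by $r_{n}$ — so equality of $D$ and $\tilde{\Lambda}_{d}^{0}$ cannot be read off from equality of their images. The resolution above trades injectivity of a single $r_{n}$ for the correct statement that an element of $\tilde{\Lambda}_{d}$ is determined by the whole tower $(r_{m})_{m\geq1}$, together with the $\iota/\hat{\iota}$-coherence of the (symmetric-multiplied) generalized higher Specht polynomials. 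I would want to double-check that this coherence is genuinely available at the level of $F_{C,T}^{I,\mathrm{hom}}$ — it is, by the proof of Proposition \ref{limSpecht}$(v)$, which is exactly where one divides out and re-multiplies by the maximal symmetric divisor — since that coherence is the lynchpin of the entire argument.
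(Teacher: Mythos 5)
Your proposal is correct and follows essentially the same route as the paper's proof: parts $(i)$--$(ii)$ are obtained by lifting the finite change-of-basis relations among the three decompositions of $\mathbb{Q}[\mathbf{x}_{n}]_{d}$ to $\tilde{\Lambda}_{d}$, using the $\iota/\hat{\iota}$-stability of the coefficients for $n>2d$ together with the fact that an element of $\tilde{\Lambda}_{d}$ is determined by its truncations; part $(iii)$ combines Theorem \ref{decomRinf} with the vanishing of the out-of-range-content summands and a multiplicity count; and part $(iv)$ is read off from Definition \ref{repswithmon} and Lemma \ref{sameiota}, exactly as in the paper. One small inaccuracy in your part $(iii)$: $\tilde{V}_{\hat{M}}$ is \emph{not} contained in $\tilde{\Lambda}_{\mu}$ (the failure of that containment is precisely the content of Lemma \ref{difmodf-1}), but your conclusion survives, since whenever the content of $\hat{M}$ has a part $\geq k$ either the polynomial factor $F_{\hat{M}^{0},\hat{T}}$ or the monomial symmetric function $m_{\hat{M}}$ already has all of its monomials carrying an exponent $\geq k$, so every monomial of the product does as well and $\tilde{V}_{\hat{M}}$ still dies in $R_{\infty,k}$.
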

Part $(iii)$ of Theorem \ref{images} yields the decomposition mentioned in Remark \ref{monfunc}, while part $(iv)$ concerns the property from Remark \ref{polstimessym}.

\begin{proof}
We recall from part $(iii)$ of Theorem \ref{FMTdecom} that when $n>2d$, the decompositions for $\mathbb{Q}[\mathbf{x}_{n+1}]_{d}$ are obtained from those of $\mathbb{Q}[\mathbf{x}_{n}]_{d}$ by taking $\hat{\iota}$-images. Indeed, this is Proposition \ref{plus1iota} for the representations of the form $V_{C}^{\vec{h}(C,I)}$, part $(iii)$ of Theorem \ref{FMTdecom} for the $V_{M}$'s, and for the $\tilde{V}_{M}$'s this is via the map from part $(iv)$ of Theorem \ref{propbases}. We thus write the decompositions of $\mathbb{Q}[\mathbf{x}_{n+1}]_{d}$ using $\hat{\iota}$-images of tableaux $M\in\operatorname{SSYT}_{d}(\lambda)$ and $C\in\operatorname{CCT}(\lambda)$, for partitions $\lambda \vdash n$.

Then, inside $V_{\hat{\iota}M}$, or $\tilde{V}_{\hat{\iota}M}$, or $V_{\hat{\iota}C}^{\vec{h}(\hat{\iota}C,I)}$, any basis element that is based on $\iota T$ for $T\in\operatorname{SYT}(\lambda)$ is determined, via Proposition \ref{forstab}, by its image in $\mathbb{Q}[\mathbf{x}_{n}]_{d}$ obtained from the substitution $x_{n+1}=0$. This means that the linear relation spanning an element of a sub-representation of $\mathbb{Q}[\mathbf{x}_{n}]_{d}$ in terms of the components of the other decompositions keeps its form when we replace each index by its image under $\iota$ or $\hat{\iota}$.

We thus take a basis element of one of the representations $V_{\hat{M}}$, $\tilde{V}_{\hat{M}}$, or $V_{\hat{C}}^{\vec{h}(\hat{C},I)} \subseteq R_{\infty,I}^{\mathrm{hom},0}$ inside $\tilde{\Lambda}_{d}$, for some $\hat{\lambda}\vdash\infty$ and  $\hat{M}\in\operatorname{SSYT}_{d}(\hat{\lambda})$ or $\hat{C}\in\operatorname{CCT}(\hat{\lambda})$ and a multi-set $I$, and let $\hat{T}\in\operatorname{SYT}(\hat{\lambda})$ be the corresponding index of that basis element. We take $n>2d$ such that $\hat{T}$ and $\hat{M}$ or $\hat{C}$ are images of tableaux $T$ and $M$ or $C$ of shape $\lambda \vdash n$, and recall from Propositions \ref{limSpecht} and \ref{forstab} that for such $n$, the associated element $F_{\hat{M},\hat{T}}$, or $F_{\hat{M}^{0},\hat{T}}m_{\hat{M}}$, or $F_{\hat{C},\hat{T}}^{I,\mathrm{hom}}$ is determined by its finite counterpart $F_{M,T}$, or $F_{M^{0},T}m_{M}$, or $F_{C,T}^{I,\mathrm{hom}}$ respectively.

We thus express the finite counterpart in terms of the other decompositions of $\mathbb{Q}[\mathbf{x}_{n}]_{d}$, and apply the same considerations for the parts of the bases of the other decompositions inside $\tilde{\Lambda}_{d}$ that are based on elements of $\operatorname{SYT}(\hat{\lambda})$ that arise from tableaux in $\operatorname{SYT}(\lambda)$ with our $\lambda \vdash n$. By what we have shown, the linear relations among the sub-representations of $\mathbb{Q}[\mathbf{x}_{n}]_{d}$ remain valid for the corresponding elements inside $\tilde{\Lambda}_{d}$, showing that each basis element showing up in one decomposition there is spanned by those from the other decompositions. Hence the image is the same, establishing part $(i)$.

Part $(ii)$ is now an immediate consequence of part $(i)$, via taking the isotypical components based on the irreducible representations determined in Theorem \ref{repsinf}, and the first assertion in part $(iii)$ follows by combining with Definition \ref{Adinf} and Theorem \ref{decomRinf}. Noting that if $\hat{M}\in\operatorname{SSYT}_{d}^{k+1}(\hat{\lambda})\setminus\operatorname{SSYT}_{d}^{k}(\hat{\lambda})$ has content $\mu$ then $\tilde{\Lambda}_{\mu}$, which contains $V_{\hat{M}}$ by part $(ii)$ of Proposition \ref{limSpecht}, maps bijectively onto its image in $R_{\infty,k+1}$, but its image in $R_{\infty,k+1}$ vanishes entirely, produces the remaining assertion there.

Finally, the assertion of part $(iv)$ holds for basis elements of $\tilde{V}_{\hat{M}}$ by Definition \ref{repswithmon} and Lemma \ref{sameiota}. The fact that the symmetric function is the same one $m_{\hat{M}}\in\Lambda$ for all such basis elements, and all the multipliers are polynomials from $\mathbb{Q}[\mathbf{x}_{\infty}]$, yields the assertion in general. This proves the theorem.
\end{proof}
In fact, one could imitate the proof of Theorem \ref{propbases} and obtain all the parts of Theorem \ref{images} directly from Theorem \ref{decomRinf}, Propositions \ref{ExtVMlim} and \ref{decomVM0mM}, and Corollary \ref{multCI}, without invoking the theorem itself. 

\begin{rmk}
We remark that in both Theorem \ref{propbases} and Theorem \ref{images} we could get two additional decompositions, one by replacing the product $\prod_{r}e_{r}^{h_{r}}$ where $\{h_{r}\}_{r\geq1}$ is the vector $\vec{h}(C,I)$ or $\vec{h}(\hat{C},I)$ for an element of $A_{d}(\lambda)$ or $A_{d}(\hat{\lambda})$ by the ``maximal'' monomial symmetric function showing up in it, and another by replacing $m_{M}$ or $m_{\hat{M}}$ for an element of $\operatorname{SSYT}_{d}(\lambda)$ or $\operatorname{SSYT}_{d}(\hat{\lambda})$ by the product of elementary symmetric functions in which it shows up as the ``maximal'' monomial one. However, these decompositions do not have the properties that were established in these results, so we content ourselves with the three decompositions already mentioned. \label{otherdecoms}
\end{rmk}

\begin{ex}
For $\hat{\lambda}$ from Example \ref{exinftab}, recall from Example \ref{421d6} that when going from $\lambda=421\vdash7$ and $d=6$ to $\lambda_{+}=521\vdash8$ (both of which yield $\hat{\lambda}$ via Lemma \ref{limtab}), the representations are already obtained via replacing indices by their $\hat{\iota}$-images. By replacing each summand there by the one whose index is the infinite semi-standard Young tableau (or cocharge tableau) constructed through Lemma \ref{limtab}, we get the decompositions of the $\mathcal{S}^{\hat{\lambda}}$-isotypical component of $\tilde{\Lambda}_{6}$ and their properties, as in Theorem \ref{images}. \label{infexd6}
\end{ex}

\begin{ex}
Consider the decomposition from Example \ref{infexd6} that is based on the first one from Example \ref{421d6}, and denote the seven corresponding elements of $\operatorname{SSYT}(\hat{\lambda})$ by $\hat{M}_{j}$, $1 \leq j \leq 7$. We take $\hat{T}\in\operatorname{SYT}(\hat{\lambda})$ to be the tableau in which the first column contains 1, 2, and 3 and the second one contains 4 and 5, and we wish to make the comparison from Lemma \ref{difmodf-1} and Proposition \ref{decomVM0mM} explicit. If $1 \leq j \leq 3$ then $f_{\hat{M}_{j}}=0$, so that $F_{\hat{M}_{j},\hat{T}}=F_{\hat{M}_{j}^{0},\hat{T}}$ and $m_{\hat{M}_{j}}=1$. Next, $m_{\hat{M}_{4}}=e_{1}$, the product of $F_{\hat{M}_{4}^{0},\hat{T}}$ with the part $\sum_{i=6}^{\infty}x_{i}$ of $e_{1}$ yield $F_{\hat{M}_{4},\hat{T}}$, the part $x_{1}+x_{2}+x_{3}$ yields $F_{\hat{M}_{1},\hat{T}}$ plus another contribution, and the latter combines with the product arising from $x_{4}+x_{5}$ to produce $F_{\hat{M}_{2},\hat{T}}$. We have $m_{\hat{M}_{5}}=e_{1}$ as well, the products of $F_{\hat{M}_{5}^{0},\hat{T}}$ with $\sum_{i=6}^{\infty}x_{i}$ and with $x_{4}+x_{5}$ produce $F_{\hat{M}_{5},\hat{T}}$ plus a part of $F_{\hat{M}_{3},\hat{T}}$, that with $x_{1}+x_{2}+x_{3}$ yields a part of $F_{\hat{M}_{2},\hat{T}}$, and the remaining parts of $F_{\hat{M}_{2},\hat{T}}$ and $F_{\hat{M}_{3},\hat{T}}$ cancel with each other. The next case involves $m_{\hat{M}_{6}}=p_{2}$, multiplying $F_{\hat{M}_{6}^{0},\hat{T}}$ by $\sum_{i=6}^{\infty}x_{i}^{2}$ and with $x_{4}^{2}+x_{5}^{2}$ gives $F_{\hat{M}_{6},\hat{T}}$ plus the same part of $F_{\hat{M}_{3},\hat{T}}$, and if we write $x_{1}^{2}+x_{2}^{2}+x_{3}^{2}$ as $h_{3}(x_{1},x_{2},x_{3})-e_{3}(x_{1},x_{2},x_{3})$ then the second summand completes $F_{\hat{M}_{3},\hat{T}}$ and the first one yields $F_{\hat{M}_{1},\hat{T}}$. Finally, from $m_{\hat{M}_{7}}=e_{2}$, the product of $F_{\hat{M}_{7}^{0},\hat{T}}$ with $\sum_{i=6}^{\infty}\sum_{j=i+1}^{\infty}x_{i}x_{j}$ equals $F_{\hat{M}_{7},\hat{T}}$, the one with $(x_{4}+x_{5})\sum_{i=6}^{\infty}x_{i}$ and $x_{4}x_{5}$ combine to give $F_{\hat{M}_{5},\hat{T}}$, from $(x_{1}+x_{2}+x_{3})\sum_{i=6}^{\infty}x_{i}$ we get $F_{\hat{M}_{4},\hat{T}}$, and those arising from $e_{3}(x_{1},x_{2},x_{3})$ and $(x_{1}+x_{2}+x_{3})(x_{4}+x_{5})$ produce $F_{\hat{M}_{5},\hat{T}}$. \label{relsd6ex}
\end{ex}
The equalities from Example \ref{relsd6ex} are $F_{\hat{M}_{4}^{0},\hat{T}}e_{1}=F_{\hat{M}_{4},\hat{T}}+F_{\hat{M}_{2},\hat{T}}+F_{\hat{M}_{1},\hat{T}}$, $F_{\hat{M}_{5}^{0},\hat{T}}e_{1}=F_{\hat{M}_{5},\hat{T}}+F_{\hat{M}_{3},\hat{T}}+F_{\hat{M}_{2},\hat{T}}$, $F_{\hat{M}_{6}^{0},\hat{T}}p_{2}=F_{\hat{M}_{6},\hat{T}}+F_{\hat{M}_{3},\hat{T}}+F_{\hat{M}_{1},\hat{T}}$, and $F_{\hat{M}_{7}^{0},\hat{T}}e_{2}=F_{\hat{M}_{7},\hat{T}}+F_{\hat{M}_{5},\hat{T}}+F_{\hat{M}_{4},\hat{T}}+F_{\hat{M}_{2},\hat{T}}$, together with the three trivial ones.

\medskip

We deduce an immediate consequence of Theorems \ref{propbases} and \ref{images}.
\begin{cor}
Let $d\geq0$, $\lambda \vdash n$, and $\hat{\lambda}\vdash\infty$ be fixed.
\begin{enumerate}[$(i)$]
\item We have the equalities $|A_{d}(\lambda)|=|\operatorname{SSYT}_{d}(\lambda)|$ and $|A_{d}(\hat{\lambda})|=|\operatorname{SSYT}_{d}(\hat{\lambda})|$.
\item Multiplying the first number from part $(i)$ by the dimension of $\mathcal{S}^{\lambda}$ and summing over $\lambda$ yields $\binom{n+d-1}{d}$.
\item For $k\geq1$ we also get $|A_{d}^{k,0}(\lambda)|=|\operatorname{SSYT}_{d}^{k}(\lambda)|$ and $|A_{d}^{k}(\hat{\lambda})|=|\operatorname{SSYT}_{d}^{k}(\hat{\lambda})|$.
\item Given a content $\mu$ of sum $d$, let $m_{h}$ be the multiplicity of $h\geq0$ in $\mu$, so that $\sum_{h}m_{h}=n$. Then multiplying $|\operatorname{SSYT}_{\mu}(\lambda)|$ by the dimension of $\mathcal{S}^{\lambda}$ and taking the sum over $\lambda \vdash n$ produces $n!\big/\prod_{h}m_{h}!$.
\end{enumerate} \label{lincombM}
\end{cor}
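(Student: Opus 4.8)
The plan is to read off every identity from the decompositions already established in Theorems \ref{propbases} and \ref{images}, together with two elementary dimension counts. The single recurring principle is that each of the modules $V_M$, $\tilde V_M$, $V_C^{\vec h(C,I)}$ — and their infinite counterparts $V_{\hat M}$, $\tilde V_{\hat M}$, $V_{\hat C}^{\vec h(\hat C,I)}$ — is \emph{one} copy of the irreducible $\mathcal{S}^\lambda$ (respectively $\mathcal{S}^{\hat\lambda}$) by Theorem \ref{repsSpecht} (respectively Theorem \ref{repsinf}), so that two direct-sum decompositions of one and the same completely reducible representation into such modules must have the same number of summands, and an equivariant linear bijection between two such decompositions preserves that number. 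Hence for part $(i)$ I would apply part $(i)$ of Theorem \ref{propbases}: the $\mathcal{S}^\lambda$-isotypical part of $\mathbb{Q}[\mathbf{x}_n]_d$ is at once $\bigoplus_{M\in\operatorname{SSYT}_d(\lambda)}V_M$ and $\bigoplus_{(C,I)\in A_d(\lambda)}V_C^{\vec h(C,I)}$, whence $|A_d(\lambda)|=|\operatorname{SSYT}_d(\lambda)|$; the infinite equality follows verbatim from part $(ii)$ of Theorem \ref{images}, using that the two sums there are finite, direct, and isotypical of type $\mathcal{S}^{\hat\lambda}$.

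For parts $(ii)$ and $(iv)$ I would pass to dimensions. Part $(i)$ of Theorem \ref{propbases} (equivalently Theorem \ref{FMTdecom}) gives $\mathbb{Q}[\mathbf{x}_n]_d=\bigoplus_{\lambda\vdash n}\bigoplus_{M\in\operatorname{SSYT}_d(\lambda)}V_M$ with $\dim V_M=\dim\mathcal{S}^\lambda$, so summing dimensions and using the standard count $\dim\mathbb{Q}[\mathbf{x}_n]_d=\binom{n+d-1}{d}$ of monomials of degree $d$ in $n$ variables yields $\sum_\lambda|\operatorname{SSYT}_d(\lambda)|\dim\mathcal{S}^\lambda=\binom{n+d-1}{d}$; part $(i)$ then replaces $|\operatorname{SSYT}_d(\lambda)|$ by $|A_d(\lambda)|$. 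Part $(iv)$ is the same argument on a fixed content: part $(ii)$ of Theorem \ref{FMTdecom} gives $\mathbb{Q}[\mathbf{x}_n]_\mu=\bigoplus_{\lambda\vdash n}\bigoplus_{M\in\operatorname{SSYT}_\mu(\lambda)}V_M$, while $\dim\mathbb{Q}[\mathbf{x}_n]_\mu$ counts the monomials whose sorted exponent vector is $\mu$, i.e.\ the multinomial coefficient $n!\big/\prod_h m_h!$, and summing $\dim V_M=\dim\mathcal{S}^\lambda$ gives the claimed identity.

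Finally, for part $(iii)$ I would work inside the quotients: parts $(ii)$ (taken with $s=0$) and $(iii)$ of Theorem \ref{propbases} exhibit the $\mathcal{S}^\lambda$-isotypical part of the degree-$d$ piece of $R_{n,k,0}$ as the bijective equivariant image both of $\bigoplus_{(C,I)\in A_d^{k,0}(\lambda)}V_C^{\vec h(C,I)}$ and of $\bigoplus_{M\in\operatorname{SSYT}_d^k(\lambda)}V_M$, so comparing the numbers of $\mathcal{S}^\lambda$-summands gives $|A_d^{k,0}(\lambda)|=|\operatorname{SSYT}_d^k(\lambda)|$, and the infinite identity comes the same way from part $(iii)$ of Theorem \ref{images}, where all three sums map bijectively onto the degree-$d$ part of $R_{\infty,k}^0$. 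I do not expect a genuine obstacle; the only point requiring care is the bookkeeping that multiplicities of an irreducible are well defined in every representation in sight — automatic over the finite groups $S_n$, and over $S_\infty$ and $S_{\mathbb N}$ guaranteed by the complete reducibility (finite direct sums of irreducibles) of Lemma \ref{RinfIprop}, Proposition \ref{ExtVMlim}, and Theorem \ref{images}, together with the irreducibility of $\mathcal{S}^{\hat\lambda}$ from Theorem \ref{repsinf}.
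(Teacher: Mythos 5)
Your proposal is correct and follows essentially the same route as the paper: parts $(i)$ and $(iii)$ by comparing the number of irreducible summands in the two (or three) isotypical decompositions from Theorems \ref{propbases} and \ref{images}, and parts $(ii)$ and $(iv)$ by comparing dimensions of $\mathbb{Q}[\mathbf{x}_{n}]_{d}$ and $\mathbb{Q}[\mathbf{x}_{n}]_{\mu}$ with the sums of $\dim\mathcal{S}^{\lambda}$ over the summands. Your closing remark on well-definedness of multiplicities over $S_{\infty}$ and $S_{\mathbb{N}}$ is a sensible (if implicit in the paper) precaution and does not change the argument.
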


\begin{proof}
The equalities in parts $(i)$ and $(iii)$ follow from the fact that in each one there is a (completely reducible) representation, and an isotypical part of it, such that both sides count, by parts $(i)$, $(iii)$, and $(ii)$ of Theorem \ref{propbases} (the latter with $s=0$) and parts $(ii)$ and $(iii)$ of Theorem \ref{images}, the multiplicity of the corresponding irreducible representation there. Part $(ii)$ is established when we compare the dimension of $\mathbb{Q}[\mathbf{x}_{n}]_{d}$ with the sum of the dimensions in the representations in any of its decompositions. For part $(iv)$ we recall the decomposition of $\mathbb{Q}[\mathbf{x}_{n}]_{\mu}$ from part $(ii)$ of Theorem \ref{FMTdecom}, note that its dimension is the asserted value, and apply the same argument proving part $(ii)$. This proves the corollary.
\end{proof}
The first equality in part $(i)$ of Corollary \ref{lincombM} is already a consequence of Propositions 1.12 and 1.13 of \cite{[Z3]}, and parts $(ii)$ and $(iv)$ there follow as well.

\begin{rmk}
Due to the equalities from Corollary \ref{lincombM}, it may be interesting to see whether some natural bijections exist between these sets. The current proof does not give one, but rather yields, for every $\lambda \vdash n$ and $d$, an invertible matrix with entries from $A_{d}(\lambda)\times\operatorname{SSYT}_{d}(\lambda)$ that expresses the basis element $F_{C,T}^{I,\mathrm{hom}}$ for $(C,I) \in A_{d}(\lambda)$ and $T\in\operatorname{SYT}(\lambda)$ as a linear combination of $F_{M,T}$ over $M\in\operatorname{SSYT}_{d}(\lambda)$. This matrix is independent of $T$ since it represents a map of representations, and it follows from the proof of Theorem \ref{images} that the matrix is the same one for all $n>2d$, and coincides with the one indexed on $A_{d}(\hat{\lambda})\times\operatorname{SSYT}_{d}(\hat{\lambda})$ with $\hat{\lambda}\vdash\infty$. Replacing the $F_{M,T}$'s and the $F_{\hat{M},\hat{T}}$'s by the respective elements $F_{M^{0},T}m_{M}\in\tilde{V}_{M}$ or $F_{\hat{M}^{0},\hat{T}}m_{\hat{M}}\in\tilde{V}_{\hat{T}}$ multiplies this by another matrix, which can be taken to be upper triangular via the proofs of Lemma \ref{difmodf-1} and Proposition \ref{decomVM0mM}. \label{noexpbij}
\end{rmk}

The condition $n>2d$ for the stability of the matrix in Remark \ref{noexpbij} (which can be weakened to $n\geq2d$ as always) is the one arising from Theorem \ref{FMTdecom} and Proposition \ref{plus1iota}, as well as part $(iv)$ of Theorem \ref{propbases}. The bijectivity between the numbers of representations there is inverted by the substitution $x_{n+1}=0$, and without this inequality, this substitution, also in polynomials that are obtained from tableaux containing vanishing entries, may behave as if it comes from expressions arising from tableaux that are no longer semi-standard. In such cases calculations like those appearing in the proof of Lemma 2.32 of \cite{[Z3]}, which may be non-trivial, may show up.

We now turn to the simplest examples.
\begin{ex}
For $d=0$ there is only the empty multi-set $I$, and the (finite or infinite) semi-standard Young tableau having a single row of zeros, which is also a cocharge tableau (finite or infinite). Then all the expressions from Theorems \ref{propbases} and \ref{images} produce the trivial representation $\mathbb{Q}$, and the numbers from Corollary \ref{lincombM} all equal 1. This is so in all the quotients as well. \label{exd0}
\end{ex}
As the case from Example \ref{exd0} is trivial, we turn to the first non-trivial one.
\begin{ex}
When $d=1$ and $n\geq2$, all the decompositions from Theorem \ref{propbases} are the same, since $\mathbb{Q}[\mathbf{x}_{n}]_{1}$ is the direct sum of two non-isomorphic irreducible representations. One of them is the standard one, associated with the (cocharge) tableau of shape $(n-1,1)$ and a single entry 1 in the second row, and the other one is $\mathbb{Q}e_{1}$, either arising from the cocharge tableau from Example \ref{exd0} with the (multi-)set $I:=\{1\}$, or associated, either as $V_{M}$ or as $\tilde{V}_{M}$, with the tableau of length 1 have $n-1$ zeros and a single instance of 1. When $n=1$ the former representation disappears, and the matrix from Remark \ref{noexpbij} is the identity matrix (of size 1 when $n=1$ and size 2 otherwise). Going to Theorem \ref{images}, we get the standard representation as an irreducible representation of co-dimension 1 inside $\mathbb{Q}[\mathbf{x}_{\infty}]_{1}$, while the trivial one is spanned by $e_{1}\in\Lambda_{1}\subseteq\tilde{\Lambda}_{1}$, with the same identity matrix of size 2 in Remark \ref{noexpbij}. Note that an element of $\mathbb{Q}[\mathbf{x}_{\infty}]_{1}$ that is not inside this irreducible sub-representation generates all of $\mathbb{Q}[\mathbf{x}_{\infty}]_{1}$. The same occurs in the quotients $R_{n,k,s}$ and $R_{\infty,k}$ for $k\geq2$, except for $s=n \leq k$ where the $e_{1}$ part disappears, and everything vanishes if $k=1$. \label{exd1}
\end{ex}

\medskip

Example \ref{exd1} shows that in general, the image from part $(i)$ of Theorem \ref{images} is not all of $\tilde{\Lambda}_{d}$, and the representation $R_{\infty,k}^{0}$ from Theorem \ref{decomRinf} is not all of $R_{\infty,k}$ from Definition \ref{defRinfk}, because $\tilde{\Lambda}_{d}$ and the latter quotients are not completely reducible (hence so is $\tilde{\Lambda}$. However, the fact that all the completely reducible sub-representations that we constructed as sums in this way ended up being the same hints at the following result.
\begin{thm}
Fix $d\geq0$, and recall the notation from Definition \ref{repswithmon}.
\begin{enumerate}[$(i)$]
\item The maximal completely reducible sub-representation of $\mathbb{Q}[\mathbf{x}_{\infty}]_{d}$ is the direct sum $\mathbb{Q}[\mathbf{x}_{\infty}]_{d}^{0}:=\bigoplus_{\hat{\lambda}\vdash\infty}\bigoplus_{\hat{M}\in\operatorname{SSYT}_{d}(\hat{\lambda}),\ \hat{M}^{0}=\hat{M}}V_{\hat{M}}$.
\item For a content $\mu$ of sum $d$, the intersection of the sub-representation from part $(ii)$ with $\mathbb{Q}[\mathbf{x}_{\infty}]_{\mu}$ is $\mathbb{Q}[\mathbf{x}_{\infty}]_{\mu}^{0}:=\bigoplus_{\hat{\lambda}\vdash\infty}\bigoplus_{\hat{M}\in\operatorname{SSYT}_{\mu}(\hat{\lambda}),\ \hat{M}^{0}=\hat{M}}V_{\hat{M}}$. It is the maximal completely reducible sub-representation of $\mathbb{Q}[\mathbf{x}_{\infty}]_{\mu}$.
\item The sub-representation $\tilde{\Lambda}_{d}^{0}=\bigoplus_{\hat{\lambda}\vdash\infty}\bigoplus_{\hat{M}\in\operatorname{SSYT}_{d}(\hat{\lambda})}V_{\hat{M}}$ from Proposition \ref{ExtVMlim} and part $(i)$ of Theorem \ref{images} is the maximal completely reducible sub-representation of $\tilde{\Lambda}_{d}$.
\item The subspace $\tilde{\Lambda}_{\mu}^{0}=\bigoplus_{\hat{\lambda}\vdash\infty}\bigoplus_{\hat{M}\in\operatorname{SSYT}_{\mu}(\hat{\lambda})}V_{\hat{M}}$ from Proposition \ref{ExtVMlim} is the maximal completely reducible sub-representation there.
\item The representation $R_{\infty,k,d}^{0}$ from part $(v)$ of Theorem \ref{decomRinf} is the maximal completely reducible sub-representation of the homogenous part $R_{\infty,k,d}$ of the quotient $R_{\infty,k}$ from Definition \ref{defRinfk} for every $k\geq1$. It is the image of $\tilde{\Lambda}_{d}^{0}$ under the projection from $\tilde{\Lambda}$ onto $R_{\infty,k}$.
\end{enumerate} \label{compred}
\end{thm}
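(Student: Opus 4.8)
The plan is to deduce all five parts from part $(i)$. Parts $(ii)$ and $(iv)$ follow from $(i)$ and $(iii)$ respectively via the elementary fact that for a sub-representation $N$ of $M$ one has $\operatorname{soc}(N)=N\cap\operatorname{soc}(M)$, where $\operatorname{soc}$ denotes the maximal completely reducible sub-representation (a submodule of a semisimple module is semisimple): apply this to the content decompositions $\mathbb{Q}[\mathbf{x}_{\infty}]_{d}=\bigoplus_{\mu}\mathbb{Q}[\mathbf{x}_{\infty}]_{\mu}$ and $\tilde{\Lambda}_{d}=\bigoplus_{\mu}\tilde{\Lambda}_{\mu}$ (Lemma \ref{propinf}$(v)$), and note that intersecting the socle with $\mathbb{Q}[\mathbf{x}_{\infty}]_{\mu}$ (resp. $\tilde{\Lambda}_{\mu}$) picks out exactly the summands $V_{\hat{M}}$ of content $\mu$, i.e. $\mathbb{Q}[\mathbf{x}_{\infty}]_{\mu}^{0}$ (resp. $\tilde{\Lambda}_{\mu}^{0}$). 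Part $(iii)$ reduces to $(i)$ because Proposition \ref{samereps} makes multiplication an $S_{\infty}$-equivariant isomorphism $\mathbb{Q}[\mathbf{x}_{\infty}]\otimes_{\mathbb{Q}}\Lambda\xrightarrow{\sim}\tilde{\Lambda}$ with $S_{\infty}$ acting trivially on $\Lambda$, so $\tilde{\Lambda}_{d}\cong\bigoplus_{b=0}^{d}\mathbb{Q}[\mathbf{x}_{\infty}]_{d-b}^{\oplus\dim\Lambda_{b}}$ as $S_{\infty}$-representations and $\operatorname{soc}(\tilde{\Lambda}_{d})=\bigoplus_{b}\operatorname{soc}(\mathbb{Q}[\mathbf{x}_{\infty}]_{d-b})\otimes\Lambda_{b}$; regrouping the sum $\tilde{\Lambda}_{d}^{0}=\bigoplus\tilde{V}_{\hat{M}}$ (Proposition \ref{decomVM0mM}) according to the pair $(\hat{M}^{0},m_{\hat{M}})$ of Definition \ref{repswithmon} identifies it with $\bigoplus_{b}\mathbb{Q}[\mathbf{x}_{\infty}]_{d-b}^{0}\otimes\Lambda_{b}$, which is exactly that socle once $(i)$ is known. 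For part $(v)$ the same tensor manipulation applies to $R_{\infty,k}\cong\bigl(\mathbb{Q}[\mathbf{x}_{\infty}]/(x_{i}^{k}:i)\bigr)\otimes_{\mathbb{Q}}\bigl(\Lambda/(p_{l}:l\geq k)\bigr)$, reducing the statement to the socle of the degree-$a$ part of $\mathbb{Q}[\mathbf{x}_{\infty}]/(x_{i}^{k})$; as an $S_{\infty}$-representation this part is $\bigoplus_{\mu:\mu_{1}<k}\mathbb{Q}[\mathbf{x}_{\infty}]_{\mu}$, so part $(ii)$ finishes it, and the identification of the answer with $R_{\infty,k,d}^{0}$ and with the image of $\tilde{\Lambda}_{d}^{0}$ is part $(iii)$ of Theorem \ref{images}.

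It thus remains to prove $(i)$. The inclusion $\mathbb{Q}[\mathbf{x}_{\infty}]_{d}^{0}\subseteq\operatorname{soc}(\mathbb{Q}[\mathbf{x}_{\infty}]_{d})$ is the semisimplicity already established in Proposition \ref{ExtVMlim} and Theorem \ref{repsinf}$(iv)$. For the reverse inclusion I would show that every $\phi\in\operatorname{Hom}_{S_{\infty}}(\mathcal{S}^{\hat{\lambda}},\mathbb{Q}[\mathbf{x}_{\infty}]_{d})$ has image in $\mathbb{Q}[\mathbf{x}_{\infty}]_{d}^{0}$. Since a polynomial fixed by $S_{\mathbb{N}}^{(m)}$ involves only $x_{1},\dots,x_{m}$, we have $\mathbb{Q}[\mathbf{x}_{\infty}]_{d}^{S_{\mathbb{N}}^{(m)}}=\mathbb{Q}[\mathbf{x}_{m}]_{d}$, and fixing an isomorphism $\mathcal{S}^{\hat{\lambda}}\cong V_{\hat{M}}$ with $f_{\hat{M}}=0$ the proof of Theorem \ref{repsinf}$(v)$, together with the equality $F_{M,T}=F_{\hat{\iota}M,\iota T}$ from Lemma \ref{sameiota}, shows that for $m$ large $(\mathcal{S}^{\hat{\lambda}})^{S_{\mathbb{N}}^{(m)}}\cong V_{M}\cong\mathcal{S}^{\lambda[m]}$ (with $\lambda[m]$ the shape obtained from $\hat{\lambda}$), the inclusions into the next invariant subspace being the ``remove-the-first-row-corner'' branchings and the ambient inclusions being the polynomial inclusions $\mathbb{Q}[\mathbf{x}_{m}]_{d}\hookrightarrow\mathbb{Q}[\mathbf{x}_{m+1}]_{d}$. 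Hence $\phi$ is the union of a compatible family $\phi_{m}\colon\mathcal{S}^{\lambda[m]}\to\mathbb{Q}[\mathbf{x}_{m}]_{d}$ of $S_{m}$-equivariant maps. Expanding $\phi_{m}$, for $m\geq 2d$, in the basis of $\operatorname{Hom}_{S_{m}}(\mathcal{S}^{\lambda[m]},\mathbb{Q}[\mathbf{x}_{m}]_{d})$ provided by Theorem \ref{FMTdecom}$(i)$ (the isomorphisms $\mathcal{S}^{\lambda[m]}\xrightarrow{\sim}V_{M}$, $M\in\operatorname{SSYT}_{d}(\lambda[m])$), the compatibility of $\phi_{m+1}$ with $\phi_{m}$ — using $\operatorname{SSYT}_{d}(\lambda[m]_{+})=\hat{\iota}\operatorname{SSYT}_{d}(\lambda[m])$ (Theorem \ref{FMTdecom}$(iii)$) together with Proposition \ref{forstab} and Lemma \ref{sameiota}$(i)$, which tell us that $F_{\hat{\iota}M,\iota T}-F_{M,T}$ lies in $x_{m+1}\mathbb{Q}[\mathbf{x}_{m+1}]_{d}$ and is nonzero precisely when $f_{M}\geq 1$ — forces the coefficient of every $M$ with $f_{M}\geq 1$ to vanish. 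Then $\operatorname{im}\phi_{m}\subseteq\mathbb{Q}[\mathbf{x}_{m}]_{d}^{0}\subseteq\mathbb{Q}[\mathbf{x}_{\infty}]_{d}^{0}$ for every $m$, hence $\operatorname{im}\phi=\bigcup_{m}\operatorname{im}\phi_{m}\subseteq\mathbb{Q}[\mathbf{x}_{\infty}]_{d}^{0}$, as required.

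The crux — and the step I expect to require genuine work — is the last implication: that the constraint ``$\phi_{m+1}$ restricts into $\mathbb{Q}[\mathbf{x}_{m}]_{d}$'' really does annihilate every $f\geq 1$ coefficient. Concretely this amounts to showing that for a fixed $T\in\operatorname{SYT}(\lambda[m])$ the polynomials $F_{\hat{\iota}M,\iota T}-F_{M,T}$, over $M\in\operatorname{SSYT}_{d}(\lambda[m])$ with $f_{M}\geq 1$, are linearly independent over $\mathbb{Q}$; equivalently, that the only compatible families of copies of $\mathcal{S}^{\lambda[m]}$ sitting inside the spaces $\mathbb{Q}[\mathbf{x}_{m}]_{d}$ are those assembled from the $\hat{\iota}$-stable ($f=0$) higher Specht polynomials. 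Proving this should reduce to tracking the $x_{m+1}$-linear part of $F_{\hat{\iota}M,\iota T}$ — governed by the explicit description of $\hat{\iota}M$ in Lemma \ref{rels} and by Propositions \ref{forstab} and \ref{limSpecht} — through the direct-sum decomposition $\mathbb{Q}[\mathbf{x}_{m+1}]_{d}=\bigoplus_{M}V_{\hat{\iota}M}$; everything else in the argument is bookkeeping with the reductions above.
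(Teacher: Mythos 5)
Your reductions of parts $(ii)$--$(v)$ to part $(i)$ are sound and essentially coincide with the paper's own route: the paper also exploits the free-module decomposition $\tilde{\Lambda}=\bigoplus_{\alpha}\mathbb{Q}[\mathbf{x}_{\infty}]m_{\alpha}$ (via the equivariant projections $\pi_{\alpha}$) to get $(iii)$ from $(i)$, and obtains $(ii)$, $(iv)$, $(v)$ by intersecting with content spaces and projecting to $R_{\infty,k}$. The problem is part $(i)$, where your argument has a genuine gap at exactly the point you flag. The linear independence of the differences $F_{\hat{\iota}M,\iota T}-F_{M,T}$ over $M$ with $f_{M}\geq1$ (equivalently: no nonzero combination $\sum_{f_{M}\geq1}a_{M}F_{\hat{\iota}M,\iota T}$ lies in $\mathbb{Q}[\mathbf{x}_{m}]$) is not ``bookkeeping with the $x_{m+1}$-linear part.'' It is precisely the kind of statement about how the $V_{M}$-decomposition of $\mathbb{Q}[\mathbf{x}_{m}]_{d}$ interacts with the $V_{N}$-decomposition of $\mathbb{Q}[\mathbf{x}_{m+1}]_{d}$ that the paper itself cannot control in general: the image of $V_{M}$ in $\mathbb{Q}[\mathbf{x}_{m+1}]_{d}$ is \emph{not} contained in $V_{\hat{\iota}M}$ (Remark \ref{notiota}), and the finer versions of the statement you need are exactly the content of Conjecture \ref{polsincn}, which the paper leaves open and on which all of Section \ref{FullRep} is conditional. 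So an argument for part $(i)$ that runs through a cross-$n$ independence claim of this type must either prove that claim in full (which would be a real contribution, not a routine verification) or be restructured.

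The paper's proof of part $(i)$ avoids this entirely, and you should import its mechanism. It first shows that any irreducible $U\subseteq\mathbb{Q}[\mathbf{x}_{\infty}]_{d}$ is contained in the completely reducible subspace $\tilde{\Lambda}_{d}^{0}=\bigoplus_{\hat{M}}\tilde{V}_{\hat{M}}$, and then observes that each summand satisfies $\tilde{V}_{\hat{M}}=V_{\hat{M}^{0}}m_{\hat{M}}\subseteq\mathbb{Q}[\mathbf{x}_{\infty}]m_{\hat{M}}$ inside the \emph{free} $\mathbb{Q}[\mathbf{x}_{\infty}]$-module $\tilde{\Lambda}=\bigoplus_{\alpha}\mathbb{Q}[\mathbf{x}_{\infty}]m_{\alpha}$ (this freeness is what Proposition \ref{samereps}$(i)$--$(ii)$ buys). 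Hence intersecting $\bigoplus_{\hat{M}}\tilde{V}_{\hat{M}}$ with $\mathbb{Q}[\mathbf{x}_{\infty}]=\mathbb{Q}[\mathbf{x}_{\infty}]\cdot m_{\emptyset}$ kills every summand with $m_{\hat{M}}\neq1$, i.e.\ every $f_{\hat{M}}>0$, and the $f=0$ cutoff comes for free from the algebraic independence of the $e_{r}$'s from the $x_{i}$'s rather than from any independence of the polynomials $F_{\hat{\iota}M,\iota T}-F_{M,T}$. A secondary point: your socle computation is phrased as a statement about $\operatorname{Hom}_{S_{\infty}}(\mathcal{S}^{\hat{\lambda}},\mathbb{Q}[\mathbf{x}_{\infty}]_{d})$, which presupposes that every irreducible sub-representation of $\mathbb{Q}[\mathbf{x}_{\infty}]_{d}$ is isomorphic to some $\mathcal{S}^{\hat{\lambda}}$; this also needs the limiting argument starting from an arbitrary $F\in U$, as in the paper, rather than being assumed.
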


\begin{proof}
The asserted direct sum in part $(i)$ is clearly completely reducible, and contained in $\mathbb{Q}[\mathbf{x}_{\infty}]_{d}$ by Lemma \ref{sameiota}. We thus consider an irreducible representation $U$ of $S_{\infty}$ or $S_{\mathbb{N}}$ inside $\mathbb{Q}[\mathbf{x}_{\infty}]_{d}$, and we have to show that it is contained in the asserted sum.

So take some element $F \in U$. Since $F$ is a polynomial, it contains only finitely many variables, and let $n$ be large enough such that $F\in\mathbb{Q}[\mathbf{x}_{n}]$ and $n>2d$. Since $S_{n} \subseteq S_{\infty} \subseteq S_{\mathbb{N}}$, the space generated by the $S_{n}$-images of $F$ is contained in $U$. By Theorem \ref{FMTdecom}, the latter space is, as a representation of $S_{n}$, a sub-representation of $\bigoplus_{\lambda \vdash n}\bigoplus_{M\in\operatorname{SSYT}_{d}(\lambda)}V_{M}$, and we need to see which of these may give a non-zero contribution to our subspace of $U$.

But that theorem shows, via Proposition \ref{forstab}, that when we increase $n$ to $n+1$, if $V_{M}$ had a contribution to this part of $U$ then $V_{\hat{\iota}M}$ will have a similar contribution to the corresponding part of $U$. By taking the limit as in that corollary, we deduce that $U$ must be contained in the direct sum from part $(iv)$ there, which we write as $\bigoplus_{\hat{M}\in\operatorname{SSYT}_{d}(\hat{\lambda})}\tilde{V}_{\hat{M}}$ via part $(ii)$ of Theorem \ref{images}.

We now recall from Remark \ref{polstimessym} that $\tilde{\Lambda}$ can be written as the free module $\bigoplus_{\alpha}\mathbb{Q}[\mathbf{x}_{\infty}]m_{\alpha}$ over $\mathbb{Q}[\mathbf{x}_{\infty}]$, where $m_{\alpha}$ runs over all the monomial elements of $\Lambda$. As every element of $\tilde{V}_{\hat{M}}$ is a polynomial times $m_{\hat{M}}\in\Lambda$ (either by Definition \ref{repswithmon} or via part $(iv)$ of Theorem \ref{images}), we deduce that $\tilde{V}_{\hat{M}}$ is contained in the part $\mathbb{Q}[\mathbf{x}_{\infty}]m_{\hat{M}}$ of that direct sum.

The direct sum property implies that if an element of the direct sum from Theorem \ref{images} contains no multiple of $m_{\alpha}$ for some index $\alpha$, then it gets no contribution from any $\tilde{V}_{\hat{M}}$ with $m_{\hat{M}}=m_{\alpha}$. In particular, the intersection of $\mathbb{Q}[\mathbf{x}_{\infty}]$ with our direct sum consists of the direct sum of those $\tilde{V}_{\hat{M}}$ for which $m_{\hat{M}}=1$, namely for which $f_{\hat{M}}=0$. Since we assumed that our representation $U$ is contained in that intersection, it is indeed contained in the asserted direct sum, as desired. This proves part $(i)$, from which part $(ii)$ follows via a decomposition as in Theorem \ref{FMTdecom}.

Turning to part $(iii)$, we consider the decomposition of $\tilde{\Lambda}$ as $\bigoplus_{\alpha}\mathbb{Q}[\mathbf{x}_{\infty}]m_{\alpha}$ again, and recall from Remark \ref{polstimessym} that the action of $S_{\infty}$ or $S_{\mathbb{N}}$ affects only the coefficients in this presentation. Hence any homomorphism $\pi:\tilde{\Lambda}\to\mathbb{Q}[\mathbf{x}_{\infty}]$ of $\mathbb{Q}[\mathbf{x}_{\infty}]$-modules that sends each monomial symmetric function $m_{\alpha}\in\Lambda$ to some constant $c_{\alpha}\in\mathbb{Q}$ respects the group action. For any $\alpha$, we define $\pi_{\alpha}$ to be the equivariant homomorphism obtained in this manner, where $m_{\alpha}$ is taken to the constant 1, and any other monomial element is sent to 0.

Consider now $F\in\tilde{\Lambda}$, which we can write in that decomposition as $\sum_{\alpha}F_{\alpha}m_{\alpha}$ with $F_{\alpha}\in\mathbb{Q}[\mathbf{x}_{\infty}]$ for polynomials $F_{\alpha}\in\mathbb{Q}[\mathbf{x}_{\infty}]$ such that $F_{\alpha}=0$ for all but finitely many $\alpha$'s (in fact, since $F\in\tilde{\Lambda}$ has a finite degree, we can restrict to monomial elements of $\Lambda$ up to that degree, and there are finitely many of those). Since applying $\pi_{\alpha}$, for any $\alpha$, to both sides shows that $F_{\alpha}=\pi_{\alpha}(F)$, we deduce the formula $F=\sum_{\alpha}\pi_{\alpha}(F)m_{\alpha}$.

Let now $U$ be an irreducible representation inside $\tilde{\Lambda}_{d}$, and take $F \in U$. Then each $m_{\alpha}$ for $\alpha$ with $\pi_{\alpha}(F)\neq0$ has degree at most $d$, and if we write this degree as $d-e$ for some $0 \leq e \leq d$ then $\pi_{\alpha}(F)$ must have degree $e$ (since all the components $\sum_{\alpha}\pi_{\alpha}(F)m_{\alpha}$ must be in $\tilde{\Lambda}_{d}$ once the sum $F$ is there). Moreover, as $\pi_{\alpha}$ is a map of representations and $U$ is irreducible, the image $\pi_{\alpha}(U)\subseteq\mathbb{Q}[\mathbf{x}_{\infty}]_{e}$ must either vanish or be an irreducible representation that is isomorphic to $U$.

But part $(i)$ implies that if $\pi_{\alpha}(U)$ is non-zero, then it is contained in the direct sum $\bigoplus_{\hat{\lambda}\vdash\infty}\bigoplus_{\hat{N}\in\operatorname{SSYT}_{e}(\hat{\lambda}),\ \hat{N}^{0}=\hat{N}}V_{\hat{N}}$. We thus take $F \in U$, and write $\pi_{\alpha}(F)\in\mathbb{Q}[\mathbf{x}_{\infty}]_{e}$ as the sum over $\hat{\lambda}\vdash\infty$ and over such tableaux $\hat{N}$ of elements $\pi_{\alpha,\hat{N}}(F) \in V_{\hat{N}}\subseteq\mathbb{Q}[\mathbf{x}_{\infty}]_{e}$. Our presentation of $F$ and of $\pi_{\alpha}(F)$ implies the equality $F=\sum_{\alpha}\sum_{\hat{\lambda}\vdash\infty}\sum_{\hat{N}}\pi_{\alpha,\hat{N}}(F)m_{\alpha}$.

So take such $\alpha$, $\hat{\lambda}$, and $\hat{N}$, and consider the infinite semi-standard Young tableau $\hat{M}$ of shape $\hat{\lambda}$ that has the same entries as $\hat{N}$, and the multiplicity of each $h$ in Definition \ref{tabinf} is the one in which it appears in $\alpha$. Then we have $\hat{M}\in\operatorname{SSYT}_{d}(\hat{\lambda})$, and as Definition \ref{repswithmon} yields $\hat{N}=\hat{M}^{0}$ and $m_{\hat{M}}=m_{\alpha}$, we deduce that multiplying any element of $V_{\hat{N}}$ by $m_{\alpha}$ produces an element of $\tilde{V}_{\hat{M}}$.

It thus follows that for every such $\alpha$, $\hat{\lambda}$, and $\hat{N}$, the product $\pi_{\alpha,\hat{N}}(F)m_{\alpha}$ lies in one of the summands of the direct sum $\bigoplus_{\hat{M}\in\operatorname{SSYT}_{d}(\hat{\lambda})}\tilde{V}_{\hat{M}}$ from, e.g., part $(ii)$ of Proposition \ref{decomVM0mM}. Hence the sum $F$ over $\hat{N}$, $\hat{\lambda}$, and $\alpha$ also lies inside that sum, which is the desired sub-representation of $\tilde{\Lambda}_{d}$ by part $(i)$ of Theorem \ref{images} and clearly completely reducible by part $(ii)$ there. As we just showed that every irreducible sub-representation of $\tilde{\Lambda}_{d}$ is contained in it, the maximality from part $(iii)$ is also established.

part $(iv)$ follows from part $(iii)$ via the intersection from Proposition \ref{ExtVMlim} just like part $(ii)$ did from part $(i)$. Finally, the argument proving part $(iii)$ of Theorem \ref{images} implies that given a content $\mu$, the image of $\tilde{\Lambda}_{\mu}$ under the projection onto $R_{\infty,k}$ vanishes of $\mu$ contains an entry that equals $k$ or larger, and is an isomorphic to $\tilde{\Lambda}_{\mu}$ as representations otherwise. Hence part $(v)$ also follows from parts $(iii)$ and $(iv)$. This proves the theorem.
\end{proof}
Note that the symbols $\mathbb{Q}[\mathbf{x}_{\infty}]_{d}^{0}$ and $\mathbb{Q}[\mathbf{x}_{\infty}]_{\mu}^{0}$ from parts $(i)$ and $(ii)$ of Theorem \ref{compred} are similar to $\tilde{\Lambda}_{d}^{0}$ and $\tilde{\Lambda}_{\mu}^{0}$, all of which will be generalized in Definitions \ref{Qxinfdf} and \ref{Lambdafilt} below, and should not be confused with $\mathbb{Q}[\mathbf{x}_{n}]_{d}^{0}$ and $\mathbb{Q}[\mathbf{x}_{n}]_{d}^{0}$, for finite $n$ and $f=0$, from Definition \ref{repswithmon}.

\begin{rmk}
The general theory of the representations of $S_{\infty}$ involves the Thoma parameters, initiated in \cite{[T]} (see more in the book \cite{[BO]}). From this point of view, all the representations we constructed should have the same Thoma parameter as the trivial representation (as they resemble the limits from the Vershik--Kerov Theorem, as in, e.g., Theorem 6.16 of \cite{[BO]}). Moreover, as Proposition \ref{samereps} shows, our representations are closer to those that are called \emph{tame} in, e.g., \cite{[O]} and \cite{[MO]}, but note that these papers discuss unitary representations. Indeed, as unitary representations are always completely reducible, and Theorem \ref{compred} shows that $\tilde{\Lambda}_{d}$ for $d\geq1$ does not have this property, our representations are not within direct grasp of these theories. \label{Thoma}
\end{rmk}

\section{The Structure of the Full Representations \label{FullRep}}

Theorem \ref{compred} implies that increasing $\mathbb{Q}[\mathbf{x}_{\infty}]_{d}^{0}$, $\mathbb{Q}[\mathbf{x}_{\infty}]_{\mu}^{0}$, $\tilde{\Lambda}_{d}^{0}$, and $\tilde{\Lambda}_{\mu}^{0}$ into larger representations of $\mathbb{Q}[\mathbf{x}_{\infty}]_{d}$, $\mathbb{Q}[\mathbf{x}_{\infty}]_{\mu}$, $\tilde{\Lambda}_{d}$, and $\tilde{\Lambda}_{\mu}$ respectively cannot be done by adding irreducible components. As the completely reducible sub-representations are direct limits as $n\to\infty$ of $\mathbb{Q}[\mathbf{x}_{n}]_{d}$ or $\mathbb{Q}[\mathbf{x}_{n}]_{\mu}$ and the larger ones are, in some sense, inverse limits of them, we introduce another notion in the finite $n$ setting.
\begin{defn}
Take a general polynomial $F\in\mathbb{Q}[\mathbf{x}_{n}]_{d}$, and by writing the latter space as in Theorem \ref{FMTdecom}, we can write $F$ as $\sum_{\lambda \vdash n}\sum_{M\in\operatorname{SSYT}_{d}(\lambda)}F_{M}$, with $F_{M} \in V_{M}\subseteq\mathbb{Q}[\mathbf{x}_{n}]_{d}$ for every $\lambda \vdash n$ and $M\in\operatorname{SSYT}_{d}(\lambda)$. We then define the \emph{$n$-support} $\operatorname{supp}_{n}F$ of $F$ to be $\bigcup_{\lambda \vdash n}\{M\in\operatorname{SSYT}_{d}(\lambda)\;|\;F_{M}\neq0\}$. \label{nsup}
\end{defn}

\begin{rmk}
It is clear from Definition \ref{nsup} that $F$ lies in $\mathbb{Q}[\mathbf{x}_{n}]_{\mu}$ for some content $\mu$ of sum $d$ if and only if $\operatorname{supp}_{n}F\cap\operatorname{SSYT}_{d}(\lambda)\subseteq\operatorname{SSYT}_{\mu}(\lambda)$ for every $\lambda \vdash n$. Combining with Definition \ref{repswithmon}, we obtain that $F$ lies in $\mathbb{Q}[\mathbf{x}_{n}]_{d}^{0}$ for some $f\geq0$ if and only if every tableau $M\in\operatorname{supp}_{n}F$ satisfies $f_{M} \leq f$. \label{contsup}
\end{rmk}

By taking $F\in\mathbb{Q}[\mathbf{x}_{n}]_{d}$ and viewing it as contained in the larger space $\mathbb{Q}[\mathbf{x}_{n+1}]_{d}$, we would like to relate the $n$-support of $F$ with its $(n+1)$-support. Lemma \ref{sameiota} yields the following result.
\begin{lem}
If $F\in\mathbb{Q}[\mathbf{x}_{n}]_{d}^{0}$ then $\operatorname{supp}_{n+1}F=\{\hat{\iota}M\;|\;M\in\operatorname{supp}_{n}F\}$. In particular $\mathbb{Q}[\mathbf{x}_{n}]_{d}^{0}$ is contained in $\mathbb{Q}[\mathbf{x}_{n+1}]_{d}^{0}$, and if $n>2d$ then the former generates the latter over $\mathbb{Q}[S_{n+1}]$. \label{Qnd0iota}
\end{lem}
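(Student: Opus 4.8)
The plan is to reduce everything to the behaviour of the stable higher Specht polynomials $F_{M^0,T}$ under the substitution relating $n$ and $n+1$ variables, which is exactly what Lemma \ref{sameiota} controls. First I would take $F\in\mathbb{Q}[\mathbf{x}_{n}]_{d}^{0}$ and use Remark \ref{contsup}: by Definition \ref{repswithmon} the condition $F\in\mathbb{Q}[\mathbf{x}_{n}]_{d}^{0}$ (i.e.\ $f=0$) means $F$ lies in $\bigoplus_{\lambda\vdash n}\bigoplus_{M\in\operatorname{SSYT}_d(\lambda),\,f_M=0}V_M$, so every $M$ occurring in its decomposition has $M^0=M$. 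For such $M$, Lemma \ref{sameiota}$(i)$ gives $F_{M,T}=F_{\hat\iota M,\iota T}$ for all $T\in\operatorname{SYT}(\lambda)$, so the component $F_M\in V_M\subseteq\mathbb{Q}[\mathbf{x}_n]_d$ literally equals, as a polynomial in $\mathbb{Q}[\mathbf{x}_{n+1}]_d$, the corresponding element of $V_{\hat\iota M}$ (the span of the $F_{\hat\iota M,\iota T}$ is contained in $V_{\hat\iota M}$, and $F_{\hat\iota M,\iota T}$ ranges over a subset of the basis of $V_{\hat\iota M}$ from Theorem \ref{repsSpecht}$(ii)$). Summing over $M\in\operatorname{supp}_nF$, I get that the image of $F$ in $\mathbb{Q}[\mathbf{x}_{n+1}]_d$ is $\sum_{M\in\operatorname{supp}_n F}F_M$ with $F_M\in V_{\hat\iota M}$, each $F_M\neq0$. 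By the directness of the decomposition in Theorem \ref{FMTdecom}$(i)$ for $n+1$, and since $M\mapsto\hat\iota M$ is injective, this is precisely the decomposition of $F$ over $n+1$ variables, whence $\operatorname{supp}_{n+1}F=\{\hat\iota M\mid M\in\operatorname{supp}_nF\}$.

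Next, to conclude $\mathbb{Q}[\mathbf{x}_n]_d^0\subseteq\mathbb{Q}[\mathbf{x}_{n+1}]_d^0$, I apply Remark \ref{contsup} in $n+1$ variables: I must check $f_{\hat\iota M}\le0$ for every $M\in\operatorname{supp}_nF$, and this is immediate from Remark \ref{fiota}, which records $f_{\hat\iota M}=f_M$; since $f_M=0$ for all $M$ in the support, the image of $F$ has $(n+1)$-support consisting only of tableaux with vanishing $f$-value, i.e.\ lies in $\mathbb{Q}[\mathbf{x}_{n+1}]_d^0$.

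For the last assertion, suppose $n>2d$. I would invoke part $(iv)$ of Theorem \ref{propbases} (whose second statement is valid for $n>2d$, indeed $n\geq 2d$): the images, under $x_{n+1}\mapsto$ symmetric-in-$(n+1)$-variables, of the representations $\tilde V_M$ for $M\in\operatorname{SSYT}_d(\lambda)$ generate, under $S_{n+1}$, the $\mathcal S^{\lambda_+}$-isotypical part of $\mathbb{Q}[\mathbf{x}_{n+1}]_d$, and summing over $\lambda\vdash n$ produces all of $\mathbb{Q}[\mathbf{x}_{n+1}]_d$ — but that argument really only uses the $f=0$ summands once we restrict to $\mathbb{Q}[\mathbf{x}_{n+1}]_d^0$, since for $f_M=0$ we have $\tilde V_M=V_M$ (Definition \ref{repswithmon}) and the map of part $(iv)$ restricted to $\mathbb{Q}[\mathbf{x}_n]_d^0=\bigoplus_{f_M=0}V_M$ is just the inclusion into $\mathbb{Q}[\mathbf{x}_{n+1}]$ established in the previous paragraphs. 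By Theorem \ref{FMTdecom}$(iii)$, when $n>2d$ the decomposition of $\mathbb{Q}[\mathbf{x}_{n+1}]_d^0$ is $\bigoplus_{\lambda\vdash n}\bigoplus_{M\in\operatorname{SSYT}_d(\lambda),\,f_M=0}V_{\hat\iota M}$, and since each $V_{\hat\iota M}$ is irreducible (Theorem \ref{repsSpecht}$(ii)$) and is generated by the $S_{n+1}$-orbit of any one of its basis elements $F_{\hat\iota M,\iota T}=F_{M,T}\in\mathbb{Q}[\mathbf{x}_n]_d^0$, the whole of $\mathbb{Q}[\mathbf{x}_{n+1}]_d^0$ is generated over $\mathbb{Q}[S_{n+1}]$ by $\mathbb{Q}[\mathbf{x}_n]_d^0$.

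The main obstacle I anticipate is not any single deep step but the bookkeeping needed to be sure that "the component $F_M$ of $F$ in $V_M$, viewed in $n+1$ variables, sits inside $V_{\hat\iota M}$ and not some larger space": this needs the combination of Lemma \ref{sameiota}$(i)$ (so the relevant basis vectors of $V_M$ are literally basis vectors of $V_{\hat\iota M}$), the linear independence in Theorem \ref{repsSpecht}$(ii)$ for shape $\lambda_+$, and the directness of the ambient decomposition of $\mathbb{Q}[\mathbf{x}_{n+1}]_d$ from Theorem \ref{FMTdecom}$(i)$ — together with the observation that $M\mapsto\hat\iota M$ is injective and preserves $f$-value (Remark \ref{fiota}). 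Once those are lined up, the identity $\operatorname{supp}_{n+1}F=\{\hat\iota M\mid M\in\operatorname{supp}_nF\}$ and both corollaries follow formally.
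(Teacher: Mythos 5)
Your proposal is correct and follows essentially the same route as the paper: decompose $F$ via Remark \ref{contsup}, use Lemma \ref{sameiota} to identify each $F_{M,T}$ with $F_{\hat{\iota}M,\iota T}$ so that $F_{M}$ lands in $V_{\hat{\iota}M}$, and deduce the generation claim from the irreducibility of the $V_{\hat{\iota}M}$ together with part $(iii)$ of Theorem \ref{FMTdecom} and Remark \ref{fiota}. The detour through Theorem \ref{propbases}$(iv)$ is unnecessary but harmless, since you correctly reduce it to the inclusion already established.
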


\begin{proof}
We write $F=\sum_{M\in\operatorname{supp}_{n}F}F_{M}$ as in Definition \ref{nsup}, and get $f_{M}=0$ for all $M\in\operatorname{supp}_{n}F$ via Remark \ref{contsup}. We span each $F_{M}$ for such $M$, with $\operatorname{sh}(M)=\lambda \vdash n$, via the basis $F_{M,T}$ with $T\in\operatorname{SYT}(\lambda)$ (via part $(ii)$ of Theorem \ref{repsSpecht}), and then Lemma \ref{sameiota} implies that each such $F_{M,T}$ also equals $F_{\hat{\iota}M,\iota T}$. The expansion of $F_{M}$ in the former basis thus presents it as an element of $V_{\hat{\iota}M}$ (which is non-zero since $M\in\operatorname{supp}_{n}F$), and thus the expansion of $F$ implies the first assertion. The containment is an immediate consequence of this assertion (Remark \ref{contsup} again), and the generation follows from part $(iii)$ of Theorem \ref{FMTdecom} and Remark \ref{fiota}. This proves the lemma.
\end{proof}
As always, the inequality $n\geq2d$ suffices for the generation in Lemma \ref{Qnd0iota}.

\medskip

In order to consider larger values of $f$, we begin with some examples.
\begin{ex}
Fix some large $n$, and recall the bases from Example \ref{smallex}. We take some basis elements with $f>0$ showing up there, consider them as elements of $\mathbb{Q}[\mathbf{x}_{n+1}]$, and decompose them using the same bases but now for $n+1$. We then get the equality \[\sum_{i=1}^{n}x_{i}=\frac{n}{n+1}\sum_{i=1}^{n+1}x_{i}+\frac{1}{n+1}\sum_{j=2}^{n}(x_{j}-x_{1})-\frac{n}{n+1}(x_{n+1}-x_{1}),\] so that in particular when we take the form spanning the 1-dimensional complement of $\mathbb{Q}[\mathbf{x}_{n}]_{1}^{0}$ inside $\mathbb{Q}[\mathbf{x}_{n}]_{1}$ and consider its image in $\mathbb{Q}[\mathbf{x}_{n+1}]_{1}$, this image has contributions from the two irreducible representations there. Similarly, $(x_{2}-x_{1})\sum_{i=3}^{n}x_{i}$ decomposes as
\[\frac{n-2}{n-1}(x_{2}-x_{1})\sum_{i=3}^{n+1}x_{i}+\frac{1}{n-1}\sum_{j=4}^{n}(x_{2}-x_{1})(x_{j}-x_{3})-\frac{n-2}{n-1}(x_{2}-x_{1})(x_{n+1}-x_{3}),\] and the for element $\sum_{i=1}^{n}\sum_{j=i+1}^{n}x_{i}x_{j}$, with $f=2$, we have the expression \[\frac{n-1}{n+1}\sum_{i=1}^{n+1}\sum_{j=i+1}^{n+1}x_{i}x_{j}+\frac{1}{n+1}\sum_{j=2}^{n}(x_{j}-x_{1})\sum_{\substack{k=2 \\ k \neq j}}^{n+1}x_{k}-\frac{n}{n+1}(x_{n+1}-x_{1})\sum_{k=2}^{n}x_{k}.\] \label{suprep}
\end{ex}

\begin{ex}
In degree 3 we recall the simpler basis from Remark \ref{simpbasis}, using which we express $(x_{2}^{2}-x_{1}^{2})\sum_{k=3}^{n}x_{k}+(x_{2}^{2}x_{1}-x_{2}x_{1}^{2})$ as \[\frac{n^{2}-n-1}{n^{2}-1}\bigg[\bigg((x_{2}^{2}-x_{1}^{2})\sum_{k=3}^{n+1}x_{k}+(x_{2}^{2}x_{1}-x_{2}x_{1}^{2})\bigg)-(x_{2}^{2}-x_{1}^{2})(x_{n+1}-x_{3})\bigg]+\] \[+\frac{1}{n^{2}-1}\bigg[\sum_{j=4}^{n+1}(x_{2}-x_{1})(x_{j}^{2}-x_{3}^{2})-\bigg((x_{2}-x_{1})\sum_{k=3}^{n+1}x_{k}^{2}-(x_{2}^{2}x_{1}-x_{2}x_{1}^{2})\bigg)\bigg]+\] \[+\frac{1}{n^{2}-1}\sum_{j=4}^{n}(x_{2}^{2}-x_{1}^{2})(x_{j}-x_{3})+\frac{x_{3}^{2}(x_{2}-x_{1})-x_{3}(x_{2}^{2}-x_{1}^{2})+(x_{2}^{2}x_{1}-x_{2}x_{1}^{2})}{n+1},\] and similarly $(x_{2}-x_{1})\sum_{k=3}^{n}x_{k}^{2}-(x_{2}^{2}x_{1}-x_{2}x_{1}^{2})$ equals \[\frac{n^{2}-n-1}{n^{2}-1}\bigg[\bigg((x_{2}-x_{1})\sum_{k=3}^{n+1}x_{k}^{2}-(x_{2}^{2}x_{1}-x_{2}x_{1}^{2})\bigg)-(x_{2}-x_{1})(x_{n+1}^{2}-x_{3}^{2})\bigg]+\]
\[+\frac{1}{n^{2}-1}\bigg[\sum_{j=4}^{n+1}(x_{2}^{2}-x_{1}^{2})(x_{j}-x_{3})-\bigg((x_{2}^{2}-x_{1}^{2})\sum_{k=3}^{n+1}x_{k}+(x_{2}^{2}x_{1}-x_{2}x_{1}^{2})\bigg)\bigg]+\] \[+\frac{1}{n^{2}-1}\sum_{j=4}^{n}(x_{2}-x_{1})(x_{j}^{2}-x_{3}^{2})-\frac{x_{3}^{2}(x_{2}-x_{1})-x_{3}(x_{2}^{2}-x_{1}^{2})+(x_{2}^{2}x_{1}-x_{2}x_{1}^{2})}{n+1}.\] For the basis element with $f=2$ we get \[\sum_{i=1}^{n}\sum_{\substack{j=1 \\ j \neq i}}^{n}x_{i}^{2}x_{j}=\frac{1}{n+1}\sum_{j=2}^{n}\bigg[(x_{j}^{2}-x_{1}^{2})\sum_{\substack{k=2 \\ k \neq j}}^{n+1}x_{k}+(x_{j}-x_{1})\sum_{\substack{k=2 \\ k \neq j}}^{n+1}x_{k}^{2}\bigg]+\] \[+\frac{n-1}{n+1}\sum_{i=1}^{n+1}\sum_{\substack{j=1 \\ j \neq i}}^{n+1}x_{i}^{2}x_{j}-\frac{n}{n+1}\bigg[(x_{n+1}^{2}-x_{1}^{2})\sum_{k=2}^{n}x_{k}+(x_{n+1}-x_{1})\sum_{k=2}^{n}x_{k}^{2}\bigg].\] \label{supd3}
\end{ex}
The elements in Examples \ref{suprep} and \ref{supd3} are basis elements for representations $V_{M}$ for $f_{M}>0$, and thus grow with $n$ in the number of variables appearing in them. Here are some examples of elements whose $n$-support contains more than one element, and that do not grow with $n$.
\begin{ex}
For any $n\geq1$ the monomial $x_{1}$ decomposes as the expression $\frac{1}{n+1}\sum_{i=1}^{n+1}x_{i}-\frac{1}{n+1}\sum_{j=2}^{n+1}(x_{j}-x_{1})$, and similarly if $n\geq3$ then $(x_{2}-x_{1})x_{3}$ equals $\frac{1}{n-1}(x_{2}-x_{1})\sum_{i=3}^{n+1}x_{i}-\frac{1}{n-1}\sum_{j=4}^{n+1}(x_{2}-x_{1})(x_{j}-x_{3})$. We also have \[x_{1}x_{2}=\frac{2}{n(n-1)}\sum_{j=3}^{n}\sum_{k=j+1}^{n+1}(x_{j}-x_{1})(x_{k}-x_{2})+\sum_{j=4}^{n+1}\frac{n+4-2j}{n(n-1)}(x_{2}-x_{1})(x_{j}-x_{3})+\] \[+\frac{1}{n+1}(x_{2}-x_{1})\sum_{k=3}^{n+1}x_{k}-\frac{2}{n^{2}-1}\sum_{j=3}^{n+1}(x_{2}-x_{1})\sum_{\substack{k=2 \\ k \neq j}}^{n+1}x_{k}+\frac{2}{n(n+1)}\sum_{i=1}^{n+1}\sum_{j=i+1}^{n+1}x_{i}x_{j},\] using the bases from Example \ref{smallex}.
\label{supgen}
\end{ex}
A degree 3 analogue of Example \ref{supgen} that uses the basis from Remark \ref{simpbasis} is as follows
\begin{ex}
Three monomials of degree 3 with $f=1$ are expanded as
\[(x_{2}^{2}-x_{1}^{2})x_{3}=-\frac{1}{n+1}[x_{3}^{2}(x_{2}-x_{1})-x_{3}(x_{2}^{2}-x_{1}^{2})+(x_{2}^{2}x_{1}-x_{2}x_{1}^{2})]+\] \[+\frac{n}{n^{2}-1}\bigg[\bigg((x_{2}^{2}-x_{1}^{2})\sum_{k=3}^{n+1}x_{k}+(x_{2}^{2}x_{1}-x_{2}x_{1}^{2})\bigg)-\sum_{j=4}^{n+1}(x_{2}^{2}-x_{1}^{2})(x_{j}-x_{3})\bigg]+\] \[+\frac{1}{n^{2}-1}\bigg[\sum_{j=4}^{n+1}(x_{2}-x_{1})(x_{j}^{2}-x_{3}^{2})-\bigg((x_{2}-x_{1})\sum_{k=3}^{n+1}x_{k}^{2}-(x_{2}^{2}x_{1}-x_{2}x_{1}^{2})\bigg)\bigg],\] \[(x_{2}-x_{1})x_{3}^{2}=+\frac{1}{n+1}[x_{3}^{2}(x_{2}-x_{1})-x_{3}(x_{2}^{2}-x_{1}^{2})+(x_{2}^{2}x_{1}-x_{2}x_{1}^{2})]+\] \[+\frac{1}{n^{2}-1}\bigg[\bigg((x_{2}^{2}-x_{1}^{2})\sum_{k=3}^{n+1}x_{k}+(x_{2}^{2}x_{1}-x_{2}x_{1}^{2})\bigg)-\sum_{j=4}^{n+1}(x_{2}^{2}-x_{1}^{2})(x_{j}-x_{3})\bigg]+\] \[+\frac{n}{n^{2}-1}\bigg[\sum_{j=4}^{n+1}(x_{2}-x_{1})(x_{j}^{2}-x_{3}^{2})-\bigg((x_{2}-x_{1})\sum_{k=3}^{n+1}x_{k}^{2}-(x_{2}^{2}x_{1}-x_{2}x_{1}^{2})\bigg)\bigg],\] and
\[x_{2}^{2}x_{1}-x_{2}x_{1}^{2}=\frac{n-1}{n+1}[x_{3}^{2}(x_{2}-x_{1})-x_{3}(x_{2}^{2}-x_{1}^{2})+(x_{2}^{2}x_{1}-x_{2}x_{1}^{2})]+\] \[+\frac{1}{n+1}\bigg[\bigg((x_{2}^{2}-x_{1}^{2})\sum_{k=3}^{n+1}x_{k}+(x_{2}^{2}x_{1}-x_{2}x_{1}^{2})\bigg)-\sum_{j=4}^{n+1}(x_{2}^{2}-x_{1}^{2})(x_{j}-x_{3})\bigg]+\] \[+\frac{1}{n+1}\bigg[\sum_{j=4}^{n+1}(x_{2}-x_{1})(x_{j}^{2}-x_{3}^{2})-\bigg((x_{2}-x_{1})\sum_{k=3}^{n+1}x_{k}^{2}-(x_{2}^{2}x_{1}-x_{2}x_{1}^{2})\bigg)\bigg],\] while with $f=2$ we get the expansion \[x_{2}^{2}x_{1}+x_{2}x_{1}^{2}=\frac{2}{n(n-1)}\sum_{j=3}^{n}\sum_{k=j+1}^{n+1}\big[(x_{j}-x_{1})(x_{k}^{2}-x_{2}^{2})+(x_{j}^{2}-x_{1}^{2})(x_{k}-x_{2})\big]+\] \[+\!\sum_{j=4}^{n+1}\frac{n+4-2j}{n(n-1)}\big[(x_{2}-x_{1})(x_{j}^{2}-x_{3}^{2})+(x_{2}^{2}-x_{1}^{2})(x_{j}-x_{3})\big]+\frac{2}{n(n\!+\!1)}\!\sum_{1 \leq i<j \leq n+1}\!\!x_{i}x_{j}+\] \[+\frac{1}{n+1}\bigg[\bigg((x_{2}^{2}-x_{1}^{2})\sum_{k=3}^{n+1}x_{k}+(x_{2}^{2}x_{1}-x_{2}x_{1}^{2})\!\bigg)+\bigg(\!(x_{2}-x_{1})\sum_{k=3}^{n+1}x_{k}^{2}-(x_{2}^{2}x_{1}-x_{2}x_{1}^{2})\bigg)\bigg]+\] \[-\frac{2}{n^{2}-1}\!\sum_{j=3}^{n+1}\!\bigg[\!\bigg(\!(x_{j}^{2}-x_{1}^{2})\!\sum_{\substack{k=2 \\ k \neq j}}^{n+1}\!x_{k}+(x_{j}^{2}x_{1}-x_{j}x_{1}^{2})\!\bigg)+\bigg(\!(x_{j}-x_{1})\!\sum_{\substack{k=2 \\ k \neq j}}^{n+1}\!x_{k}^{2}-(x_{j}^{2}x_{1}-x_{j}x_{1}^{2})\!\bigg)\!\bigg].\] \label{d3sup}
\end{ex}
Note that only the basis associated with one of the tableaux with $f=0$ appears in the last expansion in Example \ref{d3sup}, and in the last two rows there, there are easy cancelations (though we kept the canceling terms, in order to keep the form of the bases from Example \ref{smallex}). The last two expansions in Example \ref{d3sup} can be combined to obtain those of the monomials $x_{1}^{2}x_{2}$ and $x_{1}x_{2}^{2}$.

\begin{rmk}
We saw in Examples \ref{supgen} and \ref{d3sup} that when the expanded polynomial depends on a small number of variables, the full symmetric functions appear with appropriate normalizing denominators. These are very similar to the ones used in \cite{[Z1]} (for a different purpose). It may be interesting to check whether such normalizations of symmetric functions can be used, perhaps in expansions using basis elements for the representations from Definition \ref{repswithmon}, to obtain neater coefficients in some cases. \label{normsymfunc}
\end{rmk}

\medskip

Based on the latter four examples, we pose the following conjecture.
\begin{conj}
Let $d$ and $f$ be positive integers.
\begin{enumerate}[$(i)$]
\item Given $M\in\operatorname{SSYT}_{d}(\lambda)$ for some $\lambda \vdash n$ with $f_{M}=f$, consider an element $0 \neq F \in V_{M}$, namely $F\in\mathbb{Q}[\mathbf{x}_{n}]_{d}$ with $\operatorname{supp}_{n}F=\{M\}$. Then the set $\operatorname{supp}_{n+1}F$ consists of tableaux $N$ with $f_{N}\in\{f,f-1\}$.
\item For $F\in\bigoplus_{\lambda \vdash n}\bigoplus_{M\in\operatorname{SSYT}_{d}(\lambda),\ f_{M}=f}V_{M}$, we define the projection $p_{n}^{f}F$ of its image in $\mathbb{Q}[\mathbf{x}_{n+1}]_{d}$ to be $\sum_{N\in\operatorname{supp}_{n+1}F,\ f_{N}=f}F_{N}\in\mathbb{Q}[\mathbf{x}_{n+1}]_{d}^{f}$. The map $p_{n}^{f}:\bigoplus_{\lambda \vdash n}\bigoplus_{M\in\operatorname{SSYT}_{d}(\lambda),\ f_{M}=f}V_{M}\to\bigoplus_{\nu \vdash n}\bigoplus_{M\in\operatorname{SSYT}_{d}(\nu),\ f_{N}=f}V_{N}$ is then injective.
\item Given $F$ as in part $(ii)$, write $q_{n}^{f}F$ for the difference $F-p_{n}^{f}F$, which also equals $\sum_{N\in\operatorname{supp}_{n+1}F,\ f_{N}=f-1}F_{N}\in\mathbb{Q}[\mathbf{x}_{n+1}]_{d}^{f-1}$. Then the resulting map $q_{n}^{f}:\bigoplus_{\lambda \vdash n}\bigoplus_{M\in\operatorname{SSYT}_{d}(\lambda),\ f_{M}=f}V_{M}\to\bigoplus_{\nu \vdash n}\bigoplus_{M\in\operatorname{SSYT}_{d}(\nu),\ f_{N}=f-1}V_{N}$ combines with $p_{n}^{f-1}$ to produce an injective map from the larger space $\bigoplus_{\lambda \vdash n}\bigoplus_{M\in\operatorname{SSYT}_{d}(\lambda),\ f_{M}\in\{f,f-1\}}V_{M}$ to $\bigoplus_{\nu \vdash n}\bigoplus_{M\in\operatorname{SSYT}_{d}(\nu),\ f_{N}=f-1}V_{N}$.
\end{enumerate} \label{polsincn}
\end{conj}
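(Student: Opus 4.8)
The plan is to phrase everything in terms of the inclusion $\Phi\colon\mathbb{Q}[\mathbf{x}_{n}]_{d}\hookrightarrow\mathbb{Q}[\mathbf{x}_{n+1}]_{d}$ and the filtration $\{\mathbb{Q}[\mathbf{x}_{m}]_{d}^{f}\}_{f}$ from Definition \ref{repswithmon}. First observe that each $\mathbb{Q}[\mathbf{x}_{m}]_{d}^{f}$ is an $S_{m}$-subrepresentation: by its definition it is a direct sum of the irreducible spaces $V_{M}$, and by Remark \ref{mudetf} it is even a union of isotypical components inside each $\mathbb{Q}[\mathbf{x}_{m}]_{\mu}$. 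Moreover $\Phi$ is $S_{n}$-equivariant and preserves the content of monomials, so it carries $\mathbb{Q}[\mathbf{x}_{n}]_{\mu}$ into $\mathbb{Q}[\mathbf{x}_{n+1}]_{\mu}$ for every content $\mu$, and it has the $S_{n}$-equivariant one-sided inverse $\rho\colon\mathbb{Q}[\mathbf{x}_{n+1}]_{d}\to\mathbb{Q}[\mathbf{x}_{n}]_{d}$ given by $x_{n+1}=0$, which by Proposition \ref{forstab} sends $F_{\hat{\iota}M,\iota T}$ to $F_{M,T}$. Once part $(i)$ is established it yields $\Phi(\mathbb{Q}[\mathbf{x}_{n}]_{d}^{f})\subseteq\mathbb{Q}[\mathbf{x}_{n+1}]_{d}^{f}$, and then $p_{n}^{f}$ and $q_{n}^{f}$ are exactly the map induced by $\Phi$ on the $f$-th graded piece and the induced ``connecting'' map to the $(f-1)$-st graded piece of this filtration.

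For part $(i)$, fix the content $\mu$ of $M$, with $\ell$ positive entries, put $\lambda=\operatorname{sh}(M)$, and recall from Theorem \ref{repsSpecht} that $V_{M}\cong\mathcal{S}^{\lambda}$ lies in $\mathbb{Q}[\mathbf{x}_{n}]_{\mu}$, while $\mathbb{Q}[\mathbf{x}_{n+1}]_{\mu}=\bigoplus_{\nu\vdash n+1}\bigoplus_{N\in\operatorname{SSYT}_{\mu}(\nu)}V_{N}$ with $V_{N}\cong\mathcal{S}^{\nu}$ by Theorem \ref{FMTdecom}. If $N\in\operatorname{supp}_{n+1}F$ for some nonzero $F\in V_{M}$, then the $S_{n}$-equivariant projection of $\Phi(V_{M})$ onto $V_{N}$ is nonzero, hence injective because $V_{M}$ is $S_{n}$-irreducible, so $\mathcal{S}^{\lambda}$ is an $S_{n}$-constituent of $\mathcal{S}^{\nu}\!\downarrow_{S_{n}}$. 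By the branching rule $\nu$ is then obtained from $\lambda$ by adding a single box, whence $\nu_{1}\in\{\lambda_{1},\lambda_{1}+1\}$; feeding this into the shape--content formulas $f_{M}=\ell-n+\lambda_{1}$ and $f_{N}=\ell-(n+1)+\nu_{1}$ coming from Remark \ref{mudetf} gives $f_{N}\in\{f,f-1\}$. Since $\Phi$ is injective, $\operatorname{supp}_{n+1}F$ is nonempty, which finishes part $(i)$.

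For part $(ii)$, unwinding the definitions reduces the injectivity of $p_{n}^{f}$ to the statement $\Phi^{-1}\!\big(\mathbb{Q}[\mathbf{x}_{n+1}]_{d}^{f-1}\big)\cap\mathbb{Q}[\mathbf{x}_{n}]_{d}^{f}=\mathbb{Q}[\mathbf{x}_{n}]_{d}^{f-1}$, in which $\supseteq$ is already part $(i)$; the case $f=1$ (base case) is Lemma \ref{Qnd0iota}. I would first treat the stable range $n\geq 2d$, where Theorem \ref{FMTdecom} and Proposition \ref{plus1iota} make $M\mapsto\hat{\iota}M$ a bijection of summands for $n$ and $n+1$; here one attempts to use $\rho$, together with its composites with the transpositions $(i,n+1)$ (equivalently, the substitutions $x_{i}=0$), to force an $F$ with $\Phi(F)\in\mathbb{Q}[\mathbf{x}_{n+1}]_{d}^{f-1}$ down into $\mathbb{Q}[\mathbf{x}_{n}]_{d}^{f-1}$. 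Via semisimplicity and Schur's lemma this reduces to showing that $p_{n}^{f}$ is nonzero on each $V_{M}$ with $f_{M}=f$ and that, within each isotypical type, the resulting maps into $\bigoplus_{f_{N}=f}V_{N}$ are linearly independent in the relevant multiplicity space; this last point is the whole content. Part $(iii)$ then follows by running the same argument for the top two layers simultaneously and combining it with part $(ii)$ at the index $f-1$: the kernel of the combined map $(p_{n}^{f-1},q_{n}^{f})$ consists of those $F$ for which $\Phi(F)$ avoids the $f_{N}=f-1$ layer, so $\Phi(F)$ lies in $\bigoplus_{f_{N}=f}V_{N}\oplus\bigoplus_{f_{N}=f-2}V_{N}$, and one pushes this back through $\rho$ using the injectivity of $p_{n}^{f}$ and of $p_{n}^{f-1}$.

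The hard part will be controlling these maps \emph{uniformly in $n$}. The fundamental difficulty is that $\rho$ does not respect the $f$-filtration: setting $x_{n+1}=0$ in a summand $V_{N}$ can create a summand $V_{M}$ with $f_{M}=f_{N}+1$, namely when the box deleted from $\operatorname{sh}(N)$ lies in its first row, so a naive ``project and then substitute'' loses a layer rather than preserving one. This is precisely the phenomenon recorded in Remark \ref{notiota}, and it surfaces concretely in the $n$-dependent rational coefficients of Examples \ref{suprep}--\ref{d3sup}. I expect that a complete proof will need either an explicit, normalized formula for the individual components $(\Phi F)_{N}$ of $\Phi(F)$, in the spirit of the normalizations used in \cite{[Z1]} (cf.\ Remark \ref{normsymfunc}), from which the injectivity of $p_{n}^{f}$ and of the combined map in $(iii)$ can be read off directly, or an intrinsic, $n$-independent description of the $f$-filtration on $\mathbb{Q}[\mathbf{x}_{\infty}]_{d}$ and $\tilde{\Lambda}_{d}$ from which the finite-$n$ assertions descend. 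The former route seems to me the more essential and more delicate step.
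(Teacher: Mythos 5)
This statement is Conjecture \ref{polsincn}: the paper itself gives no proof of it (only a verification in degrees $d\leq4$ and a list of consequences derived under its assumption), so there is no argument of the paper to compare yours against, and I am assessing your proposal on its own terms.

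Your argument for part $(i)$ is correct and complete, and it is genuine progress beyond what the paper records. The projection of $\mathbb{Q}[\mathbf{x}_{n+1}]_{\mu}$ onto a summand $V_{N}$ is $S_{n+1}$-, hence $S_{n}$-equivariant, so a nonzero component of $\Phi(V_{M})$ in $V_{N}$ forces an embedding $\mathcal{S}^{\lambda}\hookrightarrow\mathcal{S}^{\nu}\!\downarrow_{S_{n}}$ by irreducibility of $V_{M}$ (Theorem \ref{repsSpecht}), and the branching rule then gives $\nu=\lambda$ plus one box, so $\nu_{1}\in\{\lambda_{1},\lambda_{1}+1\}$. Since every box below the first row of a semi-standard tableau carries a positive entry, $f_{M}=\ell-n+\lambda_{1}$ and $f_{N}=\ell-(n+1)+\nu_{1}$ (this is the content of Remark \ref{mudetf}), whence $f_{N}\in\{f,f-1\}$. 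Note that this settles exactly the assertion that the discussion after Lemma \ref{conjeq} singles out as not recoverable from that lemma, namely that no $N\in\operatorname{supp}_{n+1}F$ can have $f_{N}<f-1$.

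For parts $(ii)$ and $(iii)$ there is a genuine gap, which to your credit you identify yourself: after reducing injectivity of $p_{n}^{f}$, via Schur's lemma, to the linear independence of the induced maps on multiplicity spaces, you state that ``this last point is the whole content'' and do not supply it. Two further cautions. First, your parenthetical claim that the base case $f=1$ is Lemma \ref{Qnd0iota} is not right: that lemma gives the containment $\mathbb{Q}[\mathbf{x}_{n}]_{d}^{0}\subseteq\mathbb{Q}[\mathbf{x}_{n+1}]_{d}^{0}$, i.e.\ the direction you already get from part $(i)$, whereas injectivity of $p_{n}^{1}$ requires the converse statement $\Phi^{-1}\big(\mathbb{Q}[\mathbf{x}_{n+1}]_{d}^{0}\big)\cap\mathbb{Q}[\mathbf{x}_{n}]_{d}^{1}\subseteq\mathbb{Q}[\mathbf{x}_{n}]_{d}^{0}$; the difficulty is already fully present there, since $\rho$ does not carry $V_{\hat{\iota}M}$ back into $V_{M}$ on basis vectors not of the form $F_{\hat{\iota}M,\iota T}$ (see the remark following Example \ref{restofin}), so $\rho$ does not preserve the $f$-filtration, exactly as you observe via Remark \ref{notiota}. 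Second, your sketch of $(iii)$ (``pushes this back through $\rho$ using the injectivity of $p_{n}^{f}$ and of $p_{n}^{f-1}$'') does not obviously close: knowing that the $f_{N}=f-1$ layer of $\Phi(F^{(f)}+F^{(f-1)})$ vanishes gives $q_{n}^{f}F^{(f)}=-p_{n}^{f-1}F^{(f-1)}$, and no combination of the injectivity of the $p$'s alone forces $F^{(f)}=0$; part $(iii)$ needs its own multiplicity-space independence statement, which is again the open crux. So the proposal should be regarded as a correct proof of part $(i)$ together with a correct but incomplete framework for $(ii)$ and $(iii)$, with the decisive step still missing.
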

We assume $d>0$ in Conjecture \ref{polsincn} since for $d=0$ there is a single tableau $M$ with $f_{M}=0$ (and $\mathbb{Q}[\mathbf{x}_{n}]_{0}$, $\mathbb{Q}[\mathbf{x}_{\infty}]_{0}$, and $\tilde{\Lambda}_{0}$ is just the space $\mathbb{Q}$ of constants). The last assertion of part $(iii)$ there implies, for $f=1$, means that $q_{n}^{f}F$ must depend on the last variable $x_{n+1}$ wherever $F\neq0$ (a situation that is expected to occur for every $f>0$). I could verify the conjecture also for the tableaux whose non-zero contents are 111, 1111, 211, and 221, thus covering (by appropriately modifying Examples \ref{suprep} and \ref{supd3}) all the cases with $d\leq4$.

\begin{rmk}
When $F$ lies in a single $V_{M}$ (and $f=f_{M}>0$), Remark \ref{fiota} yields $\hat{\iota}M$ as a candidate for an element of $\operatorname{supp}_{n+1}F$ that participates in $p_{n}^{f}F$. However, the latter image is not necessarily contained in $V_{\hat{\iota}M}$, and $\operatorname{supp}_{n+1}F$ can contain more than one tableau $N$ with $f_{N}=f$, as Example \ref{supd3} shows. \label{notiota}
\end{rmk}

\begin{lem}
Assume Conjecture \ref{polsincn}, and take $d>0$ and $f>0$.
\begin{enumerate}[$(i)$]
\item We have $\mathbb{Q}[\mathbf{x}_{n}]_{d}^{f}\subseteq\mathbb{Q}[\mathbf{x}_{n+1}]_{d}^{f}$.
\item Elements of $\mathbb{Q}[\mathbf{x}_{n}]_{d}^{f}\setminus\mathbb{Q}[\mathbf{x}_{n}]_{d}^{f-1}$ lie in $\mathbb{Q}[\mathbf{x}_{n+1}]_{d}^{f}\setminus\mathbb{Q}[\mathbf{x}_{n+1}]_{d}^{f-1}$.
\item If $n>2d$ then $\mathbb{Q}[\mathbf{x}_{n}]_{d}^{f}$ generates $\mathbb{Q}[\mathbf{x}_{n+1}]_{d}^{f}$ over $\mathbb{Q}[S_{n+1}]$.
\end{enumerate} \label{conjeq}
\end{lem}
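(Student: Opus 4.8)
The plan is to derive Lemma \ref{conjeq} purely formally from Conjecture \ref{polsincn}, by induction on $f$, using the filtration structure of the spaces $\mathbb{Q}[\mathbf{x}_{n}]_{d}^{f}$ from Definition \ref{repswithmon}. The base case $f=0$ is already handled by Lemma \ref{Qnd0iota} (parts $(i)$ and $(iii)$ there), with part $(ii)$ for $f=0$ being vacuous since $\mathbb{Q}[\mathbf{x}_{n}]_{d}^{-1}=\{0\}$. So assume $f>0$ and that all three assertions hold for $f-1$.

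First I would prove part $(i)$. Take $F\in\mathbb{Q}[\mathbf{x}_{n}]_{d}^{f}$ and decompose it as $F=F_{<f}+F_{=f}$ where $F_{<f}\in\mathbb{Q}[\mathbf{x}_{n}]_{d}^{f-1}$ collects the $V_{M}$-components with $f_{M}\leq f-1$ and $F_{=f}$ collects those with $f_{M}=f$; this is legitimate by the direct sum in Definition \ref{repswithmon}. By the inductive hypothesis for part $(i)$, the summand $F_{<f}$ lies in $\mathbb{Q}[\mathbf{x}_{n+1}]_{d}^{f-1}\subseteq\mathbb{Q}[\mathbf{x}_{n+1}]_{d}^{f}$. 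For $F_{=f}$, part $(i)$ of Conjecture \ref{polsincn} applied componentwise (writing $F_{=f}=\sum_{M}F_{M}$ with $\operatorname{supp}_{n}F_{M}=\{M\}$, $f_{M}=f$) shows that $\operatorname{supp}_{n+1}F_{M}$ consists of tableaux $N$ with $f_{N}\in\{f,f-1\}$, hence $\operatorname{supp}_{n+1}F_{=f}$ does too, so $F_{=f}\in\mathbb{Q}[\mathbf{x}_{n+1}]_{d}^{f}$ by Remark \ref{contsup}. Adding the two pieces gives $F\in\mathbb{Q}[\mathbf{x}_{n+1}]_{d}^{f}$, establishing part $(i)$.

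Next, part $(ii)$. Let $F\in\mathbb{Q}[\mathbf{x}_{n}]_{d}^{f}\setminus\mathbb{Q}[\mathbf{x}_{n}]_{d}^{f-1}$, so in the decomposition above the component $F_{=f}$ is non-zero. I need to show $F\notin\mathbb{Q}[\mathbf{x}_{n+1}]_{d}^{f-1}$, i.e. that $\operatorname{supp}_{n+1}F$ contains at least one tableau $N$ with $f_{N}=f$. Splitting $F$'s image in $\mathbb{Q}[\mathbf{x}_{n+1}]_{d}$ into its $f_{N}=f$ part and its $f_{N}\leq f-1$ part, the former is exactly $p_{n}^{f}F_{=f}$ in the notation of Conjecture \ref{polsincn}$(ii)$ (the component $F_{<f}$ of $F$ contributes nothing with $f_{N}=f$, by the inductive hypothesis for part $(i)$ applied to $F_{<f}$). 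Since $F_{=f}\neq0$ and $p_{n}^{f}$ is injective by part $(ii)$ of the conjecture, $p_{n}^{f}F_{=f}\neq0$, so $\operatorname{supp}_{n+1}F$ meets $\{N:f_{N}=f\}$, giving part $(ii)$. Finally, part $(iii)$: for $n>2d$ we must show $\mathbb{Q}[\mathbf{x}_{n}]_{d}^{f}$ generates $\mathbb{Q}[\mathbf{x}_{n+1}]_{d}^{f}$ under $\mathbb{Q}[S_{n+1}]$. Since $\mathbb{Q}[\mathbf{x}_{n+1}]_{d}^{f}=\mathbb{Q}[\mathbf{x}_{n+1}]_{d}^{f-1}\oplus\big(\bigoplus_{\nu,\,f_N=f}V_N\big)$ as $S_{n+1}$-modules, and the first summand is generated by $\mathbb{Q}[\mathbf{x}_{n}]_{d}^{f-1}$ by the inductive hypothesis for part $(iii)$, it suffices to hit every $V_N$ with $f_N=f$. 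By part $(iii)$ of Theorem \ref{FMTdecom} and Remark \ref{fiota}, every such $N$ of shape $\nu_{+}$ is $\hat{\iota}M$ for a unique $M\in\operatorname{SSYT}_{d}(\nu)$ with $\nu\vdash n$ and $f_{M}=f$; I will show $V_{\hat\iota M}$ lies in the $\mathbb{Q}[S_{n+1}]$-span of $\mathbb{Q}[\mathbf{x}_n]_d^f$. Take $0\neq F\in V_M\subseteq\mathbb{Q}[\mathbf{x}_n]_d^f$; then $p_n^f F$ is a non-zero element of $\bigoplus_{\nu,\,f_N=f}V_N$, and since $V_M\cong\mathcal{S}^{\nu}$ is irreducible, the $S_n$-span of $F$ is all of $V_M$, so the $S_{n+1}$-span of $\mathbb{Q}[\mathbf{x}_n]_d^f$ contains $p_n^f(V_M)$, a non-zero $S_n$-submodule of the multiplicity-one $\mathcal{S}^{\nu_+}$-isotypic piece $\bigoplus_{N:\,\operatorname{sh}(N)=\nu_+,f_N=f}V_N$; applying $S_{n+1}$ and using irreducibility over $S_{n+1}$ of $V_{\hat\iota M}$ (Theorem \ref{repsinf} has no role here — we use the finite statement, part $(iii)$ of Theorem \ref{FMTdecom} and part $(iv)$ of Theorem \ref{propbases}) one gets all of $V_{\hat\iota M}$. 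Summing over $M$ yields part $(iii)$.

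The main obstacle is really bookkeeping rather than a deep point: one must be careful that the splitting $F=F_{<f}+F_{=f}$ interacts correctly with the (conjectural) maps $p_n^f$ and $q_n^f$, in particular that the $f_N=f$ part of the image of $F$ under inclusion $\mathbb{Q}[\mathbf{x}_n]_d\hookrightarrow\mathbb{Q}[\mathbf{x}_{n+1}]_d$ receives no contribution from $F_{<f}$ — this is exactly where the inductive hypothesis for part $(i)$ is used, and it is what makes the induction close. A secondary subtlety in part $(iii)$ is to argue that $p_n^f(V_M)$, being non-zero and $S_n$-equivariant into an $\mathcal{S}^{\nu_+}$-isotypic $S_{n+1}$-module whose restriction to $S_n$ is by branching a sum of distinct $\mathcal S^{\lambda}$'s, actually generates the whole $V_{\hat\iota M}$ over $S_{n+1}$; this follows from irreducibility of $V_{\hat\iota M}$ over $S_{n+1}$ together with the fact that $\mathcal S^{\nu}$ occurs in $V_{\hat\iota M}|_{S_n}$ with multiplicity one, so the projection of the $S_{n+1}$-span onto $V_{\hat\iota M}$ is non-zero and hence surjective. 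Everything else is a direct unwinding of Definition \ref{nsup}, Remark \ref{contsup}, and the three parts of Conjecture \ref{polsincn}.
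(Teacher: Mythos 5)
Your treatment of parts $(i)$ and $(ii)$ is correct and essentially the paper's own argument: part $(i)$ follows from part $(i)$ of Conjecture \ref{polsincn} applied to each component $F_{M}$ (the paper does this directly, without induction, since the conjecture already gives $f_{N}\leq f_{M}$ for every $N\in\operatorname{supp}_{n+1}F_{M}$), and part $(ii)$ is exactly the paper's reasoning: modulo $\mathbb{Q}[\mathbf{x}_{n+1}]_{d}^{f-1}$ only the piece $F_{=f}$ survives, and its image under $p_{n}^{f}$ is non-zero by the injectivity in part $(ii)$ of the conjecture.

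Part $(iii)$, however, has a genuine gap. You claim that for each $M$ with $f_{M}=f$ the projection of $p_{n}^{f}(V_{M})$ onto the specific summand $V_{\hat{\iota}M}$ is non-zero, and you deduce this from the fact that $\mathcal{S}^{\operatorname{sh}(M)}$ occurs with multiplicity one in $V_{\hat{\iota}M}|_{S_{n}}$. That multiplicity-one statement only says that a non-zero $S_{n}$-map $V_{M}\to V_{\hat{\iota}M}$ is unique up to scalar; it does not show that the composite of $p_{n}^{f}$ with the projection onto $V_{\hat{\iota}M}$ is non-zero, and Remark \ref{notiota} warns precisely that $p_{n}^{f}F$ for $F\in V_{M}$ need not lie in, or even meet, $V_{\hat{\iota}M}$, but can be spread over several $N$ with $f_{N}=f$. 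Your description of $\bigoplus_{N:\operatorname{sh}(N)=\nu_{+},\ f_{N}=f}V_{N}$ as a ``multiplicity-one isotypic piece'' is also false in general: in Example \ref{421d6} there are three tableaux of shape $421$ with $f$-value $1$, so the corresponding isotypic piece for $n+1$ has multiplicity three. Knowing only that $p_{n}^{f}(V_{M})$ projects non-trivially onto \emph{some} $V_{N}$ yields an assignment $M\mapsto N(M)$ that need not be surjective, so summing over $M$ does not produce all of $\bigoplus_{N,\ f_{N}=f}V_{N}$. The paper avoids the componentwise route entirely and argues globally: the injectivity of $p_{n}^{f}$ shows that its image has as many irreducible $S_{n}$-constituents as its domain, while part $(iii)$ of Theorem \ref{FMTdecom} and Remark \ref{fiota} show that the target $\bigoplus_{N,\ f_{N}=f}V_{N}$ has exactly that many irreducible $S_{n+1}$-constituents, whence the $S_{n+1}$-span of the image must exhaust it. If you wish to keep a componentwise argument, you need some additional input identifying which summands $p_{n}^{f}(V_{M})$ actually meets, and Conjecture \ref{polsincn} does not provide one.
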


\begin{proof}
Take $F\in\mathbb{Q}[\mathbf{x}_{n}]_{d}^{f}$, and decompose it as $\sum_{M\in\operatorname{supp}_{n}F}F_{M}$ as in Definition \ref{nsup}. Then $f_{M} \leq f$ for every $M\in\operatorname{supp}_{n}F$ through Remark \ref{contsup}, and part $(i)$ of Conjecture \ref{polsincn} implies that for every $N\in\operatorname{supp}_{n+1}F_{M}$ we have $f_{N} \leq f_{M} \leq f$. As $\operatorname{supp}_{n+1}F$ is clearly contained in $\bigcup_{M\in\operatorname{supp}_{n}F}\operatorname{supp}_{n+1}F_{M}$, part $(i)$ follows from Remark \ref{contsup}.

For parts $(ii)$ and $(iii)$ we work by induction on $f$. Lemma \ref{sameiota} implies that each representation $V_{M}$ with $f_{M}=0$ is contained in $V_{\hat{\iota}M}$ and generates it over $\mathbb{Q}[S_{n+1}]$, and we recall from part $(iii)$ of Theorem \ref{FMTdecom} that for $n>2d$ every tableau $N\in\operatorname{SSYT}_{d}(\nu)$ for $\nu \vdash n+1$ is a $\hat{\iota}$-image. As $\hat{\iota}$ does not affect the parameter $f$ (by Remark \ref{fiota}), both parts hold unconditionally for $f=0$. We thus assume that $f>0$, and work modulo $\mathbb{Q}[\mathbf{x}_{n+1}]_{d}^{f-1}$.

We thus take $F\in\mathbb{Q}[\mathbf{x}_{n}]_{d}^{f}\setminus\mathbb{Q}[\mathbf{x}_{n}]_{d}^{f-1}$, and by part $(i)$ the space by which we divided contains $F_{M}$ for every $M\in\operatorname{supp}_{n}F$ for which $f_{M}<f$. We thus consider the part $\sum_{M\in\operatorname{supp}_{n}F,\ f_{M}=f}F_{M}$, which is non-zero by our assumption on $F$, and it clearly lies in $\bigoplus_{\lambda \vdash n}\bigoplus_{M\in\operatorname{SSYT}_{d}(\lambda),\ f_{M}=f}V_{M}$. But as an element of $\mathbb{Q}[\mathbf{x}_{n+1}]_{d}^{f}$ (or just of $\mathbb{Q}[\mathbf{x}_{n+1}]_{d}$), it equals its $p_{n}^{f}$-image plus an element of $\mathbb{Q}[\mathbf{x}_{n+1}]_{d}^{f-1}$, and the former does not vanish by the injectivity from part $(ii)$ of Conjecture \ref{polsincn}. This establishes part $(ii)$.

Finally, we note that the map $p_{n}^{f}$ from that conjecture respects the action of $S_{n}$. Hence its injectivity implies that the number of representations of this group in the image is the same as the number of representations in the domain. They thus generate, over $\mathbb{Q}[S_{n+1}]$, at least the same number of representations of $S_{n+1}$. But part $(iii)$ of Theorem \ref{FMTdecom} and Remark \ref{fiota} show that the number of representations of $S_{n+1}$ in the range of $p_{n}^{f}$ is the same as the number of representations of $S_{n}$ in its domain, so that the image of $\mathbb{Q}[\mathbf{x}_{n}]_{d}^{f}$ in $\mathbb{Q}[\mathbf{x}_{n+1}]_{d}^{f}\big/\mathbb{Q}[\mathbf{x}_{n+1}]_{d}^{f-1}$ generates the entire space over $\mathbb{Q}[S_{n+1}]$. As the induction hypothesis implies that $\mathbb{Q}[\mathbf{x}_{n}]_{d}^{f-1}$ generates $\mathbb{Q}[\mathbf{x}_{n+1}]_{d}^{f-1}$, we deduce the desired assertion also for $f$, yielding part $(iii)$. This proves the lemma.
\end{proof}
Parts $(i)$ and $(ii)$ of Lemma \ref{conjeq} also implies Conjecture \ref{polsincn} back, by inverting the proof, except for the assertion that if $\operatorname{supp}_{n}F$ is a single tableau $M$ with $f_{M}=f$ then $f_{N}$ cannot be smaller than $f-1$ for any $N\in\operatorname{supp}_{n+1}F$. Note that only the number of representations is preserved by applying $p_{n}^{f}$ in the proof of part $(iii)$ of that lemma (for which, in fact, the inequality $n\geq2d$ suffices as usual), as Remark \ref{notiota} implies that this map mixes the irreducible components in Theorem \ref{FMTdecom}. By applying this map, and then $p_{n+1}^{f}$ and so forth, even to a polynomial whose $n$-support is a singleton, does produce an $(n+k)$-support that involve all values $0\leq\tilde{f} \leq f$ for large enough $k$ (most likely $k=f$), as one sees in Examples \ref{supgen} and \ref{d3sup}.

\begin{rmk}
By intersecting all the spaces with $\mathbb{Q}[\mathbf{x}_{n+1}]_{\mu}$ for some content $\mu$ of sum $d$, we get an injective map $p_{n}^{f}$ from $\bigoplus_{\lambda \vdash n}\bigoplus_{M\in\operatorname{SSYT}_{\mu}(\lambda),\ f_{M}=f}V_{M}$ into $\bigoplus_{\nu \vdash n}\bigoplus_{M\in\operatorname{SSYT}_{\mu}(\nu),\ f_{N}=f}V_{N}$, in part $(ii)$ and Conjecture \ref{polsincn}, as well as the inclusion of $\mathbb{Q}[\mathbf{x}_{n}]_{\mu}^{f}\setminus\mathbb{Q}[\mathbf{x}_{n}]_{\mu}^{f-1}$ into $\mathbb{Q}[\mathbf{x}_{n+1}]_{\mu}^{f}\setminus\mathbb{Q}[\mathbf{x}_{n+1}]_{\mu}^{f-1}$ in Lemma \ref{conjeq}. This is so, since the $n$-support from Definition \ref{nsup} is based on the representations $V_{M}$ (it will no longer hold for an equivalent definition that uses the representations $\tilde{V}_{M}$ from Definition \ref{repswithmon}). \label{conjcont}
\end{rmk}

\medskip

We may consider the spaces $\mathbb{Q}[\mathbf{x}_{n}]_{d}^{f}$ from Definition \ref{repswithmon} as a filtration on the completely reducible representation $\mathbb{Q}[\mathbf{x}_{n}]_{d}$ on $S_{n}$, starting with $\{0\}$ for $f=-1$, and concluding at a finite level determined by the maximal value of $f_{M}$ for $\lambda \vdash n$ and $M\in\operatorname{SSYT}_{d}(\lambda)$. In order to do so, we deduce from Lemma \ref{conjeq} the following consequence.
\begin{cor}
Assume that Conjecture \ref{polsincn} holds.
\begin{enumerate}[$(i)$]
\item Take $F\in\mathbb{Q}[\mathbf{x}_{\infty}]_{d}$, choose $n$ large enough such that $F\in\mathbb{Q}[\mathbf{x}_{n}]_{d}$, and write $f_{F}$ for the parameter $f$ such that $F\in\mathbb{Q}[\mathbf{x}_{n}]_{d}^{f}\setminus\mathbb{Q}[\mathbf{x}_{n}]_{d}^{f-1}$. Then $f_{F}$ is independent of $n$, and is thus an intrinsic parameter of $F\in\mathbb{Q}[\mathbf{x}_{\infty}]_{d}$.
\item We have $f_{F} \leq d$ for every such $F$, but equality can hold.
\item If $F$ lies in $\mathbb{Q}[\mathbf{x}_{\infty}]_{\mu}$ for some content $\mu$ of sum $d$, consisting of $\ell$ non-zero entries, then we have $f_{F}\leq\ell$, but again an equality can hold.
\end{enumerate} \label{Ffdef}
\end{cor}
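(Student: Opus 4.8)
The plan is to deduce all three parts from Lemma \ref{conjeq}, which (under Conjecture \ref{polsincn}) gives the compatibility of the filtrations $\mathbb{Q}[\mathbf{x}_{n}]_{d}^{f}$ with increasing $n$. For part $(i)$, fix $F\in\mathbb{Q}[\mathbf{x}_{\infty}]_{d}$ and take $n$ with $F\in\mathbb{Q}[\mathbf{x}_{n}]_{d}$; let $f_{F}^{(n)}$ be the unique $f$ with $F\in\mathbb{Q}[\mathbf{x}_{n}]_{d}^{f}\setminus\mathbb{Q}[\mathbf{x}_{n}]_{d}^{f-1}$ (well-defined since the spaces $\mathbb{Q}[\mathbf{x}_{n}]_{d}^{f}$ form an increasing filtration exhausting $\mathbb{Q}[\mathbf{x}_{n}]_{d}$, with $\mathbb{Q}[\mathbf{x}_{n}]_{d}^{-1}=\{0\}$). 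Part $(i)$ of Lemma \ref{conjeq} shows $f_{F}^{(n+1)}\leq f_{F}^{(n)}$ (since $F\in\mathbb{Q}[\mathbf{x}_{n}]_{d}^{f_F^{(n)}}\subseteq\mathbb{Q}[\mathbf{x}_{n+1}]_{d}^{f_F^{(n)}}$), while part $(ii)$ of that lemma shows $f_{F}^{(n+1)}\geq f_{F}^{(n)}$ (as $F\in\mathbb{Q}[\mathbf{x}_{n}]_{d}^{f_F^{(n)}}\setminus\mathbb{Q}[\mathbf{x}_{n}]_{d}^{f_F^{(n)}-1}$ lands in $\mathbb{Q}[\mathbf{x}_{n+1}]_{d}^{f_F^{(n)}}\setminus\mathbb{Q}[\mathbf{x}_{n+1}]_{d}^{f_F^{(n)}-1}$). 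Hence $f_{F}^{(n)}=f_{F}^{(n+1)}$ for all large $n$, and one sets $f_{F}:=f_{F}^{(n)}$.

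For part $(ii)$, I would argue that $f_{M}\leq d$ for every $\lambda\vdash n$ and $M\in\operatorname{SSYT}_{d}(\lambda)$: by Remark \ref{mudetf}, $f_M$ is the number of non-zero entries in the first row of $M$ (equivalently, $f_M$ equals the number of non-zero exponents of $p_{M,T}$ minus the number of boxes below the first row), and since $\Sigma(M)=d$ each such entry is a positive integer contributing at least $1$ to the sum $d$, so there are at most $d$ of them. Thus any $F\in\mathbb{Q}[\mathbf{x}_{n}]_{d}^{d}=\mathbb{Q}[\mathbf{x}_{n}]_{d}$, giving $f_{F}\leq d$. Equality is witnessed, e.g., by the monomial $x_{1}x_{2}\cdots x_{d}$ (for $n\geq d$): it lies in $V_{M}$-components with $f_M=d$ under the decomposition of $\mathbb{Q}[\mathbf{x}_{n}]_{d}$, most transparently because its image under the projection onto the trivial-isotypical part is a nonzero multiple of $e_{d}\in\mathbb{Q}[\mathbf{x}_{n}]_{\mu}$ with $\mu=(1^{d})$, and the relevant $V_M$ has $f_M=d$; alternatively one verifies directly via Remark \ref{contsup} that $\operatorname{supp}_n(x_1\cdots x_d)$ contains a tableau $M$ with $f_M=d$ and no tableau of larger $f$ exists in degree $d$, so $f_{x_1\cdots x_d}=d$.

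Part $(iii)$ follows the same pattern restricted to a fixed content $\mu$ with $\ell$ non-zero entries. By Remark \ref{mudetf} (or Remark \ref{contsup}), if $F\in\mathbb{Q}[\mathbf{x}_{n}]_{\mu}$ then every $M\in\operatorname{supp}_{n}F$ lies in $\operatorname{SSYT}_{\mu}(\lambda)$, and $f_{M}$ equals the number of non-zero entries in the first row of $M$, which is at most the total number $\ell$ of non-zero entries of $\mu$; hence $F\in\mathbb{Q}[\mathbf{x}_{n}]_{\mu}^{\ell}$ and $f_{F}\leq\ell$. The consistency of $f_F$ across $n$ in this setting uses Remark \ref{conjcont}, which states the $\mu$-intersected versions of the maps $p_n^f$ and the inclusions of Lemma \ref{conjeq}. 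For the equality case, take $\mu$ with exactly $\ell$ non-zero entries and pick $F$ supported on a tableau $M\in\operatorname{SSYT}_{\mu}(\lambda)$ whose first row carries all $\ell$ non-zero entries (such $M$ exists by choosing $\lambda$ with a long enough first row, e.g. $\lambda=(d)$ when $\mu=(1^{\ell})$ with appropriate zeros), so that $f_M=\ell$ and no tableau in $\operatorname{SSYT}_\mu(\lambda')$ has larger $f$; then $f_F=\ell$.

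The main obstacle is purely the exhibition of the equality cases in parts $(ii)$ and $(iii)$: one must produce an explicit polynomial (or element of $\mathbb{Q}[\mathbf{x}_\infty]_\mu$) whose $n$-support is certified to contain a tableau $M$ with the maximal value $f_M$, which requires either a clean projection argument onto an isotypical component or a small explicit computation in the spirit of Examples \ref{supgen} and \ref{d3sup}. Everything else is a formal consequence of Lemma \ref{conjeq} and the combinatorial identification of $f_M$ with the count of non-zero first-row entries from Remark \ref{mudetf}.
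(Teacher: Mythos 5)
Your proposal is correct and follows essentially the same route as the paper: part $(i)$ is exactly the application of Lemma \ref{conjeq}, and parts $(ii)$ and $(iii)$ rest on the identification of $f_{M}$ with the number of non-zero first-row entries, bounded by $\ell\leq d$, with equality realized by the single-row tableau (the paper simply takes a nonzero element of the corresponding $V_{M}$, e.g.\ a multiple of $e_{d}$ or $m_{\mu}$, rather than projecting $x_{1}\cdots x_{d}$, but both witnesses work). The only blemish is the parenthetical ``$\lambda=(d)$ when $\mu=(1^{\ell})$'', which conflates $\ell$ with $d$; the general single-row construction you describe is the right one.
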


\begin{proof}
If $f=f_{F}$ is defined using a value of $n$, so that $F\in\mathbb{Q}[\mathbf{x}_{n}]_{d}^{f}\setminus\mathbb{Q}[\mathbf{x}_{n}]_{d}^{f-1}$, then Lemma \ref{conjeq} shows that $F$ also lies in $\mathbb{Q}[\mathbf{x}_{n+1}]_{d}^{f}\setminus\mathbb{Q}[\mathbf{x}_{n+1}]_{d}^{f-1}$, so that the parameter defined using $n+1$ is the same value $f_{F}$, yielding part $(i)$.

Next, we recall from Definition \ref{repswithmon} that $F$ lies in $\mathbb{Q}[\mathbf{x}_{n}]_{d}^{f}$ but not in $\mathbb{Q}[\mathbf{x}_{n}]_{d}^{f-1}$ if it is contained in $\bigoplus_{\lambda \vdash n}\bigoplus_{M\in\operatorname{SSYT}_{d}(\lambda),\ f_{M} \leq f}V_{M}$ but at least one $M$ with $f_{M}=f$ yields a non-zero contribution. Similarly, for $F\in\mathbb{Q}[\mathbf{x}_{n}]_{d}$ we obtain the same but with tableaux $M\in\operatorname{SSYT}_{\mu}(\lambda)$.

By letting $n$ vary, we deduce that the tight upper bound on $f_{F}$ for $F\in\mathbb{Q}[\mathbf{x}_{n}]_{d}$ is the maximal value $f_{M}$ can attain when we let $n$ run over $\mathbb{N}$, $\lambda$ goes over all partitions of every such $n$, and $M$ is taken from $M\in\operatorname{SSYT}_{d}(\lambda)$, and for $F\in\mathbb{Q}[\mathbf{x}_{n}]_{\mu}$ we do the same and restrict to $M\in\operatorname{SSYT}_{\mu}(\lambda)$.

But the number $f_{M}$ for $M\in\operatorname{SSYT}_{\mu}(\lambda)$ is, by the same definition, the number of non-zero entries of $M$ that lie in the first row. It is thus the number of a subset of $\ell$ elements (the non-zero entries of the content $\mu$ of $M$), which is bounded by $\ell$ but can be $\ell$ in case $M$ has a single line (and content $\mu$ plus enough zeros, for $n$ large enough). This establishes part $(iii)$, and for part $(ii)$ we observe that the maximal value of $\ell$ that can be obtained for a content $\mu$ of sum $d$ is $\ell=d$, when $\mu$ consists of $d$ instances of 1 (and $n-d$ instances of 0 if needed). This proves the corollary.
\end{proof}
The tableau from the proof of parts $(iii)$ and $(ii)$ of Corollary \ref{Ffdef} is the only one with $f_{M}=\ell$ or with $f_{M}=d=\Sigma(M)$. We deduce that $\mathbb{Q}[\mathbf{x}_{n}]_{\mu}$ for $n\geq\ell$ is spanned by $\bigoplus_{\lambda \vdash n}\bigoplus_{M\in\operatorname{SSYT}_{\mu}(\lambda),\ f_{M}<\ell}V_{M}$ plus the symmetric function $m_{\mu}$ (in $n$ variables). Similarly, if $n \geq d$ then $\mathbb{Q}[\mathbf{x}_{n}]_{d}$ is the sum of $\bigoplus_{\lambda \vdash n}\bigoplus_{M\in\operatorname{SSYT}_{d}(\lambda),\ f_{M}<d}V_{M}$ and the multiples of the symmetric function $e_{d}^{(n)}$.

Since it is clear that $\mathbb{Q}[\mathbf{x}_{n}]_{d}^{f}$ consists of those elements $F\in\mathbb{Q}[\mathbf{x}_{n}]_{d}$ such that, viewed as elements of $\mathbb{Q}[\mathbf{x}_{\infty}]_{d}$, satisfy $f_{F} \leq f$ via Corollary \ref{Ffdef}, we make the following definition.
\begin{defn}
For any $d\geq0$ and $f\geq0$ we write $\mathbb{Q}[\mathbf{x}_{\infty}]_{d}^{f}$ for the set of polynomials $F\in\mathbb{Q}[\mathbf{x}_{\infty}]_{d}$ whose parameter $f_{F}$ from Corollary \ref{Ffdef} satisfies $f_{F} \leq f$. We also extend this notation to $f=-1$ by setting $\mathbb{Q}[\mathbf{x}_{\infty}]_{d}^{-1}=\{0\}$, and write $\mathbb{Q}[\mathbf{x}_{\infty}]_{\mu}^{f}$ for the intersection of $\mathbb{Q}[\mathbf{x}_{\infty}]_{d}^{f}$, for all $f\geq-1$, with $\mathbb{Q}[\mathbf{x}_{\infty}]_{\mu}$ for every content $\mu$ of sum $d$. \label{Qxinfdf}
\end{defn}
Note that Definition \ref{Qxinfdf} for $f=0$ reproduces the spaces $\mathbb{Q}[\mathbf{x}_{\infty}]_{d}^{0}$ and $\mathbb{Q}[\mathbf{x}_{\infty}]_{\mu}^{0}$ from parts $(i)$ and $(ii)$ of Theorem \ref{compred}. It does so unconditionally, but defining the spaces with $f>0$ depends on Conjecture \ref{polsincn}. We henceforth assume the validity of the latter, even without saying so explicitly. All the results that follow are hence conditional on that conjecture.

\medskip

The first properties of the spaces from Definition \ref{Qxinfdf} are as follows.
\begin{prop}
Take $d\geq0$, and a content $\mu$ consisting of $\ell$ positive integers of sum $d$.
\begin{enumerate}[$(i)$]
\item The spaces $\{\mathbb{Q}[\mathbf{x}_{\infty}]_{d}^{f}\}_{f=-1}^{d}$ form a strictly increasing filtration of $\mathbb{Q}[\mathbf{x}_{\infty}]_{d}$ by representations of $S_{\infty}$ and of $S_{\mathbb{N}}$, that begins with $\{0\}$ and ends at the full representation. The quotient $\mathbb{Q}[\mathbf{x}_{\infty}]_{d}^{f}\big/\mathbb{Q}[\mathbf{x}_{\infty}]_{d}^{f-1}$ is isomorphic to $\bigoplus_{\hat{\lambda}\vdash\infty}\bigoplus_{\hat{M}\in\operatorname{SSYT}_{d}(\hat{\lambda}),\ f_{\hat{M}}=f}V_{\hat{M}^{0}}$.
\item The same assertion holds for $\{\mathbb{Q}[\mathbf{x}_{\infty}]_{\mu}^{f}\}_{f=-1}^{\ell}$ inside $\mathbb{Q}[\mathbf{x}_{\infty}]_{\mu}$, where the $f$th quotient is isomorphic to $\bigoplus_{\hat{\lambda}\vdash\infty}\bigoplus_{\hat{M}\in\operatorname{SSYT}_{\mu}(\hat{\lambda}),\ f_{\hat{M}}=f}V_{\hat{M}^{0}}$.
\end{enumerate} \label{filtprop}
\end{prop}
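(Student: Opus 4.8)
The plan is to prove both parts simultaneously, since they have the same structure, deriving part $(ii)$ from part $(i)$ by intersecting everything with $\mathbb{Q}[\mathbf{x}_{\infty}]_{\mu}$ exactly as Remark \ref{conjcont} and the proof of part $(ii)$ of Theorem \ref{compred} do. First I would record the basic properties of the filtration that are essentially immediate. The containment $\mathbb{Q}[\mathbf{x}_{\infty}]_{d}^{f-1}\subseteq\mathbb{Q}[\mathbf{x}_{\infty}]_{d}^{f}$ is clear from Definition \ref{Qxinfdf}, each piece is a sub-representation of $S_{\infty}$ (hence of $S_{\mathbb{N}}$, by Proposition \ref{samereps}) because the parameter $f_{F}$ is $S_{\infty}$-invariant: if $F\in\mathbb{Q}[\mathbf{x}_{n}]_{d}$ then $wF\in\mathbb{Q}[\mathbf{x}_{m}]_{d}$ for suitable $m$, and the relevant spaces $\mathbb{Q}[\mathbf{x}_{m}]_{d}^{f}$ are $S_{m}$-stable with $f_{F}$ independent of the ambient index by Corollary \ref{Ffdef}. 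The filtration begins at $\{0\}$ for $f=-1$ by definition, and terminates at $\mathbb{Q}[\mathbf{x}_{\infty}]_{d}$ at step $f=d$ (resp.\ $f=\ell$) because Corollary \ref{Ffdef} gives $f_{F}\le d$ (resp.\ $f_{F}\le\ell$) for every $F$, with the bound attained; strictness of the inclusions follows from the same attainment statement together with the fact, from the proof of Corollary \ref{Ffdef}, that the single-row tableau realizing $f_{M}=f$ for each $0\le f\le d$ (resp.\ $\le\ell$) actually occurs in $\operatorname{SSYT}_{d}(\lambda)$ for $\lambda\vdash n$ with $n$ large.

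The substantive content is the computation of the graded pieces. I would argue as follows. Fix $f$ with $0\le f\le d$. For $n$ large (say $n>2d$) the space $\mathbb{Q}[\mathbf{x}_{n}]_{d}^{f}/\mathbb{Q}[\mathbf{x}_{n}]_{d}^{f-1}$ is, by Definition \ref{repswithmon} and Theorem \ref{FMTdecom}, isomorphic as an $S_{n}$-representation to $\bigoplus_{\lambda\vdash n}\bigoplus_{M\in\operatorname{SSYT}_{d}(\lambda),\,f_{M}=f}V_{M}$. By part $(iii)$ of Lemma \ref{conjeq} (which rests on Conjecture \ref{polsincn}) the inclusion $\mathbb{Q}[\mathbf{x}_{n}]_{d}^{f}\hookrightarrow\mathbb{Q}[\mathbf{x}_{n+1}]_{d}^{f}$ induces, on the level of graded pieces, a map whose image generates the target over $\mathbb{Q}[S_{n+1}]$, and — crucially — by part $(iii)$ of Theorem \ref{FMTdecom} together with Remark \ref{fiota} the number of irreducible summands with parameter $f$ is the same for $n$ and $n+1$ once $n>2d$. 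Hence the system $\{\mathbb{Q}[\mathbf{x}_{n}]_{d}^{f}/\mathbb{Q}[\mathbf{x}_{n}]_{d}^{f-1}\}_{n}$ stabilizes, and its direct limit is spanned, via Lemma \ref{limtab} and part $(iv)$ of Proposition \ref{limSpecht}, by the stable higher Specht polynomials $F_{\hat{M}^{0},\hat{T}}$ with $\hat{M}\in\operatorname{SSYT}_{d}(\hat{\lambda})$ and $f_{\hat{M}}=f$. By Theorem \ref{repsinf} these span $\bigoplus_{\hat{\lambda}\vdash\infty}\bigoplus_{\hat{M}\in\operatorname{SSYT}_{d}(\hat{\lambda}),\,f_{\hat{M}}=f}V_{\hat{M}^{0}}$, with the sum direct (a linear relation would, after substituting $x_{m}=0$ for $m$ large, contradict the independence in the finite case, as in the proofs of Theorem \ref{repsinf} and Proposition \ref{ExtVMlim}). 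What remains is to identify this limit with the genuine quotient $\mathbb{Q}[\mathbf{x}_{\infty}]_{d}^{f}/\mathbb{Q}[\mathbf{x}_{\infty}]_{d}^{f-1}$: every element of $\mathbb{Q}[\mathbf{x}_{\infty}]_{d}^{f}$ is a polynomial, hence lies in some $\mathbb{Q}[\mathbf{x}_{n}]_{d}^{f}$, so $\mathbb{Q}[\mathbf{x}_{\infty}]_{d}^{f}=\bigcup_{n}\mathbb{Q}[\mathbf{x}_{n}]_{d}^{f}$ and the filtration quotient is the union of the finite quotients, which is exactly the stabilized direct limit just described.

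For part $(ii)$ I would intersect throughout with $\mathbb{Q}[\mathbf{x}_{\infty}]_{\mu}$: since $f_{F}$ is determined by the $V_{M}$-support and $M\in\operatorname{SSYT}_{\mu}(\lambda)$ forces $V_{M}\subseteq\mathbb{Q}[\mathbf{x}_{n}]_{\mu}$ (part $(ii)$ of Theorem \ref{repsSpecht}), the intersection $\mathbb{Q}[\mathbf{x}_{\infty}]_{d}^{f}\cap\mathbb{Q}[\mathbf{x}_{\infty}]_{\mu}$ corresponds precisely to restricting the indexing tableaux to have content $\mu$; the length $\ell$ replaces $d$ as the top of the filtration by the content bound in Corollary \ref{Ffdef}$(iii)$, and everything else goes through verbatim with $\operatorname{SSYT}_{d}$ replaced by $\operatorname{SSYT}_{\mu}$. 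The main obstacle — really the only place where anything can go wrong — is ensuring that the finite-level identification of $\mathbb{Q}[\mathbf{x}_{n}]_{d}^{f}/\mathbb{Q}[\mathbf{x}_{n}]_{d}^{f-1}$ with $\bigoplus_{M,\,f_{M}=f}V_{M}$ is compatible with the transition maps in a way that makes the colimit computation legitimate; this is exactly what part $(iii)$ of Lemma \ref{conjeq} and part $(iii)$ of Theorem \ref{FMTdecom} are designed to supply, so the proof is really an assembly of those ingredients rather than a fresh argument, and I expect it to run only a few lines once the bookkeeping of $f$-values under $\hat{\iota}$ (Remark \ref{fiota}) is invoked.
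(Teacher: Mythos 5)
Your proposal is correct and follows essentially the same route as the paper: the elementary filtration properties come from Corollary \ref{Ffdef} and the $S_n$-stability of $\mathbb{Q}[\mathbf{x}_{n}]_{d}^{f}$, and the graded pieces are computed by stabilizing the finite-level quotients $\mathbb{Q}[\mathbf{x}_{n}]_{d}^{f}\big/\mathbb{Q}[\mathbf{x}_{n}]_{d}^{f-1}\cong\bigoplus V_{M^{0}}$ under the direct embedding, using part $(iii)$ of Lemma \ref{conjeq}, part $(iii)$ of Theorem \ref{FMTdecom}, Remark \ref{fiota}, and Lemma \ref{sameiota} to pass to the limit representatives $V_{\hat{M}^{0}}$, with part $(ii)$ obtained by intersecting with $\mathbb{Q}[\mathbf{x}_{\infty}]_{\mu}$. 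This matches the paper's argument in both structure and the specific ingredients invoked.
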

By Remark \ref{mudetf}, the condition $f_{\hat{M}}=f$ in part $(ii)$ of Proposition \ref{filtprop} takes all of the tableaux from $\operatorname{SSYT}_{\mu}(\hat{\lambda})$ for specific infinite Ferrers diagrams $\hat{\lambda}\vdash\infty$, and none of the tableaux for other choices of $\hat{\lambda}\vdash\infty$. We will exemplify all these notions and results, and those that follow, after Corollary \ref{submulti} below.

\begin{proof}
Definition \ref{Qxinfdf} gives $\mathbb{Q}[\mathbf{x}_{\infty}]_{d}^{-1}=\{0\}$, and the bounds from parts $(ii)$ and $(iii)$ of Corollary \ref{Ffdef} show that $\mathbb{Q}[\mathbf{x}_{\infty}]_{d}^{d}=\mathbb{Q}[\mathbf{x}_{\infty}]_{d}$ and $\mathbb{Q}[\mathbf{x}_{\infty}]_{\mu}^{\ell}=\mathbb{Q}[\mathbf{x}_{\infty}]_{\mu}$. Moreover, for such $\mu$ we can take $n$ large and $\lambda$ with two rows in which the second one is of length $\ell-f$, fill it with some positive entries from $\mu$ in a non-decreasing order, and fill the first row with enough zeros plus the remaining $f$ entries from $\mu$. This produces $M\in\operatorname{SSYT}(\lambda)$ with $f_{M}=f$, and any $0 \neq F \in V_{M}$ will be, via Corollary \ref{Ffdef}, in $\mathbb{Q}[\mathbf{x}_{\infty}]_{\mu}^{f}\setminus\mathbb{Q}[\mathbf{x}_{\infty}]_{\mu}^{f-1}$. Hence the sequence in part $(ii)$ is strictly increasing, and the argument proving part $(ii)$ of that corollary shows the same for part $(i)$.

We now recall from Proposition \ref{samereps} and Remark \ref{polstimessym} that $\mathbb{Q}[\mathbf{x}_{\infty}]_{d}$ is preserved under the action of $S_{\mathbb{N}}$, and that the orbit of every polynomial under this group is the same as its orbit under $S_{\infty}$. In addition, Corollary \ref{Ffdef} implies that for an element of $\mathbb{Q}[\mathbf{x}_{n}]_{d}$, being in $\mathbb{Q}[\mathbf{x}_{n}]_{d}^{f}$ is equivalent to being in $\mathbb{Q}[\mathbf{x}_{\infty}]_{d}^{f}$.

We thus take a polynomial $F\in\mathbb{Q}[\mathbf{x}_{\infty}]_{d}^{f}$, and assume that it is related to some polynomial $G\in\mathbb{Q}[\mathbf{x}_{\infty}]_{d}$ by the action of $S_{\mathbb{N}}$. As there is an element of $S_{\infty}$ taking $F$ to $G$, we can take $n$ large enough so that $F$ is in $\mathbb{Q}[\mathbf{x}_{\infty}]_{d}$ (hence in $\mathbb{Q}[\mathbf{x}_{\infty}]_{d}^{f}$), $G\in\mathbb{Q}[\mathbf{x}_{\infty}]_{d}$, and there is an element of $S_{n}$ taking $F$ to $G$.

But as $\mathbb{Q}[\mathbf{x}_{n}]_{d}^{f}$ is a representation of $S_{n}$ (which is clear from Definition \ref{repswithmon}), the polynomial $G$ must be in $\mathbb{Q}[\mathbf{x}_{n}]_{d}^{f}$ hence also in $\mathbb{Q}[\mathbf{x}_{\infty}]_{d}^{f}$. This shows that $\mathbb{Q}[\mathbf{x}_{\infty}]_{d}^{f}$ is a representation of $S_{\mathbb{N}}$ (and of $S_{\infty}$), and as so is $\mathbb{Q}[\mathbf{x}_{\infty}]_{\mu}$, we obtain the same property for $\mathbb{Q}[\mathbf{x}_{\infty}]_{\mu}^{f}$, as their intersection.

Next, take some $n>2d$, and then the quotient $\mathbb{Q}[\mathbf{x}_{n}]_{d}^{f}\big/\mathbb{Q}[\mathbf{x}_{n}]_{d}^{f-1}$ is, via Theorem \ref{FMTdecom}, the isomorphic image of $\bigoplus_{\lambda \vdash n}\bigoplus_{M\in\operatorname{SSYT}_{d}(\lambda),\ f_{M}=f}V_{M}$, which is isomorphic to $\bigoplus_{\lambda \vdash n}\bigoplus_{M\in\operatorname{SSYT}_{d}(\lambda),\ f_{M}=f}V_{M^{0}}$. Part $(iii)$ of Lemma \ref{conjeq} implies that the former direct sum generates $\mathbb{Q}[\mathbf{x}_{n+1}]_{d}^{f}\big/\mathbb{Q}[\mathbf{x}_{n+1}]_{d}^{f-1}$ over $\mathbb{Q}[S_{n+1}]$. The isomorphism type of the resulting representation of $S_{n+1}$, on a subspace of $\mathbb{Q}[\mathbf{x}_{n+1}]_{d}^{f}$ generating this quotient, is the same as the one generated over $\mathbb{Q}[S_{n+1}]$ by $\bigoplus_{\lambda \vdash n}\bigoplus_{M\in\operatorname{SSYT}_{d}(\lambda),\ f_{M}=f}V_{M^{0}}$.

But Lemma \ref{sameiota} implies that the representation thus obtained is isomorphic to $\bigoplus_{\lambda \vdash n}\bigoplus_{M\in\operatorname{SSYT}_{d}(\lambda),\ f_{M}=f}V_{\hat{\iota}(M^{0})}$. If we write this index as $(\hat{\iota}M)^{0}$ via Remark \ref{fiota}, then this direct sum becomes $\bigoplus_{\nu \vdash n+1}\bigoplus_{N\in\operatorname{SSYT}_{d}(\nu),\ f_{N}=f}V_{N^{0}}$ by this remark and part $(iii)$ of Theorem \ref{FMTdecom}. Doing so repeatedly and taking $n$ to $\infty$ yields the space generated over $\mathbb{Q}[S_{\infty}]$ by the original direct sum, which means that the space thus obtained is contained in $\mathbb{Q}[\mathbf{x}_{\infty}]_{d}^{f}$ (via Corollary \ref{Ffdef} and Definition \ref{Qxinfdf}) and maps bijectively onto $\mathbb{Q}[\mathbf{x}_{\infty}]_{d}^{f}\big/\mathbb{Q}[\mathbf{x}_{\infty}]_{d}^{f-1}$.

As the representation type (over both $S_{\mathbb{N}}$ and $S_{\infty}$, as usual) of the resulting space is the asserted one, part $(i)$ follows. Intersecting all these quotients $\mathbb{Q}[\mathbf{x}_{\infty}]_{\mu}$ yields the desired isomorphism type of the quotient in part $(ii)$ as well. This completes the proof of the proposition.
\end{proof}
Note that when passing from $n$ to $n+1$ in the proof of Proposition \ref{filtprop}, we did not use the map $p_{n}^{f}$, but rather the direct embedding of $\mathbb{Q}[\mathbf{x}_{n}]_{d}^{f}$ into $\mathbb{Q}[\mathbf{x}_{n+1}]_{d}^{f}$. Hence the subspace yielding this quotient is not canonical (and Theorem \ref{compred} shows that it indeed cannot be such), but rather depends on the choice of $n$ there. In fact, we could have also used Proposition \ref{decomVM0mM} and the presentation of the quotient as the isomorphic image of $\bigoplus_{\lambda \vdash n}\bigoplus_{M\in\operatorname{SSYT}_{d}(\lambda),\ f_{M}=f}\tilde{V}_{M}$ from Definition \ref{repswithmon} (where $V_{M^{0}}$ also equals $\tilde{V}_{M^{0}}$), but this presentations will no longer respect the decompositions into content for part $(ii)$ of Proposition \ref{filtprop}.

We get the following immediate consequence.
\begin{cor}
The representation $\mathbb{Q}[\mathbf{x}_{\infty}]_{d}$, as well as its sub-representation $\mathbb{Q}[\mathbf{x}_{\infty}]_{\mu}$ for every content $\mu$ of sum $d$, admits a finite composition series. Their semi-simplification of these representations are isomorphic to the direct sums $\bigoplus_{\hat{\lambda}\vdash\infty}\bigoplus_{\hat{M}\in\operatorname{SSYT}_{d}(\hat{\lambda})}V_{\hat{M}}^{0}$ and $\bigoplus_{\hat{\lambda}\vdash\infty}\bigoplus_{\hat{M}\in\operatorname{SSYT}_{\mu}(\hat{\lambda})}V_{\hat{M}}^{0}$ respectively. \label{semisimp}
\end{cor}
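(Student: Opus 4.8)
The plan is to deduce Corollary \ref{semisimp} directly from Proposition \ref{filtprop}, which does almost all of the work. First I would observe that parts $(i)$ and $(ii)$ of Proposition \ref{filtprop} already give, for each of $\mathbb{Q}[\mathbf{x}_{\infty}]_{d}$ and $\mathbb{Q}[\mathbf{x}_{\infty}]_{\mu}$, a finite increasing filtration by sub-representations (indexed by $f$ from $-1$ up to $d$, resp.\ $\ell$), starting at $\{0\}$ and ending at the whole space, whose successive quotients are the completely reducible representations $\bigoplus_{\hat{\lambda}\vdash\infty}\bigoplus_{\hat{M}\in\operatorname{SSYT}_{d}(\hat{\lambda}),\ f_{\hat{M}}=f}V_{\hat{M}^{0}}$, resp.\ $\bigoplus_{\hat{\lambda}\vdash\infty}\bigoplus_{\hat{M}\in\operatorname{SSYT}_{\mu}(\hat{\lambda}),\ f_{\hat{M}}=f}V_{\hat{M}^{0}}$. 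Since each such quotient is a finite direct sum of irreducible representations (the direct sums are finite because $\operatorname{SSYT}_{d}(\hat{\lambda})$ is finite and only finitely many $\hat{\lambda}\vdash\infty$ contribute, exactly as in Proposition \ref{ExtVMlim} and Theorem \ref{repsinf}), refining the filtration by inserting, between consecutive steps, one irreducible summand at a time produces an honest finite composition series of $\mathbb{Q}[\mathbf{x}_{\infty}]_{d}$ (and of $\mathbb{Q}[\mathbf{x}_{\infty}]_{\mu}$). This yields the first assertion.

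For the semi-simplification, I would argue that by definition the semi-simplification is the direct sum of the composition factors, which is also the direct sum of the semi-simplifications of the graded pieces of any filtration by sub-representations. Applying this to the filtration from Proposition \ref{filtprop}, whose graded pieces are already completely reducible, the semi-simplification of $\mathbb{Q}[\mathbf{x}_{\infty}]_{d}$ is $\bigoplus_{f=0}^{d}\bigl(\mathbb{Q}[\mathbf{x}_{\infty}]_{d}^{f}\big/\mathbb{Q}[\mathbf{x}_{\infty}]_{d}^{f-1}\bigr)\cong\bigoplus_{f=0}^{d}\bigoplus_{\hat{\lambda}\vdash\infty}\bigoplus_{\hat{M}\in\operatorname{SSYT}_{d}(\hat{\lambda}),\ f_{\hat{M}}=f}V_{\hat{M}^{0}}$. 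The double (in fact triple) sum over $f$, over $\hat{\lambda}$, and over $\hat{M}$ with $f_{\hat{M}}=f$ collapses, since every $\hat{M}\in\operatorname{SSYT}_{d}(\hat{\lambda})$ has exactly one value $f_{\hat{M}}\in\{0,\ldots,d\}$, to the single sum $\bigoplus_{\hat{\lambda}\vdash\infty}\bigoplus_{\hat{M}\in\operatorname{SSYT}_{d}(\hat{\lambda})}V_{\hat{M}^{0}}$, which is the claimed expression (matching the notation $V_{\hat{M}}^{0}$ of the statement, $V_{\hat{M}}^{0}=V_{\hat{M}^{0}}$ from Definition \ref{repswithmon}). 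The same reasoning applied to the $\mu$-filtration of part $(ii)$ gives $\bigoplus_{\hat{\lambda}\vdash\infty}\bigoplus_{\hat{M}\in\operatorname{SSYT}_{\mu}(\hat{\lambda})}V_{\hat{M}^{0}}$, and one may optionally note that summing these over all contents $\mu$ of sum $d$ recovers the $\mathbb{Q}[\mathbf{x}_{\infty}]_{d}$ statement, consistently with part $(v)$ of Lemma \ref{propinf}.

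I do not expect any genuine obstacle here: the corollary is essentially a bookkeeping consequence of Proposition \ref{filtprop}, and the only point requiring a word of care is the well-definedness of the semi-simplification, i.e.\ that the multiset of composition factors of a module with a finite composition series is independent of the series (Jordan--Hölder). That is standard, but since $S_{\infty}$ and $S_{\mathbb{N}}$ are infinite groups I would remark that it applies verbatim because we only ever deal with modules admitting a finite composition series, to which the Jordan--Hölder theorem applies over any group. One small thing to verify explicitly is that the graded pieces being completely reducible is exactly what lets us refine to a composition series and read off the factors without further computation; this is immediate from the finiteness of the direct sums noted above. All of this is conditional on Conjecture \ref{polsincn}, as already flagged before Definition \ref{Qxinfdf}, inherited through Proposition \ref{filtprop}.

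\begin{proof}
By part $(i)$ of Proposition \ref{filtprop}, the representation $\mathbb{Q}[\mathbf{x}_{\infty}]_{d}$ carries the finite increasing filtration
\[\{0\}=\mathbb{Q}[\mathbf{x}_{\infty}]_{d}^{-1}\subseteq\mathbb{Q}[\mathbf{x}_{\infty}]_{d}^{0}\subseteq\cdots\subseteq\mathbb{Q}[\mathbf{x}_{\infty}]_{d}^{d}=\mathbb{Q}[\mathbf{x}_{\infty}]_{d}\]
by sub-representations of $S_{\infty}$ and of $S_{\mathbb{N}}$, whose $f$th graded piece $\mathbb{Q}[\mathbf{x}_{\infty}]_{d}^{f}\big/\mathbb{Q}[\mathbf{x}_{\infty}]_{d}^{f-1}$ is isomorphic to $\bigoplus_{\hat{\lambda}\vdash\infty}\bigoplus_{\hat{M}\in\operatorname{SSYT}_{d}(\hat{\lambda}),\ f_{\hat{M}}=f}V_{\hat{M}^{0}}$. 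Each such graded piece is a finite direct sum of irreducible representations: the sum over $\hat{\lambda}\vdash\infty$ and over $\hat{M}\in\operatorname{SSYT}_{d}(\hat{\lambda})$ is finite as in Proposition \ref{ExtVMlim}, and each $V_{\hat{M}^{0}}$ is irreducible by Theorem \ref{repsinf}. Refining the above filtration by inserting, between $\mathbb{Q}[\mathbf{x}_{\infty}]_{d}^{f-1}$ and $\mathbb{Q}[\mathbf{x}_{\infty}]_{d}^{f}$, the preimages of the partial direct sums of the irreducible summands of the $f$th graded piece, one by one, produces a finite composition series of $\mathbb{Q}[\mathbf{x}_{\infty}]_{d}$. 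The same construction applied to part $(ii)$ of Proposition \ref{filtprop} yields a finite composition series of $\mathbb{Q}[\mathbf{x}_{\infty}]_{\mu}$ for every content $\mu$ of sum $d$.

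Since $\mathbb{Q}[\mathbf{x}_{\infty}]_{d}$ admits a finite composition series, its semi-simplification is well-defined (by the Jordan--Hölder theorem, which applies to any module with a finite composition series), and it is the direct sum of the composition factors; equivalently, it is the direct sum of the semi-simplifications of the graded pieces of any filtration by sub-representations. Using the filtration of part $(i)$ of Proposition \ref{filtprop}, and noting that its graded pieces are already completely reducible, we obtain
\[\bigoplus_{f=0}^{d}\Bigl(\mathbb{Q}[\mathbf{x}_{\infty}]_{d}^{f}\big/\mathbb{Q}[\mathbf{x}_{\infty}]_{d}^{f-1}\Bigr)\cong\bigoplus_{f=0}^{d}\bigoplus_{\hat{\lambda}\vdash\infty}\bigoplus_{\substack{\hat{M}\in\operatorname{SSYT}_{d}(\hat{\lambda})\\ f_{\hat{M}}=f}}V_{\hat{M}^{0}}\cong\bigoplus_{\hat{\lambda}\vdash\infty}\bigoplus_{\hat{M}\in\operatorname{SSYT}_{d}(\hat{\lambda})}V_{\hat{M}^{0}},\]
where the last isomorphism holds because every $\hat{M}\in\operatorname{SSYT}_{d}(\hat{\lambda})$ has exactly one value $f_{\hat{M}}\in\{0,\dots,d\}$, so the triple sum reorganizes into the single sum over all $\hat{M}\in\operatorname{SSYT}_{d}(\hat{\lambda})$. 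Recalling that $V_{\hat{M}}^{0}=V_{\hat{M}^{0}}$ by Definition \ref{repswithmon}, this is the asserted form for $\mathbb{Q}[\mathbf{x}_{\infty}]_{d}$. Applying the identical argument to the filtration of part $(ii)$ of Proposition \ref{filtprop} gives the semi-simplification $\bigoplus_{\hat{\lambda}\vdash\infty}\bigoplus_{\hat{M}\in\operatorname{SSYT}_{\mu}(\hat{\lambda})}V_{\hat{M}^{0}}$ of $\mathbb{Q}[\mathbf{x}_{\infty}]_{\mu}$. This completes the proof of the corollary.
\end{proof}
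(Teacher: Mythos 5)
Your proposal is correct and follows exactly the paper's route: the paper likewise deduces the corollary directly from Proposition \ref{filtprop}, observing that a finite filtration with completely reducible quotients is equivalent to a finite composition series and that the semi-simplification is the direct sum of those quotients. Your version merely spells out the refinement step and the collapsing of the sum over $f$, which the paper leaves implicit.
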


\begin{proof}
The existence of a finite composition series for a representation is equivalent to a filtration on that representation in which the quotients are completely reducible, and the semi-simplification is isomorphic to the direct sum of the completely reducible sub-quotients of the filtration. The result thus follows directly from Proposition \ref{filtprop}. This proves the corollary.
\end{proof}
While $V_{\hat{M}}^{0}$ is isomorphic to the original representation $V_{\hat{M}}$ for every $M$ in $\operatorname{SSYT}(\hat{\lambda})$, and the latter keeps track on the content of $\hat{M}$ and its degree is $\Sigma(\hat{M})$, we used $V_{\hat{M}}^{0}$ in Corollary \ref{semisimp} as these sub-representations of $\tilde{\Lambda}$ are the ones that are contained in $\mathbb{Q}[\mathbf{x}_{\infty}]$.

\medskip

We now obtain the natural property of the filtrations from Proposition \ref{filtprop}.
\begin{thm}
The quotients $\mathbb{Q}[\mathbf{x}_{\infty}]_{d}^{f}\big/\mathbb{Q}[\mathbf{x}_{\infty}]_{d}^{f-1}$ and $\mathbb{Q}[\mathbf{x}_{\infty}]_{\mu}^{f}\big/\mathbb{Q}[\mathbf{x}_{\infty}]_{\mu}^{f-1}$ from Proposition \ref{filtprop} are the maximal completely reducible representations inside $\mathbb{Q}[\mathbf{x}_{\infty}]_{d}\big/\mathbb{Q}[\mathbf{x}_{\infty}]_{d}^{f-1}$ and $\mathbb{Q}[\mathbf{x}_{\infty}]_{\mu}\big/\mathbb{Q}[\mathbf{x}_{\infty}]_{\mu}^{f-1}$ respectively. \label{filtQxinf}
\end{thm}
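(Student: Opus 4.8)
The plan is to combine the explicit description of the quotient from Proposition~\ref{filtprop} with the maximality statement for the bottom piece established in Theorem~\ref{compred}, by passing to finite truncations. First I would recall that Proposition~\ref{filtprop} already identifies $\mathbb{Q}[\mathbf{x}_{\infty}]_{d}^{f}\big/\mathbb{Q}[\mathbf{x}_{\infty}]_{d}^{f-1}$ as $\bigoplus_{\hat{\lambda}\vdash\infty}\bigoplus_{\hat{M}\in\operatorname{SSYT}_{d}(\hat{\lambda}),\ f_{\hat{M}}=f}V_{\hat{M}^{0}}$, which is completely reducible by Theorem~\ref{repsinf}; so the only thing to prove is maximality, i.e. that any irreducible sub-representation $U$ of the quotient $\mathbb{Q}[\mathbf{x}_{\infty}]_{d}\big/\mathbb{Q}[\mathbf{x}_{\infty}]_{d}^{f-1}$ is already contained in $\mathbb{Q}[\mathbf{x}_{\infty}]_{d}^{f}\big/\mathbb{Q}[\mathbf{x}_{\infty}]_{d}^{f-1}$.

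The core step is a lifting argument. Take $U$ as above, pick a nonzero class represented by some $F\in\mathbb{Q}[\mathbf{x}_{\infty}]_{d}$, and choose $n>2d$ with $F\in\mathbb{Q}[\mathbf{x}_{n}]_{d}$. The $S_{n}$-span of $F$ is a finite-dimensional subspace of $\mathbb{Q}[\mathbf{x}_{n}]_{d}$, and I would look at its image in $\mathbb{Q}[\mathbf{x}_{n}]_{d}\big/\mathbb{Q}[\mathbf{x}_{n}]_{d}^{f-1}$. By Theorem~\ref{FMTdecom} (and Definition~\ref{repswithmon}) this quotient is $\bigoplus_{\lambda\vdash n}\bigoplus_{M\in\operatorname{SSYT}_{d}(\lambda),\ f_M=f}V_M$, which is completely reducible over $S_n$; hence the image of the $S_n$-span of $F$ is completely reducible over $S_n$ and supported on tableaux $M$ with $f_M=f$. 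Now I invoke Corollary~\ref{Ffdef}: the image of $F$ in $\mathbb{Q}[\mathbf{x}_{n}]_{d}\big/\mathbb{Q}[\mathbf{x}_{n}]_{d}^{f-1}$ being nonzero and supported on $f_M=f$ tableaux means precisely $f_F=f$, i.e. $F\in\mathbb{Q}[\mathbf{x}_{\infty}]_{d}^{f}$. (If the image were $0$ the class of $F$ in $U$ would be zero, contradicting $U\neq 0$.) Running this over all representatives $F$ of all classes in $U$ — using that $\mathbb{Q}[\mathbf{x}_{\infty}]_{d}^{f}$ is a sub-representation by Proposition~\ref{filtprop}(i), and that $U$ is generated by one such class under $S_{\infty}$, whose action respects the filtration — forces $U\subseteq\mathbb{Q}[\mathbf{x}_{\infty}]_{d}^{f}\big/\mathbb{Q}[\mathbf{x}_{\infty}]_{d}^{f-1}$. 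This gives maximality for $\mathbb{Q}[\mathbf{x}_{\infty}]_{d}$, and the content-graded version for $\mathbb{Q}[\mathbf{x}_{\infty}]_{\mu}$ follows by intersecting with $\mathbb{Q}[\mathbf{x}_{\infty}]_{\mu}$ exactly as in the passage from Theorem~\ref{compred}(i) to (ii), using Remark~\ref{mudetf} to see that the condition $f_{\hat M}=f$ cuts out a union of isotypical components content by content.

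The subtle point I would be most careful about is the claim that ``$f_F=f$ for every nonzero representative $F$ of a class in $U$'' — since a class in the quotient $\mathbb{Q}[\mathbf{x}_{\infty}]_{d}\big/\mathbb{Q}[\mathbf{x}_{\infty}]_{d}^{f-1}$ can be represented by many $F$'s differing by elements of $\mathbb{Q}[\mathbf{x}_{\infty}]_{d}^{f-1}$, and a priori some such $F$ might have $f_F>f$. The right formulation is: a class is in the subspace $\mathbb{Q}[\mathbf{x}_{\infty}]_{d}^{f}\big/\mathbb{Q}[\mathbf{x}_{\infty}]_{d}^{f-1}$ iff it has \emph{some} representative with $f_F\le f$; so what I actually need is that every nonzero class of $U$ has a representative in $\mathbb{Q}[\mathbf{x}_{\infty}]_{d}^{f}$. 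For this I would pick one nonzero generator class $[F_0]$ of $U$, show $[F_0]$ has a representative $F_0$ with $f_{F_0}\le f$ by the truncation argument above (if the minimal such value were $>f$ we could still apply the argument at the minimal value $f'$ and get a contradiction, since the image of $F_0$ in $\mathbb{Q}[\mathbf{x}_{n}]_{d}\big/\mathbb{Q}[\mathbf{x}_{n}]_{d}^{f-1}$ would then be supported on $f_M\ge f'>f$ tableaux, hence \emph{not} in the $f$-graded piece — but it lands in $\mathbb{Q}[\mathbf{x}_{n}]_{d}\big/\mathbb{Q}[\mathbf{x}_{n}]_{d}^{f-1}$ which, modulo one more step, I must check is concentrated in degrees $\ge f$ of the $f$-filtration; here Lemma~\ref{conjeq} and part~(i) of Proposition~\ref{filtprop} keep the bookkeeping consistent), and then propagate along the $S_{\infty}$-action using that $\mathbb{Q}[\mathbf{x}_{\infty}]_{d}^{f}$ is $S_{\infty}$-stable. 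Once $[F_0]$ lies in the $f$-graded piece and generates $U$, all of $U$ does, because that piece is a sub-representation. The main obstacle, then, is phrasing the representative-selection cleanly enough that the finite-$n$ complete reducibility (Theorem~\ref{FMTdecom}) and the $n$-independence of $f_F$ (Corollary~\ref{Ffdef}) interlock without circularity; the computation itself is routine given those inputs.
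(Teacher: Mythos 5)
There is a genuine gap at the heart of your argument, namely in the step where you conclude $f_F=f$ from the finite truncation. You assert that the quotient $\mathbb{Q}[\mathbf{x}_{n}]_{d}\big/\mathbb{Q}[\mathbf{x}_{n}]_{d}^{f-1}$ is $\bigoplus_{\lambda\vdash n}\bigoplus_{M,\ f_M=f}V_M$ and that the image of the $S_n$-span of $F$ is therefore ``supported on tableaux $M$ with $f_M=f$''; but that quotient is in fact $\bigoplus_{\lambda\vdash n}\bigoplus_{M,\ f_M\geq f}V_M$, and complete reducibility over the finite group $S_n$ is automatic for every subspace of $\mathbb{Q}[\mathbf{x}_n]_d$, so it places no constraint whatsoever on which $f_M$-levels occur in $\operatorname{supp}_n F$. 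A representative $F$ of a nonzero class of $U$ only satisfies $f_F\geq f$ a priori, and nothing in your argument rules out $f_F=\hat f>f$: your fallback in the last paragraph (``apply the argument at the minimal value $f'$'') merely restates that $U$ would then not lie in the $f$-graded piece, which is the conclusion to be excluded, not a contradiction you have derived. Corollary \ref{Ffdef} only gives well-definedness of $f_F$, not its value.

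The missing mechanism — and the reason the theorem is conditional on Conjecture \ref{polsincn} — is the paper's use of the maps $p_n^{\hat f}$ and $q_n^{\hat f}$. Writing $F=\sum_{\tilde f}F_{\tilde f}$ with $\hat f=f_F$ and passing from $n$ to $n+1$, parts $(ii)$ and $(iii)$ of the conjecture guarantee that \emph{both} $p_n^{\hat f}F_{\hat f}$ and $q_n^{\hat f}F_{\hat f}+p_n^{\hat f-1}F_{\hat f-1}$ are nonzero, so the $S_\infty$-span of $F$ projects nontrivially onto irreducible constituents of the two consecutive graded pieces $\mathbb{Q}[\mathbf{x}_{\infty}]_{\mu}^{\hat f}\big/\mathbb{Q}[\mathbf{x}_{\infty}]_{\mu}^{\hat f-1}$ and $\mathbb{Q}[\mathbf{x}_{\infty}]_{\mu}^{\hat f-1}\big/\mathbb{Q}[\mathbf{x}_{\infty}]_{\mu}^{\hat f-2}$. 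By Remark \ref{mudetf} the content $\mu$ together with the shape determines the $f$-value, so these two constituents have different shapes and, by part $(v)$ of Theorem \ref{repsinf}, are non-isomorphic; if $\hat f>f$ the irreducible $U$ would surject onto both, which is impossible. Without this ``leakage to level $\hat f-1$'' argument (or some substitute for it), the maximality claim does not follow, so the proposal as written does not close.
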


\begin{proof}
As the case $f=0$ holds unconditionally by parts $(i)$ and $(ii)$ of Theorem \ref{compred}, we assume that $f>0$. The expressions for the quotients in Proposition \ref{filtprop} implies that they are completely reducible. Noting that the intersection of $\mathbb{Q}[\mathbf{x}_{\infty}]_{d}^{f}\big/\mathbb{Q}[\mathbf{x}_{\infty}]_{d}^{f-1}$ with $\mathbb{Q}[\mathbf{x}_{\infty}]_{\mu}\big/\mathbb{Q}[\mathbf{x}_{\infty}]_{\mu}^{f-1}$ is $\mathbb{Q}[\mathbf{x}_{\infty}]_{\mu}^{f}\big/\mathbb{Q}[\mathbf{x}_{\infty}]_{\mu}^{f-1}$, and the former is the direct sum of the latter over contents $\mu$ of sum $d$, we deduce that any sub-representation of $\mathbb{Q}[\mathbf{x}_{\infty}]_{\mu}^{f}\big/\mathbb{Q}[\mathbf{x}_{\infty}]_{\mu}^{f-1}$ is contained in $\mathbb{Q}[\mathbf{x}_{\infty}]_{d}^{f}\big/\mathbb{Q}[\mathbf{x}_{\infty}]_{d}^{f-1}$, and the intersection of an irreducible sub-representation $U$ of $\mathbb{Q}[\mathbf{x}_{\infty}]_{d}^{f}\big/\mathbb{Q}[\mathbf{x}_{\infty}]_{d}^{f-1}$ with $\mathbb{Q}[\mathbf{x}_{\infty}]_{\mu}^{f}\big/\mathbb{Q}[\mathbf{x}_{\infty}]_{\mu}^{f-1}$ is either $\{0\}$ or irreducible there. Hence, as in the proof of Theorem \ref{compred}, the two assertions imply one another.

We thus let $U$, as in that theorem, be an irreducible representation inside $\mathbb{Q}[\mathbf{x}_{\infty}]_{\mu}\big/\mathbb{Q}[\mathbf{x}_{\infty}]_{\mu}^{f-1}$ for some content $\mu$ of sum $d$, and we need to show that it is contained in the quotient in question. So take a non-zero element of $U$, which is thus the image of some polynomial $F\in\mathbb{Q}[\mathbf{x}_{\infty}]_{d}$ modulo $\mathbb{Q}[\mathbf{x}_{\infty}]_{\mu}^{f-1}$. The irreducibility of $U$ implies that the image of the space $\mathbb{Q}[S_{\infty}]F$ (or equivalently $\mathbb{Q}[S_{\mathbb{N}}]F$) in $\mathbb{Q}[\mathbf{x}_{\infty}]_{\mu}\big/\mathbb{Q}[\mathbf{x}_{\infty}]_{\mu}^{f-1}$ is $U$.

Write $\hat{f}=f_{F}$, which satisfies $\hat{f} \geq f$ since $F$ does not vanish modulo $\mathbb{Q}[\mathbf{x}_{\infty}]_{\mu}^{f-1}$, and we can take $n$ such that $F$ lies in $\mathbb{Q}[\mathbf{x}_{n}]_{\mu}^{\hat{f}}\setminus\mathbb{Q}[\mathbf{x}_{n}]_{\mu}^{\hat{f}-1}$, via Corollary \ref{Ffdef}. We decompose $F$ via Definition \ref{nsup} as $\sum_{M\in\operatorname{supp}_{n}F}F_{M}$, and we write it as $\sum_{\tilde{f}=0}^{\hat{f}}F_{\tilde{f}}$ where $F_{\tilde{f}}:=\sum_{M\in\operatorname{supp}_{n}F,\ f_{M}=\tilde{f}}F_{M}$.

We then view $F$ as an element of $\mathbb{Q}[\mathbf{x}_{n+1}]_{\mu}$, and examine its decomposition. Using the maps from Conjecture \ref{polsincn}, we get three expressions. One is $p_{n}^{\hat{f}}F_{\hat{f}}\in\bigoplus_{\nu \vdash n}\bigoplus_{M\in\operatorname{SSYT}_{d}(\nu),\ f_{N}=\hat{f}}V_{N}$. The second one is the sum of $q_{n}^{\hat{f}}F_{\hat{f}}$ and $p_{n}^{\hat{f}-1}F_{\hat{f}-1}$, both lying in $\bigoplus_{\nu \vdash n}\bigoplus_{M\in\operatorname{SSYT}_{d}(\nu),\ f_{N}=\hat{f}}V_{N}$. The third expression arises similarly from the remaining parts.

But from part $(i)$ of Conjecture \ref{polsincn} we deduce that the third expression lies in $\mathbb{Q}[\mathbf{x}_{n+1}]_{\mu}^{\hat{f}-2}$ hence in $\mathbb{Q}[\mathbf{x}_{\infty}]_{\mu}^{\hat{f}-2}$. Moreover, part $(ii)$ of that conjecture implies that the first expression is non-zero, and the second one does not vanish as well via part $(iii)$ there. Moreover, the space generated by $F$ over $\mathbb{Q}[S_{\infty}]$ or $\mathbb{Q}[S_{\mathbb{N}}]$ contained in the direct sum of the one generated by $p_{n}^{\hat{f}}F_{\hat{f}}$, the one generated by $q_{n}^{\hat{f}}F_{\hat{f}}+p_{n}^{\hat{f}-1}F_{\hat{f}-1}$, and the subspace arising from the third expression, which is contained in $\mathbb{Q}[\mathbf{x}_{\infty}]_{\mu}^{\hat{f}-2}$. There are also injective maps from $\mathbb{Q}[S_{\infty}]F=\mathbb{Q}[S_{\mathbb{N}}]F$ into $\mathbb{Q}[S_{\infty}]p_{n}^{\hat{f}}F_{\hat{f}}=\mathbb{Q}[S_{\mathbb{N}}]p_{n}^{\hat{f}}F_{\hat{f}}$ and the space associated with $q_{n}^{\hat{f}}F_{\hat{f}}+p_{n}^{\hat{f}-1}F_{\hat{f}-1}$.

We now observe that the space generated by $p_{n}^{\hat{f}}F_{\hat{f}}$ has a non-zero image in $\mathbb{Q}[\mathbf{x}_{\infty}]_{\mu}^{\hat{f}}\big/\mathbb{Q}[\mathbf{x}_{\infty}]_{\mu}^{\hat{f}-1}$, and the one arising from $q_{n}^{\hat{f}}F_{\hat{f}}+p_{n}^{\hat{f}-1}F_{\hat{f}-1}$ has a non-zero image in $\mathbb{Q}[\mathbf{x}_{\infty}]_{\mu}^{\hat{f}-1}\big/\mathbb{Q}[\mathbf{x}_{\infty}]_{\mu}^{\hat{f}-2}$. Combining this with the injective maps from $\mathbb{Q}[S_{\infty}]F=\mathbb{Q}[S_{\mathbb{N}}]F$, and using the fact that these quotients are completely reducible (by Proposition \ref{filtprop}), we can find one irreducible component in each of these quotients onto which the space generated by $F$ projects non-trivially.

But Proposition \ref{filtprop} shows that the first quotient is the direct sum of representations $V_{\hat{M}^{0}}$ for infinite Ferrers diagrams $\hat{\lambda}\vdash\infty$ and elements $\hat{M}\in\operatorname{SSYT}_{\mu}(\hat{\lambda})$ with $f_{\hat{M}}=\hat{f}$, while the second one is a similar direct sum, but now with the condition $f_{\hat{M}}=\hat{f}-1$. From the previous paragraph we obtain tableaux $\hat{M}_{\hat{f}}$ and $\hat{M}_{\hat{f}-1}$, whose $f$-values are $\hat{f}$ and $\hat{f}-1$ respectively, such that the space generated by $F$ projects non-trivially onto both irreducible sub-representations $V_{\hat{M}_{\hat{f}}^{0}}\subseteq\mathbb{Q}[\mathbf{x}_{\infty}]_{\mu}^{\hat{f}}\big/\mathbb{Q}[\mathbf{x}_{\infty}]_{\mu}^{\hat{f}-1}$ and $V_{\hat{M}_{\hat{f}-1}^{0}}\subseteq\mathbb{Q}[\mathbf{x}_{\infty}]_{\mu}^{\hat{f}-1}\big/\mathbb{Q}[\mathbf{x}_{\infty}]_{\mu}^{\hat{f}-2}$ (using part $(iv)$ of Theorem \ref{repsinf}).

We now recall from Remark \ref{mudetf} that $\mu$ and the shape $\operatorname{sh}(\hat{M})$ for any infinite semi-standard young tableau $\hat{M}$ determines the value of $f_{\hat{M}}$. Since this value is different for $\hat{M}_{\hat{f}}$ and $\hat{M}_{\hat{f}-1}$, their shapes must be different. As part $(iii)$ of Theorem \ref{repsinf} shows that the isomorphism type of $V_{\hat{M}}$, hence also of its isomorph $V_{\hat{M}^{0}}$, is $\mathcal{S}^{\hat{\lambda}}$ where $\hat{\lambda}=\operatorname{sh}(\hat{M})$, we deduce, via part $(v)$ of Theorem \ref{repsinf}, that the irreducible representations $V_{\hat{M}_{\hat{f}}^{0}}$ and $V_{\hat{M}_{\hat{f}-1}^{0}}$ are not isomorphic.

We saw that $\hat{f} \geq f$, and we now assume that this inequality is strict. Then $\mathbb{Q}[\mathbf{x}_{\infty}]_{\mu}^{f-2}$ is contained in $\mathbb{Q}[\mathbf{x}_{\infty}]_{\mu}^{\hat{f}-2}$, and we can replace, in our projections, the space generated by $F$ by its image in $\mathbb{Q}[\mathbf{x}_{\infty}]_{\mu}\big/\mathbb{Q}[\mathbf{x}_{\infty}]_{\mu}^{f-1}$. But our assumption was that this image is the irreducible representation $U$, which thus cannot project non-trivially onto two non-isomorphic irreducible representations. Hence the case $\hat{f}>f$ cannot occur.

But this proves that $\hat{f}=f$, meaning that $F$ lies in $\mathbb{Q}[\mathbf{x}_{\infty}]_{\mu}^{f}$, and with it we get $U\subseteq\mathbb{Q}[\mathbf{x}_{\infty}]_{\mu}^{f}\big/\mathbb{Q}[\mathbf{x}_{\infty}]_{\mu}^{f-1}$ (note that this case does not lead to the same contradiction, because now the representation $V_{\hat{M}_{\hat{f}-1}^{0}}$ vanishes). Therefore our arbitrary irreducible sub-representation $U$ of $\mathbb{Q}[\mathbf{x}_{\infty}]_{\mu}\big/\mathbb{Q}[\mathbf{x}_{\infty}]_{\mu}^{f-1}$ lies in $\mathbb{Q}[\mathbf{x}_{\infty}]_{\mu}^{f}\big/\mathbb{Q}[\mathbf{x}_{\infty}]_{\mu}^{f-1}$, which is the desired maximality as a completely reducible sub-representation of the former quotient. This proves the theorem.
\end{proof}
The proof of Theorem \ref{filtQxinf} exemplifies how for every polynomial $F\in\mathbb{Q}[\mathbf{x}_{\infty}]_{d}$ for which $f_{F}>0$, the representation that it generates cannot be completely reducible, as Theorem \ref{compred} states. The proof in this case is harder, and requires a conjecture, since we no longer work with explicit polynomials, but rather inside quotients, where there are many choices for representatives.

\medskip

We now wish to obtain a similar filtration for $\tilde{\Lambda}_{d}$ and $\tilde{\Lambda}_{\mu}$. For this we adopt the notation $|\alpha|$ for the degree of the monomial element $m_{\alpha}\in\Lambda$ (which is the sum of the entries of $\alpha$). We recall from the proof of Theorem \ref{compred} the decomposition of $\tilde{\Lambda}$ from Remark \ref{polstimessym} with respect to the monomial basis of $\Lambda$, and combining it with the grading yields $\tilde{\Lambda}_{d}=\bigoplus_{e=0}^{d}\bigoplus_{|\alpha|=d-e}\mathbb{Q}[\mathbf{x}_{\infty}]_{e}m_{\alpha}$. The same proof also constructed, for every $\alpha$, the $\mathbb{Q}[\mathbf{x}_{\infty}]$-module homomorphisms $\pi_{\alpha}:\tilde{\Lambda}\to\mathbb{Q}[\mathbf{x}_{\infty}]$, and showed that $F=\sum_{\alpha}\pi_{\alpha}(F)m_{\alpha}$ for every $F\in\Lambda$.

We now make the following definition.
\begin{defn}
Fix $d\geq0$, $f\geq-1$, $\hat{\lambda}\vdash\infty$, an element $\hat{M}\in\operatorname{SSYT}_{d}(\hat{\lambda})$, a content $\mu$ of sum $d$, and some $k\geq1$.
\begin{enumerate}[$(i)$]
\item We set $\tilde{\Lambda}_{d}^{f}$ to be the space of those elements $F=\sum_{\alpha}\pi_{\alpha}(F)m_{\alpha}$ of $\tilde{\Lambda}_{d}$ for which the multiplier $\pi_{\alpha}(F)\in\mathbb{Q}[\mathbf{x}_{\infty}]_{e}$ is in $\mathbb{Q}[\mathbf{x}_{\infty}]_{e}^{f}$ for every such $e$ and $\alpha$.
\item We write $\tilde{\Lambda}_{\mu}^{f}$ for the intersection of $\tilde{\Lambda}_{d}^{f}$ with $\tilde{\Lambda}_{\mu}$.
\item We denote by $c_{\hat{M},f}$ the number of sub-multi-sets of the index of $m_{\hat{M}}$ that have size $f$.
\item We also set $c_{\hat{M}}:=\sum_{f=0}^{d}c_{\hat{M},f}$.
\item We write $R_{\infty,k,d}^{f}$ for the image of $\tilde{\Lambda}_{d}^{f}$ under the projection from $\tilde{\Lambda}_{d}$ onto $R_{\infty,k}$ from Definition \ref{defRinfk}.
\end{enumerate} \label{Lambdafilt}
\end{defn}
Also in Definition \ref{Lambdafilt}, the case $f=0$ reproduces the unconditionally defined $\tilde{\Lambda}_{d}^{0}$, $\tilde{\Lambda}_{\mu}^{0}$, and $R_{\infty,k,d}^{0}$ from parts $(iii)$, $(iv)$, and $(v)$ of Theorem \ref{compred} (with the relation between the first and the third holding, by said part $(v)$, in this case), but the spaces with $f>0$ require Conjecture \ref{polsincn} to be well-defined.

\begin{rmk}
The number $c_{\hat{M},f}$ from Definition \ref{Lambdafilt} is, equivalently, the number of ways to attach to each $h>0$ a multiplicity between 0 and the one that it has in $\hat{M}$ via Definition \ref{tabinf} such that the sum of these multiplicities is $f$. Note that it is positive if and only if $f$ lies between 0 and the size $f_{\hat{M}}$ of the index of $m_{\hat{M}}$ as a multi-set, and if the content $\mu$ of $\hat{M}$ consists of $\ell$ positive entries of sum $d$ then we have $f_{\hat{M}}\leq\ell \leq d$. Therefore the sum $c_{\hat{M}}$ counts all the possible sub-multi-sets of the index of $m_{\hat{M}}$, or equivalently the number of ways to attach a non-negative multiplicity for every $h>0$ in a manner that is bounded its multiplicity in $\hat{M}$, without the sum condition. \label{chatMf}
\end{rmk}

\medskip

Here is the analogue of Proposition \ref{filtprop} and Theorem \ref{filtQxinf} for $\tilde{\Lambda}_{d}$, $\tilde{\Lambda}_{\mu}$, and $R_{\infty,k,d}$, where for the latter we recall the set $\operatorname{SSYT}_{d}^{k}(\hat{\lambda})$ from Definition \ref{Adinf}.
\begin{thm}
Let $d\geq0$ be an integer, and consider a content $\mu$ involving $\ell$ positive integers whose sum is $d$, and an integer $k\geq2$.
\begin{enumerate}[$(i)$]
\item The space $\tilde{\Lambda}_{d}^{f}$ is a sub-representation of $\tilde{\Lambda}_{d}$ over $S_{\infty}$ and $S_{\mathbb{N}}$ for every $f\geq-1$, with $\tilde{\Lambda}_{d}^{-1}=\{0\}$, $\tilde{\Lambda}_{d}^{d}=\tilde{\Lambda}_{d}$, and $\tilde{\Lambda}_{f-1}\subsetneq\tilde{\Lambda}_{f}$ for every $0 \leq f \leq d$. The quotient $\tilde{\Lambda}_{d}^{f}\big/\tilde{\Lambda}_{d}^{f-1}$ is the maximal completely reducible representation inside $\tilde{\Lambda}_{d}\big/\tilde{\Lambda}_{d}^{f-1}$, and it is isomorphic to $\bigoplus_{\hat{\lambda}\vdash\infty}\bigoplus_{\hat{M}\in\operatorname{SSYT}_{d}(\hat{\lambda})}\tilde{V}_{\hat{M}}^{c_{\hat{M},f}}$.
\item The representations $\{\tilde{\Lambda}_{\mu}^{f}\}_{f=-1}^{\ell}$ have similar properties, in which the quotient $\tilde{\Lambda}_{\mu}^{f}\big/\tilde{\Lambda}_{\mu}^{f-1}$ is isomorphic to $\bigoplus_{\hat{\lambda}\vdash\infty}\bigoplus_{\hat{M}\in\operatorname{SSYT}_{\mu}(\hat{\lambda})}V_{\hat{M}}^{c_{\hat{M},f}}$
\item Inside the homogenous part $R_{\infty,k,d}$ of the quotient $R_{\infty,k}$, the representations $\{R_{\infty,k,d}^{f}\}_{f=-1}^{d}$ form a filtration with the same properties, with the quotient $R_{\infty,k,d}^{f}/R_{\infty,k,d}^{f-1}$ being isomorphic to $\bigoplus_{\hat{\lambda}\vdash\infty}\bigoplus_{\hat{M}\in\operatorname{SSYT}_{d}^{k}(\hat{\lambda})}V_{\hat{M}}^{c_{\hat{M},f}}$.
\end{enumerate} \label{filtrations}
\end{thm}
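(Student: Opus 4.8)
The strategy is to lift the whole picture for $\mathbb{Q}[\mathbf{x}_{\infty}]_{d}$ (Proposition \ref{filtprop} and Theorem \ref{filtQxinf}) to $\tilde{\Lambda}_{d}$ through the decomposition $\tilde{\Lambda}_{d}=\bigoplus_{e=0}^{d}\bigoplus_{|\alpha|=d-e}\mathbb{Q}[\mathbf{x}_{\infty}]_{e}m_{\alpha}$ and the projections $\pi_{\alpha}$, exactly as in the proof of parts $(iii)$--$(iv)$ of Theorem \ref{compred}. First I would record that $\tilde{\Lambda}_{d}^{f}$ is a sub-representation: since each $\pi_{\alpha}$ is a $\mathbb{Q}[\mathbf{x}_{\infty}]$-module map that commutes with the $S_{\mathbb{N}}$-action (the action affects only the coefficients $\pi_{\alpha}(F)$, by Remark \ref{polstimessym}), and $\mathbb{Q}[\mathbf{x}_{\infty}]_{e}^{f}$ is $S_{\mathbb{N}}$-stable by Proposition \ref{filtprop}, the condition $\pi_{\alpha}(F)\in\mathbb{Q}[\mathbf{x}_{\infty}]_{e}^{f}$ for all $\alpha$ is preserved by the group. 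The endpoint statements $\tilde{\Lambda}_{d}^{-1}=\{0\}$ and $\tilde{\Lambda}_{d}^{d}=\tilde{\Lambda}_{d}$ follow from the corresponding facts for $\mathbb{Q}[\mathbf{x}_{\infty}]_{e}$ in Proposition \ref{filtprop} applied to each slice $e\le d$; strict monotonicity $\tilde{\Lambda}_{d}^{f-1}\subsetneq\tilde{\Lambda}_{d}^{f}$ comes from choosing, via the construction in the proof of Proposition \ref{filtprop}, a polynomial $G\in\mathbb{Q}[\mathbf{x}_{\infty}]_{d}^{f}\setminus\mathbb{Q}[\mathbf{x}_{\infty}]_{d}^{f-1}$ (take $e=d$, $\alpha$ empty) which already sits in $\tilde{\Lambda}_{d}^{f}\setminus\tilde{\Lambda}_{d}^{f-1}$.

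The heart of the argument is identifying the quotient $\tilde{\Lambda}_{d}^{f}/\tilde{\Lambda}_{d}^{f-1}$. Using $F=\sum_{\alpha}\pi_{\alpha}(F)m_{\alpha}$ and the fact that the $m_{\alpha}$ are a free basis, the quotient splits as $\bigoplus_{e}\bigoplus_{|\alpha|=d-e}\bigl(\mathbb{Q}[\mathbf{x}_{\infty}]_{e}^{f}/\mathbb{Q}[\mathbf{x}_{\infty}]_{e}^{f-1}\bigr)m_{\alpha}$, and Proposition \ref{filtprop} identifies the $e$-th summand with $\bigoplus_{\hat\lambda}\bigoplus_{\hat N\in\operatorname{SSYT}_{e}(\hat\lambda),\,f_{\hat N}=f}V_{\hat N^{0}}$, each copy twisted by $m_{\alpha}$. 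Now I reorganize this double sum: a pair $(\hat N,\alpha)$ with $\hat N$ of degree $e$ and $\hat N^{0}=\hat N$ (so $f_{\hat N}=f$ forces $\ell(\alpha\text{'s multiset})=f$ once we add $\alpha$ back in) together with $|\alpha|=d-e$ corresponds exactly to an $\hat M\in\operatorname{SSYT}_{d}(\hat\lambda)$ together with a choice of sub-multi-set of size $f$ of the index of $m_{\hat M}$ to serve as $\alpha$; indeed $\hat M$ is reconstructed by putting the entries of $\alpha$ back as first-row multiplicities of $\hat N$, so $\hat N=\hat M^{0}$ and the twist $V_{\hat M^{0}}m_{\alpha}$ is a summand of $\tilde V_{\hat M}$ in the sense of Definition \ref{repswithmon} (note $m_{\alpha}$ need not equal $m_{\hat M}$, but all these twists are isomorphic as representations). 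Counting these choices is precisely $c_{\hat M,f}$ from Definition \ref{Lambdafilt} and Remark \ref{chatMf}, which yields the stated multiplicity $\tilde V_{\hat M}^{c_{\hat M,f}}$. I would spell out the content-refined version for part $(ii)$ by intersecting everything with $\tilde\Lambda_{\mu}$ and using part $(ii)$ of Proposition \ref{filtprop}; here one must use the $V_{\hat M}$-presentation (not $\tilde V_{\hat M}$) so that contents are respected, as flagged after the proof of Proposition \ref{filtprop}.

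For the maximality assertion—that $\tilde\Lambda_{d}^{f}/\tilde\Lambda_{d}^{f-1}$ is the maximal completely reducible sub-representation of $\tilde\Lambda_{d}/\tilde\Lambda_{d}^{f-1}$—the plan is to mimic the proof of Theorem \ref{filtQxinf} rather than that of Theorem \ref{compred}, since we again work inside quotients. Given an irreducible $U\subseteq\tilde\Lambda_{\mu}/\tilde\Lambda_{\mu}^{f-1}$, lift a nonzero element to $F\in\tilde\Lambda_{\mu}$, expand $F=\sum_{\alpha}\pi_{\alpha}(F)m_{\alpha}$, and apply the argument of Theorem \ref{filtQxinf} to each coefficient $\pi_{\alpha}(F)\in\mathbb{Q}[\mathbf{x}_{\infty}]_{e}$: if some $\pi_{\alpha}(F)$ had $f_{\pi_{\alpha}(F)}=\hat f>f$, then passing to $\mathbb{Q}[\mathbf{x}_{n+1}]$ and using the maps $p_{n}^{\hat f},q_{n}^{\hat f},p_{n}^{\hat f-1}$ from Conjecture \ref{polsincn} produces, inside $\mathbb{Q}[S_{\mathbb{N}}]\pi_{\alpha}(F)$, projections onto two non-isomorphic irreducibles $V_{\hat M_{\hat f}^{0}}$ and $V_{\hat M_{\hat f-1}^{0}}$ (shapes distinct by Remark \ref{mudetf}, non-isomorphic by part $(v)$ of Theorem \ref{repsinf}); twisting back by $m_{\alpha}$ and noting $\mathbb{Q}[\mathbf{x}_{\infty}]_{e}^{\hat f-2}m_{\alpha}\subseteq\tilde\Lambda_{\mu}^{\hat f-2}\subseteq\tilde\Lambda_{\mu}^{f-1}$, this forces $\mathbb{Q}[S_{\mathbb{N}}]F$, hence $U$, to project nontrivially onto two non-isomorphic irreducibles modulo $\tilde\Lambda_{\mu}^{f-1}$, contradicting irreducibility of $U$. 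So every $\pi_{\alpha}(F)$ lies in $\mathbb{Q}[\mathbf{x}_{\infty}]_{e}^{f}$, i.e. $F\in\tilde\Lambda_{\mu}^{f}$, giving $U\subseteq\tilde\Lambda_{\mu}^{f}/\tilde\Lambda_{\mu}^{f-1}$; the $\mu$-version implies the $d$-version as in the first paragraph of the proof of Theorem \ref{filtQxinf}. Finally part $(iii)$ follows by projecting everything onto $R_{\infty,k}$: by the argument proving part $(iii)$ of Theorem \ref{images}, the projection kills exactly the $\tilde\Lambda_{\mu}$ with $\mu$ containing an entry $\ge k$ and is an isomorphism on the rest, so the filtration, its quotients, and their maximality all descend, with $\operatorname{SSYT}_{d}(\hat\lambda)$ replaced by $\operatorname{SSYT}_{d}^{k}(\hat\lambda)$ (and $k\ge 2$ is needed only so that $R_{\infty,k,d}$ is nonzero in the relevant degrees, cf. Example \ref{exd1}).

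\textbf{Main obstacle.} I expect the genuinely delicate point to be the bookkeeping in the maximality step: one must run the Theorem \ref{filtQxinf} argument coefficient-by-coefficient in $\alpha$ while keeping track that the ``bad'' irreducibles produced for different $\alpha$ land in the right graded pieces of the $\tilde\Lambda_{\mu}$-filtration after retwisting by $m_{\alpha}$, and that the degree shift $e\mapsto d$ does not disturb the inequalities $\hat f\ge f$. The reorganization of the indexing set $\{(\hat N,\alpha)\}\leftrightarrow\{(\hat M,\text{sub-multiset})\}$ in the identification of the quotient is conceptually simple but must be stated carefully enough that the multiplicity $c_{\hat M,f}$ is visibly correct; this is where Remark \ref{chatMf} does the real work.
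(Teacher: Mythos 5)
Your proposal is correct and its skeleton coincides with the paper's: decompose $\tilde{\Lambda}_{d}$ as $\bigoplus_{e}\bigoplus_{|\alpha|=d-e}\mathbb{Q}[\mathbf{x}_{\infty}]_{e}m_{\alpha}$, apply Proposition \ref{filtprop} slice by slice to identify the graded pieces, reindex to get the multiplicities $c_{\hat{M},f}$, intersect with $\tilde{\Lambda}_{\mu}$ for part $(ii)$, and project onto $R_{\infty,k}$ for part $(iii)$. The one place where you take a genuinely longer route is the maximality step: the paper observes that the maps $\pi_{d,\alpha}^{f}:\tilde{\Lambda}_{d}\big/\tilde{\Lambda}_{d}^{f-1}\to\mathbb{Q}[\mathbf{x}_{\infty}]_{e}\big/\mathbb{Q}[\mathbf{x}_{\infty}]_{e}^{f-1}$ are equivariant and send an irreducible $U$ to something irreducible or zero, so Theorem \ref{filtQxinf} can be quoted as a black box to conclude $\pi_{d,\alpha}^{f}(U)\subseteq\mathbb{Q}[\mathbf{x}_{\infty}]_{e}^{f}\big/\mathbb{Q}[\mathbf{x}_{\infty}]_{e}^{f-1}$, whence $U\subseteq\bigoplus_{e,\alpha}\pi_{d,\alpha}^{f}(U)m_{\alpha}\subseteq\tilde{\Lambda}_{d}^{f}\big/\tilde{\Lambda}_{d}^{f-1}$. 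You instead re-run the $p_{n}^{\hat{f}},q_{n}^{\hat{f}}$ argument of Theorem \ref{filtQxinf} on each coefficient $\pi_{\alpha}(F)$ of a lift $F$; this works (all the maps involved are equivariant, the two non-isomorphic irreducibles you produce survive modulo $\tilde{\Lambda}_{\mu}^{f-1}$ since $\hat{f}-2\geq f-1$, and an irreducible cannot map onto both), but it amounts to re-proving Theorem \ref{filtQxinf} inline. What the paper's shortcut buys is precisely that the conjecture-dependent machinery is quarantined in Theorem \ref{filtQxinf} and never touched again.

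One bookkeeping slip to fix: in your reorganization of the double sum you write that the pair consists of $\hat{N}$ with $\hat{N}^{0}=\hat{N}$ \emph{and} $f_{\hat{N}}=f$, and that $\alpha$ is the sub-multi-set of size $f$; these are inconsistent for $f>0$. In the summand $(\mathbb{Q}[\mathbf{x}_{\infty}]_{e}^{f}/\mathbb{Q}[\mathbf{x}_{\infty}]_{e}^{f-1})m_{\alpha}$ the index $\hat{N}$ carries the size-$f$ sub-multi-set of the multiplicities of $\hat{M}$ (so $f_{\hat{N}}=f$ but $\hat{N}^{0}\neq\hat{N}$ unless $f=0$), the representation contributed is $V_{\hat{N}^{0}}=V_{\hat{M}^{0}}$, and $\alpha$ is the \emph{complementary} multi-set, of size $f_{\hat{M}}-f$. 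Your final count is nevertheless correct because complementation gives $c_{\hat{M},f}=c_{\hat{M},f_{\hat{M}}-f}$ (this is exactly the content of Remark \ref{fcomp}), so the slip does not propagate; but as written the bijection should be stated with the roles of $\hat{N}$ and $\alpha$ straightened out.
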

We took $k\geq2$ in Theorem \ref{filtrations} is because for $R_{\infty,1}=\mathbb{Q}$ the filtration is trivial, and $\operatorname{SSYT}_{d}^{1}(\hat{\lambda})$ is empty for any $d>0$. The unique tableau $\hat{M}$ with $f_{\hat{M}}=d=\Sigma(\hat{M})$ is in $\operatorname{SSYT}_{d}^{k}(\hat{\lambda})$ for the appropriate $\hat{\lambda}$ wherever $k\geq2$, keeping the bound $d$ the correct one in part $(iii)$ there.

\begin{proof}
Since we have $\mathbb{Q}[\mathbf{x}_{\infty}]_{e}^{-1}=\{0\}$ and $\mathbb{Q}[\mathbf{x}_{\infty}]_{e}^{e}=\mathbb{Q}[\mathbf{x}_{\infty}]_{e}$ for all $0 \leq e \leq d$, and hence $\mathbb{Q}[\mathbf{x}_{\infty}]_{e}^{d}=\mathbb{Q}[\mathbf{x}_{\infty}]_{e}$ because $e \leq d$, we get that $\tilde{\Lambda}_{d}^{-1}$ and $\tilde{\Lambda}_{d}^{d}$ are the asserted spaces via Definition \ref{Lambdafilt}. The containment $\tilde{\Lambda}_{f-1}\subseteq\tilde{\Lambda}_{f}$ is clear, and since intersecting both with $\mathbb{Q}[\mathbf{x}_{\infty}]_{d}$ yields $\mathbb{Q}[\mathbf{x}_{\infty}]_{d}^{f-1}$ and $\mathbb{Q}[\mathbf{x}_{\infty}]_{d}^{f}$ respectively, the strict containment between the latter implies the desired one.

Now, Definition \ref{Lambdafilt} expresses $\tilde{\Lambda}_{d}^{f}$ as $\bigoplus_{e=0}^{d}\bigoplus_{|\alpha|=d-e}(\tilde{\Lambda}_{d}^{f}\cap\mathbb{Q}[\mathbf{x}_{\infty}]_{e}m_{\alpha})$ for every $f$, with $(\tilde{\Lambda}_{d}^{f}\cap\mathbb{Q}[\mathbf{x}_{\infty}]_{e}m_{\alpha})=\mathbb{Q}[\mathbf{x}_{\infty}]_{e}^{f}m_{\alpha}$. Hence the quotients $\tilde{\Lambda}_{d}\big/\tilde{\Lambda}_{d}^{f-1}$ and $\tilde{\Lambda}_{d}^{f}\big/\tilde{\Lambda}_{d}^{f-1}$ can be written as $\bigoplus_{e=0}^{d}\bigoplus_{|\alpha|=d-e}(\mathbb{Q}[\mathbf{x}_{\infty}]_{e}\big/\mathbb{Q}[\mathbf{x}_{\infty}]_{e}^{f-1})m_{\alpha}$ and $\bigoplus_{e=0}^{d}\bigoplus_{|\alpha|=d-e}(\mathbb{Q}[\mathbf{x}_{\infty}]_{e}^{f}\big/\mathbb{Q}[\mathbf{x}_{\infty}]_{e}^{f-1})m_{\alpha}$ respectively. In particular, since each direct summand is completely reducible by Proposition \ref{filtprop}, so is $\tilde{\Lambda}_{d}^{f}\big/\tilde{\Lambda}_{d}^{f-1}$.

For the maximality, we note that the expressions from the previous paragraph produce a well-defined map $\pi_{d,\alpha}^{f}:\tilde{\Lambda}_{d}\big/\tilde{\Lambda}_{d}^{f-1}\to\mathbb{Q}[\mathbf{x}_{\infty}]_{e}\big/\mathbb{Q}[\mathbf{x}_{\infty}]_{e}^{f-1}$ for every $\alpha$ with $|\alpha|=d-e$. The equality $F=\sum_{e=0}^{d}\sum_{|\alpha|=d-e}\pi_{d,\alpha}^{f}(F)m_{\alpha}$ continues to hold on $\tilde{\Lambda}_{d}\big/\tilde{\Lambda}_{d}^{f-1}$. Hence we have $U\subseteq\bigoplus_{e=0}^{d}\bigoplus_{|\alpha|=d-e}\pi_{d,\alpha}^{f}(U)m_{\alpha}$ for every subspace $U$ of $\tilde{\Lambda}_{d}\big/\tilde{\Lambda}_{d}^{f-1}$, which is a containment of representations of $S_{\infty}$ and of $S_{\mathbb{N}}$ if $U$ is such a representation.

In particular, if $U$ is irreducible then $\pi_{d,\alpha}^{f}(U)$ is either $\{0\}$ or irreducible as well, meaning that $\pi_{d,\alpha}^{f}(U)\subseteq\mathbb{Q}[\mathbf{x}_{\infty}]_{e}\big/\mathbb{Q}[\mathbf{x}_{\infty}]_{e}^{f-1}$ is actually contained in $\mathbb{Q}[\mathbf{x}_{\infty}]_{e}^{f}\big/\mathbb{Q}[\mathbf{x}_{\infty}]_{e}^{f-1}$ by Theorem \ref{filtQxinf}. After multiplying by $m_{\alpha}$ we obtain a sub-representation of $\tilde{\Lambda}_{d}^{f}\big/\tilde{\Lambda}_{d}^{f-1}$, and as the direct sum over $e$ and $\alpha$ is also contained in that representation, the maximality follows.

It remains, for part $(i)$, to determine the isomorphism type of the completely reducible quotient $\tilde{\Lambda}_{d}^{f}\big/\tilde{\Lambda}_{d}^{f-1}=\bigoplus_{e=0}^{d}\bigoplus_{|\alpha|=d-e}(\mathbb{Q}[\mathbf{x}_{\infty}]_{e}^{f}\big/\mathbb{Q}[\mathbf{x}_{\infty}]_{e}^{f-1})m_{\alpha}$. Proposition \ref{filtprop} expresses the multiplier of $m_{\alpha}$ in that direct sum as isomorphic to $\bigoplus_{\hat{\lambda}\vdash\infty}\bigoplus_{\hat{N}\in\operatorname{SSYT}_{e}(\hat{\lambda}),\ f_{\hat{N}}=f}V_{\hat{N}^{0}}$, and the coefficient $m_{\alpha}$ itself distinguishes this part from those associated with other monomial functions.

Adding the multiplicities from $\alpha$ to $\hat{N}$ yields an element $\hat{M}\in\operatorname{SSYT}_{d}(\hat{\lambda})$, with $\hat{N}^{0}=\hat{M}^{0}$, and since $V_{\hat{M}^{0}}=V_{\hat{M}^{0}}$, $V_{\hat{M}^{0}}m_{\alpha}$, and $\tilde{V}_{\hat{M}}$ are all isomorphic, this is how we express the total quotient. We thus need to determine, for each $\hat{M}\in\operatorname{SSYT}_{d}(\hat{\lambda})$, how many times combinations of $e$, $\alpha$ with $|\alpha|=d-e$, and $\hat{N}\in\operatorname{SSYT}_{e}(\hat{\lambda})$ produce $\hat{M}$ in this construction.

But as $\hat{M}^{0}=\hat{N}^{0}$, the finite part of $\hat{N}$ is determined, and we also know that the multiplicity of every $h>0$ in $\hat{N}$ as in Definition \ref{tabinf} is bounded by the one in $\hat{M}$ (since the latter is obtained by adding those from $\alpha$ to those from $\hat{N}$). The condition $f_{\hat{N}}=f$ implies that we need to take a total of $f$ multiplicities among those from $\hat{M}$ in order to obtain a candidate for $\hat{N}$, and every such choice of $f$ multiplicities produces $\hat{N}$, in $\operatorname{SSYT}_{e}(\hat{\lambda})$ for some $0 \leq e \leq d$, that yields such a contribution (for the appropriate $\alpha$). As the number of choices to take such an $\hat{N}$ is $c_{\hat{M},f}$ by Definition \ref{Lambdafilt}, part $(i)$ is established.

We now note that Remark \ref{mudetf} and part $(ii)$ of Lemma \ref{difmodf-1} and imply that every basis element of $\tilde{V}_{\hat{M}}$ for $\hat{\lambda}\vdash\infty$ and $M\in\operatorname{SSYT}_{d}(\hat{\lambda})$ is congruent to the corresponding basis element of $V_{\hat{M}}$. Therefore we may view the quotient in part $(i)$ as isomorphic to $\bigoplus_{\hat{\lambda}\vdash\infty}\bigoplus_{\hat{M}\in\operatorname{SSYT}_{d}(\hat{\lambda})}V_{\hat{M}}^{c_{\hat{M},f}}$. Since the summand associated with $M$ in this direct sum is now contained in $\tilde{\Lambda}_{\mu}$ where $\mu$ is the content of $\hat{M}$ (of sum $d$), we can intersect everything with $\tilde{\Lambda}_{\mu}$, and obtain part $(ii)$, including the maximal completely reducible assertion, as above.

Finally, projecting these presentations onto the $R_{\infty,k}$, preserving the homogeneity degree, and recalling that this projection keeps precisely the isomorphic images of $\tilde{\Lambda}_{\mu}$ for which $\operatorname{SSYT}_{\mu}(\hat{\lambda})\subseteq\operatorname{SSYT}_{d}^{k}(\hat{\lambda})$ for every $\hat{\lambda}\vdash\infty$ (as in the proof of part $(v)$ of Theorem \ref{compred}) yields part $(iii)$ as well. This completes the proof of the theorem.
\end{proof}
As in Theorem \ref{filtQxinf}, the case $f=0$ of Theorem \ref{filtrations} holds unconditionally, as parts $(iii)$, $(iv)$, and $(v)$ of Theorem \ref{compred}. Indeed, Definition \ref{Lambdafilt} gives $c_{\hat{M},0}=0$, because of the single choice of the empty sub-multi-set of size 0 required for getting $\hat{N}$ with $f_{\hat{N}}=0$ in the proof of Theorem \ref{filtrations} (in fact, we have $\hat{N}=\hat{M}^{0}$), relating the descriptions of $\tilde{\Lambda}_{d}^{0}$, $\tilde{\Lambda}_{\mu}^{0}$, and $R_{\infty,k,d}^{0}$ in these two theorems. Remark \ref{chatMf} implies that only elements $\hat{M}$ with $f_{\hat{M}} \geq f$ participate in the expression for the quotients in Theorem \ref{filtrations}, so in particular these expressions are indeed non-zero if and only if $f$ is bounded by $d$ in part $(i)$ and $\ell$ in part $(ii)$.

\begin{rmk}
The proof of Theorem \ref{filtQxinf} involved taking a total of $f$ multiplicities from those of $\hat{M}$ in order to see which $\hat{N}$ constructed its contribution in the quotient $\tilde{\Lambda}_{d}^{f}\big/\tilde{\Lambda}_{d}^{f-1}$ or $\tilde{\Lambda}_{\mu}^{f}\big/\tilde{\Lambda}_{\mu}^{f-1}$. Then the monomial symmetric function $m_{\alpha}$ such that the corresponding contribution shows up in $(\mathbb{Q}[\mathbf{x}_{\infty}]_{e}^{f}\big/\mathbb{Q}[\mathbf{x}_{\infty}]_{e}^{f-1})m_{\alpha}$ (for the appropriate $0 \leq e \leq d$) is based on the complement, which is of size $\ell-f$ in case $\hat{M}$ involves $\ell$ positive multiplicities in total. Since it is more natural to follow $m_{\alpha}$ than $\hat{N}$, we replace each sub-multi-set by its complement, which in particular shows that the numbers from Definition \ref{Lambdafilt} satisfy $c_{\hat{M},f}=c_{\hat{M},\ell-f}$ for such $\ell$. This is the convention where $\tilde{\Lambda}_{d}^{0}$ has the form from Proposition \ref{decomVM0mM} (or the one from Proposition \ref{ExtVMlim}, which is also suitable for $\tilde{\Lambda}_{\mu}^{0}$, if instead of multiplying by a symmetric function from $\Lambda$ we just leave the multiplicities there for the representation in Definition \ref{infSpecht}), while that of the semi-simplification of $\mathbb{Q}[\mathbf{x}_{\infty}]_{d}$ uses only elements $\hat{M}$ with $f_{\hat{M}}=0$, as in Corollary \ref{semisimp}. \label{fcomp}
\end{rmk}

\medskip

Our last result is an analogue of Corollary \ref{semisimp}.
\begin{cor}
There are composition series for $\tilde{\Lambda}_{d}$, $\tilde{\Lambda}_{\mu}$, and $R_{\infty,k,d}$, and their semi-simplifications are isomorphic to $\bigoplus_{\hat{\lambda}\vdash\infty}\bigoplus_{\hat{M}\in\operatorname{SSYT}_{d}(\hat{\lambda})}\tilde{V}_{\hat{M}}^{c_{\hat{M}}}$ (or equivalently $\bigoplus_{\hat{\lambda}\vdash\infty}\bigoplus_{\hat{M}\in\operatorname{SSYT}_{d}(\hat{\lambda})}V_{\hat{M}}^{c_{\hat{M}}}$), $\bigoplus_{\hat{\lambda}\vdash\infty}\bigoplus_{\hat{M}\in\operatorname{SSYT}_{\mu}(\hat{\lambda})}V_{\hat{M}}^{c_{\hat{M}}}$, and the direct sum $\bigoplus_{\hat{\lambda}\vdash\infty}\bigoplus_{\hat{M}\in\operatorname{SSYT}_{d}^{k}(\hat{\lambda})}V_{\hat{M}}^{c_{\hat{M}}}$ respectively. \label{submulti}
\end{cor}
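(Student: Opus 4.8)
The plan is to deduce this purely formally from Theorem \ref{filtrations}, using the standard fact that a representation admits a finite composition series precisely when it carries a finite filtration all of whose successive quotients are completely reducible, and that in that case its semi-simplification is the direct sum of those quotients (this is exactly the observation already used in the proof of Corollary \ref{semisimp}). First I would record that each of $\tilde{\Lambda}_{d}$, $\tilde{\Lambda}_{\mu}$, and $R_{\infty,k,d}$ carries such a filtration, namely $\{\tilde{\Lambda}_{d}^{f}\}_{f=-1}^{d}$, $\{\tilde{\Lambda}_{\mu}^{f}\}_{f=-1}^{\ell}$, and $\{R_{\infty,k,d}^{f}\}_{f=-1}^{d}$ respectively, by parts $(i)$, $(ii)$, and $(iii)$ of Theorem \ref{filtrations}. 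Each successive quotient listed there is a \emph{finite} direct sum of irreducibles — there are finitely many $\hat{\lambda}\vdash\infty$ producing tableaux of the relevant content or degree (Lemma \ref{limtab} together with the finiteness assertions in Proposition \ref{ExtVMlim} and Corollary \ref{multCI}), finitely many $\hat{M}$ of each shape, and each multiplicity $c_{\hat{M},f}$ is finite — so each graded piece can be refined into finitely many irreducible steps. Concatenating these refinements over $f$ yields the asserted finite composition series in all three cases.

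For the semi-simplifications one simply sums the composition factors, i.e.\ the graded pieces of the filtration. In the case of $\tilde{\Lambda}_{d}$ this gives $\bigoplus_{f=0}^{d}\big(\tilde{\Lambda}_{d}^{f}/\tilde{\Lambda}_{d}^{f-1}\big)\cong\bigoplus_{f=0}^{d}\bigoplus_{\hat{\lambda}\vdash\infty}\bigoplus_{\hat{M}\in\operatorname{SSYT}_{d}(\hat{\lambda})}\tilde{V}_{\hat{M}}^{c_{\hat{M},f}}$ by Theorem \ref{filtrations}$(i)$, and interchanging the order of summation turns this into $\bigoplus_{\hat{\lambda}\vdash\infty}\bigoplus_{\hat{M}\in\operatorname{SSYT}_{d}(\hat{\lambda})}\tilde{V}_{\hat{M}}^{\sum_{f=0}^{d}c_{\hat{M},f}}$, where $\sum_{f=0}^{d}c_{\hat{M},f}=c_{\hat{M}}$ by part $(iv)$ of Definition \ref{Lambdafilt} (the terms with $f>f_{\hat{M}}$ contributing $0$, via Remark \ref{chatMf}). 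Replacing each $\tilde{V}_{\hat{M}}$ by its isomorph $V_{\hat{M}}$ — as noted in the proof of Theorem \ref{filtrations}, or directly from Definition \ref{repswithmon} together with the irreducibility in Theorem \ref{repsinf}$(iv)$ — produces the equivalent form. The arguments for $\tilde{\Lambda}_{\mu}$ and $R_{\infty,k,d}$ are verbatim the same, using parts $(ii)$ and $(iii)$ of Theorem \ref{filtrations}: for $\tilde{\Lambda}_{\mu}$ one sums $f$ only up to $\ell$, but this does not affect $c_{\hat{M}}$ since $c_{\hat{M},f}=0$ once $f>f_{\hat{M}}$ and $f_{\hat{M}}\le\ell$ for $\hat{M}\in\operatorname{SSYT}_{\mu}(\hat{\lambda})$; for $R_{\infty,k,d}$ the tableaux $\hat{M}$ are restricted to $\operatorname{SSYT}_{d}^{k}(\hat{\lambda})$ throughout, exactly as in part $(iii)$ there.

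There is essentially no serious obstacle: the statement is a bookkeeping consequence of Theorem \ref{filtrations}, and the only points requiring a word of care are the reindexing $\sum_{f}c_{\hat{M},f}=c_{\hat{M}}$ and the remark that each graded piece is a genuinely finite direct sum of irreducibles, so that the finite filtration can be refined into a bona fide composition series. Complete reducibility of the graded pieces, their explicit isomorphism types, and the finiteness of all index sets are already established in the earlier results, so the write-up should be short.
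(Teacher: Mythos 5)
Your proposal is correct and follows essentially the same route as the paper: the paper's proof simply invokes the argument of Corollary \ref{semisimp} (a finite filtration with completely reducible quotients yields a composition series whose semi-simplification is the direct sum of the graded pieces) applied to the filtrations of Theorem \ref{filtrations}, together with the identity $\sum_{f}c_{\hat{M},f}=c_{\hat{M}}$ from Definition \ref{Lambdafilt}. Your additional remarks on the finiteness of each graded piece and the vanishing of $c_{\hat{M},f}$ for $f>f_{\hat{M}}$ are correct elaborations of points the paper leaves implicit.
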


\begin{proof}
The assertions follow as in the proof of Corollary \ref{semisimp}, combined with the definition of the exponents $c_{\hat{M}}$ in Definition \ref{Lambdafilt}. This proves the corollary.
\end{proof}

\begin{rmk}
As $\mathbb{Q}[\mathbf{x}_{\infty}]_{d}$ is a sub-representation of $\tilde{\Lambda}_{d}$, and it is clear from Definitions \ref{Qxinfdf} and \ref{Lambdafilt} that $\mathbb{Q}[\mathbf{x}_{\infty}]_{d}^{f}=\tilde{\Lambda}_{d}^{f}\cap\mathbb{Q}[\mathbf{x}_{\infty}]_{d}$, we may view the sub-quotients from Proposition \ref{filtprop} and Theorem \ref{filtQxinf} as sub-representations of those from Theorem \ref{filtrations}. Similarly, the semi-simplifications from Corollary \ref{semisimp} can be seen as sub-representations of those from Corollary \ref{submulti}. All these sub-representations can be considered, via the convention from Remark \ref{fcomp}, as taking, for each $\hat{M}$, the unique summand corresponding with the empty sub-multi-set, associated with the value 1 of $c_{\hat{M},0}$, in which we take no part of the index of $m_{\hat{M}}$ to into an index of a monomial symmetric function. \label{subrep}
\end{rmk}

\begin{ex}
When $d=0$ we have $\tilde{\Lambda}_{0}=\mathbb{Q}[\mathbf{x}_{\infty}]_{0}=\mathbb{Q}$, which is $V_{000\cdots}$ with index of $f$-value 0, hence also equals $\tilde{\Lambda}_{0}^{0}$ and $\mathbb{Q}[\mathbf{x}_{\infty}]_{0}^{0}$. In the case $d=1$, the infinite irreducible standard representation $V_{\substack{000\cdots \\ 1\hphantom{11\cdots}}}$ from Example \ref{exd1} is $\mathbb{Q}[\mathbf{x}_{\infty}]_{1}^{0}$, and $\tilde{\Lambda}_{1}^{0}$ was seen there to be obtained by adding the space $V_{000\cdots}^{1}$ spanned by $e_{1}$, which is also the monomial symmetric function $m_{1}$. We have $\mathbb{Q}[\mathbf{x}_{\infty}]_{1}^{1}=\mathbb{Q}[\mathbf{x}_{\infty}]_{1}$ and $\tilde{\Lambda}_{1}^{1}=\tilde{\Lambda}_{1}$ (since this is the case $f=1=d$), and the quotient modulo $\mathbb{Q}[\mathbf{x}_{\infty}]_{1}^{0}$ and $\tilde{\Lambda}_{1}^{0}$ respectively is a trivial representation in degree 1 without the symmetric function, which we write as $V_{000\cdots}^{\bar{1}}$ to indicate that it appears at the sub-quotient of $f=1$, thus related to a tableau $\hat{M}$ with $f_{\hat{M}}=1$ but in which the sub-multi-set counted by $c_{\hat{M},0}=1$ does not contain the single element 1 from the superscript (as in Remark \ref{fcomp}). The semi-simplification of $\mathbb{Q}[\mathbf{x}_{\infty}]_{1}$ is $V_{\substack{000\cdots \\ 1\hphantom{11\cdots}}} \oplus V_{000\cdots}^{\bar{1}}$, and in order to get that of $\tilde{\Lambda}_{1}$ we add $V_{000\cdots}^{1}$. The two copies of the (trivial) representation with $f_{\hat{M}}=1$ correspond to the fact that $c_{\hat{M},0}=c_{\hat{M},1}=1$ hence $c_{\hat{M}}=2$, with the copy contributing to $c_{\hat{M},0}$ showing up in the semi-simplification of $\mathbb{Q}[\mathbf{x}_{\infty}]_{1}$ (as Remark \ref{subrep} predicts), and the other one, in which we took the 1 to the index, represents explicit multiplication by $m_{1}=e_{1}$. By replacing each variable $x_{i}$ by $x_{i}^{h}$ we obtain the same description for $\mathbb{Q}[\mathbf{x}_{\infty}]_{\mu}\subseteq\mathbb{Q}[\mathbf{x}_{\infty}]_{h}$ and $\tilde{\Lambda}_{\mu}\subseteq\tilde{\Lambda}_{h}$ where $\mu$ is the content consisting of a single element $h$ (with $\ell=1$), and the symmetric function involved is now $m_{h}=p_{h}$. \label{d01filts}
\end{ex}

\begin{ex}
For $d=2$ we have two contents, one of which consists of a single integer 2, which was described in Example \ref{d01filts}. For the other content $\mu=11$, we get $\mathbb{Q}[\mathbf{x}_{\infty}]_{\mu}^{0}=V_{\substack{000\cdots \\ 11\hphantom{1\cdots}}}$, and for $\tilde{\Lambda}_{\mu}^{0}$ we also add $V_{\substack{000\cdots \\ 1\hphantom{11\cdots}}}^{1}$ and $V_{000\cdots}^{11}$ (together with the other content it gives $\tilde{\Lambda}_{2}^{0}$ from Example \ref{exd2}), the latter being the trivial representation on multiples of $m_{\mu}=e_{2}$. The quotient $\mathbb{Q}[\mathbf{x}_{\infty}]_{\mu}^{1}\big/\mathbb{Q}[\mathbf{x}_{\infty}]_{\mu}^{0}$ is written as $V_{\substack{000\cdots \\ 1\hphantom{11\cdots}}}^{\bar{1}}$ to indicate the isomorph $V_{\substack{000\cdots \\ 1\hphantom{11\cdots}}}$ of $V_{\substack{000\cdots \\ 1\hphantom{11\cdots}}}^{1}$ inside $\mathbb{Q}[\mathbf{x}_{\infty}]$, and for $\tilde{\Lambda}_{\mu}^{1}\big/\tilde{\Lambda}_{\mu}^{0}$ we also get the representation $V_{000\cdots}^{1\bar{1}}$. The final quotient $\mathbb{Q}[\mathbf{x}_{\infty}]_{\mu}^{2}\big/\mathbb{Q}[\mathbf{x}_{\infty}]_{\mu}^{1}=\tilde{\Lambda}_{\mu}^{2}\big/\tilde{\Lambda}_{\mu}^{1}$ inside $\mathbb{Q}[\mathbf{x}_{\infty}]_{\mu}\subseteq\tilde{\Lambda}_{\mu}$ is $V_{000\cdots}^{\bar{1}\bar{1}}$, which is a trivial representation. \label{d2filts}
\end{ex}

The semi-simplification of $\mathbb{Q}[\mathbf{x}_{\infty}]_{2}$ from Corollary \ref{semisimp} equals, via Example \ref{d2filts}, to $V_{000\cdots}^{\bar{2}} \oplus V_{\substack{000\cdots \\ 2\hphantom{22\cdots}}} \oplus V_{000\cdots}^{\bar{1}\bar{1}} \oplus V_{\substack{000\cdots \\ 1\hphantom{11\cdots}}}^{\bar{1}} \oplus V_{\substack{000\cdots \\ 11\hphantom{1\cdots}}}$, which is similar to the expression for $\tilde{\Lambda}_{2}^{0}$ from Example \ref{exd2}, as they are indeed known to be isomorphic (as in Remarks \ref{fcomp} and \ref{subrep}). The semi-simplification from Corollary \ref{submulti} of $\tilde{\Lambda}_{\mu}$ from that example is $V_{000\cdots}^{11} \oplus V_{000\cdots}^{1\bar{1}} \oplus V_{000\cdots}^{\bar{1}\bar{1}} \oplus V_{\substack{000\cdots \\ 1\hphantom{11\cdots}}}^{1} \oplus V_{\substack{000\cdots \\ 1\hphantom{11\cdots}}}^{\bar{1}} \oplus V_{\substack{000\cdots \\ 11\hphantom{1\cdots}}}$, and in any case we note that the missing representation $V_{000\cdots}^{1\bar{1}}$ is the same as $V_{000\cdots}^{\bar{1}1}$, because in both we take one of the two instances of 1 for the multiplicity.

\begin{ex}
We now consider the degree $d=3$, with three contents. One, with $\ell=1$, is again dealt with in Example \ref{d01filts}. The longest one with $\ell=3=d$ exhibits the same behaviour as the one from Example \ref{d2filts}, as it only involves instances of 1. For the remaining content $\mu=21$, which satisfies $\ell=2$, the representation $\mathbb{Q}[\mathbf{x}_{\infty}]_{\mu}^{0}$ equals $V_{\substack{000\cdots \\ 12\hphantom{2\cdots}}} \oplus V_{\substack{000\cdots \\ 1\hphantom{11\cdots} \\ 2\hphantom{22\cdots}}}$, the quotient $\mathbb{Q}[\mathbf{x}_{\infty}]_{\mu}^{1}\big/\mathbb{Q}[\mathbf{x}_{\infty}]_{\mu}^{0}$ is $V_{\substack{000\cdots \\ 2\hphantom{22\cdots}}}^{\bar{1}} \oplus V_{\substack{000\cdots \\ 1\hphantom{11\cdots}}}^{\bar{2}}$, and the final quotient $\mathbb{Q}[\mathbf{x}_{\infty}]_{\mu}^{2}\big/\mathbb{Q}[\mathbf{x}_{\infty}]_{\mu}^{1}$ is just $V_{000\cdots}^{\bar{1}\bar{2}}$, which is also the expression for $\tilde{\Lambda}_{\mu}^{2}\big/\tilde{\Lambda}_{\mu}^{1}$. To get $\tilde{\Lambda}_{\mu}^{0}$ from $\mathbb{Q}[\mathbf{x}_{\infty}]_{\mu}^{0}$ we add the direct sum $V_{\substack{000\cdots \\ 2\hphantom{22\cdots}}}^{1} \oplus V_{\substack{000\cdots \\ 1\hphantom{11\cdots}}}^{2} \oplus V_{000\cdots}^{12}$, and $\tilde{\Lambda}_{\mu}^{1}\big/\tilde{\Lambda}_{\mu}^{0}$ is the middle quotient plus the two trivial representations $V_{000\cdots}^{1\bar{2}} \oplus V_{000\cdots}^{\bar{1}2}$. \label{d3filts}
\end{ex}

Note that the contents in both Examples \ref{d2filts} and \ref{d3filts} were with $\ell=2$, but as the entries in the former case were the same and those in the latter one were different, the expressions do not look the same. Indeed, if $\hat{M}$ stands for the tableau with $f=2$ in these two cases, then we have $c_{\hat{M},1}=1$ for $d=2$ and $c_{\hat{M},1}=2$ when $d=3$. We also saw in these examples the general fact that if $\mu$ has $\ell$ positive integers then for $f=\ell$ both $\mathbb{Q}[\mathbf{x}_{\infty}]_{\mu}^{\ell}\big/\mathbb{Q}[\mathbf{x}_{\infty}]_{\mu}^{\ell-1}$ and $\tilde{\Lambda}_{\mu}^{\ell}\big/\tilde{\Lambda}_{\mu}^{\ell-1}$ are the same single trivial representation. The images in the quotients $R_{\infty,k}$ amounts, as we saw above, to taking the parts with the appropriate contents.

Finally, we consider the full representations, with all the graded pieces, for which we recall the union $\operatorname{SSYT}^{k}(\hat{\lambda})$ from Definition \ref{Adinf}.
\begin{cor}
The following assertions hold:
\begin{enumerate}[$(i)$]
\item We have $\mathbb{Q}[\mathbf{x}_{n}]=\bigoplus_{\lambda \vdash n}\bigoplus_{M\in\operatorname{SSYT}(\lambda)}V_{M}$ as a representation of $S_{n}$.
\item Setting $\mathbb{Q}[\mathbf{x}_{\infty}]^{f}=\bigoplus_{d=0}^{\infty}\mathbb{Q}[\mathbf{x}_{\infty}]_{d}^{f}$ for every $f\geq-1$ yields an infinite filtration on $\mathbb{Q}[\mathbf{x}_{\infty}]$ as a representation of $S_{\mathbb{N}}$ and $S_{\infty}$, where $\mathbb{Q}[\mathbf{x}_{\infty}]^{-1}=\{0\}$ and $\mathbb{Q}[\mathbf{x}_{\infty}]^{f}/\mathbb{Q}[\mathbf{x}_{\infty}]^{f-1}$ is, for any $f\geq0$, the maximal completely reducible sub-representation of $\mathbb{Q}[\mathbf{x}_{\infty}]/\mathbb{Q}[\mathbf{x}_{\infty}]^{f-1}$ and is isomorphic to the direct sum $\bigoplus_{\hat{\lambda}\vdash\infty}\bigoplus_{\hat{M}\in\operatorname{SSYT}(\hat{\lambda}),\ f_{\hat{M}}=f}V_{\hat{M}^{0}}$.
\item With $\tilde{\Lambda}^{f}:=\bigoplus_{d=0}^{\infty}\tilde{\Lambda}_{d}^{f}$ we get again $\tilde{\Lambda}^{-1}=\{0\}$, $\tilde{\Lambda}^{f}\big/\tilde{\Lambda}^{f-1}$ is the maximal completely reducible sub-representation of $\tilde{\Lambda}\big/\tilde{\Lambda}^{f-1}$, and its isomorphism class is that of $\bigoplus_{\hat{\lambda}\vdash\infty}\bigoplus_{\hat{M}\in\operatorname{SSYT}(\hat{\lambda})}V_{\hat{M}}^{c_{\hat{M}}}$.
\item The image $R_{\infty,k}^{f}$ of $\tilde{\Lambda}^{f}$ inside $R_{\infty,k}$ is $\bigoplus_{d=0}^{\infty}R_{\infty,k,d}^{f}$. It satisfies the equality $R_{\infty,k}^{-1}=\{0\}$, and for $f\geq0$ the quotient $R_{\infty,k}^{f}/R_{\infty,k}^{f-1}$ is isomorphic to $\bigoplus_{\hat{\lambda}\vdash\infty}\bigoplus_{\hat{M}\in\operatorname{SSYT}^{k}(\hat{\lambda})}V_{\hat{M}}^{c_{\hat{M}}}$, and it is the maximal completely reducible sub-representation inside $R_{\infty,k}/R_{\infty,k}^{f-1}$.
\item The semi-simplifications of the representations $\mathbb{Q}[\mathbf{x}_{\infty}]$, $\tilde{\Lambda}$, and $R_{\infty,k}$ are isomorphic, respectively, to the three direct sums $\bigoplus_{\hat{\lambda}\vdash\infty}\bigoplus_{\hat{M}\in\operatorname{SSYT}(\hat{\lambda})}V_{\hat{M}}$, $\bigoplus_{\hat{\lambda}\vdash\infty}\bigoplus_{\hat{M}\in\operatorname{SSYT}(\hat{\lambda})}V_{\hat{M}}^{c_{\hat{M}}}$, and $\bigoplus_{\hat{\lambda}\vdash\infty}\bigoplus_{\hat{M}\in\operatorname{SSYT}^{k}(\hat{\lambda})}V_{\hat{M}}^{c_{\hat{M}}}$.
\end{enumerate} \label{totdecom}
\end{cor}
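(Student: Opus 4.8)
The plan is to assemble all four parts from the degree-graded results already established, by taking direct sums over $d$ and checking that the filtration and maximality statements survive this process.

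First I would dispose of part $(i)$: since $\mathbb{Q}[\mathbf{x}_{n}]=\bigoplus_{d=0}^{\infty}\mathbb{Q}[\mathbf{x}_{n}]_{d}$ as a representation of $S_{n}$, and each graded piece decomposes as in part $(i)$ of Theorem \ref{FMTdecom}, summing over $d$ gives the stated decomposition (noting that $\operatorname{SSYT}(\lambda)=\bigcup_{d}\operatorname{SSYT}_{d}(\lambda)$). Next, for parts $(ii)$, $(iii)$, and $(iv)$, the key observation is that the action of $S_{\mathbb{N}}$ (equivalently $S_{\infty}$, by Proposition \ref{samereps}) preserves the homogeneity degree, so the decomposition $\mathbb{Q}[\mathbf{x}_{\infty}]=\bigoplus_{d}\mathbb{Q}[\mathbf{x}_{\infty}]_{d}$ and $\tilde{\Lambda}=\bigoplus_{d}\tilde{\Lambda}_{d}$ are decompositions as representations. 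Thus $\mathbb{Q}[\mathbf{x}_{\infty}]^{f}=\bigoplus_{d}\mathbb{Q}[\mathbf{x}_{\infty}]_{d}^{f}$ is a sub-representation (each graded summand is, by Proposition \ref{filtprop}), the containments $\mathbb{Q}[\mathbf{x}_{\infty}]^{f-1}\subseteq\mathbb{Q}[\mathbf{x}_{\infty}]^{f}$ are clear, and $\mathbb{Q}[\mathbf{x}_{\infty}]^{-1}=\{0\}$ because every graded piece vanishes. The quotient $\mathbb{Q}[\mathbf{x}_{\infty}]^{f}/\mathbb{Q}[\mathbf{x}_{\infty}]^{f-1}$ is $\bigoplus_{d}\bigl(\mathbb{Q}[\mathbf{x}_{\infty}]_{d}^{f}/\mathbb{Q}[\mathbf{x}_{\infty}]_{d}^{f-1}\bigr)$, and applying Proposition \ref{filtprop}$(i)$ degree by degree yields the stated isomorphism type $\bigoplus_{\hat{\lambda}\vdash\infty}\bigoplus_{\hat{M}\in\operatorname{SSYT}(\hat{\lambda}),\ f_{\hat{M}}=f}V_{\hat{M}^{0}}$ (here using that $\operatorname{SSYT}(\hat{\lambda})=\bigcup_{d}\operatorname{SSYT}_{d}(\hat{\lambda})$, which is legitimate even though this union is infinite, since only finitely many $\hat{M}$ of each degree arise). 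The maximal-complete-reducibility statement then follows because an irreducible sub-representation $U$ of $\mathbb{Q}[\mathbf{x}_{\infty}]/\mathbb{Q}[\mathbf{x}_{\infty}]^{f-1}$, being irreducible, is homogeneous of some degree $d$ (the degree-$d$ projection is either $0$ or irreducible, and not all can vanish), hence lies in $\mathbb{Q}[\mathbf{x}_{\infty}]_{d}/\mathbb{Q}[\mathbf{x}_{\infty}]_{d}^{f-1}$ and then in $\mathbb{Q}[\mathbf{x}_{\infty}]_{d}^{f}/\mathbb{Q}[\mathbf{x}_{\infty}]_{d}^{f-1}$ by Theorem \ref{filtQxinf}. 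Part $(iii)$ is entirely parallel, invoking Theorem \ref{filtrations}$(i)$ degree by degree and summing the exponents $c_{\hat{M},f}$ over $f$ only implicitly---here one sums the graded quotients, not over $f$---so the quotient $\tilde{\Lambda}^{f}/\tilde{\Lambda}^{f-1}$ is $\bigoplus_{\hat{\lambda}}\bigoplus_{\hat{M}\in\operatorname{SSYT}(\hat{\lambda})}\tilde{V}_{\hat{M}}^{c_{\hat{M},f}}$; for the semi-simplification in part $(v)$ one then sums these over $f$, using $\sum_{f}c_{\hat{M},f}=c_{\hat{M}}$ from Definition \ref{Lambdafilt}$(iv)$. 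Part $(iv)$ follows from part $(iii)$ by projecting onto $R_{\infty,k}$ and invoking Theorem \ref{filtrations}$(iii)$ together with part $(v)$ of Theorem \ref{compred} and the description of $\operatorname{SSYT}^{k}(\hat{\lambda})$; the projection is degree-preserving and kills exactly the isotypical pieces $\tilde{\Lambda}_{\mu}$ with $\mu$ containing an entry $\geq k$. Part $(v)$ is then immediate from parts $(ii)$, $(iii)$, and $(iv)$, together with Corollaries \ref{semisimp} and \ref{submulti} applied degree by degree.

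The only genuine subtlety---and the step I expect to need the most care---is that $\mathbb{Q}[\mathbf{x}_{\infty}]$, $\tilde{\Lambda}$, and $R_{\infty,k}$ are infinite-dimensional with infinitely many graded components, so I must confirm that an irreducible sub-representation of the full (ungraded) quotient is automatically concentrated in a single degree; this is exactly where I use that the $S_{\mathbb{N}}$-action is degree-preserving, so that each graded projection is an intertwiner and Schur-type reasoning applies. A second small point is that the filtration on the ungraded object is now infinite (indexed by $f\geq-1$ with no uniform upper bound, since $\mathbb{Q}[\mathbf{x}_{\infty}]_{d}^{f}=\mathbb{Q}[\mathbf{x}_{\infty}]_{d}$ only for $f\geq d$), but this causes no difficulty: it is an exhaustive increasing filtration, and the maximality statement for each quotient is still meaningful. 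Everything else is a routine transcription of the graded results, so I would present it tersely.
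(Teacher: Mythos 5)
Your proposal is correct and takes essentially the same route as the paper, whose proof is just the terse observation that each part is the degree-by-degree direct sum of Theorem \ref{FMTdecom}, Proposition \ref{filtprop}, Theorems \ref{filtQxinf} and \ref{filtrations}, and Corollaries \ref{semisimp} and \ref{submulti}. One minor imprecision: an irreducible sub-representation $U$ of the ungraded quotient need not be homogeneous of a single degree $d$ (it can sit diagonally across several degrees), but your argument survives because $U$ is contained in the direct sum of its finitely many non-zero degree projections, each of which is irreducible and hence lands in the correct graded sub-quotient by Theorem \ref{filtQxinf} (respectively Theorem \ref{filtrations}).
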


\begin{proof}
Part $(i)$ is the direct sum of Theorem \ref{FMTdecom}. Part $(ii)$ arises in this way from Proposition \ref{filtprop} and Theorem \ref{filtQxinf}. Parts $(iii)$ are, in the same manner, consequences of Theorem \ref{filtrations}, and part $(v)$ follows from Corollaries \ref{semisimp} and \ref{submulti}. This proves the corollary.
\end{proof}
As always, parts $(i)$ and $(iii)$ of Corollary \ref{totdecom} admit analogues with direct sums of the representations $\tilde{V}_{\hat{M}}$ from Definition \ref{repswithmon}. Comparing it with Corollaries \ref{semisimp} and \ref{submulti}, we no longer have composition series, as the direct sums in part $(iv)$ there are infinite.

\begin{rmk}
We saw in Theorem \ref{decomRinf} that $R_{\infty,k}^{0}$ decomposes as the direct sum over multi-sets $I$ of size $k-1$ of the representations $R_{\infty,I}^{0}$ or $R_{\infty,I}^{\mathrm{hom},0}$ from Definition \ref{RinfIdef}, with the latter preserving the homogeneous parts. One may ask whether such decompositions exist for $R_{\infty,k}^{f}$ in case $f>0$, namely is there a natural construction of representations $R_{\infty,I}^{f}$ or $R_{\infty,I}^{\mathrm{hom},f}$ for such multi-sets $I$ such that $R_{\infty,k}^{f}$ will be their direct sum, or at least the quotient $R_{\infty,k}^{f}/R_{\infty,k}^{f-1}$ will be the direct sum of the quotients $R_{\infty,I}^{f}/R_{\infty,I}^{f-1}$ or $R_{\infty,I}^{\mathrm{hom},f}/R_{\infty,I}^{\mathrm{hom},f-1}$. However, the multipliers showing up in that definitions are of elementary symmetric functions, while those arising from $V_{\hat{C}}$ with $f_{\hat{C}}>0$ are still closer, via Proposition \ref{decomVM0mM}, to those from Definition \ref{repswithmon}, in which the multipliers from $\Lambda$ are monomial symmetric functions. The mixing of these two types of multipliers, which diverts from our constructions (including those mentioned in Remark \ref{otherdecoms}), makes the determination of such a construction significantly more difficult. \label{RinfIgen}
\end{rmk}

We conclude by commenting that the complete reducibility in the finite group case seems to be related to the fact that symmetric polynomials mix with the representations with $f=0$ from Lemma \ref{sameiota}, via products and direct sums, to give the full representation. This is so, since in the infinite case the generators of $\Lambda$ and those of $\mathbb{Q}[\mathbf{x}_{\infty}]$ are algebraically independent (by part $(i)$ Proposition \ref{samereps}), and indeed these two parts become separated at the limit, as we saw in Examples \ref{exd1}, \ref{exd2}, \ref{d01filts}, \ref{d2filts}, \ref{d3filts}, and others. This separation is perhaps related to taking limits in normalizations like the one from Remark \ref{normsymfunc} (or another variation of the ones from \cite{[Z1]}), since they are based on more and more variables but with coefficients that tend to 0. We leave this question, as well as a proof of Conjecture \ref{polsincn} or other unconditional proofs of Proposition \ref{filtprop} and Theorems \ref{filtQxinf} and \ref{filtrations}, or finding a decomposition as in Remark \ref{RinfIgen}, for further research.

\noindent\textsc{Einstein Institute of Mathematics, the Hebrew University of Jerusalem, Edmund Safra Campus, Jerusalem 91904, Israel}

\noindent E-mail address: zemels@math.huji.ac.il

\end{document}